\documentclass[12pt,reqno]{amsart}

\usepackage{amstext}
\usepackage{amsthm}
\usepackage{amsmath}
\usepackage{amssymb}
\usepackage{latexsym}
\usepackage{amsfonts}
\usepackage{graphicx}
\usepackage{texdraw}
\usepackage{graphpap}

\usepackage[pagebackref,hypertexnames=false, colorlinks, citecolor=red, linkcolor=red]{hyperref} 
\usepackage[backrefs]{amsrefs}

\input txdtools

\bibliographystyle{plain}

\setlength{\evensidemargin}{0in}
\setlength{\oddsidemargin}{0in}
\setlength{\topmargin}{-.5in}
\setlength{\textheight}{9in}
\setlength{\textwidth}{6.5in}

\begin{document}

\newcommand{\ci}[1]{_{ {}_{\scriptstyle #1}}}

\newcommand{\norm}[1]{\ensuremath{\left\|#1\right\|}}
\newcommand{\abs}[1]{\ensuremath{\left\vert#1\right\vert}}
\newcommand{\ip}[2]{\ensuremath{\left\langle#1,#2\right\rangle}}
\newcommand{\p}{\ensuremath{\partial}}
\newcommand{\pr}{\mathcal{P}}

\newcommand{\pbar}{\ensuremath{\bar{\partial}}}
\newcommand{\db}{\overline\partial}
\newcommand{\D}{\mathbb{D}}
\newcommand{\B}{\mathbb{B}}
\newcommand{\Sp}{\mathbb{S}}
\newcommand{\T}{\mathbb{T}}
\newcommand{\R}{\mathbb{R}}
\newcommand{\Z}{\mathbb{Z}}
\newcommand{\C}{\mathbb{C}}
\newcommand{\N}{\mathbb{N}}
\newcommand{\scrH}{\mathcal{H}}
\newcommand{\scrL}{\mathcal{L}}
\newcommand{\td}{\widetilde\Delta}

\newcommand{\La}{\langle }
\newcommand{\Ra}{\rangle }
\newcommand{\rk}{\operatorname{rk}}
\newcommand{\card}{\operatorname{card}}
\newcommand{\ran}{\operatorname{Ran}}
\newcommand{\osc}{\operatorname{OSC}}
\newcommand{\im}{\operatorname{Im}}
\newcommand{\re}{\operatorname{Re}}
\newcommand{\tr}{\operatorname{tr}}
\newcommand{\vf}{\varphi}
\newcommand{\f}[2]{\ensuremath{\frac{#1}{#2}}}

\newcommand{\kzp}{k_z^{(p,\alpha)}}
\newcommand{\klp}{k_{\lambda_i}^{(p,\alpha)}}
\newcommand{\TTp}{\mathcal{T}_p}


\newcommand{\entrylabel}[1]{\mbox{#1}\hfill}

\newenvironment{entry}
{\begin{list}{X}%
  {\renewcommand{\makelabel}{\entrylabel}%
      \setlength{\labelwidth}{55pt}%
      \setlength{\leftmargin}{\labelwidth}
      \addtolength{\leftmargin}{\labelsep}%
   }%
}%
{\end{list}}


\numberwithin{equation}{section}

\newtheorem{thm}{Theorem}[section]
\newtheorem{lm}[thm]{Lemma}
\newtheorem{cor}[thm]{Corollary}
\newtheorem{conj}[thm]{Conjecture}
\newtheorem{prob}[thm]{Problem}
\newtheorem{prop}[thm]{Proposition}
\newtheorem*{prop*}{Proposition}

\theoremstyle{remark}
\newtheorem{rem}[thm]{Remark}
\newtheorem*{rem*}{Remark}

\title{The Essential Norm of Operators on $A^p(\mathbb{D}^n)$}

\author[M. Mitkovski]{Mishko Mitkovski$^\dagger$}
\address{Mishko Mitkovski, Department of Mathematical Sciences\\ Clemson University\\ O-110 Martin Hall, Box 340975\\ Clemson, SC USA 29634}
\email{mmitkov@clemson.edu}
\urladdr{http://people.clemson.edu/~mmitkov/}
\thanks{$\dagger$ Research supported in part by National Science Foundation DMS grant \# 1101251.}

\author[B. D. Wick]{Brett D. Wick$^\ddagger$}
\address{Brett D. Wick, School of Mathematics\\ Georgia Institute of Technology\\ 686 Cherry Street\\ Atlanta, GA USA 30332-0160}
\email{wick@math.gatech.edu}
\urladdr{www.math.gatech.edu/~bwick6}
\thanks{$\ddagger$ Research supported in part by National Science Foundation DMS grants \# 1001098 and \# 955432.}

\subjclass[2000]{32A36, 32A, 47B05, 47B35}
\keywords{Berezin Transform, Compact Operators, Bergman Space, Essential Norm, Toeplitz Algebra, Toeplitz Operator}

\begin{abstract}
In this paper we characterize the compact operators on the Bergman space $A^p(\mathbb{D}^n)$.  The main result shows that an operator on $A^p(\mathbb{D}^n)$ is compact if and only if it belongs to the Toeplitz algebra $\mathcal{T}_{p}$ and its Berezin transform vanishes on the boundary.
\end{abstract}

\maketitle

\section{Introduction and Statement of Main Results}

Let $\D^n$ denote the unit polydisc in $\C^n$.  For $1<p<\infty$ the Bergman space $A^p(\D^n):=A^p$ is the collection of holomorphic functions on $\D^n$ such that
$$
\norm{f}_{A^p}^p:=\int_{\D^n}\abs{f(z)}^p\,dv(z)<\infty.
$$
We will also let $L^p\left(\D^n\right):=L^p$ denote the Lebesgue space on $\D^n$ with respect to the normalized volume measure $v$, $dv(z):=\frac{1}{\pi^n}dA(z_1)\cdots dA(z_n)$.

Recall that the projection of $L^2$ onto $A^2$ is given by the integral operator
$$
P(f)(z):=\int_{\D^n}f(w)\prod_{l=1}^n\frac{1}{\left(1-z_l\overline{w}_l\right)^{2}}\,dv(w).
$$
It is well known that this operator is bounded from $L^p$ to $A^p$ when $1<p<\infty$.  Let $M_a$ denote the operator of multiplication by the function $a$, $M_a(f):=af$.  The Toeplitz operator with symbol $a\in L^\infty$ is the operator given by
$$
T_a:=P M_a.
$$
It is immediate to see that $\norm{T_a}_{\mathcal{L}\left(L^p, A^p\right)}\lesssim\norm{a}_{L^\infty}$.   For  $\lambda\in\D^n$, set $K_\lambda(z):=\prod_{l=1}^n\frac{1}{(1-\overline{\lambda}_lz_l)^{2}}$, and for $1<p<\infty$ let $k_{\lambda}^{(p)}(z):=\prod_{l=1}^n\frac{(1-\abs{\lambda_l}^2)^{\frac{2}{q}}}{(1-\overline{\lambda}_lz_l)^{2}}$.  Then we have $\norm{k_{\lambda}^{(p)}}_{A^p}\approx 1$, with implied constants depending on $p$ and $n$.  Here, we are letting $q=\frac{p}{(p-1)}$.

For $z\in\D^n$, the Berezin transform of an operator $S$ is defined by
$$
B(S)(z):=\ip{Sk_z^{(p)}}{k_z^{(q)}}_{A^2}.
$$
It is easy to see that when $S$ is bounded, the function $B(S)(z)$ is bounded for all $z\in \D^n$.  In fact the Berezin transform is one-to-one and so every bounded operator on $A^p$ is determined by its Berezin transform $B(S)$ (this follows from a simple uniqueness argument with analytic functions).  It is also an easy fact to deduce that if $S$ is compact, then $B(S)(z)\to 0$ as $z\to \p\D^n$.  One of the interesting aspects of operator theory on the Bergman space is that the Berezin transform essentially encapsulates all the behavior of the operator $S$.  In this paper we seek to obtain a characterization of the compactness of operators on $A^p$ in terms of the Berezin transform.

As motivation for our project, we highlight some of the major contributions towards obtaining a characterization of compactness in terms of the Berezin transform.  The first major breakthrough was obtained by Axler and Zheng in the case of the unit disc $\D$ for the standard Bergman space $A^2\left(\D\right)$, see \cite{AZ}.  They showed that when $S$ is a finite sum of finite products of Toeplitz operators, then $S$ is compact if and only if the Berezin transform vanishes as $z$ tends to the boundary of the disc.  This characterization was later extended by Engli{\v{s}} to the case of bounded symmetric domains in $\C^n$, see \cite{E}.  In the case of the unit ball $\B_n$, the Axler and Zheng result was also obtained by Raimondo, \cite{R}.

A much more precise characterization was obtained by Su\'arez in the case of the unit ball $\B_n$.  To state his contribution to the area, we need a little more notation.  Let $\mathcal{T}_{p}$ denote the Toeplitz algebra
generated by $L^\infty$ functions.  By a result of Engli{\v{s}}, \cite{E92}, it is known that the compact operators
on $A^p$ belong to $\mathcal{T}_{p}$.  Su\'arez showed in \cite{Sua} that the compact
operators are precisely those that belong to the Toeplitz algebra and have a vanishing Berezin transform on the boundary of the unit ball.  This was extended to the case of weighted Bergman spaces on the ball by Su\'arez and the authors in \cite{MSW}.

On the polydisc, the question  of compactness in terms of the Berezin transform was first studied by Engli{\v{s}} in \cite{E}.  The main result of that paper is that for an operator $S$ that is a finite sum of finite products of Toeplitz operators, it is compact if and only if its Berezin transform vanishes on $\partial\D^n$.  In \cite{NZ}, Nam and Zheng showed that the same result is true for radial operators $S$, i.e., $S$ is compact if and only if the Berezin transform vanishes on $\partial\D^n$.

The main result of this paper is the following Theorem giving a characterization of the compact operators on the Bergman space of the polydisc in terms of the Toeplitz algebra and the Berezin transform.  In particular it extends the results of \cites{E,NZ} to arbitrary operators.
\begin{thm}
Let $1<p<\infty$ and $S\in\mathcal{L}\left(A^p,A^p\right)$.  Then $S$ is compact if and only if $S\in\mathcal{T}_{p}$ and $\lim_{z\to\p\D^n} B(S)(z)=0$.
\end{thm}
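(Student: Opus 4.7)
The proof splits into two directions. The easy direction---that compact operators belong to $\mathcal{T}_p$ and have vanishing Berezin transform on $\p\D^n$---combines the cited result of Engli{\v{s}} with the standard observation that $k_z^{(p)}\to 0$ weakly in $A^p$ as $z\to\p\D^n$. Indeed, if $S$ is compact then $\|Sk_z^{(p)}\|_{A^p}\to 0$, and since $\{k_z^{(q)}\}$ is norm-bounded in $A^q$, pairing yields $B(S)(z)\to 0$.

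For the nontrivial direction, the plan is to establish the quantitative estimate
\[
\|S\|_e \;\approx\; \limsup_{z\to\p\D^n} |B(S)(z)|
\quad\text{for every } S\in\mathcal{T}_p,
\]
where $\|\cdot\|_e$ denotes the essential norm on $\mathcal{L}(A^p)$. The theorem is immediate from this once one takes the hypothesis $\lim B(S)=0$ at $\p\D^n$. The strategy adapts the framework of Su\'arez on the unit ball, and its extension to weighted ball spaces in \cite{MSW}, to the product geometry of $\D^n$.

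The key steps are as follows. First, construct a separated, covering net $\{\lambda_j\}\subset\D^n$ in the product Bergman metric, obtained as a product of one-dimensional nets indexed by $\Z^n$. Second, build a family of ``localization'' operators $\Gamma_r$---roughly weighted sums of rank-one frame operators of the form $f\mapsto\ip{f}{k_{\lambda_j}^{(q)}}_{A^2}k_{\lambda_j}^{(p)}$ associated to Bergman balls of radius $r$---and show that for each $S\in\mathcal{T}_p$,
\[
\lim_{r\to\infty}\|S-\Gamma_r S\Gamma_r\|_e \;=\; 0.
\]
This reduces matters to estimating $\|\Gamma_r S\Gamma_r\|_e$ by the behavior of $B(S)$ near the boundary, which in turn uses the approximate diagonalization of $S$ in the frame $\{k_{\lambda_j}^{(p)}\}$ together with Schur-type bounds exploiting the off-diagonal decay of $|\ip{k_\lambda^{(p)}}{k_\mu^{(q)}}_{A^2}|$ in the product pseudohyperbolic distance. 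Third, one verifies that the collection of operators for which this estimate holds is norm-closed in $\mathcal{L}(A^p)$, contains every Toeplitz operator $T_a$ with $a\in L^\infty$, and is stable under products; the last property is typically achieved via a semi-commutator lemma showing that $T_aT_b-T_{ab}$ is essentially controlled by Berezin data, and then bootstraps from generators to the entire algebra $\mathcal{T}_p$.

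The principal obstacle relative to the ball case is the stratified structure of $\p\D^n$: a point can approach the boundary by having only some coordinates tend to $\p\D$ while others remain in a compact subset of $\D$. The vanishing hypothesis on $B(S)$ at $\p\D^n$ is correspondingly weaker, so the covering nets, partition of unity, and off-diagonal kernel bounds must be set up in a product-compatible way that handles every stratum uniformly. I expect this to require a product version of the Forelli--Rudin integral estimates to control reproducing-kernel integrals against indicator functions of the localization sets, and a careful accounting along each face that prevents the loss of control in the ``mixed'' regime where some coordinates are near $\p\D$ and others are deep inside $\D$.
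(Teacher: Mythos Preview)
Your proposed target estimate
\[
\norm{S}_e \;\approx\; \limsup_{z\to\partial\D^n} \abs{B(S)(z)}
\]
is not what the paper (or Su\'arez on the ball, or \cite{MSW}) proves, and it is almost certainly false as a uniform two-sided bound on $\mathcal{T}_p$. The Berezin transform records only the diagonal matrix entries $\ip{Sk_z^{(p)}}{k_z^{(q)}}_{A^2}$; there is no mechanism by which Schur bounds on the frame Gramian $\abs{\ip{k_\lambda^{(p)}}{k_\mu^{(q)}}_{A^2}}$ can convert this into control of the full matrix of $S$, so your Step~2 (bounding $\norm{\Gamma_r S\Gamma_r}_e$ by $B(S)$) cannot go through as written. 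Your Step~3 also breaks down: the class $\{S:\norm{S}_e\approx \limsup\abs{B(S)}\}$ is not obviously closed under products, since neither $\norm{\,\cdot\,}_e$ nor $\limsup\abs{B(\,\cdot\,)}$ behaves multiplicatively and a semi-commutator lemma for $T_aT_b-T_{ab}$ does not relate $B(ST)$ to $B(S)$ and $B(T)$.

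What the paper actually does is pass through an operator-valued boundary invariant rather than the scalar $B(S)$. One introduces the uniform algebra $\mathcal{A}$ of bounded $\rho$-uniformly continuous functions on $\D^n$ and its maximal ideal space $M_{\mathcal{A}}$, and for $z\in\D^n$ sets $S_z=U_z^{p}S\,(U_z^{q})^{*}$. This map extends $WOT$-continuously to $M_{\mathcal{A}}$ (and $SOT$-continuously when $S\in\mathcal{T}_p$), yielding boundary operators $S_x$ for $x\in M_{\mathcal{A}}\setminus\D^n$. The two halves of the argument are then completely decoupled:
\begin{itemize}
\item[(a)] $B(S)(z)\to 0$ on $\partial\D^n$ if and only if $S_x=0$ for every $x\in M_{\mathcal{A}}\setminus\D^n$. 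The forward direction uses that $\abs{B(S_z)(\xi)}=\abs{B(S)(\varphi_z(\xi))}$, so vanishing of $B(S)$ at the boundary forces $B(S_x)\equiv 0$ on all of $\D^n$, and then injectivity of the Berezin transform gives $S_x=0$. Note this uses the entire function $B(S_x)$, not just its value at $0$.
\item[(b)] For $S\in\mathcal{T}_p$ one has $\norm{S}_e\approx\sup_{x\in M_{\mathcal{A}}\setminus\D^n}\norm{S_x}_{\mathcal{L}(A^p,A^p)}$. This is the technical core: it is obtained by approximating $ST_\mu$ (with $T_\mu$ a discrete near-identity) by ``segmented'' sums $\sum_{\vec{j}}M_{1_{F_{\vec{j}}}}ST_{\mu 1_{G_{\vec{j}}}}$ over a product covering of $\D^n$, and comparing several auxiliary localized norms.
\end{itemize}
Combining (a) and (b) gives the theorem. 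The point is that the quantitative essential-norm estimate lives at the level of the operators $S_x$, not at the level of the scalar $B(S)$; the Berezin transform enters only through its injectivity in step~(a). Your outline collapses these two steps into one and thereby asks the Berezin transform to do something it cannot.
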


The outline of the paper is as follows.  In Section \ref{Prelim} we remind the reader of the additional notation and facts needed throughout this paper.  In Section \ref{UAMIS} introduces an important uniform algebra and its corresponding maximal ideal space.   Section \ref{CMA} provides the connection between Bergman--Carleson measures and an approximation argument.  In Section \ref{Approximation} we show how to approximate $S\in\mathcal{T}_{p}$ by certain operators that will be useful when computing the essential norm.  Finally, in Section \ref{Characterization}, we prove the main results.  This is accomplished by obtaining several different characterizations of the essential norm of an operator on $A^p$.  The proof strategy is the same as what appears in \cite{Sua} and \cite{MSW}, but requires certain routine modifications and verifications since one is considering the Bergman space over the polydisc now.  For completeness, all details are provided.

Throughout this paper we use the standard notation $A\lesssim B$ to denote the existence of a constant $C$ such that $A\leq C B$.  While $A\approx B$ will mean $A\lesssim B$ and $B\lesssim A$.  The value of a constant may change from line to line, but we will frequently attempt to denote the parameters that the constant depends upon.  The expression $:=$ will mean equal by definition.

The authors wish to thank Daniel Su\'arez for some comments on an earlier draft of this manuscript.  Further thanks are given to a dedicated referee who provided numerous comments that improved the overall presentation of the paper.

\section{Preliminaries}
\label{Prelim}
For $z\in\D$, $\varphi_z$ will denote the automorphism of $\D$ such that $\varphi_z(0)=z$, namely
$$
\varphi_z(w):=\frac{z-w}{1-\overline{z}w}.
$$  
Using this automorphism, we can define the pseudohyperbolic and hyperbolic metrics on $\D$, by
$$
\rho\left(z,w\right):=\abs{\varphi_{z}(w)}=\abs{\frac{z-w}{1-\overline{z}w}}\quad\textnormal{ and }\quad \beta\left(z,w\right):=\frac{1}{2}\log\frac{1+\rho\left(z,w\right)}{1-\rho\left(z,w\right)}.
$$
It is well known that these metrics are invariant under the automorphism group of $\D$.  We let
$$
D\left(z,r\right):=\left\{w\in\D:\beta\left(z,w\right)\leq r\right\}=\left\{w\in\D: \rho\left(z,w\right)\leq \tanh r\right\},
$$
denote the hyperbolic disc centered at $z$ of radius $r$.  Also note the following well known identities for the M\"obius maps on $\D$:
\begin{eqnarray*}
1-\abs{\varphi_z(w)}^2 & = & \frac{(1-\abs{z}^2)(1-\abs{w}^2)}{\abs{1-\overline{z}w}^2},\\
1-\overline{\varphi_z(w)}\varphi_z(\xi) & = & \frac{(1-\abs{z}^2)(1-\overline{w}\xi)}{(1-\overline{z}\xi)(1-\overline{w}z)}.
\end{eqnarray*}

We now extend some of this notation to the polydisc.  For $z\in\D^n$ and $1\leq l\leq n$, $z_l$ will denote the $l^{\textnormal{th}}$ component of the vector $z$.  A sequence, or net, of points in the polydisc $\D^n$ will be denoted by $\{z^{k}\}$, or $\{z^{\omega}\}$. Given $z\in\D^n$, the map $\varphi_z$ will denote the map that exchanges $0$ and $z$, in particular we have,
$$
\varphi_z(w)=\left(\varphi_{z_1}(w_1),\ldots,\varphi_{z_n}(w_n)\right).
$$
For $z\in\D^n$ and $r>0$ we form the set
$$
D\left(z,r\right):=\prod_{l=1}^n D\left(z_l,r\right)
$$
where $D\left(z_l,r\right)$ is the hyperbolic disc in one variable.  For $z,w\in\D^n$ we also will let
$$
\rho\left(z,w\right):=\max_{1\leq l\leq n}\abs{\frac{w_l-z_l}{1-\overline{z}_lw_l}}=\max_{1\leq l\leq n}\rho\left(z_l,w_l\right).
$$
In particular, note that we are using similar notation for both the the disc $\D$ and the polydisc $\D^n$.  The precise usage will be clear from context and should cause no confusion.

The next lemma is well known, and the statement is provided for the reader's ease.  The interested reader can consult the book \cite{Zhu}.
\begin{lm}
\label{Growth}
For $z\in \D$, $s$ real and $t>-1$, let
$$
F_{s,t}(z):=\int_{\D}\frac{\left(1-\abs{w}^2\right)^t}{\abs{1-\overline{w}z}^s}\,dv(w).
$$
Then $F_{s,t}$ is bounded if $s<2+t$ and grows as $(1-\abs{z}^2)^{2+t-s}$ when $\abs{z}\to 1$ if $s>2+t$.
\end{lm}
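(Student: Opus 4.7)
The plan is to reduce $F_{s,t}(z)$ to a one-dimensional integral by polar coordinates and then apply a standard angular estimate. Writing $w=\rho e^{i\theta}$ with $\rho\in[0,1)$ and using rotational invariance of the inner integral in $\theta$, one arrives at
$$F_{s,t}(z) \approx \int_0^1 \rho(1-\rho^2)^t \left(\int_0^{2\pi} \frac{d\theta}{\abs{1 - \rho\abs{z} e^{i\theta}}^s}\right) d\rho.$$

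Next, I would invoke the classical one-variable bound valid for $r\in[0,1)$:
$$\int_0^{2\pi} \frac{d\theta}{\abs{1-re^{i\theta}}^s} \approx \begin{cases} 1, & s<1, \\ \log\frac{1}{1-r}, & s=1, \\ (1-r)^{1-s}, & s>1, \end{cases}$$
applied with $r=\rho\abs{z}$. Since $t>-1$, the critical threshold $s=2+t$ is strictly greater than $1$, so for the nontrivial regime I may assume $s>1$. Substituting the angular estimate and performing the change of variables $u=\rho^2$, $a=\abs{z}^2$ reduces the task to estimating
$$J(a) := \int_0^1 (1-u)^t (1-ua)^{1-s}\,du$$
as $a\to 1^{-}$.

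For $s<2+t$, I would note $(1-ua)^{1-s}\leq (1-u)^{1-s}$ (since $1-s<0$ and $1-ua\geq 1-u$), so the integrand is dominated by $(1-u)^{t+1-s}$, which is integrable precisely when $t+1-s>-1$, i.e., when $s<2+t$; uniform boundedness in $a$ follows. For $s>2+t$, setting $\epsilon:=1-a$ and changing variables $v=1-ua$, the integral $J(a)$ becomes $\approx \int_\epsilon^1 (v-\epsilon)^t v^{1-s}\,dv$. Splitting this as $[\epsilon,2\epsilon]\cup[2\epsilon,1]$, the first piece contributes $\approx \epsilon^{t+1}\cdot\epsilon^{1-s}=\epsilon^{2+t-s}$, while on the second piece $v-\epsilon\approx v$ yields $\int_{2\epsilon}^1 v^{t+1-s}\,dv\approx \epsilon^{2+t-s}$ (using $t+1-s<-1$). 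Combining gives $J(a)\approx (1-\abs{z}^2)^{2+t-s}$.

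The main obstacle is not conceptual but bookkeeping: extracting the matching lower bound so that the conclusion is $\approx$ rather than merely $\lesssim$. The $[\epsilon,2\epsilon]$ piece, on which both $(v-\epsilon)^t$ (after integration) and $v^{1-s}$ are essentially of the advertised order, supplies the requisite lower bound. All estimates are classical; alternatively, one could expand $(1-\bar w z)^{-s}$ in power series and appeal to Stirling-type asymptotics for beta integrals, as presented in Zhu's book cited in the statement.
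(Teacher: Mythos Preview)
Your argument is correct. Note, however, that the paper does not actually prove this lemma: it simply records the statement as well known and refers the reader to Zhu's book. So there is no ``paper's proof'' to compare against beyond that citation.

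Two small points worth tightening in your write-up. First, after the substitution $u=\rho^2$, $a=\abs{z}^2$, the factor $(1-\rho\abs{z})^{1-s}$ literally becomes $(1-\sqrt{ua})^{1-s}$ rather than $(1-ua)^{1-s}$; since $1-\sqrt{ua}\approx 1-ua$ this is harmless, but it deserves a word. Second, the case $s\le 1$ (which you dismiss as the ``trivial regime'') should be disposed of explicitly: for $s<1$ the angular integral is uniformly bounded, and for $s=1$ the logarithmic factor is still integrable against $(1-\rho^2)^t\rho$ because $t>-1$.

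As for the comparison: the proof one finds in Zhu proceeds by expanding $(1-\bar w z)^{-s}$ as a binomial series, using orthogonality to reduce to a sum of beta integrals, and then invoking Stirling asymptotics---exactly the alternative you mention in your last sentence. Your polar-coordinate reduction plus the dyadic split $[\epsilon,2\epsilon]\cup[2\epsilon,1]$ is an equally standard and slightly more hands-on route that avoids series manipulations; it has the advantage of making the matching lower bound transparent via the $[\epsilon,2\epsilon]$ piece, as you observe.
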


\subsection{Bergman--Carleson Measures for \texorpdfstring{$A^p$}{Bergman Space}}

Unless stated otherwise, a measure will always be a positive, finite, regular, Borel measure.  For $p>1$ a measure $\mu$ on $\D^n$ is a Carleson measure for $A^p$ if there is a constant, independent of $f$, such that
\begin{equation}
\label{Carl_Emb}
\int_{\D^n}\abs{f(z)}^p \,d\mu(z)\lesssim \int_{\D^n}\abs{f(z)}^p \,dv(z).
\end{equation}
Let $i_p$ denote the embedding of $A^p$ into $L^p(\D^n;\mu)$ and the best constant such that \eqref{Carl_Emb} holds will be denoted by $\norm{\mu}^p_{\textnormal{CM}}$.  For a measure $\mu$ we will define the operator
$$
T_\mu f(z):=\int_{\D^n}f(w)\prod_{l=1}^n\frac{1}{\left(1-\overline{w}_lz_l\right)^{2}}\,d\mu(w),
$$
which gives rise to an analytic function for all $f\in H^\infty$.  Note that when $\mu=a\,dv$ we have that $T_\mu=T_a$, and so this definition coincides with the one previously given for a Toeplitz operator.  When $1<p<\infty$, we have that $T_\mu$ is densely defined on $A^p$.  Moreover, $T_\mu$ is bounded from $A^p\to A^p$ if and only if $\mu$ is a Carleson measure for $A^p$.

\begin{lm}[Necessary and Sufficient Conditions for Bergman--Carleson Measures]
\label{CM}
Suppose that $1<p<\infty$.  Let $\mu$ be a measure on $\D^n$ and $r>0$.  The following quantities are equivalent, with constants that depend on $n$, and $r$:
\begin{itemize}
\item[(1)] $\norm{\mu}_{\textnormal{RKM}}:=\sup_{z\in\D^n}\int_{\D^n} \prod_{l=1}^n\frac{(1-\abs{z_l}^2)^{2}}{\abs{1-\overline{z}_l w_l}^{4}}\,d\mu(w)$;
\item[(2)] $\norm{\mu}^p_{\textnormal{CM}}:=\inf\left\{C: \int_{\D^n}\abs{f(z)}^p \,d\mu(z)\leq C \int_{\D^n}\abs{f(z)}^p \,dv(z)\right\}$;
\item[(3)] $\norm{\mu}_{\textnormal{Geo}}=\sup_{z\in\D^n}\frac{\mu\left(D\left(z,r\right)\right)}{\prod_{l=1}^n \left(1-\abs{z_l}^2\right)^{2}}\approx\sup_{z\in\D^n}\frac{\mu\left(D\left(z,r\right)\right)}{v\left(D\left(z,r\right)\right)}$;
\item[(4)] $\norm{T_\mu}_{\mathcal{L}\left(A^p, A^p\right)}$.
\end{itemize}
\end{lm}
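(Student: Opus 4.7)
The plan is to prove the cycle of equivalences $(3) \Leftrightarrow (1)$, $(3) \Leftrightarrow (2)$, and $(2) \Leftrightarrow (4)$, using the product structure of $\D^n$ to reduce every kernel calculation to a one-variable estimate covered by Lemma \ref{Growth}. The secondary $\approx$ inside (3) is immediate from $D(z,r) = \prod_{l=1}^n D(z_l,r)$ together with the one-variable identity $v(D(z_l,r)) \approx (1-\abs{z_l}^2)^2$ at fixed $r$, which follows from the invariance formulas for $\varphi_{z_l}$ recorded in Section \ref{Prelim}.

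For $(3) \Leftrightarrow (1)$, observe first that the integrand in (1) is exactly $\abs{k_z^{(2)}(w)}^2$. The implication $(1) \Rightarrow (3)$ is extracted by restricting the RKM integral to $D(z,r)$, where each factor $\abs{1-\overline{z}_l w_l}$ is comparable to $1-\abs{z_l}^2$, so the integrand is $\gtrsim v(D(z,r))^{-1}$ there. For the reverse, pick an $r$-lattice $\{w^k\} \subset \D^n$ with bounded overlap, split the RKM integral into pieces over $D(w^k,r)$, replace $\abs{1-\overline{z}_l w_l}$ by $\abs{1-\overline{z}_l w^k_l}$ on each piece, and recognize the resulting sum $\sum_k v(D(w^k,r)) \prod_{l=1}^n (1-\abs{z_l}^2)^2 \abs{1-\overline{z}_l w^k_l}^{-4}$ as Riemann-comparable to a product of one-variable integrals uniformly bounded via Lemma \ref{Growth} with $s=4$, $t=0$.

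The pair $(3) \Leftrightarrow (2)$ is the standard Bergman--Carleson argument. For $(3) \Rightarrow (2)$, apply the submean-value inequality $\abs{f(w)}^p \lesssim v(D(w,r))^{-1} \int_{D(w,r)} \abs{f(\zeta)}^p \, dv(\zeta)$ inside $\int \abs{f}^p \, d\mu$, swap the order of integration using the symmetry $\zeta \in D(w,r) \Leftrightarrow w \in D(\zeta,r)$, and use $v(D(w,r)) \approx v(D(\zeta,r))$ on $D(\zeta,r)$ to pull out the factor $\norm{\mu}_{\textnormal{Geo}}$. For the reverse direction, test (2) on $f = k_z^{(p)}$: the identity $\tfrac{2p}{q} - 2p = -2$ forces $\abs{k_z^{(p)}(w)}^p \approx v(D(z,r))^{-1}$ on $D(z,r)$, and combined with $\norm{k_z^{(p)}}_{A^p} \approx 1$ this yields $\mu(D(z,r))/v(D(z,r)) \lesssim \norm{\mu}^p_{\textnormal{CM}}$.

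Finally $(2) \Leftrightarrow (4)$ is pure duality. For $(2) \Rightarrow (4)$, the identity $\ip{T_\mu f}{g}_{A^2} = \int_{\D^n} f\, \overline{g}\, d\mu$ combined with H\"older and (2) applied at both exponents $p$ and $q$ (legitimate since the equivalent conditions (1) and (3) are $p$-independent) gives boundedness of $T_\mu$ on $A^p$. For $(4) \Rightarrow (1)$, compute $\ip{T_\mu k_\lambda^{(p)}}{k_\lambda^{(q)}}_{A^2}$ directly: the normalization constants combine via $\tfrac{2}{p} + \tfrac{2}{q} = 2$ and recover exactly the RKM integrand, so that $\norm{\mu}_{\textnormal{RKM}} \le \norm{T_\mu}_{\mathcal{L}(A^p,A^p)}$. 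The main technical obstacle is the Fubini swap together with the uniform absorption of $v(D(\cdot,r))$ factors in $(3) \Rightarrow (2)$; beyond this, the product structure of $\D^n$ makes every other step reduce cleanly to the one-variable Bergman theory.
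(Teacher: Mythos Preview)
Your proposal is correct. The argument you give for $(2)\Leftrightarrow(4)$ is essentially identical to the paper's: the duality pairing $\ip{T_\mu f}{g}_{A^2}=\int f\overline{g}\,d\mu$ plus H\"older for $(2)\Rightarrow(4)$, and the Berezin-transform computation $\ip{T_\mu k_\lambda^{(p)}}{k_\lambda^{(q)}}_{A^2}$ for $(4)\Rightarrow(1)$.

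The difference is only in scope. The paper does not prove the equivalence of (1), (2), and (3) at all; it simply cites the literature (Hastings, Jafari, Michalak, Wick) for that part and only writes out the link to (4). You instead sketch self-contained proofs of $(1)\Leftrightarrow(3)$ via a lattice discretization and Lemma~\ref{Growth}, and of $(2)\Leftrightarrow(3)$ via the submean-value inequality and reproducing-kernel testing. These arguments are standard and correct, so your version is more complete than the paper's while agreeing with it on the one implication the paper actually carries out.
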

Here, RKM denotes that the measure $\mu$ is a reproducing kernel measure.  Observe that condition (1) and (3) are actually independent of the exponent $p=2$ and so, the equivalence with (2) is actually true for all $1<p<\infty$.  Since the condition is independent of the value of $p$, we will refer to a measure which satisfies any of the conditions above as a Bergman--Carleson measure.

Another simple observation one should make at this point is the following.  Suppose $\mu$ is a complex-valued measure such that $\abs{\mu}$, the total variation of the measure, is a Bergman--Carleson measure.  Decompose $\mu$ into its real and imaginary parts and then use the Jordan Decomposition to write $\mu=\mu_1-\mu_2+i\mu_3-i\mu_4$ where each $\mu_j$ is a positive measure and $\abs{\mu}\approx \sum_{j=1}^{4}\abs{\mu_j}$.  We then have that $\abs{\mu_j}$ is Bergman--Carleson with $\norm{\abs{\mu}}_{\textnormal{CM}}\approx\sum_{j=1}^{4}\norm{\mu_j}_{\textnormal{CM}}$.  Using Lemma \ref{CM} we have that $T_\mu$ is a bounded operator on $A^p$ when $\mu$ is a complex-valued measure with $\abs{\mu}$ a Bergman--Carleson measure.

\begin{proof}[Proof of Lemma \ref{CM}]
The equivalence between (1), (2) and (3) is well known, see any of \cites{H,J,M, W}.  Finally, to prove the equivalence with (4), first suppose that (2) holds, then using Fubini's Theorem, we have that for $f,g\in H^\infty$ that
\begin{eqnarray*}
\abs{\ip{T_\mu f}{g}_{A^2}} & = & \abs{\int_{\D^n} f(w)\overline{g(w)}\,d\mu(w)}\\
& \lesssim & \norm{\mu}_{\textnormal{CM}}^2\norm{f}_{A^p}\norm{g}_{A^q}.
\end{eqnarray*}
But, this inequality then implies that $T_\mu: A^p\to A^p$ is bounded.  Here we have identified $\left(A^p\right)^*=A^q$.  Conversely, if $T_\mu$ is bounded, then observe that
$$
T_{\mu}\left(k_{\lambda}^{(p)}\right)(z)=\int_{\D^n}\prod_{l=1}^n\frac{1}{\left(1-z_l\overline{w}_l\right)^{2}}\prod_{l=1}^n\frac{\left(1-\abs{\lambda_l}^2\right)^{\frac{2}{q}}}{\left(1-\overline{\lambda}_lw_l\right)^{2}}\,d\mu(w),
$$
and in particular we have
$$
T_{\mu}\left(k_{\lambda}^{(p)}\right)(\lambda)=\int_{\D^n}\prod_{l=1}^n\frac{\left(1-\abs{\lambda_l}^2\right)^{\frac{2}{q}}}{\abs{1-\overline{\lambda}_lw_l}^{4}}\,d\mu(w).
$$
This computation implies
\begin{eqnarray*}
\int_{\D^n}\prod_{l=1}^n\frac{\left(1-\abs{\lambda_l}^2\right)^{2}}{\abs{1-\overline{\lambda}_lw_l}^{4}}\,d\mu(w)=\ip{T_{\mu} k_\lambda^{(p)}}{k_\lambda^{(q)}}_{A^2} & \leq & \norm{T_\mu}_{\mathcal{L}\left(A^p,A^p\right)}\norm{k_\lambda^{(p)}}_{A^p}\norm{k_\lambda^{(q)}}_{A^q}\\
& \approx & \norm{T_\mu}_{\mathcal{L}\left(A^p,A^p\right)}.
\end{eqnarray*}
\end{proof}
For a Bergman--Carleson measure $\mu$, $1<p<\infty$, and for $f\in L^p(\D^n;\mu)$ define
$$
P_\mu f(z):=\int_{\D^n}f(w)\prod_{l=1}^n\frac{1}{\left(1-\overline{w}_lz_l\right)^{2}}\,d\mu(w).
$$
Based on the computations above, it is easy to see that $P_\mu$ is a bounded operator from $L^p(\D^n;\mu)$ to $A^p$ and $T_\mu=P_{\mu}\circ \imath_p$.

\begin{lm}
\label{CM-Cor}
Let $1<p<\infty$ and suppose that $\mu$ is a Bergman--Carleson measure.  Let $F\subset \D^n$ be a compact set, then
$$
\norm{T_{\mu 1_F}f}_{A^p}\lesssim \norm{T_\mu}_{\mathcal{L}\left(A^p,A^p\right)}^{\frac{1}{q}}\norm{1_F f}_{L^p(\mu)}
$$
where $q=\frac{p}{p-1}$.
\end{lm}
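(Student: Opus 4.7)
The approach is to use duality and reduce the claim to a one-line Hölder inequality combined with the Carleson embedding in the dual exponent.

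First, observe from the defining formulas that $T_{\mu 1_F} f = P_\mu(1_F f)$, so the inequality to prove is
\[
\|P_\mu(1_F f)\|_{A^p} \lesssim \|T_\mu\|_{\mathcal{L}(A^p,A^p)}^{1/q} \|1_F f\|_{L^p(\mu)}.
\]
Via the standard duality $(A^p)^* = A^q$ with pairing $\langle h,g\rangle_{A^2} = \int_{\D^n} h\overline{g}\,dv$, the left-hand side is comparable to $\sup_{\|g\|_{A^q} \leq 1} |\langle P_\mu(1_F f), g\rangle_{A^2}|$. Fubini's theorem (legitimate because $\mu$ is Bergman--Carleson, so all inner integrals converge absolutely) combined with the reproducing property of the Bergman kernel $K(z,w) = \prod_l (1-z_l\overline{w}_l)^{-2}$ applied to $g\in A^q$ gives
\[
\langle P_\mu(1_F f), g\rangle_{A^2} = \int_{\D^n} 1_F(w) f(w)\overline{g(w)}\,d\mu(w).
\]

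Hölder's inequality in $L^p(\mu)$ and $L^q(\mu)$ then bounds this pairing by $\|1_F f\|_{L^p(\mu)}\|g\|_{L^q(\mu)}$. Since the Bergman--Carleson condition is independent of the exponent (via characterizations (1) and (3) of Lemma \ref{CM}), $\mu$ is also Carleson for $A^q$ and Lemma \ref{CM} yields
\[
\|g\|_{L^q(\mu)}^q \lesssim \|T_\mu\|_{\mathcal{L}(A^p,A^p)} \|g\|_{A^q}^q,
\]
i.e., $\|g\|_{L^q(\mu)} \lesssim \|T_\mu\|_{\mathcal{L}(A^p,A^p)}^{1/q}\|g\|_{A^q}$, which produces the asymmetric power $1/q$ appearing in the statement. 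Taking the supremum over $g$ with $\|g\|_{A^q}\leq 1$ finishes the argument.

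The principal subtlety is justifying the Fubini/reproducing step for $g\in A^q$ when $q<2$, where $A^q \not\subset A^2$. This is handled by first verifying the identity for $f,g \in H^\infty$, which is dense in $A^p$ and $A^q$ respectively, and then passing to the limit using continuity of $P_\mu$ on $L^p(\mu)$ and of the Hölder/Carleson estimates; compactness of $F$ ensures $1_F f \in L^p(\mu)$ whenever $f \in A^p$, so the statement has content for the full Bergman space.
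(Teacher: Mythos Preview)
Your proof is correct and follows essentially the same route as the paper: duality $(A^p)^*\cong A^q$, the identity $\langle T_{\mu 1_F}f,g\rangle_{A^2}=\int_{\D^n}1_F f\,\overline{g}\,d\mu$ via Fubini and the reproducing property, H\"older, and the $A^q$-Carleson embedding with constant $\norm{T_\mu}_{\mathcal{L}(A^p,A^p)}^{1/q}$. The only minor difference is that the paper dispatches the Fubini/reproducing step more directly by observing that compactness of $F$ and finiteness of $\mu$ force $T_{\mu 1_F}f\in H^\infty$ for every $f\in A^p$, so no density argument is needed.
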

\begin{proof}
It is clear that $T_{1_F\mu}f$ is a bounded analytic function for any $f\in A^p$ since $F$ is compact and $\mu$ is a finite measure.  As in the proof of the previous lemma, we have
\begin{eqnarray*}
\abs{\ip{T_{\mu 1_F} f}{g}_{A^2}} & = & \abs{\int_{\D^n} 1_F(w)f(w)\overline{g(w)}\,d\mu(w)}\\
& \leq & \norm{1_F f}_{L^p\left(\D^n;\mu\right)}\norm{g}_{L^q\left(\D^n;\mu\right)}\\
&\lesssim &  \norm{T_\mu}_{\mathcal{L}\left(A^p,A^p\right)}^{\frac{1}{q}}\norm{1_F f}_{L^p\left(\D^n;\mu\right)}\norm{g}_{A^q}.
\end{eqnarray*}
Taking the supremum over $g\in A^q$ gives the desired result.
\end{proof}

\subsection{Geometric Decompositions of \texorpdfstring{$\D^n$}{the Polydisc}}

In \cite{CR}, Coifman and Rochberg demonstrated that the following decomposition of the disc exists.
\begin{lm}
\label{StandardGeo}
Given $\varrho>0$, there is a family of Borel sets $D_m\subset\D$ and points $\left\{w_m:m\in\N\right\}$ such that
\begin{itemize}
\item[(i)] $D\left(w_m,\frac{\varrho}{4}\right)\subset D_m\subset D\left(w_m,\varrho\right)$ for all $m\in\N$;
\item[(ii)] $D_m\cap D_{m'}=\emptyset$ if $m\neq m'$;
\item[(iii)] $\bigcup_{m} D_m=\D$.
\end{itemize}
\end{lm}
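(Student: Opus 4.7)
The plan is to combine a maximal packing argument with a Voronoi-type decomposition, followed by a tiebreaking step to produce genuinely disjoint Borel sets. First, I would invoke Zorn's lemma (using separability of $\D$ to keep the index set countable) to produce a maximal family $\{w_m\}_{m\in\N}\subset\D$ for which the closed hyperbolic discs $D(w_m,\varrho/4)$ are pairwise disjoint. Because these are closed discs, disjointness forces $\beta(w_m,w_{m'})>\varrho/2$ for $m\neq m'$: otherwise the hyperbolic midpoint of $w_m$ and $w_{m'}$ would lie in both discs. Maximality then forces the enlarged discs $\{D(w_m,\varrho/2)\}_{m\in\N}$ to cover $\D$, since for any $z\in\D$ either $z$ already belongs to some $D(w_m,\varrho/4)$ or $D(z,\varrho/4)$ must meet some $D(w_m,\varrho/4)$, in which case the triangle inequality gives $\beta(z,w_m)\leq\varrho/2$.

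Next, I would define the Voronoi-type cells $V_m:=\{z\in\D:\beta(z,w_m)\leq\beta(z,w_{m'})\text{ for all }m'\in\N\}$, which are Borel sets. The covering property from the previous step implies $V_m\subset D(w_m,\varrho/2)\subset D(w_m,\varrho)$. Moreover, the separation $\beta(w_m,w_{m'})>\varrho/2$ and the triangle inequality together yield $\beta(z,w_{m'})>\varrho/4\geq\beta(z,w_m)$ for every $z\in D(w_m,\varrho/4)$ and every $m'\neq m$, so such $z$ is \emph{strictly} closer to $w_m$ than to any other center.

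Finally, to upgrade the Voronoi cells (which share boundary points) to a disjoint family, I would enumerate the centers and tiebreak by index, setting $D_1:=V_1$ and $D_m:=V_m\setminus\bigcup_{m'<m}V_{m'}$ for $m\geq 2$. By construction the family $\{D_m\}$ consists of Borel sets that are pairwise disjoint and cover $\D$, giving (ii) and (iii). The upper inclusion $D_m\subset D(w_m,\varrho)$ is inherited from $V_m$, and the lower inclusion $D(w_m,\varrho/4)\subset D_m$ survives the subtraction precisely because of the strict comparison above: a point in $D(w_m,\varrho/4)$ cannot belong to any $V_{m'}$ with $m'\neq m$. The main obstacle is this tiebreaking step, and its success hinges on the \emph{strict} separation $\beta(w_m,w_{m'})>\varrho/2$ produced in the first step from the closed-disc formulation of disjointness.
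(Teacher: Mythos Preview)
Your argument is correct. The paper itself does not prove this lemma; it merely cites Coifman and Rochberg \cite{CR} for the construction. Your approach---maximal $\varrho/2$-separated net (countable by separability of $(\D,\beta)$), Voronoi cells, then index tiebreaking---is the standard one and is essentially what one finds in that reference. The one point worth flagging is presentational: Zorn's lemma gives maximality, while countability is a separate consequence of separability rather than of Zorn itself; but you clearly have this in mind. Your observation that the \emph{strict} inequality $\beta(w_m,w_{m'})>\varrho/2$ (forced by packing \emph{closed} discs of radius $\varrho/4$) is precisely what makes the tiebreaking step preserve the inner inclusion $D(w_m,\varrho/4)\subset D_m$ is exactly the crux of the construction.
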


\begin{lm}[Lemma 3.1, \cite{Sua}]
\label{SuaGeo}
There is a positive integer $N$ such that for any $\sigma>0$ there is a covering of $\D$ by Borel sets $\left\{B_j\right\}$ that satisfy:
\begin{itemize}
\item[(i)] $B_j\bigcap B_{j'}=\emptyset$ if $j\neq j'$;
\item[(ii)] Every point of $\D$ belongs to at most $N$ sets $\Omega_\sigma(B_j)=\left\{z:\rho\left(z,B_j\right)\leq\tanh\sigma\right\}$;
\item[(iii)] there is a constant $C\left(\sigma\right)>0$ such that $\textnormal{diam}_{\rho} \,B_j\leq C\left(\sigma\right)$ for all $j\in\N$.
\end{itemize}
\end{lm}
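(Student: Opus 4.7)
Proof plan:

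This is a Whitney-type covering of the hyperbolic disc $(\D,\beta)$, which is stated as Lemma 3.1 of \cite{Sua}; I would follow Su\'arez's strategy. The construction starts from a maximal $\sigma$-separated net $\{w_j\}\subset\D$ in the $\beta$-metric, produced by a greedy selection since $(\D,\beta)$ is separable. By definition the $w_j$ satisfy $\beta(w_j,w_k)\geq\sigma$ for $j\neq k$, and by maximality $\D=\bigcup_j D(w_j,\sigma)$. I would then take the $B_j$ to be the Voronoi cells
$$
B_j:=\bigl\{z\in\D:\beta(z,w_j)\leq\beta(z,w_k)\text{ for all }k\bigr\},
$$
with ties broken by a fixed ordering of the index set so that the $B_j$ partition $\D$.

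Property (i) is then automatic. For property (iii), maximality forces $B_j\subset D(w_j,\sigma)$, so $\mathrm{diam}_\beta B_j\leq 2\sigma$ and hence $\mathrm{diam}_\rho B_j\leq\tanh(2\sigma)=:C(\sigma)$, which is strictly less than one. For property (ii), if $z\in\Omega_\sigma(B_j)$ there exists $w\in B_j$ with $\beta(z,w)\leq\sigma$; combining with $w\in D(w_j,\sigma)$ gives $\beta(z,w_j)\leq 2\sigma$, so the overlap multiplicity at $z$ is bounded by $\#\{j:w_j\in D(z,2\sigma)\}$. Moreover the balls $D(w_j,\sigma/2)$ are pairwise disjoint and all lie inside $D(z,5\sigma/2)$, so the count reduces to a packing question about $\sigma/2$-disjoint centers inside a $5\sigma/2$-ball.

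The main obstacle is showing that this count is bounded by a constant $N$ independent of $\sigma$. A direct hyperbolic volume comparison $V(5\sigma/2)/V(\sigma/2)\sim\sinh^2(5\sigma/2)/\sinh^2(\sigma/2)$ fails for this purpose because $(\D,\beta)$ is not uniformly doubling at large scales. Instead, one exploits the explicit M\"obius formulas recalled in the Preliminaries: a $\beta$-ball in $\D$ is a genuine Euclidean disc, with center and radius computable from the formula for $\rho$. After translating the $\beta$-packing condition into a planar Euclidean configuration of discs whose centers are mutually separated in the Besicovitch sense, the desired universal bound follows from the Besicovitch covering theorem in $\R^2$, whose multiplicity constant depends only on the dimension. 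Carrying out this Euclideanization carefully, tracking that the separation hypothesis survives the transition, is the technical heart of the argument and the step that yields the uniform constant $N$.
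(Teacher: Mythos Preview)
The paper does not prove this lemma; it is quoted from \cite{Sua}. So the only question is whether your argument is correct, and it is not: the Voronoi-cell construction from a maximal $\sigma$-net cannot yield a bound $N$ independent of $\sigma$.

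Your reduction correctly bounds the multiplicity at $z$ by $\#\{j:w_j\in D(z,2\sigma)\}$, and you correctly note that the hyperbolic volume comparison $\sinh^2(5\sigma/2)/\sinh^2(\sigma/2)$ blows up. The trouble is that this blow-up is not an artifact of the estimate: the packing number itself grows exponentially in $\sigma$. By the hyperbolic law of cosines, two points on the circle $\{w:\beta(0,w)=3\sigma/2\}$ separated by Euclidean angle $\theta$ lie at $\beta$-distance $\sigma$ exactly when $1-\cos\theta=2\sinh^2(\sigma/2)/\sinh^2(3\sigma/2)$, so $\theta\approx 2e^{-\sigma}$ for large $\sigma$ and the circle carries about $\pi e^\sigma$ pairwise $\sigma$-separated points, all inside $D(0,2\sigma)$. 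Extend this configuration to a maximal $\sigma$-net. For each circle point $w_j$ the Voronoi cell $B_j$ contains $D(w_j,\sigma/2)$, whose nearest point to the origin sits at $\beta$-distance $3\sigma/2-\sigma/2=\sigma$; hence $\rho(0,B_j)\le\tanh\sigma$ and $0\in\Omega_\sigma(B_j)$ for every one of the $\sim e^\sigma$ indices $j$. No appeal to Besicovitch can repair this: viewed as Euclidean discs, the balls $D(w_j,\sigma/2)$ have radii of order $1-|w_j|^2\sim e^{-3\sigma}$, and there is no Besicovitch-type separation of their Euclidean centres that caps the count uniformly in $\sigma$.

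Su\'arez's construction is different in kind. Rather than hyperbolically round cells of $\beta$-diameter $2\sigma$, one uses polar boxes: annuli of hyperbolic radial width $\sigma$, each cut into angular sectors whose hyperbolic angular width is also tuned to $\sigma$ (so the sectors have $\beta$-diameter a fixed multiple of $\sigma$ but are elongated along the annulus). A ball $D(z,\sigma)$ then meets at most three annuli, and within each annulus the sectors are wide enough that the ball meets only boundedly many of them; this is what produces a universal $N$. You should consult \cite{Sua} for the precise choice of parameters.
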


Let $\sigma>0$ and let $k$ be a non-negative integer.  Let $\{B_j\}$ be the covering of the disc that satisfies the conditions of Lemma \ref{SuaGeo} with $(k+1)\sigma$ instead of $\sigma$.  For $0\leq i\leq k$ and $j\geq 1$ we write
$$
F_{0,j}=B_j\quad\textnormal{ and }\quad F_{i+1,j}=\left\{z:\rho\left(z,F_{i,j}\right)\leq\tanh\sigma\right\}.
$$
Then we have,
\begin{lm}[Corollary 3.3, \cite{Sua}]
\label{SuaGeo2}
Let $\sigma>0$ and $k$ be a non-negative integer.  For each $0\leq i\leq k$ the family of sets $\mathcal{F}_{i}=\left\{F_{i,j}: j\in\N\right\}$ forms a covering of $\D$ such that
\begin{itemize}
\item[(i)] $F_{0,j}\bigcap F_{0,j'}=\emptyset$ if $j\neq j'$;
\item[(ii)] $F_{0,j}\subset F_{1,j}\subset\cdots\subset F_{k+1,j}$ for all $j\in\N$;
\item[(iii)] $\rho\left(F_{i,j}, F_{i+1,j}^c\right)\geq\tanh\sigma$ for all $0\leq i\leq k$ and $j\in\N$;
\item[(iv)] every point of $\D$ belongs to no more than $N$ elements of $\mathcal{F}_{i}$;
\item[(v)] $\textnormal{diam}_{\rho} \,F_{i,j}\leq C\left(k,\sigma\right)$ for all $0\leq i\leq k$ and $j\in\N$.
\end{itemize}
\end{lm}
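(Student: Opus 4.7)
The plan is to work with the hyperbolic metric $\beta$ rather than $\rho$, since $\beta$ is a genuine metric and the two are related by $\rho = \tanh \beta$. In particular, the recursion $F_{i+1,j}=\{z:\rho(z,F_{i,j})\le \tanh\sigma\}$ translates into $F_{i+1,j}=\{z:\beta(z,F_{i,j})\le \sigma\}$. The core claim, proved by induction on $i$, is
\[
F_{i,j}\subset \Omega_{i\sigma}(B_j):=\{z\in\D:\beta(z,B_j)\le i\sigma\},\qquad 0\le i\le k+1.
\]
The base case is trivial since $F_{0,j}=B_j$. For the inductive step, if $z\in F_{i+1,j}$, pick $w\in F_{i,j}$ with $\beta(z,w)\le\sigma$; then by the triangle inequality and the inductive hypothesis, $\beta(z,B_j)\le \beta(z,w)+\beta(w,B_j)\le \sigma+i\sigma=(i+1)\sigma$.

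Once this containment is in hand, the remaining five items fall out essentially for free. Property (i) is inherited directly from Lemma \ref{SuaGeo}(i) applied with parameter $(k+1)\sigma$, since $F_{0,j}=B_j$. Property (ii) is immediate from the definition, as any $z\in F_{i,j}$ has $\rho(z,F_{i,j})=0\le\tanh\sigma$, placing it in $F_{i+1,j}$. Property (iii) is almost tautological: if $z\in F_{i+1,j}^c$, then $\rho(z,F_{i,j})>\tanh\sigma$, so $\rho(z,w)>\tanh\sigma$ for every $w\in F_{i,j}$, giving $\rho(F_{i,j},F_{i+1,j}^c)\ge\tanh\sigma$. The covering property follows because $\{B_j\}=\mathcal{F}_0$ covers $\D$ and the sets grow with $i$.

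Property (iv) is where the choice of $(k+1)\sigma$ in Lemma \ref{SuaGeo} pays off. For $0\le i\le k$ we have $F_{i,j}\subset \Omega_{i\sigma}(B_j)\subset \Omega_{(k+1)\sigma}(B_j)$, so if $z$ lies in $F_{i,j_1},\dots,F_{i,j_m}$, then $z$ lies in $\Omega_{(k+1)\sigma}(B_{j_1}),\dots,\Omega_{(k+1)\sigma}(B_{j_m})$, and Lemma \ref{SuaGeo}(ii) (with parameter $(k+1)\sigma$) forces $m\le N$. For (v), the containment $F_{i,j}\subset \Omega_{i\sigma}(B_j)$ together with Lemma \ref{SuaGeo}(iii) gives
\[
\mathrm{diam}_{\beta}\,F_{i,j}\le \mathrm{diam}_{\beta}\,B_j+2i\sigma \le \beta\text{-equivalent of }C((k+1)\sigma) + 2k\sigma,
\]
which is a bound depending only on $k$ and $\sigma$; translating back to $\rho$ via the monotone map $\tanh$ yields the desired $C(k,\sigma)$.

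There is no real obstacle in this proof; the only delicate point is a notational one, namely keeping the passage between the pseudohyperbolic $\rho$ and the hyperbolic $\beta$ straight so that the triangle inequality can legitimately be used in the inductive containment. The reason the argument succeeds is precisely that we applied Lemma \ref{SuaGeo} with the inflated parameter $(k+1)\sigma$ at the start: this buys enough room for the $k+1$ successive $\sigma$-dilations to stay inside a single controlled $\Omega$-neighborhood of $B_j$, preserving both the finite-overlap constant $N$ and the uniform diameter bound.
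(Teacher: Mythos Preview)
Your proof is correct. The paper itself does not prove this lemma; it simply cites it as \cite{Sua}*{Corollary 3.3} and records the construction (apply Lemma~\ref{SuaGeo} with parameter $(k+1)\sigma$ and then iterate $F_{i+1,j}=\{z:\rho(z,F_{i,j})\le\tanh\sigma\}$). Your argument supplies exactly the details that make that construction work: the passage to the hyperbolic metric $\beta$ so that the triangle inequality is available, the inductive containment $F_{i,j}\subset\{z:\beta(z,B_j)\le i\sigma\}$, and the observation that this keeps each $F_{i,j}$ inside $\Omega_{(k+1)\sigma}(B_j)$, which is what transfers the finite-overlap constant $N$ and the diameter bound from Lemma~\ref{SuaGeo}. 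This is the standard route and almost certainly matches Su\'arez's original proof. One cosmetic point: when you ``pick $w\in F_{i,j}$ with $\beta(z,w)\le\sigma$'' the infimum need not be attained, but taking $w$ with $\beta(z,w)<\sigma+\epsilon$ and letting $\epsilon\to 0$ fixes this without changing anything.
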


We now need to extend some of these constructions to the polydisc $\D^n$.  The interested reader can see where this is done for more general bounded symmetric domains by Coifman and Rochberg in \cite{CR}.  For completeness, we explicitly provide the construction in the case of the polydisc.
\begin{lm}
\label{StandardGeo_Polydisc}
Given $\varrho>0$, there is a family of Borel sets $D_{\vec{m}}\subset\D^n$ and points $\left\{w_{\vec{m}}:\vec{m}\in\N^n\right\}$ such that
\begin{itemize}
\item[(i)] $D\left(w_{\vec{m}},\frac{\varrho}{4}\right)\subset D_{\vec{m}}\subset D\left(w_{\vec{m}},\varrho\right)$ for all $\vec{m}\in\N^n$;
\item[(ii)] $D_{\vec{m}}\bigcap D_{\vec{m}'}=\emptyset$ if $\vec{m}\neq \vec{m}'$;
\item[(iii)] $\bigcup_{\vec{m}} D_{\vec{m}}=\D^n$.
\end{itemize}
\end{lm}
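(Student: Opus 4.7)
The plan is to obtain the polydisc decomposition by taking products of the one-dimensional decomposition from Lemma \ref{StandardGeo}. This is the natural approach because the hyperbolic disc on $\D^n$ was defined as a product: $D(z,r) = \prod_{l=1}^n D(z_l, r)$, so the desired inclusions in (i) have a coordinatewise character.

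Explicitly, I would apply Lemma \ref{StandardGeo} with the same $\varrho$ to produce a one-variable family of Borel sets $\{D_m\}_{m \in \N}$ and centers $\{w_m\}_{m\in\N}$ satisfying properties (i)--(iii) in the disc. Then for each multi-index $\vec{m} = (m_1, \ldots, m_n) \in \N^n$, I would define
\[
w_{\vec{m}} := (w_{m_1}, \ldots, w_{m_n}) \in \D^n, \qquad D_{\vec{m}} := D_{m_1} \times D_{m_2} \times \cdots \times D_{m_n}.
\]
Each $D_{\vec{m}}$ is Borel in $\D^n$ since it is a finite product of Borel sets.

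Verifying the three properties is then routine. For (i), I use that in each coordinate $D(w_{m_l}, \varrho/4) \subset D_{m_l} \subset D(w_{m_l}, \varrho)$, and then take the product over $l = 1, \ldots, n$; by the definition of the polydisc hyperbolic disc, this yields exactly $D(w_{\vec{m}}, \varrho/4) \subset D_{\vec{m}} \subset D(w_{\vec{m}}, \varrho)$. For (ii), if $\vec{m} \neq \vec{m}'$ then there exists some coordinate $l_0$ with $m_{l_0} \neq m'_{l_0}$, and disjointness of $D_{m_{l_0}}$ and $D_{m'_{l_0}}$ in $\D$ forces disjointness of the products. For (iii), I use the fact that products of unions equal unions of products: $\bigcup_{\vec{m} \in \N^n} D_{\vec{m}} = \prod_{l=1}^n \bigl(\bigcup_{m \in \N} D_m\bigr) = \D^n$.

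There is no real obstacle here; the proof is a direct tensorization. The main thing to be careful about is recording precisely that the product definition of $D(z,r)$ in $\D^n$ makes the inclusions in (i) transparent, so I would take one or two sentences to cite the definition of $D(z,r) = \prod_l D(z_l,r)$ before asserting the product inclusion. Everything else is bookkeeping with multi-indices.
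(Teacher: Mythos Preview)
Your proposal is correct and follows essentially the same approach as the paper: both define $D_{\vec{m}}=\prod_{l=1}^n D_{m_l}$ and $w_{\vec{m}}=(w_{m_1},\ldots,w_{m_n})$ from the one-variable data of Lemma~\ref{StandardGeo}, then verify (i)--(iii) coordinatewise exactly as you describe. The only cosmetic difference is that for (iii) the paper checks the two inclusions by hand while you invoke the identity $\bigcup_{\vec{m}}\prod_l D_{m_l}=\prod_l\bigcup_m D_m$; these are the same argument.
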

\begin{proof}
For $\vec{m}\in\N^n$ set $D_{\vec{m}}=\prod_{l=1}^nD_{m_l}$ and $w_{\vec{m}}=\left(w_{m_1},\ldots,w_{m_n}\right)$, where $D_{m_l}$ is the Borel set and $w_{m_l}$ is the point guaranteed by Lemma \ref{StandardGeo}.  For $r>0$, set $D\left(w_{\vec{m}},r\right)=\prod_{l=1}^n D\left(w_{m_{l}},r\right)$.  It is then easy to show that properties (i)-(iii) hold for these sets using Lemma \ref{StandardGeo}.

First, for (i), by Lemma \ref{StandardGeo}, we have that for each $m$ that $D\left(w_m,\frac{\varrho}{4}\right)\subset D_{m}\subset D\left(w_m,\varrho\right)$.  From this it is immediate that we have $D\left(w_{\vec{m}},\frac{\varrho}{4}\right)\subset D_{\vec{m}}\subset D\left(w_{\vec{m}},\varrho\right)$ for all $\vec{m}$.

Next, for (ii), suppose that they do not have empty intersection in general.  Then there exists $\vec{m}\neq\vec{m}'$ such that $D_{\vec{m}}\cap D_{\vec{m}'}\neq\emptyset$.  Let $z\in D_{\vec{m}}\cap D_{\vec{m}'}$, and so $z_l\in D_{m_l}\cap D_{m_l'}$ for all $1\leq l\leq n$.  However, $\vec{m}\neq\vec{m}'$ and so there is an index $l_0$ such that $m_{l_0}\neq m_{l_0}'$.  By Lemma \ref{StandardGeo} we have that $D_{m_{l_0}}\cap D_{m_{l_0}'}=\emptyset$, and so our supposition has lead to a contradiction.  Thus, $D_{\vec{m}}\cap D_{\vec{m}'}=\emptyset$ if $\vec{m}\neq \vec{m}'$ as claimed.

Finally, for (iii), we clearly have that $\bigcup_{\vec{m}} D_{\vec{m}}\subset\D^n$.  Let $z\in\D^n$, then for each $z_l\in\D$, $1\leq l\leq n$, we have a set $D_{m_l}$ such that $z_l\in D_{m_l}$.  But, then we have that $z\in D_{\vec{m}}$ for the appropriate choice of $\vec{m}$ (corresponding to the $z$), and so $\D^n\subset \bigcup_{\vec{m}} D_{\vec{m}}$.
\end{proof}

\begin{rem}
\label{Useful2}
We remark that it is easy to see that when the radius $\varrho$ is fixed for $w\in D_{\vec{m}}$, then we have that $\prod_{l=1}^n\left(1-\abs{w_l}^2\right)\approx\prod_{l=1}^{n}\left(1-\abs{w_{m_l}}^2\right)$ and $\prod_{l=1}^n\abs{1-\overline{z}_lw_l}\approx \prod_{l=1}^n\abs{1-\overline{z}w_{m_l}}$ uniformly in $z\in\D^n$.
\end{rem}

We now take the sets from Lemma \ref{SuaGeo2} to construct important sets for the remainder of the paper.  Let $\vec{j}=(j_1,\ldots, j_n)\in \N^n$.  On the polydisc $\D^n$, for $0\leq i\leq k$, we form the sets
$$
F_{i,\vec{j}}:=\prod_{l=1}^n F_{i,j_l}.
$$
Each $F_{i,\vec{j}}$ is then the product of the sets $F_{i,j_l}$ coming from each component of the polydisc.  We then have the following Corollary.

\begin{lm}
\label{SuaGeo2_Poly}
Let $\sigma>0$ and $k$ be a non-negative integer.  For each $0\leq i\leq k$ the family of sets $\mathcal{F}_{i}=\left\{F_{i,\vec{j}}: \vec{j}\in\N^n\right\}$ forms a covering of $\D^n$ such that
\begin{itemize}
\item[(i)] $F_{0,\vec{j}}\bigcap F_{0,\vec{j}'}=\emptyset$ if $\vec{j}\neq \vec{j}'$;
\item[(ii)] $F_{0,\vec{j}}\subset F_{1,\vec{j}}\subset\cdots\subset F_{k+1,\vec{j}}$ for all $\vec{j}\in\N^n$;
\item[(iii)] $\rho\left(F_{i,\vec{j}}, F_{i+1,\vec{j}}^c\right)\geq\tanh\sigma$ for all $0\leq i\leq k$ and $\vec{j}\in\N^n$;
\item[(iv)] every point of $\D^n$ belongs to no more than $N(n)$ elements of $\mathcal{F}_{i}$;
\item[(v)] $\textnormal{diam}_{\rho} \,F_{i,\vec{j}}\leq C\left(k,\sigma\right)$ for all $0\leq i\leq k$ and $\vec{j}\in\N^n$.
\end{itemize}
\end{lm}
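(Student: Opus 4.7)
The plan is to mimic the coordinate-wise reduction already used in the proof of Lemma \ref{StandardGeo_Polydisc}: since $F_{i,\vec j} = \prod_{l=1}^n F_{i,j_l}$ is a product of the one-variable Su\'arez sets from Lemma \ref{SuaGeo2}, each of the five properties should follow by applying the corresponding one-variable statement in each slot, combined with the definition $\rho(z,w)=\max_{1\le l\le n}\rho(z_l,w_l)$ on the polydisc.

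For the easy items I would argue as follows. For (i), if $F_{0,\vec j}\cap F_{0,\vec j'}\ne\emptyset$ with $\vec j\ne\vec j'$, pick some coordinate $l_0$ where $j_{l_0}\ne j_{l_0}'$ and derive a contradiction from Lemma \ref{SuaGeo2}(i) applied in the $l_0$-th factor; this is exactly the argument used for disjointness in Lemma \ref{StandardGeo_Polydisc}. Property (ii) is immediate since a product of nested chains is a nested chain. For (v), note that $\operatorname{diam}_\rho F_{i,\vec j}=\max_l\operatorname{diam}_\rho F_{i,j_l}\le C(k,\sigma)$ because the one-variable diameter bound from Lemma \ref{SuaGeo2}(v) is uniform in $j$. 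For (iv), a point $z\in\D^n$ lies in $F_{i,\vec j}$ iff $z_l\in F_{i,j_l}$ for each $l$; since each $z_l$ lies in at most $N$ of the one-variable sets, $z$ lies in at most $N^n$ of the $F_{i,\vec j}$, so the constant is $N(n):=N^n$.

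The one point that requires a short extra argument is (iii), because the complement of a product is not a product. Here I would use the max-characterization of $\rho$ on $\D^n$: if $w\in F_{i+1,\vec j}^c$, then by definition of the product there exists an index $l_0$ with $w_{l_0}\notin F_{i+1,j_{l_0}}$. For any $z\in F_{i,\vec j}$ we have $z_{l_0}\in F_{i,j_{l_0}}$, so by Lemma \ref{SuaGeo2}(iii) applied to the $l_0$-th factor, $\rho(z_{l_0},w_{l_0})\ge\tanh\sigma$, and hence $\rho(z,w)=\max_l\rho(z_l,w_l)\ge\tanh\sigma$. Taking the infimum over $z,w$ gives the claim.

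None of the steps is a genuine obstacle; this is a routine product construction, and the only place where one cannot blindly take products is (iii), where the max-metric on $\D^n$ does the job. I would also record at the end that $\mathcal F_i$ indeed covers $\D^n$, since each coordinate of any $z\in\D^n$ is captured by some one-variable $F_{i,j_l}$, yielding a $\vec j$ with $z\in F_{i,\vec j}$.
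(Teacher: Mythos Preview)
Your proof is correct and follows the same coordinate-wise reduction to Lemma \ref{SuaGeo2} that the paper uses. Your argument for (iii) is in fact slightly cleaner than the paper's: rather than decomposing $F_{i+1,\vec{j}}^c$ into $2^n-1$ product pieces and computing the distance to each, you work pointwise and use the max-metric directly, which is more efficient and avoids the cumbersome notation the paper mentions.
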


\begin{proof}
All of these properties follow essentially from Lemma \ref{SuaGeo2}.  For (i), we proceed by contradiction.  If $F_{0,\vec{j}}\cap F_{0,\vec{j}'}\neq\emptyset$, then there is a $z$ belonging to both sets.  This implies that $z_l\in F_{0, j_l}\cap F_{0,j_l'}$ for all $1\leq l\leq n$.  However, we have that $\vec{j}\neq\vec{j}'$, and so there is an index $l_0$ such that $j_{l_0}\neq j_{l_0}'$.  This then implies that $F_{0,j_{l_0}}\cap F_{0,j_{l_0}'}\neq\emptyset$, which contradicts (i) from Lemma \ref{SuaGeo2}.  

For (ii), suppose that $z\in F_{i,\vec{j}}$ for some $0\leq i\leq k$.  Then we have $z_l\in F_{i,j_l}$ for $1\leq l\leq n$.  However, by Lemma \ref{SuaGeo2}, we have $F_{i,j_l}\subset F_{i+1,j_l}$ for $0\leq i\leq k$ and for $1\leq l\leq n$.  This gives $z\in F_{i+1,\vec{j}}$, and so $F_{i,\vec{j}}\subset F_{i+1,\vec{j}}$ for $0\leq i\leq k$ and for $\vec{j}\in\N^n$.

For (iii), we give the main idea since the notation becomes cumbersome.  Focus on the case of $n=2$, i.e., the bidisc.  Note that $F_{i+1,\vec{j}}^c=F_{i+1,j_1}^c\times F_{i+1,j_2}^c\cup F_{i+1,j_1}^c\times F_{i+1,j_2}\cup F_{i+1,j_1}\times F_{i+1,j_2}^c$, where this is a disjoint decomposition.   We check the distance between $F_{i,\vec{j}}$ and each of these components.  We compute then
\begin{eqnarray*}
\rho\left(F_{i,\vec{j}}, F_{i+1,j_1}^c\times F_{i+1,j_2}\right) & = & \max\left\{\rho\left(F_{i, j_1}, F_{i+1,j_1}^c\right),\rho\left(F_{i, j_2}, F_{i+1,j_2}\right)\right\}\\
& = & \rho\left(F_{i, j_1}, F_{i+1,j_1}^c\right)\geq\tanh\sigma.
\end{eqnarray*}
Here the last inequality follows from Lemma \ref{SuaGeo2}.  The distance from the other two components are computed identically, with the same lower bound obtained.  The case of general $n$ is similar, in that one will always be left with computing $\rho\left(F_{i, j_l}, F_{i+1,j_l}^c\right)$ for some $1\leq l\leq n$, which is always big by Lemma \ref{SuaGeo2}.

Now for (iv) we have that each point of the disc can belong to no more than $N$ elements of the sets $F_{i,j}$.  Thus, we have that each point of $\D^n$ can belong to no more than $N^n$ sets.  Finally, for (v), we have
$$
\textnormal{diam}_{\rho}\, F_{i,\vec{j}}=\max_{1\leq l\leq n} \textnormal{diam}_{\rho}\, F_{i,j_l}\leq C\left(k,\sigma\right).
$$
\end{proof}

\subsection{Technical Lemmas}

We next turn to proving the key technical estimates that will be useful when approximating the operators.  Key to these estimates is the following well known lemma.

\begin{lm}[Schur's Lemma]
\label{Schur}
Let $(X,\mu)$ and $(X,\nu)$ be measure spaces, $K(x,y)$ a non-negative measurable function on $X\times X$, $1<p<\infty$ and $\frac{1}{p}+\frac{1}{q}=1$.  If $h$ is a positive function on $X$ that is measurable with respect to $\mu$ and $\nu$ and $C_p$ and $C_q$ are positive constants with
\begin{itemize}
\item[] $$\int_{X} K(x,y) h(y)^q\,d\nu(y)\leq C_q h(x)^q\textnormal{ for } \mu\textnormal{-almost every } x;$$
\item[] $$\int_{X} K(x,y) h(x)^p\,d\mu(x)\leq C_p h(y)^p\textnormal{ for } \nu\textnormal{-almost every } y,$$
\end{itemize}
then $Tf(x)=\int_{X} K(x,y) f(y)\,d\nu(y)$ defines a bounded operator $T:L^p\left(X;\nu\right)\to L^p\left(X;\mu\right)$ with $\norm{T}_{L^p\left(X;\nu\right)\to L^p\left(X;\mu\right)}\leq C_q^{\frac{1}{q}}C_p^{\frac{1}{p}}$.
\end{lm}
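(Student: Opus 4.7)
The plan is to use the classical Hölder-plus-Fubini argument that gives Schur's test. The key trick is to split the kernel $K(x,y)$ as $K(x,y)^{1/q}\cdot K(x,y)^{1/p}$ and insert a factor of $h(y)/h(y)$ before applying Hölder's inequality in the $y$-variable, so that one of the two resulting factors is directly controlled by the first hypothesis.

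More precisely, for $f\in L^p(X;\nu)$ and $\mu$-almost every $x$, I would write
\begin{equation*}
Tf(x)=\int_{X}\bigl(K(x,y)^{1/q}h(y)\bigr)\bigl(K(x,y)^{1/p}h(y)^{-1}f(y)\bigr)\,d\nu(y),
\end{equation*}
and then apply Hölder with exponents $q$ and $p$ to obtain
\begin{equation*}
\abs{Tf(x)}^{p}\le\left(\int_{X}K(x,y)h(y)^{q}\,d\nu(y)\right)^{p/q}\int_{X}K(x,y)h(y)^{-p}\abs{f(y)}^{p}\,d\nu(y).
\end{equation*}
The first hypothesis then bounds the parenthesised factor by $C_{q}^{p/q}h(x)^{p}$.

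Next I would integrate this inequality against $d\mu(x)$ and swap the order of integration by Fubini–Tonelli (legal since the integrand is non-negative), producing
\begin{equation*}
\int_{X}\abs{Tf(x)}^{p}\,d\mu(x)\le C_{q}^{p/q}\int_{X}h(y)^{-p}\abs{f(y)}^{p}\left(\int_{X}K(x,y)h(x)^{p}\,d\mu(x)\right)d\nu(y).
\end{equation*}
The inner integral in $x$ is now exactly what the second hypothesis controls, bounding it by $C_{p}h(y)^{p}$, and the factors of $h(y)^{\pm p}$ cancel. The result is $\norm{Tf}_{L^{p}(\mu)}^{p}\le C_{q}^{p/q}C_{p}\norm{f}_{L^{p}(\nu)}^{p}$, which is the claimed bound after taking $p$-th roots.

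There is essentially no obstacle here beyond bookkeeping: the only subtlety is making sure the two hypotheses are used against the two variables in the right order, and justifying Fubini, which is automatic from non-negativity. One should also verify the inequality initially on non-negative $f$ and then extend to general $f\in L^{p}(X;\nu)$ by the triangle inequality $\abs{Tf}\le T\abs{f}$, but this is routine.
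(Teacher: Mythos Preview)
Your proof is correct and is exactly the standard H\"older-plus-Fubini argument for Schur's test. The paper does not actually supply a proof of this lemma; it is stated as a ``well known'' result and used as a black box, so there is nothing to compare against beyond noting that your argument is the canonical one.
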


\begin{lm}
\label{Tech1}
Let $1<p<\infty$ and $\mu$ be a Bergman--Carleson measure.  Suppose that $F_j, K_j\subset\D^n$ are Borel sets such that $\{F_j\}$ are pairwise disjoint and $\rho\left(F_j,K_j\right)>\tanh\sigma\geq \tanh1$ for all $j$.  If $0<\gamma<\min\left\{\frac{1}{2p},\frac{p-1}{p}\right\}$, then
\begin{equation}
\label{Tech-Est1}
\int_{\D^n}\sum_{j}1_{F_j}(z) 1_{K_j}(w)\prod_{l=1}^n\frac{\left(1-\abs{w_l}^2\right)^{-\frac{1}{p}}}{\abs{1-\overline{z}_lw_l}^{2}}\,d\mu(w)\lesssim \norm{T_\mu}_{\mathcal{L}\left(A^p, A^p\right)} \left(1-\delta^{2}\right)^{\gamma}\prod_{l=1}^n\left(1-\abs{z_l}^2\right)^{-\frac{1}{p}}
\end{equation}
where $\delta=\tanh\frac{\sigma}{2}$ and the implied constants depend on $n$ and $p$.
\end{lm}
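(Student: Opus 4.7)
The plan is to prove this pointwise bound in two steps: first, use the separation $\rho(z,w)\geq\tanh\sigma$ on $F_j\times K_j$ to extract a small factor $(1-\delta^2)^\gamma$ from the kernel; second, estimate the remaining integral uniformly in $z$ by discretizing $\D^n$ and invoking the Bergman--Carleson property of $\mu$.

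Since the sets $F_j$ are pairwise disjoint, for each fixed $z\in\D^n$ at most one index $j$ contributes, so the left side reduces, for $z\in F_j$, to $\int_{K_j}\prod_l(1-\abs{w_l}^2)^{-1/p}\abs{1-\overline{z}_l w_l}^{-2}\,d\mu(w)$, and it vanishes otherwise. For $z\in F_j$ and $w\in K_j$ one has $\rho(z,w)\geq\tanh\sigma$, so some coordinate $l_0$ satisfies $\rho(z_{l_0},w_{l_0})\geq\tanh\sigma$. Using the trivial bound $1-\rho(z_l,w_l)^2\leq 1$ in the remaining coordinates together with the polydisc identity $1-\rho(z_l,w_l)^2=(1-\abs{z_l}^2)(1-\abs{w_l}^2)/\abs{1-\overline{z}_l w_l}^2$, this gives
\[
\prod_{l=1}^n\frac{(1-\abs{z_l}^2)(1-\abs{w_l}^2)}{\abs{1-\overline{z}_l w_l}^2}\;\leq\;1-\tanh^2\sigma\;\leq\;1-\delta^2,
\]
the second step because $\delta=\tanh(\sigma/2)\leq\tanh\sigma$. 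Raising to the $\gamma$-th power and solving for $\prod_l\abs{1-\overline{z}_l w_l}^{-2\gamma}$ produces the factorization
\[
\prod_l\frac{(1-\abs{w_l}^2)^{-1/p}}{\abs{1-\overline{z}_l w_l}^{2}}\;\leq\;(1-\delta^2)^\gamma\prod_l(1-\abs{z_l}^2)^{-\gamma}\prod_l\frac{(1-\abs{w_l}^2)^{-1/p-\gamma}}{\abs{1-\overline{z}_l w_l}^{2(1-\gamma)}}.
\]

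It then suffices to prove the uniform auxiliary estimate
\[
\int_{\D^n}\prod_l\frac{(1-\abs{w_l}^2)^{-1/p-\gamma}}{\abs{1-\overline{z}_l w_l}^{2(1-\gamma)}}\,d\mu(w)\;\lesssim\;\norm{T_\mu}_{\mathcal{L}(A^p,A^p)}\prod_l(1-\abs{z_l}^2)^{\gamma-1/p}.
\]
To do this I would decompose $\D^n=\bigsqcup_{\vec m}D_{\vec m}$ via Lemma \ref{StandardGeo_Polydisc} with a fixed radius $\varrho$. By Remark \ref{Useful2}, on each tile both $(1-\abs{w_l}^2)$ and $\abs{1-\overline{z}_l w_l}$ are comparable, uniformly in $z$ and $\vec m$, to their values at the center $w_{m_l}$, so the integrand is essentially constant on $D_{\vec m}$. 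The Bergman--Carleson characterization (Lemma \ref{CM}(3)) gives $\mu(D_{\vec m})\lesssim\norm{T_\mu}\,v(D_{\vec m})\approx\norm{T_\mu}\prod_l(1-\abs{w_{m_l}}^2)^2$; substituting produces a discrete sum which, again by slow variation, is comparable to $\norm{T_\mu}\prod_l F_{2(1-\gamma),-1/p-\gamma}(z_l)$. The hypotheses on $\gamma$ are exactly what Lemma \ref{Growth} requires: $\gamma<(p-1)/p$ ensures $t=-1/p-\gamma>-1$, and $\gamma<1/(2p)$ in particular gives $\gamma<1/p$, whence $s=2(1-\gamma)>2+t$; Lemma \ref{Growth} then yields $F_{2(1-\gamma),-1/p-\gamma}(z_l)\approx(1-\abs{z_l}^2)^{\gamma-1/p}$ uniformly in $z_l\in\D$.

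Combining the two estimates yields the asserted bound. The principal technical difficulty lies in the Carleson-to-Lebesgue transfer: it is essential that Remark \ref{Useful2} holds uniformly in both $z$ and $\vec m$, so that the comparisons $\sum_{\vec m}F(w_{\vec m})\mu(D_{\vec m})\approx\int F\,d\mu$ and $\sum_{\vec m}F(w_{\vec m})v(D_{\vec m})\approx\int F\,dv$ hold with constants depending only on $\varrho$, $n$ and $p$; once these are in hand, what remains is a Forelli--Rudin type integral computation via Lemma \ref{Growth}.
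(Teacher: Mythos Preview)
Your argument is correct and takes a genuinely different route from the paper. The paper first passes from $\mu$ to $v$ via the Coifman--Rochberg tiles (carrying along the domain restriction $D(z,\sigma)^c$, with a small adjustment to $D(z,\tfrac{\tanh\sigma}{2})^c$ to accommodate tile diameters), then performs the change of variables $w\mapsto\varphi_z(w)$ to reduce to an integral over $\D^n\setminus\delta\D^n$, and finally splits this complement into $2^n-1$ product pieces, applying H\"older on each ``outer'' coordinate with an auxiliary exponent $a$ satisfying $1<a<p$ and $a(2-1/p)<2$; the small factor $(1-\delta^2)^{1/a'}$ comes from the volume of the annulus $\{|w_l|>\delta\}$.

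You instead extract the smallness \emph{before} discretizing, by the pointwise algebraic identity $\prod_l(1-\rho(z_l,w_l)^2)=\prod_l(1-|z_l|^2)(1-|w_l|^2)|1-\bar z_l w_l|^{-2}$: since one coordinate must have $\rho\geq\tanh\sigma$, the product is $\leq 1-\tanh^2\sigma\leq 1-\delta^2$, and raising to the $\gamma$-th power shifts exponents in the kernel. This sidesteps the change of variables, the tile--annulus compatibility argument, the $2^n-1$ splitting, and the auxiliary H\"older step; what remains is a single Forelli--Rudin integral over all of $\D^n$, handled directly by Lemma~\ref{Growth}. Your approach is cleaner and in fact works under the weaker hypothesis $\gamma<\min\{1/p,(p-1)/p\}$ (you only use $\gamma<1/(2p)$ to infer $\gamma<1/p$ for the condition $s>2+t$). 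The paper's route, on the other hand, keeps the Carleson transfer localized to the ``far'' region $D(z,\sigma)^c$, which is the same device used elsewhere in the paper; but for this lemma your factorization is more efficient.
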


\begin{proof}
By Lemma \ref{StandardGeo_Polydisc}, with $\varrho=\tanh\frac{1}{10}$ there is a sequence of points $\{w_{\vec{m}}\}$ and Borel sets $D_{\vec{m}}$. Standard computations show that there is a constant depending on the dimension and $p$ such that
\begin{equation}
\label{Geo-Obs}
\prod_{l=1}^n\frac{(1-\abs{w_l}^2)^{-\frac{1}{p}}}{\abs{1-\overline{z}_lw_l}^{2}}\approx \prod_{l=1}^n\frac{(1-\abs{w_{m_l}}^2)^{-\frac{1}{p}}}{\abs{1-\overline{z}_lw_{m_l}}^{2}}
\end{equation}
for all $w\in D_{\vec{m}}$ and $z\in \D^n$.  Now by the Bergman--Carleson measure condition, we have a constant $C$ such that
\begin{equation}
\label{CM-Obs}
\mu\left(D_{\vec{m}}\right)\lesssim \norm{T_\mu}_{\mathcal{L}\left(A^p, A^p\right)}v\left(D_{\vec{m}}\right).
\end{equation}
If $z\in F_j$ and $w\in K_j$, with $\rho\left(z,w\right)>\tanh\sigma$, then $K_j\subset \D^n\setminus D\left(z,\tanh\sigma\right)$ and
$$
\sum_{j} 1_{F_j}(z) 1_{K_j}(w)\leq \sum_{j} 1_{F_j}(z) 1_{\D^n\setminus D\left(z,\tanh\sigma\right)}(w).
$$
For simplicity, in the above display we write for $r>0$, $D(z,r)^{c}:=\D^n\setminus D\left(z,\tanh r\right)$ and going forward in the proof of the lemma, we let $\phi\left(z,w\right):=\prod_{l=1}^n\frac{(1-\abs{w_l}^2)^{-\frac{1}{p}}}{\abs{1-\overline{z}_lw_l}^{2}}$.  Then we have that the integral in \eqref{Tech-Est1} is controlled by
\begin{eqnarray*}
\int_{\D^n}\sum_{j}1_{F_j}(z) 1_{K_j}(w)\prod_{l=1}^n\frac{\left(1-\abs{w_l}^2\right)^{-\frac{1}{p}}}{\abs{1-\overline{z}_lw_l}^{2}}\,d\mu(w) & \leq & \sum_{j} 1_{F_j}(z)\int_{\D^n} 1_{D\left(z,\sigma\right)^{c}}(w) \phi\left(z,w\right)\,d\mu(w)\\
 & := & \sum_{j} 1_{F_j}(z) I_z.
\end{eqnarray*}
Since the sets $\left\{F_j\right\}$ are disjoint, it is enough to prove the desired estimate on each $I_z$.  We now estimate each integral $I_z$ appearing above.  First note,
\begin{eqnarray*}
I_z:=\int_{\D^n} 1_{D\left(z,\sigma\right)^c}(w)\phi\left(z,w\right)\, d\mu(w) & = & \sum_{\vec{m}}\int_{D_{\vec{m}}} 1_{D(z,\sigma)^c}(w)\phi\left(z,w\right)\,d\mu(w)\\
 & \leq & \sum_{D_{\vec{m}}\cap D(z,\sigma)^{c}\neq\emptyset} \int_{D_{\vec{m}}}\phi\left(z,w\right)\,d\mu(w)\\
 & \approx & \sum_{D_j\cap D(z,\sigma)^{c}\neq\emptyset} \phi\left(z,w_{\vec{m}}\right)\int_{D_{\vec{m}}}\,d\mu(w)\\
 & \lesssim & \norm{T_\mu}_{\mathcal{L}\left(A^p, A^p\right)} \sum_{D_{\vec{m}}\cap D(z,\sigma)^{c}\neq\emptyset} \phi(z,w_{\vec{m}})\int_{D_{\vec{m}}}\,dv(w)\\
 & \approx & \norm{T_\mu}_{\mathcal{L}\left(A^p, A^p\right)} \sum_{D_{\vec{m}}\cap D(z,\sigma)^{c}\neq\emptyset} \int_{D_{\vec{m}}}\phi\left(z,w\right)\,dv(w).
\end{eqnarray*}
In the above estimates we used \eqref{Geo-Obs} and \eqref{CM-Obs}.

If $D_{\vec{m}}\cap D\left(z,\sigma\right)^{c}\neq\emptyset$ and $w\in D_{\vec{m}}$, then
$\rho\left(w,D\left(z,\sigma\right)^c\right)\leq\textnormal{diam}_{\rho} \,D_{\vec{m}}\leq 2\varrho=2\tanh\frac{1}{10}\approx\frac{1}{5}$ and since we have
$$
\rho\left(D\left(z,\frac{\tanh\sigma}{2}\right), D\left(z,\sigma\right)^{c}\right)=\frac{\tanh\sigma}{2}\approx\tanh\frac{\sigma}{2}\geq\frac{1}{2}
$$
we have that $D_{\vec{m}}\cap D\left(z,\frac{\tanh\sigma}{2}\right)=\emptyset$ whenever $D_{\vec{m}}\cap D\left(z,\sigma\right)^{c}\neq\emptyset$.
And so we have,
\begin{eqnarray*}
I_z & \lesssim & \norm{T_\mu}_{\mathcal{L}\left(A^p, A^p\right)} \sum_{D_{\vec{m}}\cap D(z,\sigma)^{c}\neq\emptyset}
\int_{D_{\vec{m}}}\phi\left(z,w\right)\,dv(w)\\
 & = & \norm{T_\mu}_{\mathcal{L}\left(A^p, A^p\right)}
\int_{\D^n}1_{D\left(z,\frac{\tanh\sigma}{2}\right)^c}(w)\phi\left(z,w\right)\,dv(w).
\end{eqnarray*}
Thus, continuing the estimate, we have
\begin{eqnarray*}
I_z  & \lesssim &  \norm{T_\mu}_{\mathcal{L}\left(A^p, A^p\right)} \int_{\D^n}1_{D\left(z,\tanh\frac{\sigma}{2}\right)^c}(w)\phi\left(z,w\right)\,dv(w)\\
& = &  \norm{T_\mu}_{\mathcal{L}\left(A^p, A^p\right)} \int_{\D^n}1_{D\left(z,\tanh\frac{\sigma}{2}\right)^c}(w)\prod_{l=1}^n\frac{\left(1-\abs{w_l}^2\right)^{-\frac{1}{p}}}{\abs{1-\overline{z}_lw_l}^{2}}\,dv(w)\\
& = &  \norm{T_\mu}_{\mathcal{L}\left(A^p, A^p\right)} \int_{\D^n}1_{D\left(0,\delta\right)^c}(w)\prod_{l=1}^n \frac{\left(1-\abs{w_l}^2\right)^{-\frac{1}{p}}\left(1-\abs{z_l}^2\right)^{-\frac{1}{p}}}{\abs{1-\overline{z}_lw_l}^{2-\frac{2}{p}}}\,dv(w).
\end{eqnarray*}
Here we have used the change of variable $w'=\varphi_z(w)$ and an obvious computation.  We are also letting $D\left(0,\delta\right)^c=\D^n\setminus D\left(0,\delta\right)$.  To complete the lemma, it suffices to prove
\begin{equation}
\label{LastEstimate}
\int_{\D^n}1_{D\left(0,\delta\right)^c}(w)\prod_{l=1}^n \frac{\left(1-\abs{w_l}^2\right)^{-\frac{1}{p}}}{\abs{1-\overline{z}_lw_l}^{2-\frac{2}{p}}}\,dv(w)\lesssim  \left(1-\delta^{2}\right)^{\gamma},
\end{equation}
with implied constant depending on the dimension and $p$.  Indeed, once we have \eqref{LastEstimate} we then easily conclude 
$$
\int_{\D^n}\sum_{j}1_{F_j}(z) 1_{K_j}(w)\prod_{l=1}^n\frac{\left(1-\abs{w_l}^2\right)^{-\frac{1}{p}}}{\abs{1-\overline{z}_lw_l}^{2}}\,d\mu(w)\lesssim \norm{T_\mu}_{\mathcal{L}\left(A^p, A^p\right)}\left(1-\delta^{2}\right)^{\gamma}\prod_{l=1}^n\left(1-\abs{z_l}^2\right)^{-\frac{1}{p}}
$$
completing the proof of the lemma.

Turning now to \eqref{LastEstimate}, it is easy to see that $D\left(0,\delta\right)^c$ set splits into $2^{n}-1$ sets of the form $\prod_{l=1}^n D\left(0,\delta\right)^{\tau_l}$, where $\tau_l\in\{c,o\}$, with $c$ denoting ``taking the complement'' and $o$ denoting ``taking the original set.''  Furthermore, in these decompositions, we have for at least one $l$ from $1\leq l\leq n$ that $\tau_l=c$.  For each of these components we can obtain a sufficient estimate.  

Before continuing the estimate of \eqref{LastEstimate}, we do some auxiliary computations.  Pick a number $a=a(p)$ satisfying
$$
1<a<p\textnormal{ and } a\left(2-\frac{1}{p}\right)<2.
$$
Note that the second condition can be rephrased as $2p<a'$, so it is clear that we can select the number $a$ with the desired properties.  We then apply H\"older's inequality with $\frac{1}{a}+\frac{1}{a'}=1$ to see that for each $1\leq l\leq n$
$$
\int_{\abs{w_l}>\delta} \frac{\left(1-\abs{w_l}^2\right)^{-\frac{1}{p}}}{\abs{1-\overline{z}_lw_l}^{2-\frac{2}{p}}}\,dv(w_l)\leq \left(\int_{\abs{w_l}>\delta} \frac{\left(1-\abs{w_l}^2\right)^{-\frac{a}{p}}}{\abs{1-\overline{z}_lw_l}^{a\left(2-\frac{2}{p}\right)}}\,dv(w_l)\right)^{\frac{1}{a}} \left(v\{w_l:\abs{w_l}>\delta\}\right)^{\frac{1}{a'}}.
$$
We have that $\left(v\{w:\abs{w}>\delta\}\right)^{\frac{1}{a'}}=(1-\delta^{2})^{\frac{1}{a'}}$, and by Lemma \ref{Growth} with $t=-\frac{a}{p}$ and $s=a\left(2-\frac{2}{p}\right)$ we have
\begin{eqnarray*}
\left(\int_{\abs{w_l}>\delta} \frac{\left(1-\abs{w_l}^2\right)^{-\frac{a}{p}}}{\abs{1-\overline{z}_lw_l}^{a\left(2-\frac{2}{p}\right)}}\,dv(w_l)\right)^{\frac{1}{a}} & \leq & \left(\int_{\D} \frac{\left(1-\abs{w_l}^2\right)^{-\frac{a}{p}}}{\abs{1-\overline{z}_lw_l}^{a\left(2-\frac{2}{p}\right)}}\,dv(w_l)\right)^{\frac{1}{a}}\lesssim 1
\end{eqnarray*}
since $s=a\left(2-\frac{2}{p}\right)=a\left(2-\frac{1}{p}\right)-\frac{a}{p}<2-\frac{a}{p}=2+t$ by choice of $a$.  Thus, we have, recalling that $\gamma=\frac{1}{a'}$, 
\begin{equation}
\label{LastEstimate2}
\int_{\abs{w_l}>\delta} \frac{\left(1-\abs{w_l}^2\right)^{-\frac{1}{p}}}{\abs{1-\overline{z}_lw_l}^{2-\frac{2}{p}}}\,dv(w_l)\lesssim \left(1-\delta^{2}\right)^{\gamma}\quad\forall 1\leq l\leq n,
\end{equation}
with the restrictions on $a$ giving the corresponding restrictions on $\gamma$ in the statement of the lemma.  Also note that using Lemma \ref{Growth} we have
\begin{equation}
\label{LastEstimate3}
\int_{\abs{w_l}\leq\delta} \frac{\left(1-\abs{w_l}^2\right)^{-\frac{1}{p}}}{\abs{1-\overline{z}_lw_l}^{2-\frac{2}{p}}}\,dv(w_l)\lesssim 1\quad\forall 1\leq l\leq n.
\end{equation}
It is now easy to conclude \eqref{LastEstimate}.  Indeed, let $O\subset\left\{1,\ldots, n\right\}$ be where $\tau_l=o$, and let $C\subset\left\{1,\ldots,n\right\}$ where $\tau_l=c$.  Note that $O\cup C=\left\{1,\ldots,n\right\}$, $O\cap C=\emptyset$ and that the cardinality of $C$ is at least 1.  There are $2^n-1$ such sets $C$.  Abusing notation, we can write
$$
\prod_{l=1}^n D\left(0,\delta\right)^{\tau_l}=\prod_{l\in O}D\left(0,\delta\right)\prod_{l\in C}D\left(0,\delta\right)^{c}.
$$
Using \eqref{LastEstimate2} and \eqref{LastEstimate3} we have,
\begin{equation}
\left(\prod_{l\in O}\int_{D\left(0,\delta\right)} \frac{\left(1-\abs{w_l}^2\right)^{-\frac{1}{p}}}{\abs{1-\overline{z}_lw_l}^{2-\frac{2}{p}}}\,dv(w_l)\right)\left(\prod_{l\in C}\int_{D\left(0,\delta\right)^{c}} \frac{\left(1-\abs{w_l}^2\right)^{-\frac{1}{p}}}{\abs{1-\overline{z}_lw_l}^{2-\frac{2}{p}}}\,dv(w_l)\right)\lesssim \left(1-\delta^{2}\right)^{\gamma\,\textnormal{card}(C)},
\end{equation}
and so  
$$
\int_{\prod_{l=1}^n D\left(0,\delta\right)^{\tau_l}}\prod_{l=1}^n \frac{\left(1-\abs{w_l}^2\right)^{-\frac{1}{p}}}{\abs{1-\overline{z}_lw_l}^{2-\frac{2}{p}}}\,dv(w)\lesssim \left(1-\delta^{2}\right)^{\gamma\,\textnormal{card}(C)}.
$$
Consequently, 
\begin{eqnarray*}
\int_{\D^n}1_{D\left(0,\delta\right)^c}(w)\prod_{l=1}^n \frac{\left(1-\abs{w_l}^2\right)^{-\frac{1}{p}}}{\abs{1-\overline{z}_lw_l}^{2-\frac{2}{p}}}\,dv(w) & = & \sum_{C}\int_{\prod_{l=1}^n D\left(0,\delta\right)^{\tau_l}}\prod_{l=1}^n \frac{\left(1-\abs{w_l}^2\right)^{-\frac{1}{p}}}{\abs{1-\overline{z}_lw_l}^{2-\frac{2}{p}}}\,dv(w)\\
& \lesssim & \sum_{C} \left(1-\delta^{2}\right)^{\gamma\,\textnormal{card}(C)}\lesssim \left(1-\delta^{2}\right)^{\gamma}.
\end{eqnarray*}
This gives \eqref{LastEstimate} and we are done.
\end{proof}

\begin{lm}
\label{Tech2}
Let $1<p<\infty$ and $\mu$ be a Bergman--Carleson measure.  Suppose that $F_j, K_j\subset\D^n$ are Borel sets and $a_j\in L^\infty$ and $b_j\in L^\infty(\D^n;\mu)$ are functions of norm at most 1 for all $j$.  If
\begin{itemize}
\item[(i)] $\rho\left(F_j, K_j\right)\geq\tanh\sigma\geq \tanh1$;
\item[(ii)] $\textnormal{supp}\, a_j\subset F_j$ and $\textnormal{supp}\, b_j\subset K_j$;
\item[(iii)] every $z\in\D^n$ belongs to at most $N$ of the sets $F_j$
\end{itemize}
then $\sum_{j} M_{a_j} P_{\mu} M_{b_j}$ is a bounded operator from $A^p$ to $L^p$ and there is a function $\beta_{p}\left(\sigma\right)\to 0$ when $\sigma\to\infty$ such that
\begin{equation}
\label{Tech2-Est1}
\norm{\sum_{j} M_{a_j} P_{\mu} M_{b_j}f}_{L^p}\leq N\beta_{p}\left(\sigma\right)\norm{T_\mu}_{\mathcal{L}\left(A^p, A^p\right)}\norm{f}_{A^p}
\end{equation}
and for every $f\in A^p$ we have
\begin{equation}
\label{Tech2-Est2}
\sum_{j}\norm{M_{a_j} P_{\mu} M_{b_j} f}_{L^p}^p\leq N\beta_{p}^p\left(\sigma\right)\norm{T_\mu}_{\mathcal{L}\left(A^p, A^p\right)}^p\norm{f}^{p}_{A^p}.
\end{equation}
\end{lm}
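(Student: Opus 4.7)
\emph{Proof sketch.} The plan is to realize the operator $T := \sum_j M_{a_j} P_\mu M_{b_j}$ as the integral operator
\[
Tf(z) = \int_{\D^n} K(z,w) f(w) \, d\mu(w), \qquad K(z,w) := \sum_j a_j(z)\, b_j(w)\prod_{l=1}^n (1-\overline{w}_l z_l)^{-2},
\]
whose kernel is dominated by $|K(z,w)| \leq \sum_j 1_{F_j}(z)\, 1_{K_j}(w) \prod_l |1-\overline{w}_l z_l|^{-2}$, and then to estimate the norm of $T : L^p(\D^n;\mu) \to L^p(\D^n;dv)$ via Schur's Lemma~\ref{Schur} using the test function $h(z) = \prod_l (1-|z_l|^2)^{-1/(pq)}$, which satisfies $h^q = \prod_l (1-|z_l|^2)^{-1/p}$ and $h^p = \prod_l (1-|z_l|^2)^{-1/q}$. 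Combining the resulting Schur bound with the Bergman--Carleson embedding $\|f\|_{L^p(\mu)} \lesssim \|T_\mu\|^{1/p} \|f\|_{A^p}$ from Lemma~\ref{CM} will then produce \eqref{Tech2-Est1}.

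The first Schur condition, with $z$ fixed and integration against $d\mu(w)$, is exactly the conclusion of Lemma~\ref{Tech1}: using the overlap hypothesis (iii) to partition $\{j\}$ into $N$ subfamilies with pairwise disjoint $F_j$'s, each subfamily produces the bound $C\,\|T_\mu\|(1-\delta^2)^\gamma h(z)^q$, and summing the $N$ subfamilies costs only a multiplicative factor of $N$. The second Schur condition is the main obstacle, since the hypotheses give no control over how many $K_j$'s can contain a single point $w$, and hence the partition-into-disjoint-subfamilies trick is not directly available. My workaround will combine the separation hypothesis (i) with the overlap hypothesis (iii): if $w \in K_j$ then $F_j \subset D(w,\tanh\sigma)^c$, and therefore
\[
\sum_{j} 1_{K_j}(w)\, 1_{F_j}(z) \leq N\cdot 1_{D(w,\tanh\sigma)^c}(z) \qquad \textrm{for every } z \in \D^n.
\]
This reduces the second Schur condition to estimating $N\int_{D(w,\tanh\sigma)^c} \prod_l (1-|z_l|^2)^{-1/q}\,|1-\overline{w}_l z_l|^{-2}\, dv(z)$, which is handled by the same change of variables $z' = \varphi_w(z)$ and the one-variable estimates \eqref{LastEstimate2}--\eqref{LastEstimate3} appearing in the proof of Lemma~\ref{Tech1} (with the exponent $p$ replaced by $q$ and $\mu$ replaced by $v$), yielding a bound $NC(1-\delta^2)^{\gamma'} h(w)^p$. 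Choosing $\gamma_0$ admissible for both exponents $p$ and $q$ and setting $\beta_p(\sigma) := C(1-\delta^2)^{\gamma_0}$, Schur's lemma together with the Carleson embedding produces \eqref{Tech2-Est1}.

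For the vector-valued estimate \eqref{Tech2-Est2}, rather than bounding the $L^p$-norm of the sum, the plan is to apply the pointwise Schur/H\"older inequality \emph{term by term}: for each $j$, writing $g_j := M_{a_j} P_\mu M_{b_j} f$ and $K_j^*(z,w) := 1_{F_j}(z) 1_{K_j}(w) \prod_l |1-\overline{w}_l z_l|^{-2}$, one has
\[
|g_j(z)|^p \leq \Big(\int K_j^*(z,w) h(w)^q d\mu(w)\Big)^{p/q} \int K_j^*(z,w) h(w)^{-p}\,|f(w)|^p d\mu(w).
\]
The first factor is bounded by $(C\|T_\mu\|(1-\delta^2)^\gamma)^{p/q} h(z)^p$ from Lemma~\ref{Tech1} applied to the (trivially disjoint) singleton family $\{F_j\}$. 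Summing over $j$, integrating in $z$, and using Fubini reduces the problem to estimating $\int h(z)^p \sum_j 1_{F_j}(z) 1_{K_j}(w) \prod_l |1-\overline{w}_l z_l|^{-2} dv(z)$, which is exactly the second-Schur-condition kernel from the previous paragraph and is bounded by $NC(1-\delta^2)^{\gamma'} h(w)^p$. Cancelling the powers of $h$ and invoking the Bergman--Carleson embedding on the remaining $\int |f(w)|^p d\mu(w)$ balances the exponents of $\|T_\mu\|$ (via $p/q + 1 = p$) and yields \eqref{Tech2-Est2} with the same $\beta_p(\sigma)$. The main technical obstacle throughout is the second Schur condition; everything else follows routinely from Lemma~\ref{Tech1}.
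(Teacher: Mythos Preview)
Your approach is correct and takes a somewhat different route from the paper's. The paper first reduces to the case $N=1$ by decomposing each set $F_j$ (not the index set) as $F_j=\bigcup_{k=1}^N A_j^k$, where $A_j^k$ is the set of $z\in F_j$ for which $j$ is the $k$th smallest element of $\{j':z\in F_{j'}\}$; for each fixed $k$ the family $\{A_j^k\}_j$ is genuinely disjoint. In the disjoint case the paper applies Schur with the small factor coming only from the $d\mu$-side (Lemma~\ref{Tech1}), while the $dv$-side is bounded trivially via Lemma~\ref{Growth} with no decay in $\sigma$. Estimate \eqref{Tech2-Est2} then follows from \eqref{Tech2-Est1} for free, since disjoint supports give $\sum_j\|g_j\|_{L^p}^p=\|\sum_j g_j\|_{L^p}^p$. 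Your route instead extracts smallness from \emph{both} Schur conditions via the pointwise bound $\sum_j 1_{F_j}(z)1_{K_j}(w)\le N\cdot 1_{\{\rho(z,w)>\tanh\sigma\}}$, and handles \eqref{Tech2-Est2} by a direct term-by-term H\"older estimate rather than by reduction to disjointness. This is a legitimate alternative: it avoids the combinatorial $A_j^k$ trick at the cost of rerunning the change-of-variables estimate \eqref{LastEstimate} with $q$ in place of $p$.

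One genuine caveat: your phrase ``partition $\{j\}$ into $N$ subfamilies with pairwise disjoint $F_j$'s'' is not justified by hypothesis (iii). Bounded pointwise overlap does not force the intersection graph of the $F_j$ to be $N$-colorable (take finitely many sets that pairwise intersect but have empty triple intersection: then $N=2$ while the intersection graph is complete). The fix is either the paper's $A_j^k$ decomposition of the sets themselves, or---more in keeping with your own argument---to apply your pointwise bound symmetrically to the first Schur condition as well: for fixed $z$ at most $N$ indices have $z\in F_j$, and for each such $j$ one has $K_j\subset\{w:\rho(z,w)>\tanh\sigma\}$, so $\sum_j 1_{F_j}(z)1_{K_j}(w)\le N\cdot 1_{\{\rho(z,w)>\tanh\sigma\}}(w)$, after which the $I_z$ estimate inside the proof of Lemma~\ref{Tech1} applies directly.
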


\begin{proof}
Since $\mu$ is a Bergman--Carleson measure we have that $\imath_p: A^p\to L^p(\D^n;\mu)$ is bounded and to prove the lemma it is then enough to prove the following two estimates:
\begin{equation}
\label{Tech2-Est1-Red}
\norm{\sum_{j} M_{a_j} P_{\mu} M_{b_j}f}_{L^p}\leq N\kappa_{p}\left(\delta\right)\norm{T_\mu}_{\mathcal{L}\left(A^p, A^p\right)}^{1-\frac{1}{p}}\norm{f}_{L^p(\D^n;\mu)},
\end{equation}
and
\begin{equation}
\label{Tech2-Est2-Red}
\sum_{j} \norm{M_{a_j} P_{\mu} M_{b_j} f}_{L^p}^p\leq N\kappa_{p}^p\left(\delta\right)\norm{T_\mu}_{\mathcal{L}\left(A^p, A^p\right)}^{p-1}\norm{f}^{p}_{L^p(\D^n;\mu)}
\end{equation}
where $\delta=\tanh\frac{\sigma}{2}$ and $\kappa_{p}\left(\delta\right)\to 0$ as $\delta\to 1$.  Estimates \eqref{Tech2-Est1-Red} and \eqref{Tech2-Est2-Red} imply \eqref{Tech2-Est1} and \eqref{Tech2-Est2} via an application of Lemma \ref{CM}.

First, consider the case when $N=1$, and so the sets $\{F_j\}$ are pairwise disjoint.  Set
$$
\Phi\left(z,w\right)=\sum_{j} 1_{F_j}(z) 1_{K_j}(w) \prod_{l=1}^n\frac{1}{\abs{1-\overline{z}_lw_l}^{2}}.
$$
Suppose now that $f\in L^p\left(\D^n;\mu\right)$, $\norm{a_j}_{L^\infty}\leq 1$ and $\norm{b_j}_{L^\infty(\D^n;\mu)}\leq 1$.  Easy estimates show
\begin{eqnarray*}
\abs{\sum_{j} M_{a_j} P_{\mu} M_{b_j} f(z)} & = & \abs{\sum_{j} a_j(z)\int_{\D^n} b_j(w) f(w)\prod_{l=1}^n\frac{1}{\left(1-\overline{w}_l z_l\right)^{2}} \,d\mu(w)}\\
& \leq & \int_{\D^n} \Phi\left(z,w\right) \abs{f(w)}\,d\mu(w).
\end{eqnarray*}
Thus, it suffices to prove the operator with kernel $\Phi\left(z,w\right)$ is bounded between the necessary spaces.  Set $h(z)=\prod_{l=1}^n\left(1-\abs{z_l}^2\right)^{-\frac{1}{pq}}$ and observe that Lemma \ref{Tech1} implies
$$
\int_{\D^n} \Phi\left(z,w\right) h(w)^q \,d\mu(w)\lesssim \norm{T_\mu}_{\mathcal{L}\left(A^p, A^p\right)} \left(1-\delta^{2}\right)^{\gamma}h(z)^{q}.
$$
While Lemma \ref{Growth} plus a simple computation implies
$$
\int_{\D^n}\Phi\left(z,w\right) h(z)^{p} \,dv(z)\lesssim h(w)^{p}.
$$
Indeed, since $F_j$ are disjoint and form a cover of $\D^n$ (since $N=1$) we have
\begin{eqnarray*}
\int_{\D^n}\Phi\left(z,w\right) h(z)^{p} \,dv(z) & = & \int_{\D^n}\prod_{l=1}^n \frac{\left(1-\abs{z_l}^2\right)^{-\frac{1}{q}}}{\abs{1-\overline{z}_lw_l}^2}\, dv(z)\approx \prod_{l=1}^n(1-\abs{w_l}^2)^{-\frac{1}{q}}=h(w)^p.
\end{eqnarray*}

Schur's Lemma, Lemma \ref{Schur}, then implies that the operator with kernel $\Phi\left(z,w\right)$ is bounded from $L^p\left(\D^n;\mu\right)$ to $L^p$ with norm controlled by a constant $C\left(n,p\right)$ times $k_{p}\left(\delta\right)\norm{T_\mu}_{\mathcal{L}\left(A^p, A^p\right)}^{1-\frac{1}{p}}$.  We thus have \eqref{Tech2-Est1-Red} when $N=1$.  Since the sets $F_j$ are disjoint in this case, then we also have \eqref{Tech2-Est2-Red} since
$$
\sum_{j} \norm{M_{a_j} P_{\mu} M_{b_j} f}_{L^p}^p=\norm{\sum_{j} M_{a_j}P_{\mu} M_{b_j} f}_{L^p}^p.
$$

Now suppose that $N>1$.  Let $z\in \D^n$ and let $S(z)=\left\{j: z\in F_j\right\}$, ordered according to the index $j$.
Each $F_j$ admits a disjoint decomposition $F_j=\bigcup_{k=1}^{N} A_{j}^{k}$ where $A_{j}^{k}$ is the set of $z\in F_j$ such that
$j$ is the $k^{\textnormal{th}}$ element of $S(z)$.
We then have that for $1\leq k\leq N$ the sets $\left\{A_{j}^k: j\geq 1\right\}$ are pairwise disjoint.  Hence, we can apply the computations from above to conclude the following:
\begin{eqnarray*}
\sum_{j} \norm{M_{a_j} P_{\mu} M_{b_j} f}_{L^p}^p & = & \sum_{j} \sum_{k=1}^{N} \norm{M_{a_j 1_{A^k_j}} P_\mu M_{b_j} f}_{L^p}^p\\
 & = & \sum_{k=1}^{N}\sum_{j} \norm{M_{a_j 1_{A^k_j}} P_\mu M_{b_j} f}_{L^p}^p\\
 & \lesssim & N k_{p}^p\left(\delta\right)\norm{T_\mu}_{\mathcal{L}\left(A^p, A^p\right)}^{p-1}\norm{f}_{A^p}^p.
\end{eqnarray*}
This gives \eqref{Tech2-Est2-Red} and \eqref{Tech2-Est1-Red} follows from similar computations.
\end{proof}

\begin{rem}
\label{Rem_Countable}
Note that Lemmas \ref{Tech1} and \ref{Tech2} are stated for arbitrary countable collection of sets $F_j$ and $K_j$.  However, when we apply then, they will be applied to the sets $\left\{F_{\vec{j}}\right\}$ from Lemma \ref{SuaGeo2_Poly} and the complement of an enlargement of these sets.
\end{rem}

\section{A Uniform Algebra and Its Maximal Ideal Space}
\label{UAMIS}

We consider the algebra $\mathcal{A}$ of all bounded functions that are uniformly continuous from the metric space $\left(\D^n,\rho\right)$ into the metric space $\left(\C,\abs{\,\cdot\,}\right)$.  We then associate to $\mathcal{A}$ its maximal ideal space $M_{\mathcal{A}}$ which is the set of all non-zero multiplicative linear functionals from $\mathcal{A}$ to $\C$.  Endowed with the weak-star topology, this is a compact Hausdorff space.  Via the Gelfand transform we can view the elements of $\mathcal{A}$ as continuous functions on $M_{\mathcal{A}}$ as given by $\hat{a}\left(f\right)=f\left(a\right)$ where $f$ is a multiplicative linear functional.  Since $\mathcal{A}$ is a commutative $C^*$ algebra, the Gelfand transform is an isomorphism.  It is also obvious that point evaluation is a multiplicative linear functional, and so $\D^n\subset M_{\mathcal{A}}$.  Moreover, since $\mathcal{A}$ is a $C^*$ algebra we have that $\D^n$ is dense in $M_{\mathcal{A}}$.  Also, one can see that the Euclidean topology on $\D^n$ agrees with the topology induced by $M_{\mathcal{A}}$.

We next state several lemmas and facts that will be useful going forward.  For a set $E\subset M_{\mathcal{A}}$, the closure of $E$ in the space $M_{\mathcal{A}}$ will be denoted $\overline{E}$.  Note that if $E\subset r\D^n$ where $0<r<1$ then this closure is the same as the Euclidean closure.

\begin{lm}
\label{Separation}
Let $E, F\subset\D^n$.  Then $\overline{E}\cap\overline{F}=\emptyset$ if and only if $\rho\left(E,F\right)>0$.
\end{lm}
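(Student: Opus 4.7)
The plan is to prove the two implications separately, exploiting in each case that the topology on $M_{\mathcal{A}}$ is generated (via the Gelfand transform) by evaluation against functions in $\mathcal{A}$.

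For the easy direction, $\rho(E,F)>0 \Rightarrow \overline{E}\cap\overline{F}=\emptyset$, I would first note that $\rho$ is an honest metric on $\D^n$: each one-variable pseudohyperbolic metric satisfies the ordinary triangle inequality, and the coordinatewise maximum inherits it. Setting $\delta:=\rho(E,F)>0$, I would then construct the separating function
$$
a(z) := \min\bigl(\rho(z,E),\,\delta\bigr),
$$
which is bounded by $\delta$ and $1$-Lipschitz with respect to $\rho$ (since $z\mapsto\rho(z,E)$ is $1$-Lipschitz), hence lies in $\mathcal{A}$. By construction $a\equiv 0$ on $E$ and $a\equiv\delta$ on $F$, so the Gelfand transform $\hat a$, which is continuous on $M_{\mathcal{A}}$, vanishes on $\overline{E}$ and equals $\delta$ on $\overline{F}$. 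That forces $\overline{E}\cap\overline{F}=\emptyset$.

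For the converse I would argue the contrapositive. Assuming $\rho(E,F)=0$, pick sequences $\{z_k\}\subset E$ and $\{w_k\}\subset F$ with $\rho(z_k,w_k)\to 0$. By compactness of $M_{\mathcal{A}}$, some subnet $\{z_{k_\omega}\}$ converges to a point $\xi\in M_{\mathcal{A}}$. For every $a\in\mathcal{A}$, uniform continuity of $a$ on $(\D^n,\rho)$ gives $|a(z_{k_\omega})-a(w_{k_\omega})|\to 0$, so the triangle inequality
$$
\bigl|a(w_{k_\omega})-\hat a(\xi)\bigr| \le \bigl|a(w_{k_\omega})-a(z_{k_\omega})\bigr| + \bigl|a(z_{k_\omega})-\hat a(\xi)\bigr|
$$
shows $a(w_{k_\omega})\to\hat a(\xi)$ as well. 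Since the Gelfand transform identifies $\mathcal{A}$ with $C(M_{\mathcal{A}})$, net convergence in $M_{\mathcal{A}}$ is tested precisely by convergence of such $a$-values; hence $w_{k_\omega}\to\xi$, so $\xi\in\overline{E}\cap\overline{F}$.

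There is no real obstacle here: the argument is entirely soft. The only items requiring (routine) verification are that $\rho$ obeys the ordinary triangle inequality on the polydisc, and that the weak-$*$ topology on $M_{\mathcal{A}}$ is indeed the initial topology generated by $\{\hat a : a\in\mathcal{A}\}$. Both are standard, and the heart of the argument is the observation that $\rho$-uniform continuity of every $a\in\mathcal{A}$ is exactly what forces pairs of $\rho$-close points to share Gelfand limits.
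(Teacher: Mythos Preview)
Your proof is correct. For the implication $\rho(E,F)>0 \Rightarrow \overline{E}\cap\overline{F}=\emptyset$, you and the paper use essentially the same separating function $z\mapsto\rho(z,E)$; your truncation at $\delta$ is harmless but unnecessary, since $\rho\le 1$ already bounds it.

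For the other implication the two arguments genuinely differ. The paper applies Urysohn's lemma (stated as ``Tietze's Theorem'') on the compact Hausdorff space $M_{\mathcal{A}}$ to produce $f\in C(M_{\mathcal{A}})\cong\mathcal{A}$ with $f\equiv 1$ on $\overline{E}$ and $f\equiv 0$ on $\overline{F}$, and then notes that uniform $\rho$-continuity of $f$ on $\D^n$ forces $\rho(E,F)>0$. You instead run the contrapositive directly: from $\rho(E,F)=0$ you extract paired sequences, pass to a convergent subnet via compactness of $M_{\mathcal{A}}$, and use uniform continuity of every $a\in\mathcal{A}$ to show both subnets share the same Gelfand limit. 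Your approach avoids invoking Urysohn and makes explicit why the defining property of $\mathcal{A}$ (uniform $\rho$-continuity) is exactly what links $\rho$-proximity to topological closure in $M_{\mathcal{A}}$; the paper's route is a line shorter but hides that mechanism behind the separation axiom. Both are entirely soft and equally valid.
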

\begin{proof}
If $\overline{E}\cap\overline{F}=\emptyset$, then by Tietsze's Theorem there is $f\in\mathcal{A}$ such that $f\equiv 1$ on $E$ and $f\equiv 0$ on $F$.  The uniform continuity of $f$ on $\D^n$ with respect to the metric $\rho$ gives that
$$
\rho\left( E,F\right)=\rho\left(\overline{E}\cap\D^n,\overline{F}\cap\D^n\right)>0.
$$
Conversely, suppose $\rho\left(E,F\right)>0$, and set $f(z)=\rho\left(z,E\right)$.  Then we have that $f\in\mathcal{A}$ and that it separates the points $\overline{E}$ from $\overline{F}$, and so $\overline{E}\cap\overline{F}=\emptyset$.
\end{proof}

\begin{lm}
\label{Invariance}
Let $z,w,\xi\in\D^n$.  Then there is a positive constant such that
$$
\rho\left(\varphi_z(\xi),\varphi_w(\xi)\right)\lesssim \max_{1\leq l\leq n}\frac{\rho\left(z_l,w_l\right)}{\left(1-\abs{\xi_l}^2\right)^2}\lesssim\rho\left(z,w\right)\prod_{l=1}^n\frac{1}{\left(1-\abs{\xi_l}^2\right)^2}.
$$
\end{lm}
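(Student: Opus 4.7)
The plan is to reduce to a one-variable statement using the componentwise structure of $\rho$ on the polydisc, and then to prove the single-variable inequality by an explicit computation of the Möbius difference backed by the invariance identities recorded in the preliminaries. Since $\varphi_z$ acts componentwise and $\rho(a,b)=\max_l\rho(a_l,b_l)$ on $\D^n$, one has
\[
\rho(\varphi_z(\xi),\varphi_w(\xi))=\max_{1\le l\le n}\rho(\varphi_{z_l}(\xi_l),\varphi_{w_l}(\xi_l)),
\]
so the first inequality of the lemma reduces to the one-variable claim that $\rho(\varphi_a(c),\varphi_b(c))\lesssim\rho(a,b)/(1-|c|^2)^2$ for $a,b,c\in\D$. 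The second inequality in the statement is then immediate, since $\rho(z_l,w_l)\le\rho(z,w)$ and each factor $1/(1-|\xi_{l'}|^2)^2\ge 1$.

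For the single-variable bound, I would expand the Möbius difference directly,
\[
\varphi_a(c)-\varphi_b(c)=\frac{(a-b)+c(b\bar a-a\bar b)-c^2\,\overline{(a-b)}}{(1-\bar a c)(1-\bar b c)},
\]
and estimate each piece of the numerator in terms of $|a-b|$ (for instance $|b\bar a-a\bar b|\le 2|a-b|$), which yields $|\varphi_a(c)-\varphi_b(c)|\le(1+|c|)^2|a-b|/(|1-\bar a c||1-\bar b c|)$. For the factor $|1-\overline{\varphi_a(c)}\varphi_b(c)|$ appearing in $\rho$, the universal identity $|1-\bar u v|^2=(1-|u|^2)(1-|v|^2)+|u-v|^2$ gives the lower bound $\sqrt{(1-|\varphi_a(c)|^2)(1-|\varphi_b(c)|^2)}$; combining this with the Möbius identity $1-|\varphi_a(c)|^2=(1-|a|^2)(1-|c|^2)/|1-\bar a c|^2$ from the preliminaries gives
\[
|1-\overline{\varphi_a(c)}\varphi_b(c)|\ge\frac{\sqrt{(1-|a|^2)(1-|b|^2)}\,(1-|c|^2)}{|1-\bar a c||1-\bar b c|}.
\]

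Dividing, the $|1-\bar a c||1-\bar b c|$ factors cancel, so
\[
\rho(\varphi_a(c),\varphi_b(c))\lesssim\frac{|a-b|}{\sqrt{(1-|a|^2)(1-|b|^2)}\,(1-|c|^2)}=\frac{\rho(a,b)}{\sqrt{1-\rho(a,b)^2}\,(1-|c|^2)},
\]
where the final identity is the same universal identity applied to the pair $(a,b)$. A short case split concludes the argument. If $\rho(a,b)\le\sqrt 3/2$, the square-root factor is bounded below and one obtains the desired bound with only one power of $(1-|c|^2)$, which is stronger than claimed. If $\rho(a,b)>\sqrt 3/2$, one instead uses the trivial bound $\rho(\varphi_a(c),\varphi_b(c))\le 1\lesssim\rho(a,b)/(1-|c|^2)^2$, since $(1-|c|^2)^2\le 1$.

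The hard part will be producing a sharp enough lower bound on $|1-\overline{\varphi_a(c)}\varphi_b(c)|$: cruder estimates like $\ge 1-|c|$ or $\ge 1-|\varphi_a(c)||\varphi_b(c)|$ lose too much. The identity $|1-\bar u v|^2=(1-|u|^2)(1-|v|^2)+|u-v|^2$ supplies exactly the right bound, at the price of introducing a $\sqrt{1-\rho(a,b)^2}$ factor that must be absorbed via the final case split; once one accepts the loss of an extra power of $(1-|c|^2)$ in the target, this absorption is routine.
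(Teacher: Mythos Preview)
Your proposal is correct and follows the same overall strategy as the paper: reduce the first inequality to the single-disc estimate $\rho(\varphi_a(c),\varphi_b(c))\lesssim\rho(a,b)/(1-|c|^2)^2$ via the componentwise definition of $\rho$ and $\varphi_z$, and then obtain the second inequality by the trivial bound $(1-|\xi_l|^2)^2\ge\prod_{l'}(1-|\xi_{l'}|^2)^2$.

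The only difference is in the handling of the one-variable estimate. The paper does not prove it at all; it simply quotes the inequality from the earlier references \cite{MSW,Sua,Sua2,Sua3}. You instead give a self-contained argument: an explicit expansion of $\varphi_a(c)-\varphi_b(c)$, the identity $|1-\bar u v|^2=(1-|u|^2)(1-|v|^2)+|u-v|^2$ to bound the denominator from below, and a case split on the size of $\rho(a,b)$ to absorb the factor $1/\sqrt{1-\rho(a,b)^2}$. This buys independence from the cited literature and makes transparent why a second power of $(1-|c|^2)$ is needed in the statement (to handle the regime $\rho(a,b)\to 1$), at the cost of a slightly longer write-up. Both approaches are equivalent in substance; yours is more informative for a reader who does not have the references at hand.
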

\begin{proof}
In the case of the disc we have that this is true.  Namely, 
$$
\rho\left(\varphi_{z_l}(\xi_l),\varphi_{w_l}(\xi_l)\right)\lesssim \frac{\rho\left(z_l,w_l\right)}{\left(1-\abs{\xi_l}^2\right)^2}.
$$
See any of \cites{MSW, Sua,Sua2,Sua3} for the proof of this fact.  Using this, we then have that
$$
\rho\left(\varphi_z(\xi),\varphi_w(\xi)\right)=\max_{1\leq l\leq n} \rho\left(\varphi_{z_l}(\xi_l),\varphi_{w_l}(\xi_l)\right)\lesssim \max_{1\leq l\leq n}\frac{\rho\left(z_l,w_l\right)}{\left(1-\abs{\xi_l}^2\right)^2}.
$$
To see the last inequality, note that $\prod_{l=1}^n\left(1-\abs{\xi_l}^2\right)^2\leq \left(1-\abs{\xi_l}^2\right)^2$ for all $l=1,\ldots, n$.
\end{proof}

The next lemma is a translation to the polydisc of a result from Su\'arez \cite{Sua3} on the unit disc.  It is easy to see that the proof by Su\'arez in \cite{Sua3}*{Theorem 4.1} is abstract enough to include much more general domains, such as the ball and polydisc.  For completeness however, we provide a proof. 

\begin{lm}
\label{Extend}
Let $\left(E,d\right)$ be a metric space and $f:\D^n\to E$ be a continuous map.  Then $f$ admits a continuous extension from $M_{\mathcal{A}}$ into $E$ if and only if $f$ is $\left(\rho,d\right)$ uniformly continuous  and $\overline{f(\D^n)}$ is compact.
\end{lm}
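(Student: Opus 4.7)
The plan is to prove both directions of the equivalence, leveraging the fact that $\mathcal{A}$ is a commutative $C^*$-algebra whose Gelfand transform identifies $\mathcal{A}$ with $C(M_{\mathcal A})$, so every $a\in\mathcal A$ extends continuously to $M_{\mathcal A}$ and $\D^n$ is dense in $M_{\mathcal A}$.

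For the forward direction, assume $\tilde f\colon M_{\mathcal A}\to E$ is a continuous extension of $f$. Compactness of $\overline{f(\D^n)}$ is immediate, since $\tilde f(M_{\mathcal A})$ is the continuous image of a compact space and contains $f(\D^n)$. For uniform continuity I would argue by contradiction: if nets $z_\alpha,w_\alpha\in\D^n$ satisfy $\rho(z_\alpha,w_\alpha)\to 0$ while $d(f(z_\alpha),f(w_\alpha))\ge\varepsilon$, I would pass to subnets converging in the compact space $M_{\mathcal A}$ to points $x$ and $y$ respectively. For any $a\in\mathcal A$, uniform continuity of $a$ with respect to $\rho$ forces $|a(z_\alpha)-a(w_\alpha)|\to 0$, while the Gelfand extension gives $a(z_\alpha)\to a(x)$ and $a(w_\alpha)\to a(y)$; hence $a(x)=a(y)$ for every $a\in\mathcal A$, so $x=y$. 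Continuity of $\tilde f$ would then yield $d(f(z_\alpha),f(w_\alpha))\to 0$, contradicting the choice of the nets.

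For the reverse direction, assume $f$ is uniformly continuous and set $K:=\overline{f(\D^n)}$, which is compact in $E$. The central construction is to define, for each $x\in M_{\mathcal A}$ and any net $z_\alpha\to x$ in $\D^n$, the value $\tilde f(x):=\lim_\alpha f(z_\alpha)\in K$. The critical step is well-definedness. Given $y\in K$, consider $g_y\colon\D^n\to\R$ defined by $g_y(z):=d(f(z),y)$. This function is bounded, since $K$ is compact, and $(\rho,|\cdot|)$-uniformly continuous, since it is $1$-Lipschitz in $f(z)$ composed with the uniformly continuous map $f$, so $g_y\in\mathcal A$ and extends continuously to $\widetilde{g_y}$ on $M_{\mathcal A}$. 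Given two nets $z_\alpha\to x$ and $w_\beta\to x$ with subnet limits $f(z_\alpha)\to y_1$ and $f(w_\beta)\to y_2$ in $K$ extracted by compactness, evaluating $\widetilde{g_{y_1}}$ at $x$ gives $0=\lim_\alpha d(f(z_\alpha),y_1)=\widetilde{g_{y_1}}(x)=\lim_\beta d(f(w_\beta),y_1)=d(y_2,y_1)$, forcing $y_1=y_2$.

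To finish, I would verify continuity of $\tilde f$ as follows: fix $x\in M_{\mathcal A}$ and set $y:=\tilde f(x)$; the identity $\widetilde{g_y}(x')=d(\tilde f(x'),y)$ holds on $M_{\mathcal A}$ (apply the defining convergence to any net approaching $x'$ and use continuity of $d$), so $x'\mapsto d(\tilde f(x'),\tilde f(x))$ is continuous on $M_{\mathcal A}$ and vanishes at $x$, giving continuity of $\tilde f$ at $x$. The main obstacle is the well-definedness step, because it is there that both hypotheses enter essentially: compactness of $K$ makes $g_y$ bounded and hence an element of $\mathcal A$, while uniform continuity of $f$ makes $g_y$ uniformly continuous. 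Everything else is bookkeeping with the Gelfand correspondence between $\mathcal A$ and $C(M_{\mathcal A})$.
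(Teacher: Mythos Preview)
Your proof is correct, but the construction for the ``if'' direction is genuinely different from the paper's. Both proofs handle the ``only if'' direction in essentially the same way (a contradiction with two nets forced to share a limit in $M_{\mathcal A}$). For the ``if'' direction, however, the paper defines the multi-valued cluster set $F(x)$ and proves it is a singleton by invoking the Separation Lemma (Lemma~\ref{Separation}): if $F(x)$ contained two points $e_1\neq e_2$, the preimages $V_i=f^{-1}(B_{d(e_1,e_2)/4}(e_i))$ would have $x$ in both closures, hence $\rho(V_1,V_2)=0$, contradicting uniform continuity of $f$ since $d(f(V_1),f(V_2))>0$. Your approach instead encodes the problem in the scalar functions $g_y(z)=d(f(z),y)$, observes that each $g_y$ lies in $\mathcal A$, and uses the Gelfand extension $\widetilde{g_y}$ to read off both well-definedness and continuity of $\tilde f$ from the identity $\widetilde{g_y}(x')=d(\tilde f(x'),y)$. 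This is slicker: it is self-contained (no appeal to Lemma~\ref{Separation}) and delivers continuity of $\tilde f$ for free, whereas the paper defers continuity to an unstated diagonalization argument. The paper's route, on the other hand, keeps the geometry of $\rho$ in the foreground and reuses a lemma needed elsewhere in the section. One small point worth making explicit in your write-up: your well-definedness argument shows that all subnet limits of $f(z_\alpha)$ coincide, and you should note that together with compactness of $K$ this forces the full net $f(z_\alpha)$ to converge, so that $\tilde f(x):=\lim_\alpha f(z_\alpha)$ is genuinely a limit and not merely a subnet limit.
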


\begin{proof}
Assume that $f$ is uniformly $\left(\rho, d\right)$ continuous on $\D^n$ and $\overline{f(\D^n)}$ is compact.  Let $x\in M_{\mathcal{A}}$ and set
$$
F(x)=\left\{e\in E: f(z^{\omega})\to e\,\textnormal{ for some net }\, z^{\omega}\to x, z^{\omega}\in\D^n\right\}.
$$
Since $\overline{f(\D^n)}$ is compact, we have that $F(x)$ is nonempty.  The function $F(x)$ defined on $M_{\mathcal{A}}$ is multi-valued, and a diagonalization argument shows that $f$ can be extended continuously to $M_{\mathcal{A}}$ if and only if $F(x)$ is single-valued for every $x\in M_{\mathcal{A}}$.  

To show that it is single-valued, we proceed by contradiction.  Let $x\in M_{\mathcal{A}}$ and suppose that $e_1, e_2\in F(x)$ with $d\left(e_1, e_2\right)>0$.  Let $B_r(e)$ denote the ball in $E$ of radius $r$ and center $e$, and consider the sets
$$
V_i=\left\{z\in\D^n: f(z)\in B_{\frac{d\left(e_1,e_2\right)}{4}}(e_i))\right\}\quad i=1,2.
$$ 
Since $e_i\in F(x)$, we have that $x\in \overline{V_i}^{M_{\mathcal{A}}}$ for $i=1,2$.  By Lemma \ref{Separation}, we have that $\rho\left(V_1,V_2\right)=0$.  However, 
$$
d\left(f\left(V_1\right), f\left( V_2\right)\right)\geq d\left(B_{\frac{d\left(e_1,e_2\right)}{4}}(e_1),B_{\frac{d\left(e_1,e_2\right)}{4}}(e_2)\right)\geq\frac{1}{2}d\left(e_1,e_2\right)>0.
$$
But, $f$ is uniformly $\left(\rho,d\right)$ continuous, and so this last inequality implies that $\rho\left(V_1,V_2\right)>0$.  This gives the desired contradiction, and so $F(x)$ is single-valued.

For the converse, suppose $f$ admits a continuous extension from $M_{\mathcal{A}}$ into $E$.  Since $\D^n$ is dense in $M_{\mathcal{A}}$, $\overline{f(\D^n)}=f(M_{\mathcal{A}})$, and so $\overline{f(\D^n)}$ is compact.  It remains to show that $f$ is uniformly $\left(\rho,d\right)$ continuous.  If $f$ is not uniformly $\left(\rho,d\right)$ continuous, there are two sequences $z^{k}, w^{k}\in\D^n$ such that $\rho\left(z^{k},w^{k}\right)\to 0$, but $d\left(f(z^{k}), f(w^{k})\right)\geq\delta>0$ for all $k$.  By continuity of $f$ on $\D^n$, we can not have the sequence accumulate on $\D^n$.  Let $x\in \overline{\{z^{k}\}}^{M_{\mathcal{A}}}\setminus \D^n$ and let $\{z^{\omega}\}$ be a subnet of $\{z^{k}\}$ that tends to the point $x$.  Every element of $\{z^{\omega}\}$ is given by $z^{k(\omega)}$, and so we can select a corresponding subnet $\left\{w^{\omega}\right\}$ of the sequence $\{w^{k}\}$ with the property
\begin{equation*}
\rho\left(z^{\omega}, w^{\omega}\right)\to 0\quad\textnormal{ and }\quad d\left( f(z^{\omega}),f(w^{\omega})\right)\geq\delta\quad \forall\omega.
\end{equation*}
Since the subnet $\{z^{\omega}\}$ tends to $x$, we have by the first condition above that the subnet $\{w^{\omega}\}$ tends to $x$ as well.  This is because the first condition implies that $g(w^{\omega})\to g(x)$ for all $g\in\mathcal{A}$.  But, since $f$ is continuous on $M_{\mathcal{A}}$ we have
$$
\lim f(w^{\omega})=f(x)=\lim f(z^{\omega})
$$
which contradicts the second condition above.  Thus, we have that $f$ is uniformly $\left(\rho,d\right)$ continuous.
\end{proof}

Let $x\in M_{\mathcal{A}}$ and suppose that $\{z^{\omega}\}$ is a net in $\D^n$ that converges to $x$.  By compactness, the net $\{\varphi_{z^{\omega}}\}$ in the product space $M^{\D^n}_{\mathcal{A}}$ admits a convergent subnet $\{\varphi_{z^{\omega_\tau}}\}$.  This means there is a function $\varphi:\D^n\to M_{\mathcal{A}}$ such that $f\circ \varphi_{z^{\omega_{\tau}}}\to f\circ\varphi$ for all $f\in \mathcal{A}$ and pointwise on $\D^n$.  We now show that the whole net converges to $\varphi$ and $\varphi$ is independent of the net.  Suppose that $\{w^{\gamma}\}$ is another net in $\D^n$ converging to $x$ such that $\varphi_{w^{\gamma}}$ tends to some $\psi\in M_{\mathcal{A}}^{\D^n}$.  If there is a $\xi\in\D^n$, such that $\varphi(\xi)\neq\psi(\xi)$, then there are tails of both nets such that the sets
$$
E=\left\{\varphi_{z^{\omega_{\tau}}}(\xi):\tau\geq\tau_0\right\}\textnormal{ and } F=\left\{\varphi_{w^{\gamma}}(\xi):\gamma\geq\gamma_0\right\}
$$
have disjoint closures in $M_{\mathcal{A}}$.  By Lemma \ref{Separation} we have that $\rho\left(E,F\right)>0$, but by Lemma \ref{Invariance} we have
\begin{eqnarray*}
\rho\left(E,F\right) & = & \inf\left\{\rho\left(\varphi_{z^{\omega_{\tau}}}(\xi),\varphi_{w^{\gamma}}(\xi)\right):\tau\geq\tau_0,\gamma\geq\gamma_0\right\}\\
 & \lesssim & \prod_{l=1}^n\frac{1}{\left(1-\abs{\xi_l}^2\right)^2}\inf\left\{\rho\left(z^{\omega_{\tau}}, w^{\gamma}\right):\tau\geq\tau_0,\gamma\geq\gamma_0\right\}=0.
\end{eqnarray*}
The last inequality holds since $\{w^{\gamma}\}$ and $\{z^{\omega}\}$ both tend to $x$, and then applying Lemma \ref{Separation} gives the equality.  But, this implies that the entire net tends to $\varphi$ and is independent of the net, giving the claim.  We then denote the limit by $\varphi_x$ and one can easily observe that $\varphi_x(0)=x$.  We then have the following Lemma, (see \cite{CLNZ}*{Lemma 4.2} and \cite{Sua}*{Lemma 6.3}).

\begin{lm}
\label{Maximal3}
Let $\left\{z^{\omega}\right\}$ be a net in $\D^n$ converging to $x\in M_{\mathcal{A}}$.  Then
\begin{itemize}
\item[(i)] $a\circ\varphi_x\in\mathcal{A}$ for every $a\in\mathcal{A}$.  In particular, $\varphi_x:\D^n\to M_{\mathcal{A}}$ is continuous;
\item[(ii)] $a\circ\varphi_{z^{\omega}}\to a\circ\varphi_x$ uniformly on compact sets of $\D^n$ for every $a\in\mathcal{A}$.
\end{itemize}
\end{lm}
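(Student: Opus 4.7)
The plan is to exploit the fact that for every $z \in \D^n$ the automorphism $\varphi_z$ is a $\rho$-isometry of $\D^n$. This furnishes, for each $a \in \mathcal{A}$, an equicontinuous family $\{a \circ \varphi_z : z \in \D^n\}$ with a common modulus of continuity. From this, (i) will follow because pointwise limits of functions sharing a common modulus of continuity retain that modulus, and (ii) will follow from the pointwise convergence of the net combined with equicontinuity via a net-version of the Arzel\`a--Ascoli argument.

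First I would verify that for each fixed $z \in \D^n$ the map $\varphi_z$ preserves the metric $\rho$: in the single-disc case this is the standard M\"obius-invariance of the pseudohyperbolic distance recorded in Section~\ref{Prelim}, and in the polydisc one obtains $\rho(\varphi_z(\xi),\varphi_z(\eta)) = \max_l \rho(\varphi_{z_l}(\xi_l), \varphi_{z_l}(\eta_l)) = \max_l \rho(\xi_l,\eta_l) = \rho(\xi,\eta)$ simply by taking the maximum over the $n$ coordinates. Hence, if $a \in \mathcal{A}$ has modulus of continuity $\omega_a$, then
\[
|(a \circ \varphi_z)(\xi) - (a \circ \varphi_z)(\eta)| \leq \omega_a(\rho(\xi,\eta)) \quad\text{and}\quad \|a \circ \varphi_z\|_\infty \leq \|a\|_\infty
\]
for every $z,\xi,\eta \in \D^n$. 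The family $\{a \circ \varphi_z\}_{z \in \D^n}$ is thus uniformly bounded and $\rho$-equicontinuous with a common modulus.

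For (i), recall from the construction of $\varphi_x$ in the paragraph preceding the lemma that $\varphi_{z^\omega}(\xi) \to \varphi_x(\xi)$ in $M_{\mathcal{A}}$ for every $\xi \in \D^n$; by the definition of the Gelfand topology this gives $(a \circ \varphi_{z^\omega})(\xi) \to (a \circ \varphi_x)(\xi)$ for every $\xi \in \D^n$ and every $a \in \mathcal{A}$. Since each $a \circ \varphi_{z^\omega}$ is bounded by $\|a\|_\infty$ and has modulus of continuity $\omega_a$, passing to the pointwise limit preserves both features, so $a \circ \varphi_x \in \mathcal{A}$. The continuity of $\varphi_x : \D^n \to M_{\mathcal{A}}$ is then immediate, because the weak-star topology on $M_{\mathcal{A}}$ is generated by the Gelfand transforms $\hat a$ of elements of $\mathcal{A}$, and each $\hat a \circ \varphi_x = a \circ \varphi_x$ has just been shown to lie in $\mathcal{A}$.

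For (ii), fix $a \in \mathcal{A}$, a compact $K \subset \D^n$, and $\varepsilon > 0$. Choose $\delta > 0$ with $\omega_a(\delta) < \varepsilon/3$ and cover $K$ by finitely many $\rho$-balls of radius $\delta$ centered at points $\xi_1,\ldots,\xi_N \in K$. By the pointwise convergence from (i), for each $i$ there exists an index $\omega_i$ with $|(a \circ \varphi_{z^\omega})(\xi_i) - (a \circ \varphi_x)(\xi_i)| < \varepsilon/3$ whenever $\omega \geq \omega_i$. Choosing $\omega_\star$ to dominate $\omega_1,\ldots,\omega_N$ in the directed set and using the common modulus $\omega_a$ to compare the value at an arbitrary $\xi \in K$ to the value at the nearest center $\xi_i$ yields $\sup_{\xi \in K}|(a \circ \varphi_{z^\omega})(\xi) - (a \circ \varphi_x)(\xi)| < \varepsilon$ for all $\omega \geq \omega_\star$. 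The only point requiring a small amount of care is that we are working with nets rather than sequences, but the directed-set formulation of the Arzel\`a--Ascoli argument (replacing ``choose $N$ large'' by ``choose an upper bound in the directed set'') transfers verbatim, and that is the closest thing to a genuine obstacle in the proof.
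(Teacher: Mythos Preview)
Your proof is correct and follows essentially the same approach as the paper: both arguments hinge on the $\rho$-isometry of $\varphi_z$, which makes the family $\{a\circ\varphi_z\}$ equicontinuous with the same modulus as $a$, and then pass the modulus through the pointwise limit to obtain (i). For (ii) the paper argues by contradiction (extracting a convergent subnet of bad points $\xi^\omega$ and applying a three-term triangle inequality), whereas you give the direct Arzel\`a--Ascoli covering argument; these are two standard routes to the same ``pointwise convergence $+$ equicontinuity $\Rightarrow$ locally uniform convergence'' conclusion and there is no substantive difference.
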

\begin{proof}
If $a\in\mathcal{A}$, given $\epsilon>0$, there is a $\delta>0$ such that if $z,w\in\D^n$, then 
$\rho\left(z,w\right)<\delta$ implies that
$$
\abs{a(z)-a(w)}<\epsilon.
$$
Since $\rho\left(\varphi_{z^{\omega}}(z),\varphi_{z^{\omega}}(w)\right)=\rho\left(z,w\right)$ and
$$
\abs{a(\varphi_x(z))-a(\varphi_x(w))}=\lim_{\omega}\abs{a(\varphi_{z^{\omega}}(z))-a(\varphi_{z^{\omega}}(w))},
$$ 
we have that (i) holds.

Suppose by contradiction that (ii) fails.  Then there is a $a\in\mathcal{A}$, some $0<r<1$ and $\epsilon>0$ such that 
$$
\abs{a\circ\varphi_{z^{\omega}}(\xi^{\omega})-a\circ\varphi_{x}(\xi^{\omega})}>\epsilon
$$
for some points $\xi^{\omega}\in r\D^n$.  Passing to a subnet, if necessary, we can assume $\xi^{\omega}\to\xi\in  \overline{r\D^n}$.  But, this leads to a contradiction since we have
\begin{eqnarray*}
\epsilon & < & \abs{a\circ\varphi_{z^{\omega}}(\xi^{\omega})-a\circ\varphi_{x}(\xi^{\omega})}\\
 & \leq & \abs{a\circ\varphi_{z^{\omega}}(\xi^{\omega})-a\circ\varphi_{z^{\omega}}(\xi)}+\abs{a\circ\varphi_{z^{\omega}}(\xi)-a\circ\varphi_{x}(\xi)}+\abs{a\circ\varphi_{x}(\xi^{\omega})-a\circ\varphi_{x}(\xi)}.
\end{eqnarray*}
The first and third terms go to zero by the $\rho$ continuity of $a$ and $a\circ\varphi_x$.  The second term goes to zero by the pointwise convergence of $a\circ\varphi_{z^{\omega}}\to a\circ\varphi_x$.  This is the desired contradiction.
\end{proof}

\subsection{Maps from \texorpdfstring{$M_{\mathcal{A}}$}{the maximal ideal space} into \texorpdfstring{$\mathcal{L}\left(A^p, A^p\right)$}{the bounded operators on the weighted Bergman space}}

For $z\in\D^n$, define a map
$$
U^{p}_z f(w):= f(\varphi_z(w))\prod_{l=1}^n\frac{\left(1-\abs{z_l}^2\right)^{\frac{2}{p}}}{\left(1-w_l\overline{z}_l\right)^{\frac{4}{p}}}
$$
where the argument of $(1-w\overline{z})$ is used to define the root that appears above.  Then a standard change of variable argument and straightforward computation gives
$$
U^{p}_zU^{p}_z=I_{A^p}\quad\textnormal{ and }\quad \norm{U^{p}_z f}_{A^p}=\norm{f}_{A^p} \quad\forall f\in A^p.
$$
For a real number $r$, we set
\begin{equation}
\label{Jr}
J^r_z(w):=\prod_{l=1}^n \frac{\left(1-\abs{z_l}^2\right)^{r}}{\left(1-w_l\overline{z}_l\right)^{2r}}.
\end{equation}
Observe that
$$
U^{p}_z f(w)=f\left(\varphi_z(w)\right)J_z^{\frac{2}{p}}(w),
$$
and so 
$$
U^{p}_z= T_{J_z^{\frac{2}{p}-1}}U^{2}_z=U^{2}_z T_{J_z^{1-\frac{2}{p}}}.
$$
If $q$ is the conjugate exponent of $p$, we have that
$$
\left(U^{q}_z\right)^{*}=U^{2}_z T_{\overline{J_z}^{\frac{2}{q}-1}}
=T_{\overline{J_z}^{1-\frac{2}{q}}}U^{2}_z.
$$
And then using that $U^{2}_zU^{2}_z=I_{A^2}$ and straightforward computations one finds
$$
\left(U^{q}_z\right)^{*}U^{p}_z=T_{b_z}
\quad\textnormal{ and }\quad U^{p}_z\left(U^{q}_z\right)^{*}=T_{b_z}^{-1}
$$
where
\begin{equation}
\label{bofz}
b_z(w)=\prod_{l=1}^n\frac{(1-\overline{w}_lz_l)^{2\left(\frac{1}{q}-\frac{1}{p}\right)}}{(1-\overline{z}_lw_l)^{2\left(\frac{1}{q}-\frac{1}{p}\right)}}.
\end{equation}
Also observe at this point that when $p=q=2$ that $b_z(w)=1$.  This will be important later on when we consider the special case of $A^2$.

For $z\in\D^n$ and $S\in\mathcal{L}\left(A^p, A^p\right)$ we then define the map 
$$
S_z :=U^{p}_z S\left(U^{q}_z\right)^{*},
$$
which induces a map $\Psi_S:\D^n\to \mathcal{L}\left(A^p, A^p\right)$ given by $$\Psi_S(z)=S_z.$$  One should think of the map $S_z$ in the following way.  This is an operator on $A^p$ and so it first acts as ``translation'' in $\D^n$, then the action of $S$, then ``translation'' back.  We now show that it is possible to extend the map $\Psi_S$ continuously to a map from $M_\mathcal{A}$ to $\mathcal{L}\left(A^p, A^p\right)$ when endowed with both the weak and strong operator topologies.

First observe that $C(\overline{\D^n})\subset\mathcal{A}$ induces a natural projection $\pi:M_{\mathcal{A}}\to M_{C(\overline{\D^n})}$.  Note that $M_{C(\overline{\D^n})}=\overline{\D^n}$, and so the coordinates of $\pi(x)$ can be denoted by $\pi_l(x)$.  If $x\in M_{\mathcal{A}}$, let
\begin{equation}
\label{bofx}
b_x(w)=\prod_{l=1}^n\frac{(1-\overline{w}_l\pi_l(x))^{2\left(\frac{1}{q}-\frac{1}{p}\right)}}{(1-\overline{\pi_l(x)}w_l)^{2\left(\frac{1}{q}-\frac{1}{p}\right)}}.
\end{equation}
When $\{z^{\omega}\}$ is a net in $\D^n$ that tends to $x\in M_{\mathcal{A}}$,
then $z^{\omega}=\pi(z^{\omega})\to \pi(x)$ in the Euclidean metric (which follows from the continuity of $\pi:M_{\mathcal{A}}\to M_{C(\overline{\D^n})}$).
So, we have that $b_{z^{\omega}}\to b_x$ uniformly on compact sets of $\D^n$ and boundedly.  And, further,
$$
(U^{q}_z)^{*}U^{p}_z=T_{b_z}\to T_{b_x}
\ \textnormal{ and }\ (U^{p}_z)^{*}  U^{q}_z=T_{\overline{b_z}}\to T_{\overline{b_x}}
$$
with convergence in the strong operator topology in $\mathcal{L}\left(A^p,A^p\right)$ and
$\mathcal{L}(A^q,A^q)$ respectively.  If $a\in\mathcal{A}$ then Lemma \ref{Maximal3} implies that
$a\circ \varphi_{z^{\omega}}\to a\circ\varphi_x$ uniformly on compact sets of $\D^n$.  The above discussion implies
$$
T_{\left(a\circ\varphi_{z^{\omega}} \right) b_{z^{\omega}}}\to T_{\left(a\circ\varphi_x\right)b_x}
$$
in the strong operator topology associated with $\mathcal{L}\left(A^p,A^p\right)$.

Recall that $k_{z}^{(p)}(w)=\prod_{l=1}^n\frac{\left(1-\abs{z_l}^2\right)^{\frac{2}{q}}}{\left(1-\overline{z}_lw_l\right)^{2}}$ and that $\norm{k^{(p)}_z}_{A^p}\approx 1$.  Then note
\begin{eqnarray*}
\prod_{j=1}^n\left(1-\abs{\xi_j}^2\right)^{\frac{2}{p}}J_z^{\frac{2}{p}}(\xi) & = & \prod_{l=1}^n\left(1-\abs{\varphi_{z_l}(\xi_l)}^2\right)^{\frac{2}{p}}\prod_{l=1}^n\frac{\abs{1-\overline{z}_l\xi_l}^{\frac{4}{p}}}{(1-\xi_l\overline{z}_l)^{\frac{4}{p}}}\\
 & = & \prod_{l=1}^n\left(1-\abs{\varphi_{z_l}(\xi_l)}^2\right)^{\frac{2}{p}}\lambda_{(p)}(\xi,z).
\end{eqnarray*}
Here the constant $\lambda_{(p)}(\xi,z)$ is unimodular.  To see this, if $f\in A^{p}$, then
\begin{eqnarray}
\ip{f}{\left(U^{p}_z\right)^* k_\xi^{(q)}}_{A^2} & = &
\ip{U^{p}_zf}{ k_\xi^{(q)}}_{A^2}=
\ip{J_z^{\frac{2}{p}} \left(f\circ\varphi_z\right)}{ k_\xi^{(q)}}_{A^2}\notag\\
& = & f(\varphi_z(\xi))\prod_{l=1}^n\left(1-\abs{\xi_l}^2\right)^{\frac{2}{p}}J_z^{\frac{2}{p}}(\xi)\notag\\
& = & f(\varphi_z(\xi))\prod_{l=1}^n\left(1-\abs{\varphi_{z_l}(\xi_l)}^2\right)^{\frac{2}{p}}\lambda_{(p)}(\xi,z)\notag\\
& = & \ip{f}{\overline{\lambda_{(p)}(\xi,z)} k_{\varphi_z(\xi)}^{(q)}}_{A^2}\notag,
\end{eqnarray}
which gives
\begin{equation}
\label{Comp}
\left(U^{p}_z\right)^* k_\xi^{(q)}=\lambda_{(p)}(\xi,z)k_{\varphi_z(\xi)}^{(q)}.
\end{equation}
\begin{lm}
\label{UniformCon}
Fix $\xi\in\D^n$.  Then the map $z\mapsto \left(U^{p}_z\right)^* k_\xi^{(q)}$ is uniformly continuous from $\left(\D^n,\rho\right)$ into $\left(A^{q},\norm{\,\cdot\,}_{A^q}\right)$.
\end{lm}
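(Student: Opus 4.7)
My plan is to use identity~\eqref{Comp} to reduce the lemma to norm continuity of the normalized reproducing kernel $w\mapsto k_w^{(q)}$, then to establish the latter by a direct change-of-variables computation.

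Given $z,z'\in\D^n$, from~\eqref{Comp} and the triangle inequality (using $\abs{\lambda_{(p)}(\xi,z')}=1$), I will bound
\[
\norm{(U^{p}_z)^{*}k_\xi^{(q)}-(U^{p}_{z'})^{*}k_\xi^{(q)}}_{A^q}\leq\abs{\lambda_{(p)}(\xi,z)-\lambda_{(p)}(\xi,z')}\,\norm{k_{\varphi_z(\xi)}^{(q)}}_{A^q}+\norm{k_{\varphi_z(\xi)}^{(q)}-k_{\varphi_{z'}(\xi)}^{(q)}}_{A^q},
\]
and use $\norm{k_{\varphi_z(\xi)}^{(q)}}_{A^q}\approx 1$ uniformly. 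Unwinding~\eqref{Comp}, $\lambda_{(p)}(\xi,z)=\prod_{l=1}^{n}(1-z_l\overline{\xi}_l)^{2/p}/(1-\overline{z}_l\xi_l)^{2/p}$; since $\xi$ is fixed in $\D^n$, $\abs{1-\overline{z}_l\xi_l}\geq 1-\abs{\xi_l}>0$ throughout $\overline{\D^n}$, so $\lambda_{(p)}(\xi,\cdot)$ extends continuously to $\overline{\D^n}$ in the Euclidean topology. A short argument using the explicit form of $\rho$ shows that whenever $\rho(z_n,z_n')\to 0$ and $z_n\to\zeta\in\overline{\D^n}$ Euclideanly, then $z_n'\to\zeta$ as well; hence every element of $C(\overline{\D^n})$ is $\rho$-uniformly continuous on $\D^n$, and in particular the $\lambda$-piece vanishes uniformly as $\rho(z,z')\to 0$.

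For the kernel piece, Lemma~\ref{Invariance} at the fixed $\xi$ gives $\rho(\varphi_z(\xi),\varphi_{z'}(\xi))\lesssim_{\xi}\rho(z,z')$, so it will suffice to show that $w\mapsto k_w^{(q)}$ is uniformly $(\rho,\norm{\,\cdot\,}_{A^q})$-continuous on $\D^n$. I will set $u=\varphi_w(w')$, so that $\abs{u}=\rho(w,w')$, and perform the change of variable $z=\varphi_w(\zeta)$ in $\norm{k_w^{(q)}-k_{w'}^{(q)}}_{A^q}^q$. Using the M\"obius identities from Section~\ref{Prelim} (in particular $1-\overline{w}_l\varphi_{w_l}(\zeta_l)=(1-\abs{w_l}^2)/(1-\overline{w}_l\zeta_l)$ together with the derived identity $(\overline{w}_l'-\overline{w}_l)/(1-\overline{w}_l'w_l)=-\overline{u}_l$), this reduces after cancellation to
\[
\norm{k_w^{(q)}-k_{w'}^{(q)}}_{A^q}^{q}=\int_{\D^n}\prod_{l=1}^{n}\abs{1-\overline{w}_l\zeta_l}^{\frac{2(2-p)}{p-1}}\abs{1-G_u(\zeta)}^{q}\,dv(\zeta),
\]
where
\[
G_u(\zeta)=\prod_{l=1}^{n}\frac{(1-\abs{u_l}^2)^{2/p}(1-w_l\overline{u}_l)^{2-2/p}}{(1-\overline{w}_lu_l)^{2/p}(1-\zeta_l\overline{u}_l)^{2}}.
\]
Each factor of $G_u$ tends to $1$ as $\abs{u}\to 0$, uniformly in $w,\zeta\in\D^n$, so $\abs{1-G_u(\zeta)}\leq c(\abs{u})$ with $c(\abs{u})\to 0$. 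When $p>2$, Lemma~\ref{Growth} (applied with $t=0$ and $s=2(p-2)/(p-1)<2$) bounds the kernel factor's integral uniformly in $w$; when $p\leq 2$ the exponent $2(2-p)/(p-1)$ is nonnegative so the factor is itself uniformly bounded. Either way, $\norm{k_w^{(q)}-k_{w'}^{(q)}}_{A^q}\lesssim c(\rho(w,w'))\to 0$ uniformly in $w,w'\in\D^n$, which yields the required conclusion.

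The main obstacle will be the bookkeeping in the change-of-variables step that produces $G_u$ and verifies its uniform convergence to $1$; once that factorization is in hand, the remainder of the argument is a routine triangle inequality combined with a dominated convergence estimate.
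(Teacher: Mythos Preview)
Your argument is correct and shares the same overall architecture as the paper: both reduce via \eqref{Comp} to the uniform $\rho$-continuity of (i) $z\mapsto\lambda_{(p)}(\xi,z)$ and (ii) $w\mapsto k_w^{(q)}$, invoking Lemma~\ref{Invariance} for the composition with $\varphi_{\cdot}(\xi)$.

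Where you diverge is in the treatment of (ii). The paper handles it by duality: it estimates $\norm{k_z^{(q)}-k_{\varphi_z(w)}^{(q)}}_{A^q}$ by testing against unit-norm $f\in A^p$, then introduces $g_z(w)=\prod_l(1-\abs{z_l})^{2/p}f(\varphi_z(w))$ and bounds the two resulting pieces by $\norm{K_0-K_w}_{L^\infty}$ and $\abs{1-\prod_l\abs{1-\overline{w}_lz_l}^{4/p}(1-\abs{w_l}^2)^{-2/p}}$ respectively. You instead compute the $A^q$ norm directly: the change of variable $z=\varphi_w(\zeta)$, together with the M\"obius identities, produces the closed-form integrand $\prod_l\abs{1-\overline{w}_l\zeta_l}^{2(2-p)/(p-1)}\abs{1-G_u(\zeta)}^q$, and the conclusion follows from the uniform limit $G_u\to 1$ plus Lemma~\ref{Growth}. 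Your route is more self-contained and yields an explicit modulus of continuity, while the paper's duality argument avoids the algebraic bookkeeping and would transport more readily to settings where the kernel lacks a product structure. Either way the essential content is the same; your computation of $G_u$ checks out, and the case split $p\le 2$ versus $p>2$ for the remaining integral is handled correctly.
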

\begin{proof}
By \eqref{Comp} we only need to prove that the maps $z\mapsto \lambda_{(p)}(\xi,z)$ and
$z\mapsto k_{\varphi_z(\xi)}^{(q)}$ are uniformly continuous from $\left(\D^n,\rho\right)$ into
$\left(\C,\abs{\,\cdot\,}\right)$ and $\left(A^{q},\norm{\,\cdot\,}_{A^q}\right)$, respectively.  It is obvious that the map $z\mapsto \lambda_{(p)}(z,\xi)$  has the desired property.  So we just focus on the continuity in the second map.

By Lemma \ref{Invariance} we have that $z\mapsto\varphi_z(\xi)$ is uniformly continuous from $\left(\D^n,\rho\right)$ into itself.  So, it suffices to prove the uniform continuity of the map $w\mapsto k^{(q)}_w$.  Namely, for any $\epsilon>0$, there is a $\delta>0$ such that if $\rho\left(w,0\right)=\max_{1\leq l\leq n}\abs{w_l}<\delta$ then
$$
\sup_{z\in\D^n}\norm{k_{z}^{(q)}-k_{\varphi_z(w)}^{(q)}}_{A^q}<\epsilon.
$$
We use the duality between $A^{p}$ and $A^{q}$ to have that
\begin{equation*}
\sup_{z\in\D^n}\norm{k_{z}^{(q)}-k_{\varphi_z(w)}^{(q)}}_{A^q}\approx\sup_{z\in\D^n}\sup_{f\in A^{p}:\norm{f}_{A^p}\leq 1}\abs{\prod_{l=1}^n\left(1-\abs{z_l}^2\right)^{\frac{2}{p}}f(z)-\prod_{l=1}^n\left(1-\abs{\varphi_{z_l}(w_l)}^2\right)^{\frac{2}{p}}f(\varphi_z(w))}.
\end{equation*}
Consider the term inside the supremums, and observe that it can be dominated by
$$
\abs{\prod_{l=1}^n\left(1-\abs{z_l}^2\right)^{\frac{2}{p}}\left(f(z)-f(\varphi_z(w))\right)} +\abs{f(\varphi_z(w))}\abs{\prod_{l=1}^n\left(1-\abs{z_l}^2\right)^{\frac{2}{p}}-\prod_{l=1}^n\left(1-\abs{\varphi_{z_l}(w_l)}^2\right)^{\frac{2}{p}}}
$$
through adding and subtracting a common term.  For the first term, set 
$$
g_z(w)=\prod_{l=1}^n \left(1-\abs{z_l}\right)^{\frac{2}{p}}f(\varphi_z(w))=\prod_{l=1}^n\left(1-\overline{z}_lw_l\right)^{\frac{4}{p}}U_z^p f(w).
$$
Since $U_z^p$ is an isometry, we have that $\norm{g_z}_{A^p}\lesssim\norm{f}_{A^p}$.  Now observe that we have
\begin{eqnarray*}
\prod_{l=1}^n\left(1-\abs{z_l}^2\right)^{\frac{2}{p}}\abs{\left(f(z)-f(\varphi_z(w))\right)}  & = & \abs{g_z(0)-g_z(w)}\\
& \lesssim &  \norm{f}_{A^p}\norm{K_0-K_w}_{L^\infty}.
\end{eqnarray*}
While for the second term, it is easy to see using the reproducing property of the kernel $k_z^{(q)}$ that this is dominated by a constant times
$$
\norm{f}_{A^p}\abs{1-\prod_{l=1}^n\frac{(1-\abs{z_l}^\frac{2}{p})}{\left(1-\abs{\varphi_{z_l}(w_l)}^2\right)^{\frac{2}{p}}}}=\norm{f}_{A^p}\abs{1-\prod_{l=1}^n\frac{\abs{1-\overline{w}_l z_l}^{\frac{4}{p}}}{\left(1-\abs{w_l}^2\right)^{\frac{2}{p}}}}.
$$
Combining these estimates, we find that
$$
\sup_{z\in\D^n}\norm{k_{z}^{(q)}-k_{\varphi_z(w)}^{(q)}}_{A^q}\lesssim \norm{K_w-K_{0}}_{L^\infty}+\sup_{z\in\D^n}\abs{1-\prod_{l=1}^n\frac{\abs{1-\overline{w}_l z_l}^{\frac{4}{p}}}{\left(1-\abs{w_l}^2\right)^{\frac{2}{p}}}}.
$$
However, it is now easy to see that by taking $\max_{1\leq l\leq n}\abs{w_l}$ small enough both of the remaining terms can be made sufficiently small, independent of $z$.  This gives the desired continuity. 
\end{proof}

\begin{prop}
\label{WOTCon}
Let $S\in \mathcal{L}\left(A^p, A^p\right)$.  Then the map $\Psi_S: \D^n \to \left(\mathcal{L}\left(A^p, A^p\right), WOT\right)$ extends continuously to $M_{\mathcal{A}}$.
\end{prop}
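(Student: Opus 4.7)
The plan is to reduce the WOT-continuity of $\Psi_S$ to a family of scalar continuity problems that can be handled by Lemma \ref{Extend}, and then to reassemble the scalar extensions into a bounded operator. The key inputs will be Lemma \ref{UniformCon}, the uniform bound $\|S_z\|_{\mathcal{L}(A^p,A^p)} \lesssim \|S\|$ (which follows because $U_z^p$ is an isometry on $A^p$ and $(U_z^q)^*$ is bounded), and density of the normalized reproducing kernels.

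\textbf{Reduction to scalar tests.} Since the WOT on $\mathcal{L}(A^p, A^p)$ is generated by the seminorms $T \mapsto |\langle Tf, g\rangle_{A^2}|$ with $f \in A^p$ and $g \in A^q$, the map $\Psi_S$ extends continuously to $M_{\mathcal{A}}$ precisely when each scalar function $F_{f,g}(z) := \langle S_z f, g\rangle_{A^2}$ extends continuously to $M_{\mathcal{A}}$ and the resulting extensions assemble into a bounded operator at each $x \in M_{\mathcal{A}}$.

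\textbf{Extension on reproducing kernels.} First I treat $f = k_\xi^{(p)}$ and $g = k_\eta^{(q)}$. Moving the unitary-like factors across the $A^2$-pairing,
$$
F_{\xi,\eta}(z) = \langle S(U_z^q)^* k_\xi^{(p)},\, (U_z^p)^* k_\eta^{(q)}\rangle_{A^2}.
$$
Applying Lemma \ref{UniformCon} twice (once with exponents $(p,q)$, once with $(q,p)$), both maps $z \mapsto (U_z^q)^* k_\xi^{(p)}$ and $z \mapsto (U_z^p)^* k_\eta^{(q)}$ are uniformly $(\rho, \|\cdot\|)$ continuous into $A^p$ and $A^q$ respectively, with bounded range. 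Combined with boundedness of $S$ and H\"older continuity of the $A^2$-pairing between $A^p$ and $A^q$, this yields that $F_{\xi,\eta}$ is uniformly $(\rho, |\cdot|)$ continuous on $\D^n$ with bounded range; then Lemma \ref{Extend} provides the desired continuous extension to $M_{\mathcal{A}}$.

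\textbf{Density and general $f,g$.} The linear span of $\{k_\xi^{(p)} : \xi \in \D^n\}$ is dense in $A^p$: if $h \in A^q$ annihilates every $k_\xi^{(p)}$, the reproducing formula gives $\prod_l(1-|\xi_l|^2)^{2/q} h(\xi) = 0$ for all $\xi$, hence $h \equiv 0$; the analogous statement holds in $A^q$. By bilinearity, $F_{f,g}$ extends continuously to $M_{\mathcal{A}}$ whenever $f$ and $g$ are finite linear combinations of reproducing kernels. For arbitrary $f \in A^p$ and $g \in A^q$, choose approximating combinations $f_n \to f$ and $g_m \to g$ and observe
$$
|F_{f,g}(z) - F_{f_n, g_m}(z)| \lesssim \|S\|\bigl(\|f-f_n\|_{A^p}\|g\|_{A^q} + \|f_n\|_{A^p}\|g-g_m\|_{A^q}\bigr),
$$
which is uniform in $z$. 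Hence $F_{f,g}$ is a uniform limit of continuously-extendable functions on $\D^n$ and itself extends continuously to $M_{\mathcal{A}}$.

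\textbf{Assembly and the main obstacle.} For $x \in M_{\mathcal{A}}$, the bilinear form $(f,g) \mapsto \widetilde{F_{f,g}}(x)$ on $A^p \times A^q$ is bounded by $C\|S\|\,\|f\|_{A^p}\|g\|_{A^q}$ (being a pointwise limit of such bounded forms along any net $z^\omega \to x$), so it represents a unique operator $S_x \in \mathcal{L}(A^p, A^p)$ via $\langle S_x f, g\rangle_{A^2} = \widetilde{F_{f,g}}(x)$. Setting $\Psi_S(x) := S_x$ yields the required WOT-continuous extension. The main subtlety is that $(\mathcal{L}(A^p, A^p), \mathrm{WOT})$ is not metrizable, so Lemma \ref{Extend} cannot be applied directly to $\Psi_S$; the reduction to scalar tests followed by uniform-boundedness-driven assembly is precisely what circumvents this difficulty, and the uniform bound $\|S_z\| \lesssim \|S\|$ is what allows both the density argument and the identification of the limiting bilinear form with a bounded operator.
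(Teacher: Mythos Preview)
Your proof is correct, but it takes a somewhat different route from the paper's. The paper observes that \emph{bounded} subsets of $\mathcal{L}(A^p,A^p)$ are metrizable in the WOT (this uses separability and reflexivity of $A^p$ for $1<p<\infty$) and have compact closure; since $\Psi_S(\D^n)$ is bounded, Lemma~\ref{Extend} applies directly to the operator-valued map $\Psi_S$, and one only needs uniform $(\rho,\text{WOT})$-continuity. The paper then checks this by a telescoping decomposition $S_{z_1}-S_{z_2}=U_{z_1}^{p}S[(U_{z_1}^{q})^*-(U_{z_2}^{q})^*]+(U_{z_1}^{p}-U_{z_2}^{p})S(U_{z_2}^{q})^*$ and reduces, exactly as you do, to Lemma~\ref{UniformCon} on the dense span of reproducing kernels. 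So your worry that Lemma~\ref{Extend} cannot be applied directly is overstated: the paper's point is precisely that on the bounded range it can.

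What your approach buys is that it never invokes the metrizability of bounded WOT-sets; you instead extend each scalar function $F_{f,g}$ by Lemma~\ref{Extend} (legitimately, since $\C$ is metric), pass to general $f,g$ by uniform approximation, and then assemble the limiting bilinear form into an operator via the uniform bound $\|S_z\|\lesssim\|S\|$. This is a perfectly valid and self-contained alternative. The paper's route is shorter and yields uniform $(\rho,\text{WOT})$-continuity of $\Psi_S$ as an intermediate statement, which is conceptually a bit cleaner; your route is more elementary in that it only ever applies Lemma~\ref{Extend} to scalar-valued functions.
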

\begin{proof}
Bounded sets in $\mathcal{L}\left(A^p, A^p\right)$ are metrizable and have compact closure in the weak operator topology.  Since $\Psi_S\left(\D^n\right)$ is bounded, by Lemma \ref{Extend} we only need to show that $\Psi_S$ is uniformly continuous from $\left(\D^n,\rho\right)$ into $\left(\mathcal{L}\left(A^p, A^p\right), WOT\right)$, where $WOT$ is the weak operator topology.  Namely, we need to demonstrate that for $f\in A^p$ and $g\in A^{q}$ the function $z\mapsto \ip{S_z f}{g}_{A^2}$ is uniformly continuous from $\left(\D^n,\rho\right)$ into $\left(\C, \abs{\,\cdot\,}\right)$.

For $z_1,z_2\in\D^n$ we have
\begin{eqnarray*}
S_{z_1}-S_{z_2} & = & U_{z_1}^{p}S\left(U_{z_1}^{q}\right)^*-U_{z_2}^{p}S\left(U_{z_2}^{q}\right)^*\\
 & = & U_{z_1}^{p}S[\left(U_{z_1}^{q}\right)^*-\left(U_{z_2}^{q}\right)^*]+\left(U_{z_1}^{p}-U_{z_2}^{p}\right)S\left(U_{z_2}^{q}\right)^*\\
 & = & I+II.
\end{eqnarray*}
Note that the terms $I$ and $II$ have a certain symmetry, and so it is enough to deal with either term since the argument will work in the other case as well.  Observe that
\begin{eqnarray*}
\abs{\ip{If}{g}_{A^2}} & \leq & \norm{U_{z_1}^{p}S}_{\mathcal{L}\left(A^p, A^p\right)}\norm{\left[\left(U_{z_1}^{q}\right)^*-\left(U_{z_2}^{q}\right)^*\right]f}_{A^p}\norm{g}_{A^q}\\
\abs{\ip{IIf}{g}_{A^2}} & \leq & \norm{\left(U_{z_1}^{q}\right)^{*}S}_{\mathcal{L}\left(A^p, A^p\right)}\norm{\left[\left(U_{z_1}^{p}\right)^*-\left(U_{z_2}^{p}\right)^*\right]g}_{A^q}\norm{f}_{A^p}.
\end{eqnarray*}
Since $S$ is bounded and since $\norm{U_{z}^{p}}_{\mathcal{L}\left(A^p, A^p\right)}\lesssim 1$ for all $z$, we just need to show that the expression $\norm{[(U_{z_1}^{p})^*-(U_{z_2}^{p})^*]g}_{A^q}$ can be made small.  It suffices to do this on a dense set of functions, and in particular we can take the linear span of $\left\{k_{w}^{(p)}:w\in\D^n\right\}$.  However, then we can apply Lemma \ref{UniformCon} to conclude the result.
\end{proof}

This proposition allow us to define $S_x$ for all $x\in M_{\mathcal{A}}$. Namely, we let $S_x:=\Psi_S(x)$. In particular, if $\{z^{\omega}\}$ is a net in $\D^n$ that tends to $x\in M_{\mathcal{A}}$ then $S_{z^{\omega}}\to S_x$ in the weak operator topology. In Proposition~\ref{SOTCon} below we will show that if $S\in\mathcal{T}_p$ then we also have that $S_{z^{\omega}}\to S_x$ in the strong operator topology. 

\begin{lm}
\label{Inverse}
If $\{z^{\omega}\}$ is a net in $\D^n$ converging to $x\in M_{\mathcal{A}}$ then $T_{b_x}$ is invertible and $T_{b_{z^{\omega}}}^{-1}\to T_{b_x}^{-1}$ in the strong operator topology.
\end{lm}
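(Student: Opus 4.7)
The plan is to combine two ingredients already at hand: (i) the SOT convergences $T_{b_{z^\omega}}\to T_{b_x}$ in $\mathcal{L}(A^p,A^p)$ and $T_{\overline{b_{z^\omega}}}\to T_{\overline{b_x}}$ in $\mathcal{L}(A^q,A^q)$ established in the preceding discussion; and (ii) a uniform lower bound for the family $\{T_{b_{z}}\}$ coming from the factorization $T_{b_z}^{-1} = U_z^p (U_z^q)^{*}$. Since $U_z^p$ is an isometry of $A^p$ and $(U_z^q)^{*}$ is the Banach-space dual of the bijective isometry $U_z^q$ of $A^q$ (hence is itself an isometry of $A^p$), we have $\norm{T_{b_z}^{-1}}_{\mathcal{L}(A^p,A^p)}\leq 1$ uniformly in $z\in\D^n$, i.e., $\norm{T_{b_z}f}_{A^p}\geq \norm{f}_{A^p}$ for every $f\in A^p$. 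The same reasoning on the other side gives $\norm{T_{\overline{b_z}}h}_{A^q}\geq \norm{h}_{A^q}$ for every $h\in A^q$.

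First I would prove $T_{b_x}$ is invertible. Passing the lower bound through the SOT limit yields $\norm{T_{b_x}f}_{A^p}=\lim_\omega \norm{T_{b_{z^\omega}}f}_{A^p}\geq \norm{f}_{A^p}$, so $T_{b_x}$ is injective with closed range. A short Fubini-plus-reproducing-kernel calculation (using that $\ip{Pg}{h}_{A^2}=\ip{g}{h}_{L^2}$ for $h\in A^q$) identifies the Banach adjoint as $(T_{b_x})^{*} = T_{\overline{b_x}}$ on $A^q$; running the identical limiting argument on $A^q$ yields $\norm{T_{\overline{b_x}}h}_{A^q}\geq\norm{h}_{A^q}$, so $(T_{b_x})^{*}$ is injective. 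By the standard duality between kernels of adjoints and closures of ranges, this is equivalent to density of the range of $T_{b_x}$ in $A^p$, and density combined with closedness forces surjectivity. Thus $T_{b_x}$ is invertible with $\norm{T_{b_x}^{-1}}_{\mathcal{L}(A^p,A^p)}\leq 1$.

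For the SOT convergence of the inverses, I would apply the standard perturbation identity
\[
T_{b_{z^\omega}}^{-1}f - T_{b_x}^{-1}f = T_{b_{z^\omega}}^{-1}\bigl(T_{b_x}-T_{b_{z^\omega}}\bigr)T_{b_x}^{-1}f
\]
to an arbitrary $f\in A^p$. Using $\norm{T_{b_{z^\omega}}^{-1}}\leq 1$ one obtains $\norm{T_{b_{z^\omega}}^{-1}f - T_{b_x}^{-1}f}_{A^p}\leq \norm{(T_{b_x}-T_{b_{z^\omega}})T_{b_x}^{-1}f}_{A^p}$, and the right-hand side tends to zero by applying the already-known SOT convergence $T_{b_{z^\omega}}\to T_{b_x}$ to the fixed vector $T_{b_x}^{-1}f$.

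The main obstacle is extracting surjectivity of $T_{b_x}$: SOT convergence only propagates lower bounds on operators, so it cannot on its own place an element in the range of the limit. The trick is to notice that the entire lower-bound argument runs in parallel on $A^q$ with the conjugate family, and the resulting lower bound on $(T_{b_x})^{*}=T_{\overline{b_x}}$ translates into density of the range of $T_{b_x}$; combined with the closedness coming from the direct lower bound on $T_{b_x}$, this yields the missing surjectivity. Once invertibility is in hand, the SOT convergence of the inverses is a one-line perturbation estimate.
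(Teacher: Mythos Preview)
Your argument is correct and takes a genuinely different route from the paper's proof. The paper first invokes Proposition~\ref{WOTCon} with $S=I_{A^p}$ to obtain a weak operator limit $Q$ of the net $T_{b_{z^\omega}}^{-1}=U_{z^\omega}^{p}(U_{z^\omega}^{q})^*$, then uses the Uniform Boundedness Principle to bound $\norm{T_{b_{z^\omega}}^{-1}}$ and a direct pairing computation to verify $T_{b_x}Q=I=QT_{b_x}$; only after identifying $Q=T_{b_x}^{-1}$ does it apply the same perturbation identity you use to upgrade to SOT convergence. Your approach bypasses the WOT machinery entirely: you read the uniform bound on $T_{b_z}^{-1}$ directly off the factorization $T_{b_z}^{-1}=U_z^{p}(U_z^{q})^*$, push the resulting lower bound through the already-known SOT convergence $T_{b_{z^\omega}}\to T_{b_x}$ (and the parallel one on $A^q$), and then appeal to the closed range theorem. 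This is arguably more elementary, since it avoids Proposition~\ref{WOTCon} and the Uniform Boundedness Principle, at the cost of invoking the kernel/range duality.

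One small correction: the identification $(A^q)^*\cong A^p$ via the $A^2$ pairing is an isomorphism but not an isometry (the constants in $\norm{k_\lambda^{(p)}}_{A^p}\approx 1$ reflect this), so $(U_z^q)^*$ viewed as an operator on $A^p$ is not literally an isometry. You only get $\norm{T_{b_z}^{-1}}_{\mathcal{L}(A^p,A^p)}\lesssim 1$ and hence $\norm{T_{b_z}f}_{A^p}\gtrsim\norm{f}_{A^p}$, with implied constants depending on $p$ and $n$ but not on $z$. This is all your argument needs, so nothing breaks; just replace the exact inequalities by $\lesssim$ and $\gtrsim$ throughout.
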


\begin{proof}
By Proposition \ref{WOTCon} applied to the operator $S=I_{A^p}$ we have that $U_{z^{\omega}}^{p}(U_{z^{\omega}}^{q})^*=T_{b_{z^{\omega}}}^{-1}$ has a weak operator limit in $\mathcal{L}(A^p, A^p)$, denote this by $Q$.  The Uniform Boundedness Principle provides a constant $C$ such that $\norm{T_{b_{z^{\omega}}}^{-1}}_{\mathcal{L}\left(A^p, A^p\right)}\leq C$ for all $\omega$.  Then given $f\in A^p$ and $g\in A^q$, since we know
$$
\norm{(T_{\overline{b}_{z^{\omega}}}-T_{\overline{b}_{x}})g}_{A^q}\to 0
$$
we have
\begin{eqnarray*}
\ip{T_{b_x}Qf}{g}_{A^2}=\ip{Qf}{T_{\overline{b}_x}g}_{A^2} & = & \lim_{\omega}\ip{T^{-1}_{b_{z^{\omega}}}f}{T_{\overline{b}_x}g}_{A^2}\\
& = & \lim_{\omega}\left(\ip{T^{-1}_{b_{z^{\omega}}}f}{(T_{\overline{b}_x}-T_{\overline{b}_{z^{\omega}}})g}_{A^2}+\ip{T^{-1}_{b_{z^{\omega}}}f}{T_{\overline{b}_{z^{\omega}}}g}_{A^2}\right)\\
& = & \ip{f}{g}_{A^2}+\lim_{\omega}\ip{T^{-1}_{b_{z^{\omega}}}f}{(T_{\overline{b}_x}-T_{\overline{b}_{z^{\omega}}})g}_{A^2}=\ip{f}{g}_{A^2}.
\end{eqnarray*}
This gives $T_{b_x}Q=I_{A^p}$.  Since taking adjoints is a continuous operation in the $WOT$, $T_{\overline{b}_{z^{\omega}}}^{-1}\to Q^*$, and interchanging the roles of $p$ and $q$, we have that $T_{\overline{b}_x}Q^*=I$, which implies that $QT_{b_x}=I_{A^p}$.  This gives that $Q=T_{b_x}^{-1}$ and $T_{b_{z^{\omega}}}^{-1}\to T_{b_x}^{-1}$ in the weak operator topology.  Finally, note that
$$
T_{b_{z^{\omega}}}^{-1}-T_{b_x}^{-1}=T_{b_{z^{\omega}}}^{-1}(T_{b_x}-T_{b_{z^{\omega}}}) T_{b_x}^{-1}
$$
and since $\norm{T_{b_{z^{\omega}}}^{-1}}_{\mathcal{L}\left(A^p, A^p\right)}\leq C$ and $T_{b_{z^{\omega}}}-T_{b_x}\to 0$ in the strong operator topology, and so $T_{b_{z^{\omega}}}^{-1}\to T_{b_x}^{-1}$ in the strong operator topology as claimed.
\end{proof}

\begin{prop}
\label{SOTCon}
If $S\in\mathcal{T}_{p}$ and $\{z^{\omega}\}$ is a net in $\D^n$ that tends to $x\in M_{\mathcal{A}}$ then $S_{z^{\omega}}\to S_x$ in the strong operator topology.  In particular, $\Psi_S:\D^n\to \left(\mathcal{L}\left(A^p,A^p\right), SOT\right)$ extends continuously to $M_{\mathcal{A}}$.
\end{prop}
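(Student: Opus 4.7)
The plan is to show that the collection
\[
\mathcal{D}=\left\{S\in\mathcal{L}(A^p,A^p): \Psi_S\text{ extends continuously to }M_{\mathcal{A}}\to(\mathcal{L}(A^p,A^p),\text{SOT})\right\}
\]
is a norm-closed subalgebra of $\mathcal{L}(A^p,A^p)$ containing $T_a$ for every $a\in L^\infty$. Since $\mathcal{T}_p$ is by definition the norm closure of the algebra generated by the Toeplitz operators, this will yield $\mathcal{T}_p\subseteq\mathcal{D}$, which is exactly the claim.

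The norm-closedness of $\mathcal{D}$ is a routine $\varepsilon/3$ argument using the uniform bound $\|S_z\|\le\|S\|$ (which comes from $\|U_z^p\|_{\mathcal{L}(A^p)}=1$) together with $\|S_x\|\le\|S\|$ (from lower semicontinuity of the norm in the weak operator topology applied to Proposition \ref{WOTCon}). For closure under products the key identity is
\[
(ST)_z \;=\; S_z\,T_{b_z}\,T_z,
\]
which one verifies by computing $(U_z^q)^*T_{b_z}U_z^p=((U_z^q)^*)^2(U_z^p)^2=I_{A^p}$, using $T_{b_z}=(U_z^q)^*U_z^p$ together with the involutive identities $(U_z^p)^2=I$ and $((U_z^q)^*)^2=I$. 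Given $S,T\in\mathcal{D}$ and any net $z^\omega\to x$ in $M_{\mathcal{A}}$, the three families $\{S_{z^\omega}\}$, $\{T_{b_{z^\omega}}\}$, $\{T_{z^\omega}\}$ are uniformly bounded and converge in SOT (the outer ones by assumption; $T_{b_{z^\omega}}\to T_{b_x}$ follows from $b_{z^\omega}\to b_x$ pointwise and boundedly on $\D^n$, dominated convergence in $L^p$, and the boundedness of $P:L^p\to A^p$). Since products of uniformly bounded SOT-convergent nets are SOT-convergent, $(ST)_{z^\omega}\to S_x T_{b_x}T_x$ in SOT, and uniqueness of limits against Proposition \ref{WOTCon} forces this SOT-limit to coincide with the weak operator limit $(ST)_x$.

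The main technical step is showing that $T_a\in\mathcal{D}$ for each $a\in L^\infty$. Since $\|(T_a)_z\|\le\|a\|_{L^\infty}$ uniformly, density and uniform boundedness reduce the task to verifying $A^p$-norm convergence of $(T_a)_{z^\omega}f$ for $f$ in the span of the normalized reproducing kernels $\{k_\xi^{(p)}:\xi\in\D^n\}$. The analogue of \eqref{Comp} gives
\[
(T_a)_z k_\xi^{(p)} \;=\; \lambda_{(q)}(\xi,z)\,U_z^p\,T_a\,k_{\varphi_z(\xi)}^{(p)},
\]
where $\lambda_{(q)}(\xi,\cdot)$ is unimodular and extends continuously to $M_{\mathcal{A}}$ since it depends only on $\pi(z)$. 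The remaining norm convergence of $U_{z^\omega}^pT_ak_{\varphi_{z^\omega}(\xi)}^{(p)}$ can be handled by applying Lemma \ref{Extend}, now with target $(A^p,\|\cdot\|_{A^p})$, to the vector-valued map $z\mapsto(T_a)_z k_\xi^{(p)}$, with Lemma \ref{UniformCon} and the technique of its proof supplying both the uniform $\rho$-continuity and the precompactness of the range in $A^p$.

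The main obstacle is precisely this last step. Unlike the Hilbert-space case $p=2$, where $U_z$ is unitary and $(T_a)_z=T_{a\circ\varphi_z}$ is a bona fide Toeplitz operator with nicely converging symbol, for $p\neq 2$ one has the more complicated expression $(T_a)_z=U_z^pPM_aU_z^pT_{b_z}^{-1}$ carrying both the cocycle $T_{b_z}^{-1}$ and a branch of the Jacobian-type weight $J_z^{2/p-1}$; upgrading the weak convergence of Proposition \ref{WOTCon} to $A^p$-norm convergence therefore demands the careful bookkeeping with these weights that is carried out in the corresponding Su\'arez-type arguments of \cite{Sua} and \cite{MSW}.
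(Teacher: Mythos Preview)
Your overall strategy---showing that the set $\mathcal{D}$ of operators for which $\Psi_S$ extends SOT-continuously is a closed subalgebra containing enough Toeplitz operators---is exactly the architecture the paper uses, and your verification of the product identity $(ST)_z=S_zT_{b_z}T_z$ and of closure under norm limits is fine.  The genuine gap is in the ``main technical step'': you try to show $T_a\in\mathcal{D}$ for \emph{every} $a\in L^\infty$ by applying Lemma~\ref{Extend} to the vector-valued map $z\mapsto(T_a)_zk_\xi^{(p)}$.  For that lemma you need two things: uniform $(\rho,\|\cdot\|_{A^p})$-continuity \emph{and} precompactness of the range in $A^p$.  The first is plausible from the proof of Lemma~\ref{UniformCon}, but the second is neither established nor obvious; $(\D^n,\rho)$ is not totally bounded, so uniform continuity plus boundedness does not force precompactness of the image in an infinite-dimensional Banach space.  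Your deferral to ``the careful bookkeeping \dots\ in \cite{Sua} and \cite{MSW}'' is misplaced: those papers (and this one) do not prove $T_a\in\mathcal{D}$ for general $a\in L^\infty$ directly.

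What the paper actually does is use Theorem~\ref{Sua2Thm}---that $\mathcal{T}_p$ is the closed algebra generated by $\{T_a:a\in\mathcal{A}\}$, not merely by $L^\infty$ symbols---to reduce the generator step to $a\in\mathcal{A}$.  For such $a$ one has the explicit factorisation
\[
(T_a)_z=T_{b_z}^{-1}\,T_{(a\circ\varphi_z)b_z}\,T_{b_z}^{-1},
\]
and now everything is transparent: $T_{b_{z^\omega}}^{-1}\to T_{b_x}^{-1}$ in SOT by Lemma~\ref{Inverse}, while Lemma~\ref{Maximal3} gives $a\circ\varphi_{z^\omega}\to a\circ\varphi_x$ uniformly on compacta (this is precisely where membership in $\mathcal{A}$, not just $L^\infty$, is used), so $T_{(a\circ\varphi_{z^\omega})b_{z^\omega}}\to T_{(a\circ\varphi_x)b_x}$ in SOT, and products of uniformly bounded SOT-convergent nets do the rest.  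The reduction to $\mathcal{A}$ is the missing idea in your argument; once you insert it, your closed-subalgebra framework completes cleanly.
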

\begin{proof}
First observe that if $A, B\in \mathcal{L}\left(A^p, A^p\right)$ then
\begin{eqnarray*}
\left(AB\right)_z= U^{p}_zAB\left(U^{q}_z\right)^{*} & = & U^{p}_zA\left(U^{q}_z\right)^{*}\left(U^{q}_z\right)^{*} U^{p}_zU^{p}_zB\left(U^{q}_z\right)^{*}\\
 & = & A_z T_{b_z}B_z.
\end{eqnarray*}
In general this applies to longer products of operators.

For $S\in \mathcal{T}_{p}$ and $\epsilon>0$, by Theorem \ref{Sua2Thm} there is a finite sum of finite products of Toeplitz operators with symbols in $\mathcal{A}$, call it $R$, such that $\norm{R-S}_{\mathcal{L}\left(A^p,A^p\right)}<\epsilon$.  This gives, $\norm{R_z-S_z}_{\mathcal{L}\left(A^p,A^p\right)}\lesssim \epsilon$, and upon passing to the $WOT$ limit, we have $\norm{R_x-S_x}_{\mathcal{L}\left(A^p,A^p\right)}\lesssim \epsilon$ for all $x\in M_{\mathcal{A}}$.  These observations imply it suffices to prove the Lemma for $R$ alone.  By linearity, it suffices to show it in the special case of $R=\prod_{j=1}^m T_{a_j}$ where $a_j\in\mathcal{A}$.  A simple computation shows that
$$
U^{2}_z T_a U^{2}_z= T_{a\circ\varphi_z}
$$
and more generally,
\begin{eqnarray*}
\left(T_a\right)_z & = & U^{p}_z\left(U^{q}_z\right)^*\left(U^{q}_z\right)^*T_a U^{p}_zU^{p}_z\left(U^{q}_z\right)^*\\
& = & U^{p}_z\left(U^{q}_z\right)^*\left(T_{\overline{J}_z^{1-\frac{2}{q}}}U^{2}_zT_aU^{2}_zT_{J_z^{1-\frac{2}{p}}}\right)U^{p}_z\left(U^{q}_z\right)^*\\
& = & T_{b_{z}}^{-1}T_{\left(a\circ\varphi_z\right) b_z}T_{b_{z}}^{-1}.
\end{eqnarray*}
We now combine this computation with the observation at the beginning of the lemma to see that
\begin{eqnarray*}
\left(\prod_{j=1}^{m} T_{a_j}\right)_z & = & \left(T_{a_1}\right)_z T_{b_z}\cdots T_{b_z}\left(T_{a_m}\right)_z\\
 & = & T^{-1}_{b_z} T_{\left(a_1\circ\varphi_z\right) b_z}T^{-1}_{b_z}T_{\left(a_2\circ\varphi_z\right) b_z}T^{-1}_{b_z}\cdots
 T^{-1}_{b_z}T_{\left(a_m\circ\varphi_z\right) b_z}T^{-1}_{b_z}.
\end{eqnarray*}
But, since the product of $SOT$ nets is $SOT$ convergent, Lemma \ref{Inverse} and the fact that
$T_{\left(a\circ\varphi_{z^{\omega}}\right)b_{z^{\omega}}}\to T_{(a\circ\varphi_x) b_x}$ in the $SOT$ gives that
$$
\left(\prod_{j=1}^{m} T_{a_j}\right)_{z^{\omega}}\to T^{-1}_{b_x}
T_{\left(a_1\circ\varphi_x\right) b_x}T^{-1}_{b_x}T_{\left(a_2\circ\varphi_x\right) b_x}T^{-1}_{b_x}\cdots T^{-1}_{b_x}
T_{\left(a_m\circ\varphi_x\right) b_x}T^{-1}_{b_x}.
$$
But, this is exactly the statement that $R_{z^{\omega}}\to R_x$ for the operator $\prod_{j=1}^{m} T_{a_j}$, and proves the continuous extension we desired.
\end{proof}

We now collect a very simple Lemma based on these computations that gives information about the Berezin transform vanishing in terms of $S_x$.

\begin{lm}
\label{BerezinVanish}
Let $S\in\mathcal{L}\left(A^p,A^p\right)$.  Then $B(S)(z)\to 0$ as $z\to \partial\D^n$ if and only if $S_x=0$ for all $x\in M_{\mathcal{A}}\setminus\D^n$.
\end{lm}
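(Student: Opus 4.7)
The plan is to translate the hypothesis about the Berezin transform into statements about the matrix coefficients of $S_z$, and then to exploit the weak operator continuity of $\Psi_S$ from Proposition \ref{WOTCon} together with the injectivity of the Berezin transform on bounded operators. The pivotal algebraic identity comes from \eqref{Comp} evaluated both at $\xi=0$ and at a general $\xi\in\D^n$ (together with its $p\leftrightarrow q$ analog, derived by the same computation). Noting that $k_0^{(p)}=k_0^{(q)}=\mathbf{1}$, these give
\begin{align*}
(U^q_z)^*\mathbf{1}=\lambda_{(q)}(0,z)\,k^{(p)}_z, &\qquad (U^p_z)^*\mathbf{1}=\lambda_{(p)}(0,z)\,k^{(q)}_z,\\
(U^q_z)^* k^{(p)}_\xi=\lambda_{(q)}(\xi,z)\,k^{(p)}_{\varphi_z(\xi)}, &\qquad (U^p_z)^* k^{(q)}_\xi=\lambda_{(p)}(\xi,z)\,k^{(q)}_{\varphi_z(\xi)},
\end{align*}
with all the $\lambda$ factors unimodular. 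Plugging the first row into $\ip{S_z\mathbf{1}}{\mathbf{1}}_{A^2}=\ip{S(U^q_z)^*\mathbf{1}}{(U^p_z)^*\mathbf{1}}_{A^2}$ yields $\abs{\ip{S_z\mathbf{1}}{\mathbf{1}}_{A^2}}=\abs{B(S)(z)}$, and plugging the second row yields $\abs{\ip{S_z k^{(p)}_\xi}{k^{(q)}_\xi}_{A^2}}=\abs{B(S)(\varphi_z(\xi))}$ for every $\xi\in\D^n$.

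For the $(\Leftarrow)$ direction, assume $S_x=0$ for every $x\in M_{\mathcal{A}}\setminus\D^n$. If $B(S)(z)$ failed to vanish at the boundary, I would extract a sequence $z^k\to\partial\D^n$ with $\abs{B(S)(z^k)}\ge\epsilon>0$ and then, by compactness of $M_{\mathcal{A}}$, pass to a subnet $z^\omega\to x$; since the Euclidean topology on $\D^n$ agrees with the topology it inherits from $M_{\mathcal{A}}$, this $x$ lies outside $\D^n$. Proposition \ref{WOTCon} then gives $\ip{S_{z^\omega}\mathbf{1}}{\mathbf{1}}_{A^2}\to\ip{S_x\mathbf{1}}{\mathbf{1}}_{A^2}=0$, contradicting $\abs{\ip{S_{z^k}\mathbf{1}}{\mathbf{1}}_{A^2}}=\abs{B(S)(z^k)}\ge\epsilon$.

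For the $(\Rightarrow)$ direction, assume $B(S)$ vanishes at $\partial\D^n$, fix $x\in M_{\mathcal{A}}\setminus\D^n$ and a net $z^\omega\to x$, and fix $\xi\in\D^n$. Some coordinate $\pi_l(x)$ lies on $\partial\D$, and a one-variable computation (using $\abs{\pi_l(x)}=1$, so that $\abs{\pi_l(x)-\xi_l}=\abs{1-\overline{\pi_l(x)}\xi_l}$) shows $\abs{\varphi_{z^\omega_l}(\xi_l)}\to 1$; hence $\varphi_{z^\omega}(\xi)$ escapes every compact subset of $\D^n$ and $B(S)(\varphi_{z^\omega}(\xi))\to 0$ by hypothesis. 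Combining this with Proposition \ref{WOTCon} and the second identity above yields $B(S_x)(\xi)=\ip{S_x k^{(p)}_\xi}{k^{(q)}_\xi}_{A^2}=0$ for every $\xi\in\D^n$. Since $S_x$ is bounded (as a WOT limit of operators with norm $\lesssim\norm{S}$) and the Berezin transform is one-to-one on bounded operators, this forces $S_x=0$. I expect the main obstacle to be the bookkeeping with the unimodular $\lambda$ factors and checking that the $p\leftrightarrow q$ variant of \eqref{Comp} follows by the same derivation; once that is in place, the rest is soft analysis on $M_{\mathcal{A}}$.
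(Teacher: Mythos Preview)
Your proof is correct and follows essentially the same route as the paper: both hinge on the identity $\abs{B(S_z)(\xi)}=\abs{B(S)(\varphi_z(\xi))}$ obtained from \eqref{Comp} (and its $p\leftrightarrow q$ twin), then invoke the $WOT$ continuity of $\Psi_S$ from Proposition~\ref{WOTCon} together with injectivity of the Berezin transform. The only cosmetic difference is that where the paper simply asserts $\varphi_{z^\omega}(\xi)\to\partial\D^n$, you supply the explicit justification via $\pi(x)\in\partial\D^n$ and the one-variable M\"obius computation; this is a welcome addition and changes nothing in the overall argument.
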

\begin{proof}
First, some general computations.  Let $z,\xi\in\D^n$, then we have that
\begin{eqnarray*}
B(S_z)(\xi) & = & \ip{S\left(U^{q}_z\right)^*k_\xi^{(p)}}{\left(U^{p}_z\right)^*k_\xi^{(q)}}_{A^2}\\
& = & \lambda_{(q)}(\xi,z)\overline{\lambda_{(p)}(\xi,z)}\ip{Sk_{\varphi_z(\xi)}^{(p)}}{k_{\varphi_z(\xi)}^{(q)}}_{A^2}.
\end{eqnarray*}
Here the last equality follow by \eqref{Comp}.  Thus, we have that $\abs{B(S_z)(\xi)}=\abs{B(S)(\varphi_z(\xi))}$ since $\lambda_{(p)}$ and $\lambda_{(q)}$ are unimodular numbers.  For $x\in M_{\mathcal{A}}\setminus\D^n$, there is a net $\{z^{\omega}\}$ tending to $x$ and for $\xi\in\D^n$ fixed, the continuity of $\Psi_S$ in the $WOT$, Lemma \ref{WOTCon}, gives that $B(S_{z^{\omega}})(\xi)\to B(S_x)(\xi)$ and by the computations above $\abs{B(S)(\varphi_{z^{\omega}}(\xi))}\to \abs{B(S_x)(\xi)}$.

Now, suppose $B(S)(z)$ vanishes as $z\to \partial\D^n$.  Since $x\in M_{\mathcal{A}}\setminus\D^n$ and $z^{\omega}\to x$ we have that $z^{\omega}\to\partial\D^n$ and similarly $\varphi_{z^{\omega}}(\xi)\to \partial\D^n$ too.  Since $B(S)(z)$ vanishes as we approach the boundary, then we have that $B(S_x)(\xi)=0$, and since $\xi\in\D^n$ was arbitrary and the Berezin transform is one-to-one we have that $S_x=0$

Conversely, suppose the Berezin transform does not vanish as we approach the boundary.  Then we have a sequence $\{z^{k}\}$ in $\D^n$ such that $z^{k}\to \partial\D^n$ and $\abs{B(S)(z^k)}\geq\delta>0$.  Since $M_{\mathcal{A}}$ is compact, extract a subnet $\{z^{\omega}\}$ of $\{z^{k}\}$ converging to $x\in M_{\mathcal{A}}\setminus\D^n$.  The computations above imply $\abs{B(S_x)(0)}\geq\delta>0$, which in turn gives that $S_x\neq 0$.
\end{proof}

\section{Bergman--Carleson Measures and Approximation}
\label{CMA}
Given a complex-valued measure $\mu$ whose total variation is Bergman--Carleson,
our next goal is to construct a sequence of functions $B_k(\mu) \in \mathcal{A}$ such that $T_{B_k(\mu)} \rightarrow T_\mu$ in the norm of $\mathcal{L}\left(A^p, A^p\right)$ for $1<p<\infty$.
As a consequence, we have the following Theorem.
\begin{thm}
\label{Sua2Thm}
The Toeplitz algebra $\mathcal{T}_{p}$ equals the closed algebra generated by $\left\{T_a:a\in\mathcal{A}\right\}$.
\end{thm}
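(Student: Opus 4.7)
The strategy is to reduce Theorem \ref{Sua2Thm} to the announced approximation statement: for every complex measure $\mu$ whose total variation $\abs{\mu}$ is Bergman--Carleson there exist $B_k(\mu)\in\mathcal{A}$ such that $T_{B_k(\mu)}\to T_\mu$ in the norm of $\mathcal{L}(A^p,A^p)$. Specialising to $\mu=a\,dv$ with $a\in L^\infty$ (which is automatically Bergman--Carleson) expresses each Toeplitz generator $T_a$ of $\mathcal{T}_p$ as a norm limit of operators in the closed algebra generated by $\{T_b:b\in\mathcal{A}\}$; the reverse inclusion is free since $\mathcal{A}\subset L^\infty$.

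To build $B_k(\mu)$ I would combine the two geometric decompositions already in place. First use Lemma \ref{StandardGeo_Polydisc} at a fixed small scale $\varrho$ to get a disjoint partition $\{D_{\vec m}\}$ with centers $w_{\vec m}$. Then use Lemma \ref{SuaGeo2_Poly} at the large scale $k\sigma$, with $\sigma$ fixed and $k$ the approximation index, to produce a $\rho$-Lipschitz partition of unity $\{\psi^{(k)}_{\vec j}\}$ subordinate to the enlarged sets $F_{1,\vec j}$ whose Lipschitz constants are controlled by property (v) of that lemma. Define
$$
B_k(\mu)(z):=\sum_{\vec m}\frac{\mu(D_{\vec m})}{v(D_{\vec m})}\,\chi^{(k)}_{\vec m}(z),
$$
where $\chi^{(k)}_{\vec m}$ is a mollified indicator of $D_{\vec m}$ assembled from the $\psi^{(k)}_{\vec j}$. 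Lemma \ref{CM}(3) bounds the coefficients uniformly so that $B_k(\mu)\in L^\infty$, and the uniform $\rho$-Lipschitz control on the partition of unity places it in $\mathcal{A}$.

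To estimate $\norm{T_\mu-T_{B_k(\mu)\,dv}}_{\mathcal{L}(A^p,A^p)}$ I would split into two regimes. On the near-diagonal region $\rho(z,w)\leq\tanh(k\sigma/2)$, Remark \ref{Useful2} says that $\prod_l(1-\overline w_l z_l)^{-2}$ is essentially constant on each $D_{\vec m}$, so replacing $\mu$ by its cell-averaged density produces a kernel error of order $\varrho$, independent of $k$, which is made small once at the outset by choosing $\varrho$ small. For the complementary off-diagonal region I would organise the difference as $\sum_{\vec j} M_{a_{\vec j}}P_\mu M_{b_{\vec j}}$ with $a_{\vec j}=\psi^{(k)}_{\vec j}$ supported in $F_{1,\vec j}$ and $b_{\vec j}$ supported off $F_{k,\vec j}$; iterating property (iii) of Lemma \ref{SuaGeo2_Poly} gives $\rho$-separation at least $\tanh(k\sigma)$, so Lemma \ref{Tech2} yields a bound $N(n)\beta_p(k\sigma)\norm{T_{\abs{\mu}}}_{\mathcal{L}(A^p,A^p)}$, which vanishes as $k\to\infty$.

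The main obstacle is the interplay of the two scales $\varrho$ and $k\sigma$: to keep $B_k(\mu)$ inside $\mathcal{A}$ one needs the partition of unity to be $\rho$-Lipschitz with constants independent of $k$, while the off-diagonal factor $\beta_p(k\sigma)$ must dominate the combinatorial multiplicity $N(n)$ coming from the overlap of the cover. Once this bookkeeping is arranged the near-diagonal and off-diagonal errors can be driven to zero in the right order (first fix $\varrho$ tiny, then let $k\to\infty$), giving $T_{B_k(\mu)}\to T_\mu$ in operator norm and hence Theorem \ref{Sua2Thm}.
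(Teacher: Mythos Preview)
Your reduction --- approximate each $T_\mu$ with $\abs{\mu}$ Bergman--Carleson by $T_a$ for some $a\in\mathcal{A}$, then specialise to $\mu=a\,dv$ --- is exactly how the paper proceeds. The gap is in your construction of $B_k(\mu)$. You ask for $\chi^{(k)}_{\vec m}$ to be a ``mollified indicator of $D_{\vec m}$ assembled from the $\psi^{(k)}_{\vec j}$'', but the two scales are incompatible: the cells $D_{\vec m}$ have $\rho$-diameter $\approx\varrho$, while your partition of unity lives at the much larger scale $k\sigma$. Any function with $\rho$-Lipschitz constant independent of $\varrho$ that approximates $1_{D_{\vec m}}$ must spread over many neighbouring cells, and then the overlap makes $\sum_{\vec m}\frac{\mu(D_{\vec m})}{v(D_{\vec m})}\chi^{(k)}_{\vec m}$ unbounded; if instead each $\chi^{(k)}_{\vec m}$ is genuinely localised near $D_{\vec m}$, its Lipschitz constant is $\gtrsim 1/\varrho$ and uniform membership in $\mathcal{A}$ fails. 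Either way the near-diagonal claim collapses: once the mollification is at scale $k\sigma$, $B_k(\mu)$ is a \emph{coarse} average of $\mu$ that gets coarser as $k\to\infty$, so the error is not of order $\varrho$ independently of $k$, and sending $k\to\infty$ moves you away from $T_\mu$ rather than toward it.

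The paper's $B_k(\mu)$ is not a cell-averaging scheme at all but the $k$-Berezin transform of Nam--Zheng, the explicit integral smoothing in \eqref{Berezin-k-poly}; membership in $\mathcal{A}$ is then automatic (Lemma~\ref{bo-lip}). Convergence $T_{B_k(\mu)}\to T_\mu$ comes not from a near/far kernel splitting but from two analytic facts: the commutation $B_0 B_k(\mu)=B_k B_0(\mu)$ forces $\norm{B_0(B_k(\mu)\,dv-\mu)}_{L^\infty}\to 0$ (since $B_0(\mu)\in\mathcal{A}$ and $B_k a\to a$ uniformly for $a\in\mathcal{A}$), and the Schur-type bound of Lemma~\ref{Lemma_Change_Est} controls $\norm{T_\nu}_{\mathcal{L}(A^p,A^p)}$ by $\sup_{z}\norm{T_{\nu_z}1}_{L^{p_1}}$, a quantity that is then driven to zero via \cite{NZ}*{Lemma~3.4} once the Berezin transform of $\nu$ vanishes uniformly. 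Lemmas~\ref{Tech1} and~\ref{Tech2} play no role in the proof of Theorem~\ref{Sua2Thm}; they enter only later, in Section~\ref{Approximation}.
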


To prove this Theorem requires some additional notation.  Let $\mu$ be a complex-valued, Borel, regular measure on $\D^n$ of finite total variation.  Following Nam and Zheng, \cite{NZ}, we define the $k$-Berezin transform of $\mu$ to be the function
\begin{equation}
\label{Berezin-k-poly}
B_k(\mu)(z):=\left(k+1\right)^n\int_{\D^n} \prod_{l=1}^n\frac{\left(1-\abs{z_l}^{2}\right)^2}{\abs{1-\overline{w}_l z_l}^4}\prod_{l=1}^n\left(1-\abs{\varphi_{z_l}(w_l)}^2\right)^k\,d\mu(w).
\end{equation}
It is immediate that we have an equivalent definition given by
\begin{equation}
\label{Berezin-k-poly2}
B_k(\mu)(z)=\left(k+1\right)^n\int_{\D^n} \prod_{l=1}^n\frac{\left(1-\abs{z_l}^{2}\right)^{2+k}\left(1-\abs{w_l}^2\right)^k}{\abs{1-\overline{w}_l z_l}^{2(2+k)}}\,d\mu(w).
\end{equation}
\begin{rem}
The astute reader will not see the definition \eqref{Berezin-k-poly} or \eqref{Berezin-k-poly2} in the paper \cite{NZ}.  However, the definition we are using is simply the definition of Nam and Zheng in \cite{NZ}*{Proposition 2.2} applied to the operator $T_{\mu}$.  Indeed, to see this, for a general operator one has
$$
B_k\left(S\right)(z)=\left(k+1\right)^n\prod_{l=1}^n\left(1-\abs{z_l}^2\right)^{2+k}\int_{\D^n}S\left(\prod_{l=1}^n\frac{\left(1-\cdot\,\overline{w}_l\right)^k}{\left(1-\cdot\,\overline{z}_l\right)^{k+2}}\right)(w) \prod_{l=1}^n\frac{1}{\left(1-\overline{w}_lz_l\right)^{k+2}} \,dv(w).
$$ 
Now, letting $S=T_\mu$ and a computation yields 
$$
T_{\mu}\left(\prod_{l=1}^n\frac{\left(1-\cdot\,\overline{w}_l\right)^k}{\left(1-\cdot\,\overline{z}_l\right)^{k+2}}\right)(w)=\int_{\D^n}\prod_{l=1}^n\frac{\left(1-\overline{w}_l\xi_l\right)^{k}}{\left(1-\overline{z}_l\xi_l\right)^{k+2}\left(1-\overline{\xi}_lw_l\right)^{2}} \,d\mu(\xi).
$$
Using this expression, substitution into the above formula of Nam and Zheng, an application of Fubini and using the reproducing property of the kernel $K_z$  then yields $B_k(T_\mu)=B_k(\mu)$ as given in \eqref{Berezin-k-poly} and \eqref{Berezin-k-poly2}. 
\end{rem}

For $z\in\D^n$, we consider a related measure defined by $\mu_z(w):=\left(\prod_{l=1}^n\frac{(1-\abs{z_l}^2)}{\abs{1-\overline{z}_lw_l}^2}\right)^{-2}\mu\circ\varphi_z(w)$.  A straightforward change of variables argument gives
\begin{equation}
\label{ChangeofVariables}
\int_{\D^n} f\left(\varphi_z(\xi)\right) \prod_{l=1}^n \frac{(1-\abs{z_l}^2)^2}{\abs{1-\overline{z}_l\xi_l}^4}\,d\mu(\xi)=\int_{\D^n} f\left(\xi\right) \,d\mu_z(\xi).
\end{equation}
Which in turn gives a relationship between the $k$-Berezin transform and the automorphism $\varphi_z$,
\begin{equation}
\label{MagicalEquality}
B_k(\mu)\left(\varphi_z(w)\right)=B_k(\mu_z)(w).
\end{equation}
Indeed, from the definition of $\mu_z$, we have
\begin{eqnarray*}
B_k(\mu_z)(w) & = & \left(k+1\right)^n\int_{\D^n} \prod_{l=1}^n\frac{\left(1-\abs{w_l}^{2}\right)^2}{\abs{1-\overline{w}_l \xi_l}^4}\prod_{l=1}^n\left(1-\abs{\varphi_{w_l}(\xi_l)}^2\right)^k\,d\mu_z(\xi)\\
 & = & \left(k+1\right)^n\int_{\D^n} \prod_{l=1}^n\frac{\left(1-\abs{w_l}^{2}\right)^2}{\abs{1-\overline{w}_l \varphi_{z_l}(\xi_l)}^4}\prod_{l=1}^n \frac{\left(1-\abs{\varphi_{w_l}\left(\varphi_{z_l}(\xi_l)\right)}^2\right)^k\left(1-\abs{z_l}^2\right)^2}{\abs{1-\overline{z}_l\xi_l}^4}\,d\mu(\xi)\\
& = & \left(k+1\right)^n\int_{\D^n} \prod_{l=1}^n\frac{\left(1-\abs{\varphi_{z_l}(w_l)}^{2}\right)^2}{\abs{1-\overline{\xi}_l \varphi_{z_l}(w_l)}^4}\prod_{l=1}^n\left(1-\abs{\varphi_{\varphi_{z_l}(w_l)}(\xi_l)}^2\right)^k\,d\mu(\xi)\\
& = & B_k(\mu)(\varphi_z(w)).
\end{eqnarray*}
In the display above, the first equality is just the definition of the $k$-Berezin transform, the second is just \eqref{ChangeofVariables}, and the third follows from computation.  

It is immediate to see that when $k=0$, that $B_0(\mu)(z)=B(T_\mu)(z)$, and so if $\mu$ is a Bergman--Carleson measure then we have
$$
\norm{\mu_z}_{\textnormal{RKM}}=\norm{B_0(\mu_z)}_{L^\infty}=\norm{B_0(\mu)}_{L^\infty}=\norm{B(T_\mu)}_{L^\infty}=\norm{\mu}_{\textnormal{RKM}}.
$$
A similar result holds when $\abs{\mu}$ is a Bergman--Carleson measure.  

\begin{lm}
\label{Lemma_Change}
Let $0<\eta<1$ and let $\mu$ be a complex-valued measure such that its total variation $\abs{\mu}$ is a Bergman--Carleson measure.  If $\frac{1}{p_1}+\frac{1}{q_1}=1$, where $q_1>1$ is close enough to $1$ so that $q_1\eta<1$ and $q_1\left(2-\eta\right)<2$, then there is a constant depending on $p_1$ and $n$ such that
\begin{eqnarray*}
\int_{\D^n}\abs{T_\mu K_z(w)}\prod_{l=1}^n\frac{1}{(1-\abs{w_l}^2)^{\eta}}\,dv(w)\leq C\left(p_1,n\right)\norm{T_{\mu_z} 1}_{L^{p_1}}\prod_{l=1}^n\frac{1}{\left(1-\abs{z_l}^2\right)^{\eta}}.
\end{eqnarray*}
\end{lm}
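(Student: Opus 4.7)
The plan is to first reduce the integrand to something involving $T_{\mu_z}1$ via a pointwise identity, then transfer the integral to the variable $v=\varphi_z(w)$, and finally close with H\"older and Lemma \ref{Growth}.

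\textbf{Step 1 (key pointwise identity).} I would start from
$$
T_\mu K_z(w)=\int_{\D^n}\prod_{l=1}^n\frac{1}{(1-\overline{z}_l\xi_l)^{2}(1-\overline{\xi}_lw_l)^{2}}\,d\mu(\xi)
$$
and substitute $\xi=\varphi_z(\eta)$. Using the identities of Section~\ref{Prelim}, $1-\overline{z}_l\varphi_{z_l}(\eta_l)=(1-|z_l|^2)/(1-\overline{z}_l\eta_l)$ and a direct computation gives $1-\overline{\varphi_{z_l}(\eta_l)}w_l=(1-\overline{z}_lw_l)(1-\overline{\eta}_l\varphi_{z_l}(w_l))/(1-z_l\overline{\eta}_l)$. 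Combining these with the definition $d\mu_z(\eta)=\prod_l|1-\overline{z}_l\eta_l|^{4}(1-|z_l|^2)^{-2}\,d(\mu\circ\varphi_z)(\eta)$ yields the identity
$$
T_\mu K_z(w)\;=\;\prod_{l=1}^n\frac{1}{(1-\overline{z}_lw_l)^{2}}\;T_{\mu_z}1\bigl(\varphi_z(w)\bigr).
$$

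\textbf{Step 2 (change of variables).} Substitute the identity into the integral and then change variables $v=\varphi_z(w)$. Using $1-|w_l|^2=(1-|z_l|^2)(1-|v_l|^2)/|1-\overline{z}_lv_l|^{2}$, $dv(w)=\prod_l(1-|z_l|^2)^2/|1-\overline{z}_lv_l|^{4}\,dv(v)$ and $|1-\overline{z}_lw_l|^{-2}=|1-\overline{z}_lv_l|^{2}/(1-|z_l|^2)^{2}$, the Jacobians combine cleanly to give
$$
\int_{\D^n}|T_\mu K_z(w)|\prod_{l=1}^n(1-|w_l|^2)^{-\eta}\,dv(w)=\prod_{l=1}^n(1-|z_l|^2)^{-\eta}\int_{\D^n}|T_{\mu_z}1(v)|\prod_{l=1}^n\frac{|1-\overline{z}_lv_l|^{-(2-2\eta)}}{(1-|v_l|^2)^{\eta}}\,dv(v).
$$

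\textbf{Step 3 (H\"older and Lemma \ref{Growth}).} Applying H\"older with exponents $p_1,q_1$ separates the $L^{p_1}$-norm of $T_{\mu_z}1$ from
$$
\left(\int_{\D^n}\prod_{l=1}^n\frac{|1-\overline{z}_lv_l|^{-q_1(2-2\eta)}}{(1-|v_l|^2)^{q_1\eta}}\,dv(v)\right)^{1/q_1}.
$$
This product integral factors over the $n$ disc variables, and in each factor Lemma \ref{Growth} with $t=-q_1\eta$ and $s=q_1(2-2\eta)$ applies: $t>-1$ is precisely $q_1\eta<1$, and $s<2+t$ is precisely $q_1(2-\eta)<2$. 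Both conditions are the hypotheses on $q_1$, so each one-variable integral is bounded uniformly in $z_l$, yielding the desired estimate with constant depending only on $p_1$ and $n$.

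The only genuine obstacle is Step~1: one must verify the pointwise identity by carefully bookkeeping the M\"obius identities and the definition of $\mu_z$. Once the identity is in place, Steps~2 and~3 are standard Jacobian bookkeeping and a direct application of H\"older together with the already-established Lemma~\ref{Growth}.
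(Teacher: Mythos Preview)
Your proposal is correct and follows essentially the same approach as the paper. The key pointwise identity $T_\mu K_z(w)=\prod_l(1-\overline{z}_lw_l)^{-2}\,T_{\mu_z}1(\varphi_z(w))$ is exactly what the paper establishes (the paper routes it through $J_z^1$ and the relation $T_{\mu_z}1(w)=J_z^1(w)\,T_\mu J_z^1(\varphi_z(w))$, whereas you derive it by direct substitution $\xi=\varphi_z(\eta)$, which is arguably cleaner); the subsequent change of variables, H\"older step, and application of Lemma~\ref{Growth} with $t=-q_1\eta$, $s=q_1(2-2\eta)$ are identical to the paper's argument.
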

\begin{proof}
First, observe
$$
\prod_{l=1}^n\left(1-\abs{z_l}^2\right)T_\mu K_z= T_\mu J_z^1.
$$
Recall that $J_z^1$ was defined in \eqref{Jr}.  Next, note 
\begin{eqnarray*}
T_{\mu_z}1(w) & = & \int_{\D^n} \prod_{l=1}^n\frac{1}{\left(1-\overline{\xi}_lw_l\right)^2}\,d\mu_z(\xi)\\
 & = & \int_{\D^n} \prod_{l=1}^n\frac{1}{\left(1-\overline{\varphi_{z_l}(\xi_l)}w_l\right)^2}\prod_{l=1}^n\frac{\left(1-\abs{z_l}^2\right)^2}{\abs{1-\overline{z}_l\xi_l}^4}\,d\mu(\xi).
\end{eqnarray*}
On the other hand, we have
\begin{eqnarray*}
J_z^1(w)\cdot T_\mu J_z^1\left(\varphi_z(w)\right) & = & J_z^1\left(w\right)\int_{\D^n}\prod_{l=1}^n\frac{1}{\left(1-\overline{\xi}_l\varphi_{z_l}(w_l)\right)^2}\prod_{l=1}^n\frac{\left(1-\abs{z_1}^2\right)}{\left(1-\overline{z}_l\xi_l\right)^2}\, d\mu(\xi)\\
& = & J_z^1(w) \prod_{l=1}^n\frac{\left(1-\overline{z}_lw_l\right)^2}{\left(1-\abs{z_l}^2\right)}\int_{\D^n} \prod_{l=1}^n\frac{\left(1-\abs{z_l}^2\right)^2}{\left(1-\overline{\varphi_{z_l}(\xi_l)}w_l\right)^2\abs{1-\overline{z}_l\xi_l}^4}\,d\mu(\xi)\\
& = & T_{\mu_z}1(w).
\end{eqnarray*}
Using this equality we have that 
$$
\prod_{l=1}^n\left(1-\abs{z_l}^2\right)T_\mu K_z(w)=T_\mu J_z^1(w)=T_{\mu_z}1(\varphi_z(w))\left(J_z^1(\varphi_z(w))\right)^{-1}=T_{\mu_z}1\left(\varphi_z(w)\right)J_z^1(w).
$$
We now use this computation to prove the lemma, 
\begin{eqnarray*}
 \int_{\D^n}\frac{\abs{T_\mu K_z(w)}}{\prod_{l=1}^n\left(1-\abs{w_l}^2\right)^\eta}\,dv(w) & = & \prod_{l=1}^n\frac{1}{\left(1-\abs{z_l}^2\right)}\int_{\D^n} \frac{\abs{T_{\mu_z}1(\varphi_z(w))}\abs{J_z^1(w)}}{\prod_{l=1}^n\left(1-\abs{w_l}^2\right)^{\eta}}\,dv(w) \\
 & = & \int_{\D^n} \frac{\abs{T_{\mu_z}1(w)}\abs{J_z^1(\varphi_z(w))}}{\prod_{l=1}^n\left(1-\abs{\varphi_{z_l}(w_l)}^2\right)^{\eta}}\prod_{l=1}^n \frac{\left(1-\abs{z_l}^2\right)}{\abs{1-\overline{z}_lw_l}^4}\,dv(w)\\
& =  & \prod_{l=1}^n\frac{1}{\left(1-\abs{z_l}^2\right)^{\eta}}\int_{\D^n} \abs{T_{\mu_z}1(w)} \prod_{l=1}^n \frac{\left(1-\abs{w_l}^2\right)^{-\eta}}{\abs{1-\overline{z}_lw_l}^{2-2\eta}}\,dv(w)\\
& \leq & \frac{\norm{T_{\mu_z}1}_{p_1}}{\prod_{l=1}^n\left(1-\abs{z_l}^2\right)^{\eta}}\left(\int_{\D^n} \prod_{l=1}^n \frac{\left(1-\abs{w_l}^2\right)^{-q_1\eta}}{\abs{1-\overline{z}_lw_l}^{q_1\left(2-2\eta\right)}}\,dv(w)\right)^{\frac{1}{q_1}}\\
& =  & \frac{\norm{T_{\mu_z}1}_{p_1}}{\prod_{l=1}^n\left(1-\abs{z_l}^2\right)^{\eta}} \prod_{l=1}^n\left(\int_{\D} \frac{\left(1-\abs{w_l}^2\right)^{-q_1\eta}}{\abs{1-\overline{z}_lw_l}^{q_1\left(2-2\eta\right)}}\,dv(w_l)\right)^{\frac{1}{q_1}}.
\end{eqnarray*}
Now apply Lemma \ref{Growth} using the conditions on $q_1$ to see that 
$$
\prod_{l=1}^n\left(\int_{\D} \frac{\left(1-\abs{w_l}^2\right)^{-q_1\eta}}{\abs{1-\overline{z}_lw_l}^{q_1(2-2\eta)}}\,dv(w)\right)^{\frac{1}{q_1}}\lesssim 1,
$$
and so the lemma follows.
\end{proof}

\begin{lm}
\label{Lemma_Change_Est}
Let $1<p<\infty$ and $\mu$ be a complex-valued measure such that its total variation $\abs{\mu}$ is a Bergman--Carleson measure.  If $\frac{1}{p_1}+\frac{1}{q_1}=1$, where $q_1$ satisfies the condition of Lemma \ref{Lemma_Change} for $\eta=\frac{1}{p}$ and $\eta=\frac{p-1}{p}$, then
\begin{eqnarray*}
\norm{T_\mu}_{\mathcal{L}\left(A^p,A^p\right)}\lesssim \left(\,\sup_{z\in\D^n}\norm{T_{\mu_z} 1}_{p_1}\right)^{\frac{1}{p}}\left(\,\sup_{z\in\D^n}\norm{T_{\mu_z}^* 1}_{p_1}\right)^{\frac{1}{q}}.
\end{eqnarray*}
\end{lm}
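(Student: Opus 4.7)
\emph{Proof plan.} The strategy is to realize $T_\mu$ as an integral operator on $L^p(\D^n,dv)$ and then apply Schur's test, Lemma \ref{Schur}. For $f\in A^p$, the reproducing property $f(w)=\int f(\xi)K(w,\xi)\,dv(\xi)$ together with Fubini's theorem (justified since $|\mu|$ is Bergman--Carleson) gives
$$
T_\mu f(z) = \int_{\D^n} f(\xi)\, T_\mu K_\xi(z)\, dv(\xi), \qquad f\in A^p.
$$
Since $T_\mu f$ is analytic, $\|T_\mu f\|_{A^p}=\|T_\mu f\|_{L^p(dv)}$, and it suffices to bound the integral operator with kernel $|T_\mu K_\xi(z)|$ as a map $L^p(dv)\to L^p(dv)$.

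I will invoke Schur's test with the standard weight $h(z):=\prod_{l=1}^n (1-|z_l|^2)^{-1/(pq)}$, so that $h(z)^q=\prod_l(1-|z_l|^2)^{-1/p}$ and $h(z)^p=\prod_l(1-|z_l|^2)^{-1/q}$. The ``column'' estimate
$$
\int_{\D^n} |T_\mu K_\xi(z)|\, h(z)^p\, dv(z)\leq C_p\, h(\xi)^p
$$
is precisely Lemma \ref{Lemma_Change} applied to $\mu$ with $\eta=(p-1)/p=1/q$, giving $C_p\lesssim \sup_\xi \|T_{\mu_\xi} 1\|_{p_1}$. For the ``row'' estimate
$$
\int_{\D^n} |T_\mu K_\xi(z)|\, h(\xi)^q\, dv(\xi)\leq C_q\, h(z)^q,
$$
I exploit the Hermitian symmetry of the Bergman kernel: writing $T_\mu K_\xi(z)=\int K(w,\xi)K(z,w)\,d\mu(w)$ and taking complex conjugates yields $\overline{T_\mu K_\xi(z)}=T_{\bar\mu} K_z(\xi)$, hence $|T_\mu K_\xi(z)|=|T_{\bar\mu} K_z(\xi)|$. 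Applying Lemma \ref{Lemma_Change} to the measure $\bar\mu$ with $\eta=1/p$ then yields $C_q\lesssim \sup_z \|T_{(\bar\mu)_z} 1\|_{p_1}$. A direct check from \eqref{ChangeofVariables}, using that the Jacobian-type factor $\prod_l(1-|z_l|^2)^2/|1-\bar z_l\xi_l|^4$ is real and positive, shows $(\bar\mu)_z=\overline{\mu_z}$; combined with $(T_\nu)^*=T_{\bar\nu}$ this gives $T_{(\bar\mu)_z} 1 = T_{\mu_z}^* 1$, so $C_q\lesssim \sup_z \|T_{\mu_z}^* 1\|_{p_1}$.

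Combining these via Schur's lemma yields the pointwise bound $|T_\mu f(z)|\leq \int |f(\xi)||T_\mu K_\xi(z)|\,dv(\xi)$ with $L^p$-operator norm at most $C_q^{1/q}C_p^{1/p}$, which gives exactly the desired estimate. The hypotheses on $q_1$ are precisely what is needed so that Lemma \ref{Lemma_Change} is simultaneously available for $\eta=1/p$ and $\eta=1/q$, corresponding to the two Schur conditions. The main delicate point is the adjoint symmetry argument needed to transform the row estimate into a form to which Lemma \ref{Lemma_Change} directly applies; once the identifications $|T_\mu K_\xi(z)|=|T_{\bar\mu} K_z(\xi)|$ and $T_{(\bar\mu)_z} 1 = T_{\mu_z}^* 1$ are recorded, the rest is a routine application of Schur's test.
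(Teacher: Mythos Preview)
Your proof is correct and essentially identical to the paper's: both represent $T_\mu$ as an integral operator with kernel $\abs{T_\mu K_\xi(z)}$, apply Schur's test with the weight $h(z)=\prod_l(1-\abs{z_l}^2)^{-1/(pq)}$, and invoke Lemma~\ref{Lemma_Change} with $\eta=1/q$ and $\eta=1/p$ for the two Schur conditions. The kernel symmetry you record as $\abs{T_\mu K_\xi(z)}=\abs{T_{\bar\mu}K_z(\xi)}$ together with $T_{(\bar\mu)_z}1=T_{\mu_z}^*1$ is exactly the paper's identity $T_\mu K_\lambda(w)=\overline{T_\mu^* K_w(\lambda)}$, just expressed via the conjugate measure rather than the adjoint.
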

\begin{proof}
Let $f\in A^p$ and $w\in\D^n$.  Observe that $T_\mu K_\lambda(w)=\overline{T_\mu^* K_w(\lambda)}$, and so
$$
T_\mu f(w)=\ip{T_\mu f}{K_w}_{A^2}=\ip{f}{T_{\mu}^* K_w}_{A^2}=\int_{\D^n}f(\lambda)\overline{T_\mu^* K_w(\lambda)}\,dv(\lambda)=\int_{\D^n}f(\lambda) T_\mu K_\lambda(w)\,dv(\lambda).
$$
Set $\Phi\left(\lambda,w\right)=\abs{T_\mu K_\lambda(w)}=\abs{T_\mu^* K_w(\lambda)}$ and $h(\lambda)=\prod_{l=1}^n(1-\abs{\lambda_l}^2)^{-\frac{1}{pq}}$, where $\frac{1}{p}+\frac{1}{q}=1$.  Now apply Lemma \ref{Lemma_Change} with $\eta=\frac{1}{q}=\frac{p-1}{p}$ to see that
\begin{eqnarray*}
\int_{\D^n}\Phi\left(\lambda,w\right) h(w)^p\,dv(w) & = & \int_{\D^n}\abs{T_\mu K_\lambda(w)}\prod_{l=1}^n\frac{1}{\left(1-\abs{w_l}^2\right)^{\frac{1}{q}}}\,dv(w)\\
& \lesssim & \norm{T_{\mu_z} 1}_{L^{p_1}}\prod_{l=1}^n\frac{1}{\left(1-\abs{\lambda_l}^2\right)^{\frac{1}{q}}}\\
& \lesssim & \left(\,\sup_{z\in\D^n} \norm{T_{\mu_z} 1}_{L^{p_1}}\right) h(\lambda)^{p}.
\end{eqnarray*}
Similarly, apply Lemma \ref{Lemma_Change} with $\eta=\frac{1}{p}$ to see that
\begin{eqnarray*}
\int_{\D^n}\Phi\left(\lambda,w\right) h(\lambda)^q\,dv(\lambda) & = & \int_{\D^n}\abs{T_\mu^* K_w(\lambda)}\prod_{l=1}^n\frac{1}{\left(1-\abs{\lambda_l}^2\right)^{\frac{1}{p}}}\,dv(\lambda)\\
& \lesssim & \norm{T_{\mu_z}^* 1}_{L^{p_1}}\prod_{l=1}^n\frac{1}{\left(1-\abs{w_l}^2\right)^{\frac{1}{p}}}\\
& \lesssim & \left(\,\sup_{z\in\D^n} \norm{T_{\mu_z} 1}_{L^{p_1}}\right) h(w)^{q}.
\end{eqnarray*}
Lemma \ref{Schur} gives the desired result.

\end{proof}

\begin{lm}
\label{bo-lip}
If $|\mu|$ is a Bergman--Carleson measure and $k\ge 0$, then $B_k(\mu)$ is a bounded Lipschitz function from $\left(\D^n, \rho\right)$ into $\left(\mathbb{C}, \abs{\,\cdot\,}\right)$. Specifically, there are constants depending on $n, k$ such that
$$
|B_k(\mu)(z)| \lesssim  \,  B_0(|\mu|)(z) \ \ \mbox{ and }\ \
|B_{k}(\mu) (z_1) - B_{k}(\mu) (z_2)| \lesssim    \,
\norm{B_{0}(|\mu|)}_\infty  \, \rho\left(z_1,z_2\right).
$$
\end{lm}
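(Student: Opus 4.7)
The strategy is to establish the pointwise domination first, then use a direct derivative computation to get a hyperbolic gradient bound in each variable, and finally combine the gradient bound with the boundedness (via a short case analysis) to upgrade to pseudohyperbolic Lipschitz continuity.

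For the pointwise bound, since $\varphi_{z_l}(w_l) \in \D$ and $k \ge 0$, we have $\bigl(1-|\varphi_{z_l}(w_l)|^2\bigr)^k \le 1$; substituting this into \eqref{Berezin-k-poly} and passing absolute values through the integral yields $|B_k(\mu)(z)| \le (k+1)^n B_0(|\mu|)(z)$. Applied to $|\mu|$ itself this also gives $B_k(|\mu|)(z) \le (k+1)^n B_0(|\mu|)(z)$, and since $|\mu|$ is Bergman--Carleson, $B_0(|\mu|)$ is bounded by Lemma \ref{CM}, so in particular $\|B_k(\mu)\|_\infty \lesssim \|B_0(|\mu|)\|_\infty$.

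For the derivative estimate, I differentiate \eqref{Berezin-k-poly2} under the integral sign, which is justified locally by uniform bounds on the integrand near any interior point. Writing $G_k(z,w)$ for the integrand, differentiation of the factor indexed by $l$ gives
\begin{equation*}
\partial_{z_l}\left[\frac{(1-|z_l|^2)^{2+k}}{|1-\bar w_l z_l|^{2(2+k)}}\right] = \frac{(2+k)(1-|z_l|^2)^{1+k}}{|1-\bar w_l z_l|^{2(2+k)}}\left[-\bar z_l + \frac{(1-|z_l|^2)\bar w_l}{1-\bar w_l z_l}\right],
\end{equation*}
and the bracketed factor collapses, after clearing the denominator $1-\bar w_l z_l$, to $(\bar w_l - \bar z_l)/(1-\bar w_l z_l)$. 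Reinstating the remaining coordinates yields
\begin{equation*}
(1-|z_l|^2)\,\partial_{z_l} B_k(\mu)(z) = (k+1)^n(2+k) \int_{\D^n} \frac{\overline{w_l - z_l}}{1-\bar w_l z_l}\, G_k(z,w)\, d\mu(w).
\end{equation*}
Since $|\overline{w_l-z_l}/(1-\bar w_l z_l)| = \rho(z_l, w_l) \le 1$, we obtain $|(1-|z_l|^2)\,\partial_{z_l} B_k(\mu)(z)| \le (2+k)\, B_k(|\mu|)(z) \lesssim \|B_0(|\mu|)\|_\infty$; the estimate for $\partial_{\bar z_l}$ is identical by symmetry.

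To conclude Lipschitz continuity in $\rho$, I telescope coordinate by coordinate, decomposing $B_k(\mu)(z_1) - B_k(\mu)(z_2)$ as a sum of $n$ differences in each of which only one coordinate changes (from $z_{1,l}$ to $z_{2,l}$). Each such difference concerns a single-variable function $f\colon \D \to \C$ satisfying both $\|f\|_\infty \lesssim \|B_0(|\mu|)\|_\infty$ and $(1-|\zeta|^2)|\nabla_\zeta f(\zeta)| \lesssim \|B_0(|\mu|)\|_\infty$. Integrating the gradient bound along the hyperbolic geodesic gives $|f(\zeta_1)-f(\zeta_2)| \lesssim \|B_0(|\mu|)\|_\infty\, \beta(\zeta_1,\zeta_2)$; when $\rho(\zeta_1,\zeta_2) \le 1/2$ one has $\beta \lesssim \rho$ and this is what we want, while for $\rho(\zeta_1,\zeta_2) > 1/2$ the uniform boundedness already yields $|f(\zeta_1)-f(\zeta_2)| \le 2\|f\|_\infty \lesssim \|B_0(|\mu|)\|_\infty\, \rho(\zeta_1,\zeta_2)$. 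Summing over $l$ and using $\rho(z_{1,l},z_{2,l}) \le \rho(z_1,z_2)$ completes the proof. The one mechanical step is the derivative simplification; conceptually the argument reduces pseudohyperbolic Lipschitz continuity on the polydisc to a combination of hyperbolic gradient bounds and uniform boundedness in each coordinate.
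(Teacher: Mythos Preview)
Your argument is correct. The pointwise bound $|B_k(\mu)(z)|\le (k+1)^n B_0(|\mu|)(z)$ is exactly what the paper does. For the Lipschitz estimate the paper does not argue directly at all: it simply cites \cite{NZ}*{Theorem 2.8}. Your proof supplies a self-contained substitute: you compute $(1-|z_l|^2)\,\partial_{z_l}B_k(\mu)$ explicitly, bound it by $(2+k)B_k(|\mu|)\lesssim \|B_0(|\mu|)\|_\infty$, and then convert the invariant gradient bound into a $\rho$-Lipschitz bound via the standard $\beta$-integration combined with the trivial $2\|f\|_\infty$ bound for large $\rho$. This is a legitimate and clean alternative; it avoids importing the machinery of \cite{NZ} at the cost of a short but genuine calculus computation, and it makes the dependence of the Lipschitz constant on $n$ and $k$ transparent (a factor of $n(2+k)(k+1)^n$ from the gradient step and the coordinate telescope). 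One minor point worth stating explicitly is that differentiation under the integral sign is justified because for $z$ in any compact subset of $\D^n$ the integrand and its $z_l$-derivatives are bounded by a fixed multiple of $\prod_l (1-|w_l|^2)^k\,|1-\bar w_l z_l|^{-2(2+k)}$, which is $|\mu|$-integrable since $|\mu|$ is finite; you allude to this but it is the only place where care is needed.
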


\begin{proof}
If $\abs{\mu}$ is a Bergman--Carleson measure, then from \eqref{Berezin-k-poly} we have that 
\begin{eqnarray*}
\abs{B_k(\mu)(z)} & \leq & \left(k+1\right)^n\abs{B_0(\abs{\mu})(z)}.
\end{eqnarray*}
The fact that $B_k(\mu)$ is Lipschitz continuous follows from \cite{NZ}*{Theorem 2.8}.
\end{proof}
As a consequence of Lemma \ref{bo-lip}, we have that $B_k(\mu)\in\mathcal{A}$ for all $k\geq 0$.  Indeed, to see that $B_k(\mu)$ is bounded, simply note that since $\abs{\mu}$ is a Bergman--Carleson measure one has 
$$
\norm{B_k(\mu)}_{L^\infty}\lesssim \norm{B_0(\abs{\mu})}_{L^\infty}=\norm{\mu}_{\textnormal{RKM}}.
$$
While the second condition in Lemma \ref{bo-lip} implies that $B_k(\mu)$ is uniformly continuous from $\left(\D^n,\rho\right)$ to $\left(\C,\abs{\,\cdot\,}\right)$.

Now let $\mu$ denote an absolutely continuous measure with respect to $dv$, so we have that $\mu=a\,dv$, with $a\in L^1$.  From \eqref{Berezin-k-poly}, we have
$$
B_k(a\,dv)(z):=B_k(a)(z)=\left(k+1\right)^n\int_{\D^n}\prod_{l=1}^n\frac{\left(1-\abs{z_l}^{2}\right)^2}{\abs{1-\overline{w}_l z_l}^4}\prod_{l=1}^n\left(1-\abs{\varphi_{z_l}(w_l)}^2\right)^k a(w)\, dv(w).
$$
Now, observe that upon making the change of variables, $w=\varphi_z(\xi)$ in the integrand gives,
$$
B_k(a)(z)=\left(k+1\right)^n\int_{\D^n}\prod_{l=1}^n\left(1-\abs{\xi_l}^2\right)^k a\left(\varphi_z(\xi)\right)\,dv(\xi).
$$
Recall that we are letting $\varphi_z(\xi):=\left(\varphi_{z_1}(\xi_1),\ldots, \varphi_{z_n}(\xi_n)\right)$.  We then observe the following lemma.

\begin{lm}
Let $a\in\mathcal{A}$, then we have 
\begin{equation}
\label{ApproxinA}
\lim_{k\to\infty}\norm{B_k(a)-a}_{L^\infty}=0.
\end{equation}
\end{lm}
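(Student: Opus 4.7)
The key observation is that after the change of variables $w = \varphi_z(\xi)$ one obtains
$$
B_k(a)(z) = (k+1)^n \int_{\D^n} \prod_{l=1}^n (1-|\xi_l|^2)^k\, a(\varphi_z(\xi))\, dv(\xi),
$$
and that the measure
$$
d\nu_k(\xi) := (k+1)^n \prod_{l=1}^n (1-|\xi_l|^2)^k\, dv(\xi)
$$
is a \emph{probability} measure on $\D^n$. Indeed, in one variable, passing to polar coordinates gives $(k+1)\int_{\D}(1-|\xi|^2)^k\, dv(\xi) = 1$, and $\nu_k$ is just the $n$-fold tensor product of this probability measure. Since $\varphi_z(0)=z$, I can write
$$
B_k(a)(z) - a(z) = \int_{\D^n} \bigl[a(\varphi_z(\xi)) - a(\varphi_z(0))\bigr]\, d\nu_k(\xi).
$$

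The plan is now to exploit the uniform continuity of $a$ with respect to $\rho$. Because $\varphi_z$ is a $\rho$-isometry on $\D^n$, we have
$$
\rho(\varphi_z(\xi), z) = \rho(\varphi_z(\xi), \varphi_z(0)) = \rho(\xi, 0) = \max_{1\le l\le n}|\xi_l|.
$$
Fix $\varepsilon > 0$. By $a\in\mathcal{A}$ pick $\delta \in (0,1)$ so that $\rho(w,w')<\delta$ implies $|a(w)-a(w')|<\varepsilon$. Split the integral over $E_\delta = \{\xi : \max_l |\xi_l| < \delta\}$ and its complement. On $E_\delta$ the integrand is bounded by $\varepsilon$, contributing at most $\varepsilon\nu_k(\D^n) = \varepsilon$; on $\D^n\setminus E_\delta$ the integrand is bounded by $2\|a\|_\infty$.

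Thus it remains to show $\nu_k(\D^n\setminus E_\delta)\to 0$ uniformly in $z$ (note the estimate above is already $z$-independent). Since $\D^n\setminus E_\delta \subset \bigcup_{l=1}^n\{|\xi_l|\ge\delta\}$ and the factor corresponding to the remaining coordinates integrates to $1$, the tensor-product structure of $\nu_k$ and a direct polar-coordinates computation give
$$
(k+1)\int_{\{|\xi_l|\ge\delta\}}(1-|\xi_l|^2)^k\, dv(\xi_l) = (1-\delta^2)^{k+1},
$$
so by subadditivity $\nu_k(\D^n\setminus E_\delta) \le n(1-\delta^2)^{k+1}$. Choosing $k$ large enough that $2\|a\|_\infty\, n(1-\delta^2)^{k+1}<\varepsilon$ yields $\|B_k(a)-a\|_{L^\infty}<2\varepsilon$, which proves \eqref{ApproxinA}.

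There is no real obstacle here: the whole argument is a standard approximate-identity computation, with the only (minor) point requiring care being the verification that $\nu_k$ is a probability measure and that the $\rho$-distance between $z$ and $\varphi_z(\xi)$ is controlled by $\max_l|\xi_l|$, both of which are immediate from the definitions and the automorphism-invariance of $\rho$ on each factor.
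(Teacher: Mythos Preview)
Your proof is correct and follows essentially the same approach as the paper: both use the change-of-variables formula to write $B_k(a)(z)-a(z)$ as an integral of $a(\varphi_z(\xi))-a(\varphi_z(0))$ against the probability measure $d\nu_k$, split according to whether $\max_l|\xi_l|<\delta$, and use uniform $\rho$-continuity of $a$ together with the estimate $\nu_k(\D^n\setminus E_\delta)\lesssim (1-\delta^2)^{k+1}$. Your explicit observation that $\rho(\varphi_z(\xi),z)=\max_l|\xi_l|$ via the $\rho$-isometry property of $\varphi_z$ makes the argument slightly cleaner, but the substance is identical.
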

\begin{proof}
First, a simple computation shows
$$
\left(k+1\right)^n\int_{\D^n}\prod_{l=1}^n\left(1-\abs{\xi_l}^2\right)^k\,dv(\xi)=1.
$$
Then we have
\begin{eqnarray*}
\abs{B_k(a)(z)-a(z)} & = & \abs{\left(k+1\right)^n\int_{\D^n}\prod_{l=1}^n\left(1-\abs{\xi_l}^2\right)^k a\left(\varphi_z(\xi)\right)\, dv(\xi)-a(z)}\\
& = & \abs{\left(k+1\right)^n\int_{\D^n}\prod_{l=1}^n\left(1-\abs{\xi_l}^2\right)^k \left(a\left(\varphi_z(\xi)\right)-a(z)\right)\, dv(\xi)}\\
& \leq & \left(k+1\right)^n\int_{\D^n}\abs{a(\varphi_z(\xi))-a(\varphi_z(0))}\prod_{l=1}^n\left(1-\abs{\xi_l}^2\right)^k \,dv(\xi).
\end{eqnarray*}
Since $a\in\mathcal{A}$ is uniformly continuous from $\left(\D^n,\rho\right)$ into $\left(\C,\abs{\,\cdot\,}\right)$, for any $\epsilon>0$, there exists a $\delta>0$, such that if $\max_{1\leq l\leq n}\abs{\xi_l}<\delta$ then $\abs{a(\varphi_z(\xi))-a(z)}=\abs{a(\varphi_z(\xi))-a(\varphi_z(0))}<\epsilon$.  Using this, we have
\begin{eqnarray*}
\abs{B_k(a)(z)-a(z)} & \leq & \left(k+1\right)^n\int_{\left\{\xi:\max_{1\leq l\leq n}\abs{\xi_l}<\delta\right\}}\abs{a(\varphi_z(\xi))-a(\varphi_z(0))}\prod_{l=1}^n\left(1-\abs{\xi_l}^2\right)^k \,dv(\xi)\\
&  & + \left(k+1\right)^n\int_{\D^n\setminus \left\{\xi:\max_{1\leq l\leq n}\abs{\xi_l}<\delta\right\}} \abs{a(\varphi_z(\xi))-a(\varphi_z(0))}\prod_{l=1}^n\left(1-\abs{\xi_l}^2\right)^k \,dv(\xi)\\
& \leq & \epsilon+\left(k+1\right)^n\int_{\D^n\setminus \left\{\xi:\max_{1\leq l\leq n}\abs{\xi_l}<\delta\right\}}\abs{a\left(\varphi_z(\xi)\right)-a(\varphi_z(0))}\prod_{l=1}^n\left(1-\abs{\xi_l}^2\right)^k \,dv(\xi)\\
& \leq & \epsilon + 2\sup_{z\in\D^n}\abs{a(z)}\left(k+1\right)^n\int_{\D^n\setminus\left\{\xi:\max_{1\leq l\leq n}\abs{\xi_l}<\delta\right\}}\prod_{l=1}^n\left(1-\abs{\xi_l}^2\right)^k \,dv(\xi).
\end{eqnarray*}
Note now that this last expression can be made as small as desired by taking $k$ sufficiently large.  Indeed, a simple computation using the splitting of the domain $\D^n\setminus\left\{\xi:\max_{1\leq l\leq n}\abs{\xi_l}<\delta\right\}$ as in the proof of Lemma \ref{Tech1}, shows
$$
\left(k+1\right)^n\int_{\D^n\setminus\left\{\xi:\max_{1\leq l\leq n}\abs{\xi_l}<\delta\right\}}\prod_{l=1}^n\left(1-\abs{\xi_l}^2\right)^k \,dv(\xi)\lesssim \left(1-\delta^2\right)^{k+1}.
$$
Since $z\in\D^n$ was arbitrary, we have \eqref{ApproxinA}.
\end{proof}

\begin{thm}
Let $1<p<\infty$ and $\mu$ a complex-valued measure such that $\abs{\mu}$ is a Bergman--Carleson measure.  Then $T_{B_k(\mu)}\to T_\mu$ in $\mathcal{L}\left(A^p,A^p\right)$.  In particular, we have that $\mathcal{T}_p$ is the closed algebra generated by $\{T_a: a\in\mathcal{A}\}$. 
\end{thm}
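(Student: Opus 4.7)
The plan is to apply Lemma \ref{Lemma_Change_Est} to the complex measure $\mu_k := \mu - B_k(\mu)\,dv$. By Lemma \ref{bo-lip}, $\norm{B_k(\mu)}_{L^\infty}\lesssim \norm{\mu}_{\textnormal{RKM}}$ uniformly in $k$, so $\abs{\mu_k}$ is Bergman--Carleson with norm controlled uniformly in $k$, and Lemma \ref{Lemma_Change_Est} gives
\begin{equation*}
\norm{T_{\mu_k}}_{\mathcal{L}(A^p,A^p)}\lesssim \left(\sup_{z\in\D^n}\norm{T_{(\mu_k)_z}1}_{L^{p_1}}\right)^{\frac{1}{p}}\left(\sup_{z\in\D^n}\norm{T_{(\mu_k)_z}^*1}_{L^{p_1}}\right)^{\frac{1}{q}}.
\end{equation*}
Thus the goal becomes showing these two suprema tend to $0$ as $k\to\infty$.

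To exploit the $\mu\mapsto\mu_z$ invariance, I would next establish the transformation identity $(\mu - B_k(\mu)\,dv)_z = \mu_z - B_k(\mu_z)\,dv$. This combines \eqref{MagicalEquality} (i.e.\ $B_k(\mu)\circ\varphi_z = B_k(\mu_z)$) with the simpler identity $(a\,dv)_z = (a\circ\varphi_z)\,dv$, which follows directly from \eqref{ChangeofVariables} and the M\"obius Jacobian. Using also $\norm{\mu_z}_{\textnormal{RKM}}=\norm{\mu}_{\textnormal{RKM}}$ (recorded in the excerpt), the proof is reduced to proving that for every Bergman--Carleson $\nu$ with $\norm{\nu}_{\textnormal{RKM}}\leq 1$,
\begin{equation*}
\norm{T_\nu 1 - P(B_k(\nu))}_{L^{p_1}}\;\longrightarrow\; 0\qquad\text{as } k\to\infty
\end{equation*}
uniformly in $\nu$, together with the analogous statement for the adjoint.

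To establish this core estimate, I would expand both sides as integrals over $d\nu(\eta)$ and write the difference as $\int \Omega_k(w,\eta)\,d\nu(\eta)$, where
\begin{equation*}
\Omega_k(w,\eta) := K_\eta(w) - (k+1)^n\int K_\xi(w)\prod_{l=1}^n\frac{(1-\abs{\xi_l}^2)^{2+k}(1-\abs{\eta_l}^2)^k}{\abs{1-\overline{\eta}_l\xi_l}^{2(2+k)}}\,dv(\xi).
\end{equation*}
The kernel $\Omega_k$ encodes the error of the $k$-Berezin smoothing, and it is precisely here that the approximate-identity property that drove the previous lemma ($B_k(a)\to a$ in $L^\infty$ for $a\in\mathcal{A}$) is put to work. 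A M\"obius change of variables $\xi\mapsto\varphi_\eta(\xi)$, combined with the weighted Bergman reproducing formula, reduces $\Omega_k$ to a form whose size decays in $k$; then a Schur-type argument in the spirit of Lemma \ref{Tech1}, fed by the integral estimates of Lemma \ref{Growth}, controls the $L^{p_1}$ norm. The main obstacle will be exactly this quantitative step: showing that the decay of $\Omega_k$ is strong enough, and uniform enough in the pair $(w,\eta)$, to overcome the Bergman--Carleson mass of $\nu$ with a rate depending only on $\norm{\nu}_{\textnormal{RKM}}$ and not on finer features of $\nu$.

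The concluding assertion follows at once from the operator-norm convergence. For any $a\in L^\infty$, the measure $a\,dv$ has Bergman--Carleson total variation, so $T_{B_k(a)}\to T_a$ in $\mathcal{L}(A^p, A^p)$; and since each $B_k(a)\in\mathcal{A}$ (by the preceding lemma together with Lemma \ref{bo-lip}), each $T_a$ belongs to the closed algebra generated by $\{T_b:b\in\mathcal{A}\}$. Since $\mathcal{T}_p$ is by definition generated by $\{T_a:a\in L^\infty\}$, the two closed algebras coincide.
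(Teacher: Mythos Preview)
Your reduction via Lemma~\ref{Lemma_Change_Est} and the transformation identity $(\mu - B_k(\mu)\,dv)_z = \mu_z - B_k(\mu_z)\,dv$ matches the paper's framework exactly, and your treatment of the concluding assertion about $\mathcal{T}_p$ is correct. The divergence is at what you call the ``core estimate,'' where the paper takes a different and cleaner route than your direct kernel analysis of $\Omega_k$.

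Rather than estimating $\Omega_k$, the paper exploits the commutativity $B_0 B_k(\mu) = B_k B_0(\mu)$ (from \cite{NZ}*{Proposition~2.11}). Since $B_0(\mu)\in\mathcal{A}$ by Lemma~\ref{bo-lip}, the already-proved approximate-identity lemma \eqref{ApproxinA} applied to $a=B_0(\mu)$ gives
\[
\norm{B_0\bigl(B_k(\mu)\,dv - \mu\bigr)}_{L^\infty} = \norm{B_k B_0(\mu) - B_0(\mu)}_{L^\infty}\to 0.
\]
A second cited result, \cite{NZ}*{Lemma~3.4}, then converts uniform smallness of the Berezin transform of $T_{B_k(\mu)dv-\mu}$ into $\sup_{z\in\D^n}\abs{T_{(B_k(\mu)dv-\mu)_z}1(w)}\to 0$ uniformly on compact subsets of $\D^n$. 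The $L^{p_1}$ norm is finished by a domain split: on $(r\D^n)^c$ one uses Cauchy--Schwarz together with the uniform Carleson bound (small by taking $r$ close to $1$), while on $r\D^n$ one uses the uniform-on-compacta convergence just obtained. Your Schur-type attack on $\Omega_k$ may well be feasible, but you correctly flag the quantitative decay as the main obstacle and do not carry it out; the paper sidesteps that obstacle entirely by recycling \eqref{ApproxinA} through the commutativity trick, so no new kernel estimate is needed.
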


\begin{proof}
First, we note that by \cite{NZ}*{Proposition 2.11} we have $B_0B_k(\mu)=B_kB_0(\mu)$.  Now, $B_0(\mu)\in\mathcal{A}$, and so by \eqref{ApproxinA},
$$
\norm{B_0\left(B_k(\mu)dv-\mu\right)}_{L^\infty} = \norm{B_0B_k(\mu)- B_0(\mu)}_{L^\infty}=\norm{B_kB_0(\mu)-B_0(\mu)}_{L^\infty}\to 0.
$$
Using Lemma \ref{bo-lip} we have $\norm{B_k(\mu)dv}_{\textnormal{CM}}\lesssim\norm{\mu}_{\textnormal{CM}}$, with the implied constant independent of $k$.  This in turn gives that $\norm{T_{B_k(\mu)}-T_\mu}_{\mathcal{L}\left(A^p,A^p\right)}$ is bounded by a multiple of $\norm{\mu}_{\textnormal{CM}}$, independent of $k$.  By these computations, we have a sequence of bounded operators $\{T_{B_k(\mu)-d\mu}\}$ in $\mathcal{L}\left(A^2,A^2\right)$ such that $\norm{B_0(B_k(\mu)dv-d\mu)}_{L^\infty}\to 0$ as $k\to\infty$.  We can now apply \cite{NZ}*{Lemma 3.4} to  conclude that
\begin{equation}
\label{UniformLimit}
\sup_{z\in\D^n}\abs{\left(T_{B_k(\mu)dv-d\mu}\right)_z 1}\to 0
\end{equation}
uniformly on compact subsets of $\D^n$ as $k\to\infty$.  Now observe that we have
$$
\left(T_{B_k(\mu)dv-d\mu}\right)_z=T_{\left(B_k(\mu)dv-d\mu\right)_z},
$$
where this is the measure obtained by composing with the change of variables $\varphi_z$.  By Lemma \ref{Lemma_Change_Est} we have that
\begin{eqnarray*}
\norm{T_{B_k(\mu)}-T_\mu}_{\mathcal{L}\left(A^p,A^p\right)} & = & \norm{T_{B_k(\mu)dv-d\mu}}_{\mathcal{L}\left(A^p,A^p\right)}\\
 & \lesssim & \left(\,\sup_{z\in\D^n}\norm{T_{(B_k(\mu)dv-d\mu)_z} 1}_{p_1}\right)^{\frac{1}{p}}\left(\,\sup_{z\in\D^n}\norm{T_{(B_k(\mu)dv-d\mu)_z}^* 1}_{p_1}\right)^{\frac{1}{q}}.
\end{eqnarray*}
The goal is now to show that this last expression can be made small as $k\to\infty$.

Let $\epsilon>0$ and set $F_{k,z}(w):=T_{\left(B_k(\mu)dv-d\mu\right)_z}1(w)$.  For $0<r<1$ set $\left(r\D^n\right)^c=\D^n\setminus r\D^n$.  Choose $1<p_1<\infty$ so that Lemma \ref{Lemma_Change_Est} holds for this value of $p$.  Also, select $0<r<1$ so that $\norm{\mu}_{\textnormal{CM}}^{p_1}\norm{1_{(r\D^n)^c}}_{L^2}<\frac{\epsilon}{2}$.  Then by splitting the integral, we have
$$
\norm{F_{k,z}}_{L^{p_1}}^{p_1}=\norm{F_{k,z}1_{r\D^n}}_{L^{p_1}}^{p_1}+\norm{F_{k,z}1_{(r\D^n)^c}}_{L^{p_1}}^{p_1}.
$$
We will show that each of these terms can be made sufficiently small.  First, observe by Cauchy-Schwarz that
\begin{eqnarray*}
\norm{F_{k,z}1_{(r\D^n)^c}}_{L^{p_1}}^{p_1} & \leq & \norm{F_{k,z}}_{L^{2p_1}}^{p_1}\norm{1_{(r\D^n)^c}}_{L^2}\\
& = & \norm{T_{\left(B_k(\mu)dv-d\mu\right)_z}1}_{L^{2p_1}}^{p_1}\norm{1_{(r\D^n)^c}}_{L^2}\\
& \lesssim & \left(\norm{(B_k(\mu)dv)_z}_{\textnormal{CM}}+\norm{(d\mu)_z}_{\textnormal{CM}}\right)^{p_1}\norm{1_{(r\D^n)^c}}_{L^2}\\
& \lesssim & \norm{\mu}_{\textnormal{CM}}^{p_1}\norm{1_{(r\D^n)^c}}_{L^2}\lesssim \frac{\epsilon}{2}.
\end{eqnarray*}
In the second inequality we have used \eqref{MagicalEquality} and Lemma \ref{CM}.  Now, for the given value of $r$, we have by \eqref{UniformLimit} that for $w\in r\D^n$ and for $k$ sufficiently large that
$$
\sup_{z\in\D^n}\abs{\left(T_{B_k(\mu)dv-d\mu}\right)_z 1(w)}<\frac{\epsilon}{2}\quad \forall w\in r\D^n.
$$
Using this, we have
\begin{eqnarray*}
\sup_{z\in\D^n}\norm{F_{k,z}1_{r\D^n}}_{L^{p_1}}^{p_1} & = & \sup_{z\in\D^n} \int_{r\D^n}\abs{T_{\left(B_k(\mu)dv-d\mu\right)_z}1(w)}^{p_1}dv(w)\\
& \leq & \int_{r\D^n}\sup_{z\in\D^n}\abs{T_{\left(B_k(\mu)dv-d\mu\right)_z}1(w)}^{p_1}dv(w)\\
& \leq & \left(\frac{\epsilon}{2}\right)^{p_1}.
\end{eqnarray*}
Combining these estimates we see that
$$
\left(\,\sup_{z\in\D^n}\norm{T_{(B_k(\mu)dv-d\mu)_z} 1}_{p_1}\right)^{\frac{1}{p}}\lesssim \epsilon^{\frac{1}{p}}
$$
provided $k$ is sufficiently large.  These computations can then be repeated when we observe that
$$
T_{(B_k(\mu)dv-d\mu)_z}^*=T_{(B_k(\overline{\mu})dv-d\overline{\mu})_z}.
$$
Thus, we have
\begin{eqnarray*}
\norm{T_{B_k(\mu)}-T_\mu}_{\mathcal{L}\left(A^p,A^p\right)} & \lesssim & \left(\,\sup_{z\in\D^n}\norm{T_{(B_k(\mu)dv-d\mu)_z} 1}_{p_1}\right)^{\frac{1}{p}}\left(\,\sup_{z\in\D^n}\norm{T_{(B_k(\mu)dv-d\mu)_z}^* 1}_{p_1}\right)^{\frac{1}{q}}\lesssim\epsilon
\end{eqnarray*}
provided $k$ is chosen large enough.
\end{proof}

\section{Approximation by Segmented Operators}
\label{Approximation}

\begin{lm}
\label{Approx1}
Let $1<p<\infty$ and $\sigma\geq 1$.  Suppose that $a_1,\ldots, a_k\in L^\infty$ are functions of norm at most 1 and that $\mu$ is a Bergman--Carleson measure.  Consider the covering of $\D^n$ given by Lemma \ref{SuaGeo2} for these values of $k$ and $\sigma$.  Then there is a positive constant depending on $p$, $k$, and the dimension such that
\begin{equation}
\label{Approx-Est1}
\norm{\left[\, \prod_{i=1}^{k} T_{a_i} \right] T_{\mu}-\sum_{\vec{j}} M_{1_{F_{0,\vec{j}}}}
\left[\, \prod_{i=1}^{k}T_{a_i}  \right] T_{1_{F_{k+1,\vec{j}}}\mu}}_{\mathcal{L}\left(A^p,L^p\right)}\lesssim \beta_{p}(\sigma)\norm{T_{\mu}}_{\mathcal{L}\left(A^p, A^p\right)}
\end{equation}
where $\beta_{p}(\sigma)\to 0$ as $\sigma\to\infty$.
\end{lm}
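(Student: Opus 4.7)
The plan is to telescope through the $(k+1)$ nested shells $F_{0,\vec{j}}\subset F_{1,\vec{j}}\subset\cdots\subset F_{k+1,\vec{j}}$, each consecutive pair being $\tanh\sigma$-separated (Lemma~\ref{SuaGeo2_Poly}(iii) together with nesting), matching the $(k+1)$ operators in the product $T_{a_1}\cdots T_{a_k}T_\mu$. Since $\{F_{0,\vec{j}}\}$ partitions $\D^n$, $\sum_{\vec{j}} M_{1_{F_{0,\vec{j}}}}=I$, and the decomposition $T_\mu = T_{1_{F_{k+1,\vec{j}}}\mu}+T_{1_{F_{k+1,\vec{j}}^c}\mu}$ turns the difference in \eqref{Approx-Est1} into
$$
R := \sum_{\vec{j}} M_{1_{F_{0,\vec{j}}}} \Bigl[\prod_{i=1}^k T_{a_i}\Bigr] T_{1_{F_{k+1,\vec{j}}^c}\mu}.
$$

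Next, I would sequentially insert $I = M_{1_{F_{i,\vec{j}}}}+M_{1_{F_{i,\vec{j}}^c}}$ between $T_{a_i}$ and $T_{a_{i+1}}$ for $i=1,\ldots,k$ (legitimate since each $T_{a_{i+1}}=PM_{a_{i+1}}$ accepts an $L^p$ input), peeling off the ``$F_{i,\vec{j}}^c$'' contribution at each stage. The result is a decomposition $R = \sum_{m=1}^{k+1}R_m$, where for $m\le k$
$$
R_m = \sum_{\vec{j}} M_{1_{F_{0,\vec{j}}}} \Bigl[\prod_{i<m} T_{a_i} M_{1_{F_{i,\vec{j}}}}\Bigr] T_{a_m} M_{1_{F_{m,\vec{j}}^c}} \Bigl[\prod_{i>m} T_{a_i}\Bigr] T_{1_{F_{k+1,\vec{j}}^c}\mu},
$$
and the ``main piece'' $R_{k+1} = \sum_{\vec{j}} M_{1_{F_{0,\vec{j}}}}[\prod_{i=1}^k T_{a_i} M_{1_{F_{i,\vec{j}}}}]T_{1_{F_{k+1,\vec{j}}^c}\mu}$ has all interior indicators ``inside''. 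Each $R_m$ contains a distinguished ``bad gap'' pair $M_{1_{F_{m-1,\vec{j}}}}T_{a_m}M_{1_{F_{m,\vec{j}}^c}}$ (or $M_{1_{F_{k,\vec{j}}}}T_{1_{F_{k+1,\vec{j}}^c}\mu}$ in the main piece) whose multiplier supports are $\tanh\sigma$-separated.

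For the main piece, the input to the final bad gap is $f$ itself, independent of $\vec{j}$. Using the disjointness of $\{F_{0,\vec{j}}\}$ and the uniform $\mathcal{L}(L^p,L^p)$-bound on the outer chain (each Toeplitz and indicator factor has norm $\lesssim 1$),
$$
\|R_{k+1}f\|_{L^p}^p \lesssim \sum_{\vec{j}} \|M_{1_{F_{k,\vec{j}}}} T_{1_{F_{k+1,\vec{j}}^c}\mu} f\|_{L^p}^p,
$$
and Lemma~\ref{Tech2}(ii) (applied with $\mu_{\text{Tech2}}=\mu$, $a_{\vec{j}}=1_{F_{k,\vec{j}}}$, $b_{\vec{j}}=1_{F_{k+1,\vec{j}}^c}$, using the finite overlap from Lemma~\ref{SuaGeo2_Poly}(iv)) bounds the right side by $N(n)\beta_p(\sigma)^p\|T_\mu\|^p\|f\|^p$, yielding $\|R_{k+1}\|_{\mathcal{L}(A^p,L^p)}\lesssim\beta_p(\sigma)\|T_\mu\|$.

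The \emph{main obstacle} is the error pieces $R_m$ for $m\le k$, where the input to the interior bad gap depends on $\vec{j}$ through the trailing $T_{1_{F_{k+1,\vec{j}}^c}\mu}$, so Lemma~\ref{Tech2}(ii) does not apply directly. The resolution is to use $T_{1_{F_{k+1,\vec{j}}^c}\mu}=P_\mu M_{1_{F_{k+1,\vec{j}}^c}}\iota_\mu$ and factor the $\vec{j}$-independent composition $Z'=T_{a_{m+1}}\cdots T_{a_k}P_\mu$ into the middle, recasting $R_m = V_m\circ\iota_\mu$ with
$$
V_m = \sum_{\vec{j}} M_{1_{F_{0,\vec{j}}}}\bigl[\textstyle\prod_{i<m} T_{a_i}M_{1_{F_{i,\vec{j}}}}\bigr]T_{a_m}M_{1_{F_{m,\vec{j}}^c}}\,Z'\,M_{1_{F_{k+1,\vec{j}}^c}}:L^p(\mu)\to L^p.
$$
A Schur-type kernel estimate along the lines of the proof of Lemma~\ref{Tech1} (exploiting the $\tanh\sigma$-separation of $F_{0,\vec{j}}$ from $F_{k+1,\vec{j}}^c$, the finite-overlap property, and the uniform bound $\|Z'\|_{L^p(\mu)\to A^p}\lesssim\|T_\mu\|^{1/q}$) gives $\|V_m\|_{L^p(\mu)\to L^p}\lesssim\beta_p(\sigma)\|T_\mu\|^{1/q}$. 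Combined with $\|\iota_\mu\|_{A^p\to L^p(\mu)}\approx\|T_\mu\|^{1/p}$ from Lemma~\ref{CM}, this produces $\|R_m\|_{\mathcal{L}(A^p,L^p)}\lesssim\beta_p(\sigma)\|T_\mu\|$. Summing the $k+1$ bounds completes the proof with constant depending on $k$, $p$, and $n$.
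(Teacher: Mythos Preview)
Your overall strategy---telescoping through the shells and isolating ``bad gaps'' where Lemma~\ref{Tech2} applies---is the right idea, and your treatment of the main piece $R_{k+1}$ is essentially correct. The problem is with the error pieces $R_m$ for $m\le k$.

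As you correctly diagnose, the tail in $R_m$ depends on $\vec{j}$ through $T_{1_{F_{k+1,\vec{j}}^c}\mu}$. But your proposed fix does not work. You suggest absorbing the $\vec{j}$-independent block $Z'=T_{a_{m+1}}\cdots T_{a_k}P_\mu$ and then running ``a Schur-type kernel estimate along the lines of Lemma~\ref{Tech1}'' on $V_m$, exploiting the separation of $F_{0,\vec{j}}$ from $F_{k+1,\vec{j}}^c$. This is where the argument breaks down: Lemma~\ref{Tech1} (and hence Lemma~\ref{Tech2}) gets its $\beta_p(\sigma)$ decay from the explicit form of the Bergman kernel $\prod_l|1-\bar z_lw_l|^{-2}$ sandwiched between the separated indicators. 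In $V_m$ the operator between $1_{F_{0,\vec{j}}}$ and $1_{F_{k+1,\vec{j}}^c}$ is a product of several Toeplitz operators composed with $P_\mu$; it has no explicit positive kernel, and a mere operator-norm bound on $Z'$ cannot produce any $\sigma$-decay. If instead you try to exploit the gap at position $m$ termwise (single-$\vec{j}$ application of Lemma~\ref{Tech2}), you are left with $\beta_p(\sigma)^p\sum_{\vec{j}}\|T_{1_{F_{k+1,\vec{j}}^c}\mu}f\|_{A^p}^p$, and this sum is uncontrollable because the complements $F_{k+1,\vec{j}}^c$ have \emph{infinite} overlap.

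The paper avoids this trap by inserting an intermediate operator and telescoping twice. In Step~1 it telescopes from $\bigl[\prod_i T_{a_i}\bigr]T_\mu$ to $\sum_{\vec{j}}M_{1_{F_{0,\vec{j}}}}\bigl[\prod_i T_{a_i1_{F_{i,\vec{j}}}}\bigr]T_{1_{F_{k+1,\vec{j}}}\mu}$ while keeping the unlocalized $T_\mu$ in the tail; each telescoping difference then has a $\vec{j}$-\emph{independent} tail $\bigl[\prod_{i>m+1}T_{a_i}\bigr]T_\mu f$, so Lemma~\ref{Tech2} applies directly. In Step~2 it telescopes back, now with $T_{1_{F_{k+1,\vec{j}}}\mu}$ in the tail; the tails are $\vec{j}$-dependent, but only through the \emph{good} localized sets $F_{k+1,\vec{j}}$, which have finite overlap. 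Combining a single-$\vec{j}$ application of Lemma~\ref{Tech2} with Lemma~\ref{CM-Cor} gives $\sum_{\vec{j}}\|T_{1_{F_{k+1,\vec{j}}}\mu}f\|_{A^p}^p\lesssim\|T_\mu\|^{p-1}\|f\|_{A^p}^p$, and the argument closes. The essential point you are missing is that the $\vec{j}$-dependent tail must carry the localization $1_{F_{k+1,\vec{j}}}$, not its complement.
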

\begin{proof}
We break the proof up into two steps.  We will prove that
\begin{equation}
\label{Step1}
\norm{\left[\, \prod_{i=1}^{k}T_{a_i} \right] T_{\mu}-\sum_{\vec{j}} M_{1_{F_{0,\vec{j}}}}
\left[\, \prod_{i=1}^{k} T_{a_i1_{F_{i,\vec{j}}}} \right] T_{1_{F_{k+1,\vec{j}}}\mu}}_{\mathcal{L}\left(A^p,L^p\right)} \lesssim  \beta_{p}(\sigma)\norm{T_{\mu}}_{\mathcal{L}\left(A^p, A^p\right)},
\end{equation}
and
\begin{equation}
\label{Step2}
\norm{\sum_{\vec{j}}M_{1_{F_{0,\vec{j}}}}\left[\left[\, \prod_{i=1}^{k}T_{a_i}  \right] T_{1_{F_{k+1,\vec{j}}}\mu}-\left[\, \prod_{i=1}^{k} T_{a_i 1_{F_{i,\vec{j}}}}  \right]T_{1_{F_{k+1,\vec{j}}}\mu}\right]}_{\mathcal{L}\left(A^p, L^p\right)}\lesssim \beta_{p}(\sigma)\norm{T_\mu}_{\mathcal{L}\left(A^p, A^p\right)}.
\end{equation}
It is obvious that each of these inequalities when combined give the desired estimate in the statement of the lemma.

For $0\leq m\leq k+1$, define the operators $X_m\in \mathcal{L}\left(A^p,L^p\right)$ by
$$
X_m=\sum_{\vec{j}} M_{1_{F_{0,\vec{j}}}}\left[\, \prod_{i=1}^{m} T_{1_{F_{i,\vec{j}}}a_i}\prod_{i=m+1}^{k} T_{a_i} \right] T_\mu.
$$
Then clearly, 
$X_0=\sum_{\vec{j}} M_{1_{F_{0,\vec{j}}}} \left[\,\prod_{i=1}^{k} T_{a_i}\right] T_\mu
= \left[\,\prod_{i=1}^{k} T_{a_i}\right]T_\mu$, with convergence in the strong operator topology.  Similarly, we have 
$$
X_{k+1}=\sum_{\vec{j}} M_{1_{F_{0,\vec{j}}}}\left[\,\prod_{i=1}^{k} T_{a_i1_{F_{i,\vec{j}}}} \right] T_{\mu 1_{F_{k+1,\vec{j}}}}.
$$
We now seek and estimate on the operator norm of $X_0-X_{k+1}$, with the idea being to use a telescoping sum and estimate each difference.  When $0\leq m\leq k-1$, a simple computation shows
$$
X_m-X_{m+1}=\sum_{\vec{j}} M_{1_{F_{0,\vec{j}}}} \left[\,\prod_{i=1}^{m}T_{1_{F_{i,\vec{j}}}a_i} \right]
T_{1_{F_{m+1,\vec{j}}^c}a_{m+1}}  \left[\, \prod_{i=m+2}^{k}T_{a_i}\right]  T_\mu.
$$
Here, of course, we should interpret this product as the identity when the lower index is greater than the upper index.  Take any $f\in A^p$ and apply Lemma \ref{Tech2}, in particular \eqref{Tech2-Est2}, Lemma \ref{SuaGeo2} to the measure $dv$ (see Remark \ref{Rem_Countable} as well) along with some obvious estimates to see that
\begin{eqnarray*}
\norm{\left(X_m-X_{m+1}\right)f}_{L^p}^p & \lesssim & \sum_{\vec{j}}\norm{M_{1_{F_{m,\vec{j}}}a_m}P M_{1_{F_{m+1,\vec{j}}^c}a_{m+1}}\left[\,\prod_{i=m+2}^k T_{a_i} \right] T_\mu f}_{L^p}^p\\
& \lesssim & N\beta_{p}^p(\sigma)\norm{\left[\,\prod_{i=m+2}^k T_{a_i} \right] T_\mu f}_{L^p}^p\\
& \lesssim & N\beta_{p}^p(\sigma)\norm{T_\mu}_{\mathcal{L}\left(A^p,A^p\right)}^p\norm{f}_{A^p}^p.
\end{eqnarray*}
Also, we have that
$$
X_{k}-X_{k+1}=\sum_{\vec{j}} M_{1_{F_{0,\vec{j}}}} \left[\, \prod_{i=1}^{k}T_{1_{F_{i,\vec{j}}}a_i} \right] T_{\mu 1_{F_{k+1,\vec{j}}^c}}, 
$$
and so
\begin{eqnarray*}
\norm{(X_k-X_{k+1})f}_{L^p}^p & = & \norm{\sum_{\vec{j}} M_{1_{F_{0,\vec{j}}}} \left[\, \prod_{i=1}^{k}T_{1_{F_{i,\vec{j}}}a_i} \right] T_{\mu 1_{F_{k+1,\vec{j}}^c}}f}_{L^p}^p\\
& = & \sum_{\vec{j}} \norm{M_{1_{F_{0,\vec{j}}}} \left[\, \prod_{i=1}^{k}T_{1_{F_{i,\vec{j}}}a_i} \right] T_{\mu 1_{F_{k+1,\vec{j}}^c}}f}_{L^p}^p\\
& \lesssim & \sum_{\vec{j}}\norm{M_{1_{F_{k,\vec{j}}}a_k}T_{\mu 1_{F_{k+1,\vec{j}}^c}}f}_{L^p}^p\\
 & \lesssim & N\beta_{p}^p(\sigma)\norm{T_\mu}_{\mathcal{L}\left(A^p,A^p\right)}^p\norm{f}^p_{A^p}.
\end{eqnarray*}
Here we used that $\{F_{0,\vec{j}}\}$ is a disjoint cover of $\D^n$, obvious estimates and applying Lemma \ref{Tech2}, and in particular \eqref{Tech2-Est2} at the second to last inequality.  Since $N=N(n)$, we have the following estimates for $0\leq m\leq k$,
$$
\norm{\left(X_m-X_{m+1}\right)f}_{L^p}\lesssim \beta_{p}(\sigma)\norm{T_\mu}_{\mathcal{L}\left(A^p,A^p\right)}\norm{f}_{A^p}.
$$
But from this it is immediate that estimate \eqref{Step1} holds, since
$$
\norm{\left(X_0-X_{k+1}\right)f}_{L^p}\leq \sum_{m=0}^{k}\norm{\left(X_m-X_{m+1}\right)f}_{L^p}\lesssim \beta_{p}(\sigma)\norm{T_\mu}_{\mathcal{L}\left(A^p,A^p\right)}\norm{f}_{A^p}.
$$

The idea behind \eqref{Step2} is similar.  For $0\leq m\leq k$, define the operator
$$
\tilde{X}_m=\sum_{\vec{j}} M_{1_{F_{0,\vec{j}}}}\left[\,\prod_{i=1}^{m} T_{1_{F_{i,\vec{j}}}a_i}\prod_{i=m+1}^{k} T_{a_i}\right] T_{\mu 1_{F_{k+1,\vec{j}}}},
$$
and so when $m=0$ and $m=k$ we have 
\begin{eqnarray*}
\tilde{X}_0 & = & \sum_{\vec{j}} M_{1_{F_{0,\vec{j}}}}\left[\,\prod_{i=1}^{k} T_{a_i}\right] T_{\mu 1_{F_{k+1,\vec{j}}}}\\
\tilde{X}_k & = & \sum_{\vec{j}} M_{1_{F_{0,\vec{j}}}}\left[\,\prod_{i=1}^{k}T_{a_i1_{F_{i,\vec{j}}}}\right]T_{\mu 1_{F_{k+1,\vec{j}}}}.
\end{eqnarray*}
For $0\leq m\leq k-1$, a simple computation shows
$$
\tilde{X}_m-\tilde{X}_{m+1}=\sum_{\vec{j}} M_{1_{F_{0,\vec{j}}}}\left[\,\prod_{i=1}^{m} T_{1_{F_{i,\vec{j}}}a_i}\right]
T_{1_{F_{m+1,\vec{j}}^c}a_{m+1}}  \left[\,\prod_{i=m+2}^{k}T_{a_i}\right]  T_{\mu 1_{F_{k+1,\vec{j}}}}.
$$
Again, applying obvious estimates and using Lemma \ref{Tech2} one can conclude that
\begin{eqnarray*}
\norm{\left(\tilde{X}_m-\tilde{X}_{m+1}\right)f}^p_{L^p} & \lesssim & \beta_{p}^{p}(\sigma)\sum_{\vec{j}}\norm{T_{1_{F_{k+1,\vec{j}}}\mu}f}_{A^p}^{p}\\
 & \lesssim & \beta_{p}^{p}(\sigma) \norm{T_{\mu}}_{\mathcal{L}\left(A^p,A^p\right)}^{\frac{p}{q}} \sum_{\vec{j}} \norm{1_{F_{k+1,\vec{j}}} f}_{L^p(\mu)}^p\\
 & \lesssim & N\beta_{p}^{p}(\sigma)
 \norm{T_{\mu}}_{\mathcal{L}\left(A^p,A^p\right)}^p  \norm{f}_{A^p}^p.
\end{eqnarray*}
In the above estimates, we used Lemma \ref{CM-Cor} twice and that the sets $\left\{F_{k+1,\vec{j}}\right\}$ form a covering of $\D^n$ having at most $N=N(n)$ overlap.  Concluding, for $0\leq m\leq k-1$, 
$$
\norm{\left(\tilde{X}_m-\tilde{X}_{m+1}\right)f}_{L^p}\lesssim \beta_{p}(\sigma)\norm{T_{\mu}}_{\mathcal{L}\left(A^p,A^p\right)}\norm{f}_{A^p}.
$$
But, then this implies that
$$
\norm{\left(\tilde{X}_0-\tilde{X}_{k}\right)f}_{L^p}\leq \sum_{m=0}^{k-1}\norm{\left(\tilde{X}_m-\tilde{X}_{m+1}\right)f}_{L^p}\lesssim \beta_{p}(\sigma)\norm{T_{\mu}}_{\mathcal{L}\left(A^p,A^p\right)}\norm{f}_{A^p},
$$
which is clearly \eqref{Step2}.
\end{proof}

\begin{lm}
\label{Approx2}
Let
$$
X=\sum_{i=1}^{m} \left[\,\prod_{l=1}^{k_i} T_{a^i_l} \right] T_{\mu_i}
$$
where $a^i_j\in L^\infty$, $k_1,\ldots, k_m\leq k$ and $\mu_i$ are complex-valued measures on $\D^n$ such that $\abs{\mu_i}$ are Bergman--Carleson measures.  Given $\epsilon>0$ there is $\sigma=\sigma\left(X,\epsilon\right)\geq 1$ such that if $\left\{F_{i,\vec{j}}\right\}$ and $0\leq i\leq k+1$ are the sets given by Lemma \ref{SuaGeo2} for these values of $\sigma$ and $k$, then
$$
\norm{X-\sum_{\vec{j}}M_{1_{F_{0,\vec{j}}}}\sum_{i=1}^{m} \left[\,\prod_{l=1}^{k_i} T_{a^i_l} \right] T_{1_{F_{k+1,\vec{j}}}\mu_i}}_{\mathcal{L}\left(A^p,L^p\right)}<\epsilon.
$$
\end{lm}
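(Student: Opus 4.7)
The plan is to reduce Lemma \ref{Approx2} to term-by-term application of Lemma \ref{Approx1}. I will set $k := \max_{1 \leq i \leq m} k_i$ and use the nested family $\{F_{i,\vec{j}}\}_{0 \leq i \leq k+1}$ from Lemma \ref{SuaGeo2_Poly} for this single value of $k$. The key geometric observation is that the nesting $F_{l+1, \vec{j}} \subset F_{k+1, \vec{j}}$ gives $F_{k+1, \vec{j}}^c \subset F_{l+1, \vec{j}}^c$, and hence
$$
\rho(F_{l, \vec{j}}, F_{k+1, \vec{j}}^c) \geq \rho(F_{l, \vec{j}}, F_{l+1, \vec{j}}^c) \geq \tanh \sigma
$$
for every $0 \leq l \leq k$. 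Thus, for each $i$, the shortened subfamily $F_{0, \vec{j}} \subset F_{1, \vec{j}} \subset \cdots \subset F_{k_i, \vec{j}} \subset F_{k+1, \vec{j}}$ still meets all of the separation requirements exploited in the proof of Lemma \ref{Approx1}.

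Next, for each fixed $i$, I will rerun the telescoping argument of Lemma \ref{Approx1}, replacing its $k$ with $k_i$ and using the above shortened subfamily; in particular the final telescope step will involve the separation pair $(F_{k_i, \vec{j}}, F_{k+1, \vec{j}}^c)$ in place of $(F_{k_i, \vec{j}}, F_{k_i+1, \vec{j}}^c)$. Because every consecutive chosen pair still has $\rho$-separation at least $\tanh \sigma$, each invocation of Lemma \ref{Tech2} supplies the same per-step bound as in Lemma \ref{Approx1}. The output will be
$$
\left\| \left[\prod_{l=1}^{k_i} T_{a^i_l}\right] T_{\mu_i} - \sum_{\vec{j}} M_{1_{F_{0, \vec{j}}}} \left[\prod_{l=1}^{k_i} T_{a^i_l}\right] T_{1_{F_{k+1, \vec{j}}} \mu_i} \right\|_{\mathcal{L}(A^p, L^p)} \lesssim \beta_p(\sigma) \|T_{\mu_i}\|_{\mathcal{L}(A^p, A^p)},
$$
with implicit constant depending on $p, n, k$ and $\|a^i_l\|_{L^\infty}$ but not on $\sigma$.

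Summing this estimate over $1\le i\le m$ and applying the triangle inequality will dominate the norm appearing in Lemma \ref{Approx2} by $C(X)\beta_p(\sigma)$, where $C(X)$ depends only on the data defining $X$. Since $\beta_p(\sigma)\to 0$ as $\sigma\to\infty$, one then selects $\sigma = \sigma(X,\epsilon)\geq 1$ large enough that $C(X)\beta_p(\sigma)<\epsilon$, and the proof is finished.

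The main obstacle is essentially bookkeeping: checking that the telescope of Lemma \ref{Approx1} really does pass through when the intermediate indices $k_i < l < k+1$ are skipped and the enlarged truncation set $F_{k+1, \vec{j}}$ is substituted for $F_{k_i+1, \vec{j}}$ in the measure-end of the product. The separation bound established in the first paragraph is exactly the hypothesis needed to make every Schur-type estimate of Lemma \ref{Tech2} continue to apply verbatim in the reindexed telescope, so once the reindexing is written out carefully the conclusion follows routinely.
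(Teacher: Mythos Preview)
Your approach is correct, but the paper takes a shorter route. Rather than reopening the telescoping proof of Lemma~\ref{Approx1} with a modified subfamily for each $i$, the paper simply pads each product $\prod_{l=1}^{k_i} T_{a^i_l}$ with copies of the identity $T_1$ until all products have the common length $k$. Lemma~\ref{Approx1} then applies directly, as a black box, to each summand with this single value of $k$, and the truncation set $F_{k+1,\vec{j}}$ appears automatically. This avoids the separation-reindexing check you outline (which is valid, and your inequality $\rho(F_{k_i,\vec{j}},F_{k+1,\vec{j}}^c)\geq \rho(F_{k_i,\vec{j}},F_{k_i+1,\vec{j}}^c)\geq\tanh\sigma$ is exactly right), but the padding trick makes it unnecessary to revisit the innards of the telescope at all.

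One small omission in your sketch: Lemma~\ref{Approx1} and the underlying Lemma~\ref{Tech2} are stated for positive Bergman--Carleson measures, whereas the $\mu_i$ here are complex-valued. The paper handles this first, decomposing each $\mu_i$ into its four nonnegative Jordan parts before invoking Lemma~\ref{Approx1}; this is routine but should be mentioned.
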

\begin{proof}
First, suppose that $\mu_i$ are are non-negative measures.  It suffices to prove the result in this situation since for general $\mu_i$ we can decompose $\mu_i=\mu_{i,1}-\mu_{i,2}+i\mu_{i,3}-i\mu_{i,4}$ where each $\mu_{i,j}$ is a non-negative measure, and hence a Bergman--Carleson measure.

Without loss of generality, set $k_i=k$ for all $i=1,\ldots, m$.  This can be accomplished by placing copies of the identity in each product if necessary.  We now apply Lemma \ref{Approx1} to each term in the product defining the operator $X$.  By Lemma \ref{Approx1}, for $\sigma=\sigma\left(X,\epsilon\right)$ sufficiently large we have
$$
\norm{\left[\,\prod_{j=1}^{k} T_{a^i_j} \right] T_{\mu_i}-
\sum_{\vec{j}} M_{1_{F_{0,\vec{j}}}}\left[\,\prod_{l=1}^{k} T_{a^i_l} \right] T_{1_{F_{k+1,\vec{j}}}\mu_i}}_{\mathcal{L}\left(A^p,L^p\right)}<\frac{\epsilon}{m}
$$
for $i=1,\ldots, m$, and then summing in $i$ one obtains
$$
\norm{X-\sum_{i=1}^{m}\sum_{\vec{j}}M_{1_{F_{0,\vec{j}}}} \left[\,\prod_{l=1}^{k_i} T_{a^i_l} \right] T_{1_{F_{k+1,\vec{j}}}\mu_i}}_{\mathcal{L}\left(A^p,L^p\right)}<\epsilon.
$$
But, for every $i=1,\ldots, m$ we have that
$\sum_{\vec{j}} M_{1_{F_{0,\vec{j}}}} \left[\,\prod_{j=1}^{k_i} T_{a^i_j}\right] T_{1_{F_{k+1,\vec{j}}}\mu_i}$ converges in the strong operator topology, so
$$
\norm{X-\sum_{\vec{j}} M_{1_{F_{0,\vec{j}}}}\sum_{i=1}^{m} \left[\,\prod_{l=1}^{k_i} T_{a^i_l}\right] T_{1_{F_{k+1,\vec{j}}}\mu_i}}_{\mathcal{L}\left(A^p,L^p\right)}<\epsilon
$$
as desired.
\end{proof}

\begin{lm}
\label{Approx3}
Let $S\in\mathcal{T}_{p}$, $\mu$ be a Bergman--Carleson measure and $\epsilon>0$.  Then there are Borel sets $F_{\vec{j}}\subset G_{\vec{j}}\subset\D^n$ such that
\begin{itemize}
\item[(i)] $\D^n=\bigcup F_{\vec{j}}$;
\item[(ii)] $F_{\vec{j}}\bigcap F_{\vec{j}'}=\emptyset$ if $\vec{j}\neq \vec{j}'$;
\item[(iii)] each point of $\D^n$ lies in no more than $N(n)$ of the sets $G_{\vec{j}}$;
\item[(iv)] $\textnormal{diam}_{\rho}\, G_{\vec{j}}\leq d(p,S,\epsilon)$
\end{itemize}
and
$$
\norm{ST_\mu-\sum_{\vec{j}} M_{1_{F_{\vec{j}}}}ST_{1_{G_{\vec{j}}}\mu}}_{\mathcal{L}\left(A^p, L^p\right)}<\epsilon.
$$
\end{lm}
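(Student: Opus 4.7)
The strategy is to approximate $S$ in operator norm by a finite sum $R$ of finite products of Toeplitz operators, apply Lemma \ref{Approx2} to $RT_\mu$, and then transfer the estimate back to $S$ while controlling the error that gets "wrapped" between the partition of unity $M_{1_{F_{\vec{j}}}}$ and the truncated Toeplitz operators $T_{1_{G_{\vec{j}}}\mu}$. The critical point is the order of quantifiers: the approximation tolerance $\delta$ must be chosen \emph{before} invoking Lemma \ref{Approx2}, since the covering produced by that lemma will depend on $R$ and hence on $\delta$.

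I would begin by fixing $\delta>0$, depending only on $p$, $n$, $\epsilon$, $\|T_\mu\|_{\mathcal{L}(A^p,A^p)}$ and $\|\mu\|_{\textnormal{CM}}$, in accordance with the bound derived below. Since $S\in\mathcal{T}_p$, choose $R=\sum_{i=1}^{m}\prod_{l=1}^{k_i} T_{a_l^i}$ with $a_l^i\in L^\infty$ and $\|S-R\|_{\mathcal{L}(A^p,A^p)}<\delta$. Put $k=\max_i k_i$ and apply Lemma \ref{Approx2} to $RT_\mu=\sum_{i}\bigl[\prod_{l}T_{a_l^i}\bigr]T_\mu$ with tolerance $\epsilon/2$, producing $\sigma\geq 1$ and the cover $\{F_{i,\vec{j}}\}$ from Lemma \ref{SuaGeo2_Poly}. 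Setting $F_{\vec{j}}:=F_{0,\vec{j}}$ and $G_{\vec{j}}:=F_{k+1,\vec{j}}$, properties (i)--(iv) of the statement then follow immediately from Lemma \ref{SuaGeo2_Poly}, with $d(p,S,\epsilon):=C(k,\sigma)$.

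The remaining task is to split
\begin{align*}
ST_\mu-\sum_{\vec{j}}M_{1_{F_{\vec{j}}}}ST_{1_{G_{\vec{j}}}\mu}
&=(S-R)T_\mu+\Bigl[RT_\mu-\sum_{\vec{j}}M_{1_{F_{\vec{j}}}}RT_{1_{G_{\vec{j}}}\mu}\Bigr]\\
&\quad-\sum_{\vec{j}}M_{1_{F_{\vec{j}}}}(S-R)T_{1_{G_{\vec{j}}}\mu}
\end{align*}
and estimate each piece. The first has operator norm at most $\delta\|T_\mu\|$ and the second is less than $\epsilon/2$ by construction. The third term is the main technical obstacle, but it is handled cleanly using the disjointness of $\{F_{\vec{j}}\}$: for $f\in A^p$ one has
$$
\Bigl\|\sum_{\vec{j}}M_{1_{F_{\vec{j}}}}(S-R)T_{1_{G_{\vec{j}}}\mu}f\Bigr\|_{L^p}^{p}=\sum_{\vec{j}}\|M_{1_{F_{\vec{j}}}}(S-R)T_{1_{G_{\vec{j}}}\mu}f\|_{L^p}^{p}\leq\|S-R\|^{p}\sum_{\vec{j}}\|T_{1_{G_{\vec{j}}}\mu}f\|_{A^p}^{p}.
$$
Lemma \ref{CM-Cor} gives $\|T_{1_{G_{\vec{j}}}\mu}f\|_{A^p}^{p}\lesssim\|T_\mu\|^{p/q}\|1_{G_{\vec{j}}}f\|_{L^p(\mu)}^{p}$, while the bounded-overlap condition $\sum_{\vec{j}}1_{G_{\vec{j}}}\leq N(n)$ combined with the Bergman--Carleson property gives $\sum_{\vec{j}}\|1_{G_{\vec{j}}}f\|_{L^p(\mu)}^{p}\leq N(n)\|\mu\|_{\textnormal{CM}}^{p}\|f\|_{A^p}^{p}$. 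Thus the third piece has operator norm bounded by a constant times $\delta\|T_\mu\|^{1/q}\|\mu\|_{\textnormal{CM}}$, which by Lemma \ref{CM} is $\lesssim\delta\|T_\mu\|$. Fixing $\delta$ at the outset so that the first and third pieces are each at most $\epsilon/4$ closes the argument; crucially, the bound on the wrapped term depends only on $\|T_\mu\|$ and $\|\mu\|_{\textnormal{CM}}$ and is \emph{uniform} in the cover, which is exactly what legitimizes choosing $\delta$ before $\sigma$.
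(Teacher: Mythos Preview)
Your proposal is correct and follows essentially the same approach as the paper's proof: approximate $S$ by a finite sum of finite products of Toeplitz operators, apply Lemma~\ref{Approx2} to that approximant, and control the wrapped error term $\sum_{\vec{j}}M_{1_{F_{\vec{j}}}}(S-R)T_{1_{G_{\vec{j}}}\mu}$ via the disjointness of the $F_{\vec{j}}$, Lemma~\ref{CM-Cor}, and the bounded overlap of the $G_{\vec{j}}$. The only cosmetic difference is that the paper fixes the approximation tolerance directly as $\epsilon/\|T_\mu\|$ rather than introducing an auxiliary $\delta$; your explicit emphasis on the order of quantifiers (choosing $\delta$ before the cover) is a helpful clarification of a point the paper leaves implicit.
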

\begin{proof}
Since $S\in \mathcal{T}_{p}$ there is $X_0=\sum_{i=1}^{m}\prod_{l=1}^{k_i}T_{a_l^i}$ such that
$$
\norm{S-X_0}_{\mathcal{L}\left(A^p, A^p\right)}\lesssim\frac{\epsilon}{\norm{T_\mu}_{\mathcal{L}\left(A^p,A^p\right)}},
$$
where $a_l^i\in L^\infty$ and $k_i$ are positive integers.
Set $k=\max\left\{k_i:i=1,\ldots,m\right\}$.  By Lemma \ref{Approx2} we can choose $\sigma=\sigma(X_0,\epsilon)$ and
sets $F_{\vec{j}}=F_{0,\vec{j}}$ and $G_{\vec{j}}=F_{k+1,\vec{j}}$ such that
$$
\norm{X_0T_\mu-\sum_{\vec{j}} M_{1_{F_{\vec{j}}}} X_0 T_{\mu 1_{G_{\vec{j}}}}}_{\mathcal{L}\left(A^p,L^p\right)}
<\epsilon.
$$
We have that (i), (ii), (iii) and (iv) clearly hold by Lemma \ref{SuaGeo2}.  Now, for $f\in A^p$ we have
\begin{eqnarray*}
\norm{\sum_{\vec{j}} M_{1_{F_{\vec{j}}}}\left(S-X_0\right)T_{\mu 1_{G_{\vec{j}}}}f}_{L^p}^p & = & \sum_{\vec{j}}\norm{ M_{1_{F_{\vec{j}}}}\left(S-X_0\right)T_{\mu 1_{G_{\vec{j}}}}f}_{L^p}^p\\
& \leq & \left(\frac{\epsilon}{\norm{T_\mu}_{\mathcal{L}\left(A^p, A^p\right)}}\right)^p\sum_{\vec{j}} \norm{T_{1_{G_{\vec{j}}}\mu} f}_{A^p}^p\\
& \leq & \left(\frac{\epsilon}{\norm{T_\mu}_{\mathcal{L}\left(A^p, A^p\right)}}\right)^p\sum_{\vec{j}} \norm{1_{G_{\vec{j}}}f}_{L^p(\mu)}^p\\
& \lesssim & \epsilon^p\norm{f}_{A^p}^p.
\end{eqnarray*}
Therefore, the triangle inequality gives
\begin{eqnarray*}
\norm{ST_\mu-\sum_{\vec{j}} M_{1_{F_{\vec{j}}}}ST_{\mu 1_{G_{\vec{j}}}}}_{\mathcal{L}\left(A^p, L^p\right)} &\leq&
\norm{S-X_0}_{\mathcal{L}\left(A^p, A^p\right)} \norm{T_\mu}_{\mathcal{L}\left(A^p, A^p\right)} + \epsilon \\
&+&
\norm{\sum_{\vec{j}} M_{1_{F_{\vec{j}}}}\left(S-X_0\right)T_{\mu 1_{G_{\vec{j}}}}}_{\mathcal{L}\left(A^p, L^p\right)}
\lesssim \epsilon,
\end{eqnarray*}
which gives the lemma.
\end{proof}

\section{Characterization of the Essential Norm on \texorpdfstring{$A^p$}{Bergman Spaces}}
\label{Characterization}

We have now collected enough tools to provide the characterization of the essential norm of an operator on $A^p$.  Fix $\varrho>0$ and let $\{w_{\vec{m}}\}$ and $D_{\vec{m}}$ be the sets in Lemma \ref{StandardGeo_Polydisc}.  Define the measure
$$
\mu_{\varrho}:=\sum_{\vec{m}} v(D_{\vec{m}}) \delta_{w_{\vec{m}}}\approx\sum_{\vec{m}} \prod_{l=1}^n(1-\abs{w_{m_l}}^2)^2 \delta_{w_{\vec{m}}}.
$$
Then we have that $\mu_\varrho$ is a Bergman--Carleson measure and $T_{\mu_\varrho}:A^p\to A^p$ is bounded.  Looking in Coifman and Rochberg, \cite{CR}, one can see that the following lemma holds.  The interested reader can also see results of this type in results of Amar, \cite{Amar} and Rochberg, \cite{Roch}.  Again for completeness, we provide the details.
\begin{lm}
For $1<p<\infty$,  $T_{\mu_\varrho}\to I_{A^p}$ on $\mathcal{L}\left(A^p,A^p\right)$ when $\varrho\to 0$.
\end{lm}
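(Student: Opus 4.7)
The plan is to realize $T_{\mu_\varrho}-I_{A^p}$ as an integral operator whose kernel vanishes at rate $O(\varrho)$, then conclude via Schur's lemma. Using the reproducing identity $f(z)=\int_{\D^n}f(w)K(z,w)\,dv(w)$ valid on $A^p$ (where $K(z,w)=\prod_{l=1}^n(1-\overline{w}_lz_l)^{-2}$) together with the disjoint decomposition $\D^n=\bigcup_{\vec{m}}D_{\vec{m}}$ from Lemma~\ref{StandardGeo_Polydisc}, I would first write
\begin{equation*}
(T_{\mu_\varrho}-I_{A^p})f(z)=\sum_{\vec{m}}\int_{D_{\vec{m}}}\bigl[f(w_{\vec{m}})K(z,w_{\vec{m}})-f(w)K(z,w)\bigr]\,dv(w).
\end{equation*}

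Next, for $w\in D_{\vec{m}}$ I would establish the hyperbolic Lipschitz estimate
\begin{equation*}
\bigl|f(w_{\vec{m}})K(z,w_{\vec{m}})-f(w)K(z,w)\bigr|\lesssim\varrho\,|K(z,w_{\vec{m}})|\sup_{u\in D(w_{\vec{m}},2\varrho)}|f(u)|,
\end{equation*}
with constant independent of $z$. The $f$-part uses the standard hyperbolic Lipschitz bound $|f(w)-f(w_{\vec{m}})|\lesssim\varrho\sup_{D(w_{\vec{m}},2\varrho)}|f|$ for holomorphic $f$ on a hyperbolic ball of radius $\varrho$. The $K$-part follows from Remark~\ref{Useful2} combined with the uniform invariant-derivative bound $(1-|w_l|^2)|\partial_{\overline{w}_l}K(z,w)|\lesssim|K(z,w)|$, which one checks directly from the product form of $K$.

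Summing over $\vec{m}$ and using $v(D_{\vec{m}})\approx v(D(w_{\vec{m}},3\varrho))$, the subharmonicity of $|f(w)\overline{K(z,w)}|$ as a function of $w$ (product of two holomorphic functions), and the bounded overlap of $\{D(w_{\vec{m}},3\varrho)\}_{\vec{m}}$, the sub-mean-value inequality yields
\begin{equation*}
|(T_{\mu_\varrho}-I_{A^p})f(z)|\lesssim\varrho\int_{\D^n}|f(w)|\,|K(z,w)|\,dv(w).
\end{equation*}
Finally I would apply Schur's lemma (Lemma~\ref{Schur}) to this majorant with test function $h(w)=\prod_{l=1}^n(1-|w_l|^2)^{-1/(pq)}$, verifying the Schur hypotheses via Lemma~\ref{Growth}, to conclude $\|T_{\mu_\varrho}-I_{A^p}\|_{\mathcal{L}(A^p,A^p)}\lesssim\varrho\to 0$.

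The main obstacle is securing the Lipschitz estimate for $K(z,\cdot)$ above with a constant uniform in $z\in\D^n$: naive Euclidean differentiation of $K(z,\cdot)$ produces factors that blow up as $z$ approaches $\partial\D^n$, so one must instead exploit the scale-invariant derivative, which is precisely calibrated so that $(1-|w_l|^2)|\partial_{\overline{w}_l}K(z,w)|$ remains comparable to $|K(z,w)|$ uniformly in $z$. Once that uniformity is obtained, the rest of the argument is a routine Schur-test computation in the spirit of Lemma~\ref{Tech1}.
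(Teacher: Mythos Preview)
Your proposal is correct and follows essentially the same route as the paper's proof: both decompose $f-T_{\mu_\varrho}f$ over the sets $D_{\vec m}$, exploit the hyperbolic Lipschitz estimate for $K(z,\cdot)$ together with the sub-mean-value property of holomorphic functions to extract the factor $\varrho$, and then control the resulting majorant by the $L^p$-boundedness of the integral operator with kernel $|K(z,w)|$ (which is exactly the Schur test you outline). The only cosmetic differences are that the paper splits each summand into two terms $I_{\vec m}$ and $II_{\vec m}$ and passes through an auxiliary discretized function $H$ before invoking the boundedness of that operator, whereas you keep the product $fK$ together and apply the sub-mean-value inequality to $|f\overline{K(z,\cdot)}|$ directly; these are organizational variants of the same argument.
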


\begin{proof}
The main idea behind the proof is to compare $f(z)$ via the reproducing formula to $T_{\mu_\varrho}f(z)$ and to obtain an estimate of the form
\begin{equation}
\label{Key_Est}
\norm{\left(T_{\mu_\varrho}-I_{A^p}\right)f}_{A^p}\lesssim\varrho\norm{f}_{A^p},
\end{equation}
with the implied constant depending only on $p$ and the dimension.  This estimate would prove the desired result.  Furthermore, it suffices to prove this estimate on a dense class of functions, and so without loss of generality suppose that $f\in \textnormal{Pol}\left(\D^n\right)$, where $\textnormal{Pol}\left(\D^n\right)$ is the collection of analytic polynomials on $\D^n$.  First, note that
\begin{eqnarray*}
\abs{f(z)-T_{\mu_\varrho}f(z)} & = & \abs{\int_{\D^n} f(w)\prod_{l=1}^n \frac{1}{\left(1-\overline{w}_lz_l\right)^2}\,dv(w)-\sum_{\vec{m}}f(w_{\vec{m}})v(D_{\vec{m}})\prod_{l=1}^n\frac{1}{\left(1-\overline{w}_{m_l}z_l\right)^2}}\\
  & = & \abs{\sum_{\vec{m}}\int_{D_{\vec{m}}}\left(f(w)\prod_{l=1}^n \frac{1}{\left(1-\overline{w}_lz_l\right)^2}-f(w_{\vec{m}})\prod_{l=1}^n\frac{1}{\left(1-\overline{w}_{m_l}z_l\right)^2}\right)\,dv(w)}.
\end{eqnarray*}
Here we have used (iii) of Lemma \ref{StandardGeo_Polydisc}.  Consider the integrand in the above expression, by adding and subtracting a common term we can write this in two different ways.  In particular, we have
\begin{eqnarray*}
\int_{D_{\vec{m}}}\left(f(w)\prod_{l=1}^n \frac{1}{\left(1-\overline{w}_lz_l\right)^2}-f(w_{\vec{m}})\prod_{l=1}^n\frac{1}{\left(1-\overline{w}_{m_l}z_l\right)^2}\right)\,dv(w) & = & I_{\vec{m}}+II_{\vec{m}},
\end{eqnarray*}
where
\begin{eqnarray*}
I_{\vec{m}} & := & \int_{D_{\vec{m}}}f(w)\left(\,\prod_{l=1}^n \frac{1}{(1-\overline{w}_lz_l)^2}-\prod_{l=1}^n \frac{1}{\left(1-\overline{w}_{m_l}z_l\right)^2}\right)\,dv(w)\\
II_{\vec{m}} & := & \int_{D_{\vec{m}}}\left(f(w)-f(w_{\vec{m}})\right)\prod_{l=1}^n \frac{1}{\left(1-\overline{w}_{m_l}z_l\right)^2}\,dv(w).
\end{eqnarray*}
Consider term $I_{\vec{m}}$, and note
\begin{eqnarray}
\abs{I_{\vec{m}}}  & \leq & \int_{D_{\vec{m}}}\abs{f(w)}\abs{\prod_{l=1}^n \frac{1}{\left(1-\overline{w}_lz_l\right)^2}-\prod_{l=1}^n \frac{1}{(1-\overline{w}_{m_l}z_l)^2}}\,dv(w)\notag\\
 & \lesssim & \int_{D_{\vec{m}}}\abs{f(w)}\rho\left(w,w_{\vec{m}}\right)\,dv(w) \prod_{l=1}^n \frac{1}{\abs{1-\overline{w}_{m_l}z_l}^2}\notag\\
 & \lesssim & \varrho \int_{D_{\vec{m}}}\abs{f(w)}\,dv(w)\prod_{l=1}^n \frac{1}{\abs{1-\overline{w}_{m_l}z_l}^2}\notag\\
 & \leq & \varrho \left(\int_{D_{\vec{m}}}\abs{f(w)}^p\,dv(w)\right)^{\frac{1}{p}}\left(\int_{D_{\vec{m}}}\,dv(w)\right)^{\frac{1}{q}}\prod_{l=1}^n \frac{1}{\abs{1-\overline{w}_{m_l}z_l}^2}\notag\\
 & \lesssim & \varrho \left(\int_{D(w_{\vec{m}},\varrho)}\abs{f(w)}^p\,dv(w)\right)^{\frac{1}{p}}\prod_{l=1}^n \frac{(1-\abs{w_{m_l}}^2)^{\frac{2}{q}}}{\abs{1-\overline{w}_{m_l}z_l}^2}\label{Estimate1}.
\end{eqnarray}
Here, for the above estimates, for the third inequality we used the fact that
$$
\abs{\prod_{l=1}^n \frac{1}{\left(1-\overline{w}_lz_l\right)^2}-\prod_{l=1}^n \frac{1}{(1-\overline{w}_{m_l}z_l)^2}}\lesssim \rho\left(w,w_{\vec{m}}\right)\prod_{l=1}^n \frac{1}{\abs{1-\overline{w}_{m_l}z_l}^2}
$$
and for the fourth inequality we used that $\rho\left(w,w_{\vec{m}}\right)\leq \varrho$ when $w\in D_{\vec{m}}$, and for the last inequality we used (i) from Lemma \ref{StandardGeo_Polydisc}.  All other estimates in this string of inequalities are obvious, and the implied constants at each step depends only on the dimension.  We will obtain a similar estimate for the second term.  
From the definition of $II_{\vec{m}}$, we have
\begin{eqnarray}
\abs{II_{\vec{m}}} & \leq & \prod_{l=1}^n\frac{1}{\abs{1-\overline{w}_{m_l}z_l}^2}\int_{D_{\vec{m}}} \abs{f(w)-f(w_{\vec{m}})}\,dv(w)\notag \\
 & \lesssim & \varrho\prod_{l=1}^n\frac{1}{\abs{1-\overline{w}_{m_l}z_l}^2}\int_{D(w_{\vec{m}},\varrho)}\abs{f(w)}\,dv(w)\notag \\
 & \lesssim & \varrho \left(\,\int_{D(w_{\vec{m}},\varrho)}\abs{f(w)}^p\,dv(w)\right)^{\frac{1}{p}}\prod_{l=1}^n \frac{(1-\abs{w_{m_l}}^2)^{\frac{2}{q}}}{\abs{1-\overline{w}_{m_l}z_l}^2}\label{Estimate2}.
\end{eqnarray}
Here, the first and third inequalities are obvious, and the second inequality uses the mean value inequality, an obvious estimate, and the sub-mean value inequality for holomorphic functions.  Combining estimates from \eqref{Estimate1} and \eqref{Estimate2}, we have
\begin{eqnarray*}
\abs{f(z)-T_{\mu_\varrho}f(z)} & \leq & \sum_{\vec{m}}\abs{I_{\vec{m}}}+\abs{II_{\vec{m}}} \lesssim  \varrho \sum_{\vec{m}}\left(\,\int_{D(w_{\vec{m}},\varrho)}\abs{f(w)}^p\,dv(w)\right)^{\frac{1}{p}}\prod_{l=1}^n \frac{(1-\abs{w_{m_l}}^2)^{\frac{2}{q}}}{\abs{1-\overline{w}_{m_l}z_l}^2}.
\end{eqnarray*}
Now, we claim
\begin{equation}
\label{Maybe}
\norm{\sum_{\vec{m}}\left(\,\int_{D(w_{\vec{m}},\varrho)}\abs{f(w)}^p\,dv(w)\right)^{\frac{1}{p}}\prod_{l=1}^n \frac{(1-\abs{w_{m_l}}^2)^{\frac{2}{q}}}{\abs{1-\overline{w}_{m_l}z_l}^2}}_{L^p}^p\lesssim \sum_{\vec{m}}\int_{D(w_{\vec{m}},\varrho)}\abs{f(w)}^p\,dv(w)
\end{equation}
with implied constant depending on the dimension and $p$.  Assuming \eqref{Maybe} we have
\begin{eqnarray*}
\norm{f-T_{\mu_\varrho}f}_{A^p}^p & \lesssim & \varrho^p \norm{\sum_{\vec{m}}\left(\,\int_{D(w_{\vec{m}},\varrho)}\abs{f(w)}^p\,dv(w)\right)^{\frac{1}{p}}\prod_{l=1}^n \frac{(1-\abs{w_{m_l}}^2)^{\frac{2}{q}}}{\abs{1-\overline{w}_{m_l}z_l}^2}}_{L^p}^p\\
& \lesssim & \varrho^p \sum_{\vec{m}}\int_{D(w_{\vec{m}},\varrho)}\abs{f(w)}^p\,dv(w)\\
& \lesssim & \varrho^p\norm{f}_{A^p}^p.
\end{eqnarray*}
Here we have used that the sets $\left\{D\left(w_{\vec{m}},\varrho\right):\vec{m}\in\N^n\right\}$ are essentially disjoint, and we pick up again an implied constant depending on the dimension.  Combining things, we see
$$
\norm{f-T_{\mu_\varrho}f}_{A^p}^p\lesssim \varrho^p\norm{f}_{A^p}^p,
$$
with implied constant depending on the dimension and $p$, which is \eqref{Key_Est}.  We now turn to proving \eqref{Maybe}.  First, note that
$$
\abs{D_{\vec{m}}}\approx\prod_{l=1}^n(1-\abs{w_{m_l}}^2)^2.
$$
Define
$$
H(z):=\sum_{\vec{m}} \left(\,\int_{D\left(w_{\vec{m}},\varrho\right)}\abs{f(w)}^p\,dv(w)\right)^{\frac{1}{p}}\abs{D_{\vec{m}}}^{-\frac{1}{p}}1_{D_{\vec{m}}}(z),
$$
and by Lemma \ref{StandardGeo_Polydisc} we have
$$
\norm{H}_{L^p}^p=\sum_{\vec{m}} \int_{D\left(w_{\vec{m}},\varrho\right)}\abs{f(w)}^p\,dv(w).
$$
Let $T:L^p\to L^p$ be the operator given by
$$
T f(z)=\int_{\D^n} f(w) \prod_{l=1}^n\frac{1}{\abs{1-\overline{w}_lz_l}^2}\,dv(w)
$$
and recall that $T$ is a bounded operator on $L^p$ with norm depending only on $p$ and the dimension.  Now, we clam that
\begin{equation}
\label{Maybe2}
\sum_{\vec{m}}\left(\,\int_{D(w_{\vec{m}},\varrho)}\abs{f(w)}^p\,dv(w)\right)^{\frac{1}{p}}\prod_{l=1}^n \frac{(1-\abs{w_{m_l}}^2)^{\frac{2}{q}}}{\abs{1-\overline{w}_{m_l}z_l}^2}\approx TH(z),
\end{equation}
which would imply
\begin{eqnarray*}
\norm{\sum_{\vec{m}}\left(\,\int_{D(w_{\vec{m}},\varrho)}\abs{f(w)}^p\,dv(w)\right)^{\frac{1}{p}}\prod_{l=1}^n \frac{(1-\abs{w_{m_l}}^2)^{\frac{2}{q}}}{\abs{1-\overline{w}_{m_l}z_l}^2}}_{L^p}^p & \approx & \norm{TH}_{L^p}^p\\
& \lesssim & \norm{H}_{L^p}^p\\
 & = & \sum_{\vec{m}}\int_{D(w_{\vec{m}},\varrho)}\abs{f(w)}^p\,dv(w),
\end{eqnarray*}
proving \eqref{Maybe}.  To see that \eqref{Maybe2} holds, observe that
\begin{eqnarray*}
TH(z) & = & \sum_{\vec{m}} \left(\,\int_{D\left(w_{\vec{m}},\varrho\right)}\abs{f(w)}^p\,dv(w)\right)^{\frac{1}{p}}\abs{D_{\vec{m}}}^{-\frac{1}{p}}\int_{D_{\vec{m}}} \prod_{l=1}^n\frac{1}{\abs{1-\overline{w}_lz_l}^2}\,dv(w)\\
 & \approx & \sum_{\vec{m}} \left(\,\int_{D\left(w_{\vec{m}},\varrho\right)}\abs{f(w)}^p\,dv(w)\right)^{\frac{1}{p}}\abs{D_{\vec{m}}}^{-\frac{1}{p}}\int_{D_{\vec{m}}} \prod_{l=1}^n\frac{1}{\abs{1-\overline{w}_{m_l}z_l}^2}\,dv(w)\\
 & \approx & \sum_{\vec{m}} \left(\,\int_{D\left(w_{\vec{m}},\varrho\right)}\abs{f(w)}^p\,dv(w)\right)^{\frac{1}{p}}\abs{D_{\vec{m}}}^{\frac{1}{q}} \prod_{l=1}^n\frac{1}{\abs{1-\overline{w}_{m_l}z_l}^2}\\
 & \approx & \sum_{\vec{m}} \left(\,\int_{D\left(w_{\vec{m}},\varrho\right)}\abs{f(w)}^p\,dv(w)\right)^{\frac{1}{p}} \prod_{l=1}^n\frac{(1-\abs{w_{m_l}}^2)^\frac{2}{q}}{\abs{1-\overline{w}_{m_l}z_l}^2}.
\end{eqnarray*}
Here, we again used Lemma \ref{StandardGeo_Polydisc}, in particular Remark \ref{Useful2}, in the first approximate equality.
\end{proof}

Now choose $0<\varrho\leq 1$ so that $\norm{T_{\mu_\varrho}-I_{A^p}}_{\mathcal{L}\left(A^p,A^p\right)}<\frac{1}{4}$.  We then have that $\norm{T_{\mu_\varrho}}_{\mathcal{L}\left(A^p, A^p\right)}$ and $\norm{T_{\mu_\varrho}^{-1}}_{\mathcal{L}\left(A^p, A^p\right)}$ are less than $\frac{4}{3}$.  Fix this value of $\varrho$, and denote $\mu_\varrho:=\mu$ for the rest of the paper.

For $S\in\mathcal{L}\left(A^p, A^p\right)$ and $r>0$, let
$$
\mathfrak{a}_S(r):=\varlimsup_{z\to\partial\D^n}\sup\left\{\norm{Sf}_{A^p}: f\in T_{\mu 1_{D\left(z,r\right)}}(A^p),\norm{f}_{A^p}\leq 1\right\}.
$$
Then define
$$
\mathfrak{a}_S:=\lim_{r\to 1}\mathfrak{a}_S(r).
$$
Since for $r_1<r_2$ we have that $T_{\mu 1_{D(z,r_1)}}(A^p)\subset T_{\mu 1_{D(z,r_2)}}(A^p)$ and $\mathfrak{a}_S(r)\leq\norm{S}_{\mathcal{L}\left(A^p,A^p\right)}$ this limit is well defined.  We define two other measures of the size of an operator, which are given in a very intrinsic and geometric way:
\begin{eqnarray*}
\mathfrak{b}_{S} & := & \sup_{r>0} \varlimsup_{z\to\partial\D^n}\norm{M_{1_{D(z,r)}}S} _{\mathcal{L}\left(A^p,L^p\right)},\\
\mathfrak{c}_{S} & := & \lim_{r\to 1}\norm{M_{1_{(r\D^n)^c}}S}_{\mathcal{L}\left(A^p,L^p\right)}.
\end{eqnarray*}
Recall that for notational simplicity we are letting $\left(r\D^n\right)^c=\D^n\setminus r\D^n$.  Finally, for $S\in \mathcal{L}\left(A^p,A^p\right)$ recall that
$$
\norm{S}_e=\inf\left\{\norm{S-Q}_{\mathcal{L}\left(A^p,A^p\right)}:Q\textnormal{ is compact}\right\}
$$
is the essential norm of the operator $S$.  We first show how to compute the essential norm of an operator $S$ in terms of the operators $S_x$ where $x\in M_{\mathcal{A}}\setminus\D^n$.

\begin{thm}
\label{EssentialviaSx}
Let $S\in\mathcal{T}_{p}$.  Then there exists constants depending on $p$ and the dimension such that
\begin{equation}
\label{EssentialNorm&Sx}
\sup_{x\in M_{\mathcal{A}}\setminus\D^n}\norm{S_x}_{\mathcal{L}\left(A^p, A^p\right)}\lesssim\norm{S}_e\lesssim \sup_{x\in M_{\mathcal{A}}\setminus\D^n}\norm{S_x}_{\mathcal{L}\left(A^p, A^p\right)}.
\end{equation}
\end{thm}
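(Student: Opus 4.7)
The plan is to establish the two inequalities separately: the easy lower bound $\sup_{x\in M_{\mathcal{A}}\setminus\D^n}\norm{S_x}\le\norm{S}_e$ from the vanishing of $Q_x$ for compact $Q$, and the harder upper bound $\norm{S}_e\lesssim\sup_x\norm{S_x}$ via the segmented approximation of Section \ref{Approximation}.

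For the lower bound I will show that $Q_x=0$ for every compact operator $Q$ on $A^p$ and every $x\in M_{\mathcal{A}}\setminus\D^n$. The analogue of \eqref{Comp} with $p,q$ swapped gives $(U^{q}_z)^*k^{(p)}_\xi=\lambda_{(q)}(\xi,z)k^{(p)}_{\varphi_z(\xi)}$, and for $z^{\omega}\to x\notin\D^n$ one has $\varphi_{z^{\omega}}(\xi)\to\partial\D^n$, so $k^{(p)}_{\varphi_{z^{\omega}}(\xi)}\rightharpoonup 0$ in $A^p$. Density of the span of reproducing kernels and uniform boundedness of $(U^{q}_z)^*$ extend this to $(U^{q}_{z^{\omega}})^*f\rightharpoonup 0$ for every $f\in A^p$; compactness of $Q$ promotes this to $Q(U^{q}_{z^{\omega}})^*f\to 0$ strongly, and the isometry $U^{p}_{z^{\omega}}$ then gives $Q_{z^{\omega}}f\to 0$ in $A^p$, so $Q_x=0$ in WOT by Proposition \ref{WOTCon}. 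Consequently $(S-Q)_x=S_x$, and WOT lower semi-continuity of the operator norm together with $\norm{U^{p}_z}_{\mathcal{L}(A^p,A^p)}=\norm{(U^{q}_z)^*}_{\mathcal{L}(A^p,A^p)}=1$ yields $\norm{S_x}\le\norm{S-Q}$; taking the infimum over compact $Q$ and the supremum over $x$ completes this direction.

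For the upper bound, set $M=\sup_{x\in M_{\mathcal{A}}\setminus\D^n}\norm{S_x}$. Since $T_\mu$ is invertible with $\norm{T_\mu^{-1}}\le 4/3$ and $QT_\mu^{-1}$ is compact whenever $Q$ is, it suffices to produce, for each $\epsilon>0$, a compact $Q$ with $\norm{ST_\mu-Q}_{\mathcal{L}(A^p,A^p)}\lesssim M+\epsilon$. Lemma \ref{Approx3} gives disjoint Borel sets $F_{\vec{j}}\subset G_{\vec{j}}$ of uniformly bounded hyperbolic diameter and finite overlap such that the segmented sum $\sum_{\vec{j}}M_{1_{F_{\vec{j}}}}ST_{1_{G_{\vec{j}}}\mu}$ approximates $ST_\mu$ within $\epsilon$ in $\mathcal{L}(A^p,L^p)$; post-composing with the bounded projection $P$ retains the error in $\mathcal{L}(A^p,A^p)$. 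Fix $0<r<1$ and split the indices into $J_r=\{\vec{j}:F_{\vec{j}}\cap r\D^n\neq\emptyset\}$ and its complement. Bounded hyperbolic diameter and finite overlap force $J_r$ finite, and since $\mu$ is discretely supported on the hyperbolically separated Coifman--Rochberg centers $\{w_{\vec{m}}\}$, each $T_{1_{G_{\vec{j}}}\mu}$ has finite rank; hence the $J_r$-piece $P\sum_{\vec{j}\in J_r}M_{1_{F_{\vec{j}}}}ST_{1_{G_{\vec{j}}}\mu}$ is compact.

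The crux is to bound the boundary tail $\sum_{\vec{j}\in J_r^c}M_{1_{F_{\vec{j}}}}ST_{1_{G_{\vec{j}}}\mu}$ in $\mathcal{L}(A^p,L^p)$ by a constant multiple of $M$ (up to vanishing error as $r\to 1$). For each $\vec{j}\in J_r^c$ pick $z_{\vec{j}}\in F_{\vec{j}}$; these escape to $\partial\D^n$ as $r\to 1$. Using the identity $S=U^{p}_{z_{\vec{j}}}S_{z_{\vec{j}}}T_{b_{z_{\vec{j}}}}U^{p}_{z_{\vec{j}}}$ obtained by inverting $S_z=U^{p}_zS(U^{q}_z)^*$, the shift $\varphi_{z_{\vec{j}}}$ maps $F_{\vec{j}},G_{\vec{j}}$ to sets of uniformly bounded diameter near the origin, while the invariance $\norm{\mu_z}_{\textnormal{RKM}}=\norm{\mu}_{\textnormal{RKM}}$ together with the technical Lemmas \ref{Tech1} and \ref{Tech2} and the $\ell^p$ summation from disjointness of $\{F_{\vec{j}}\}$ and finite overlap of $\{G_{\vec{j}}\}$ reduce the tail bound to a local estimate on $S_{z_{\vec{j}}}$ acting on data supported near $0$. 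The main obstacle is upgrading this local estimate from one depending on the full $\sup_{z\in\D^n}\norm{S_z}$ to one depending only on $M$: because only the local action of $S_{z_{\vec{j}}}$ on a fixed hyperbolic neighborhood of the origin enters each term, one can extract a subnet $z_{\vec{j}}\to x\in M_{\mathcal{A}}\setminus\D^n$ and use the SOT convergence $S_{z_{\vec{j}}}\to S_x$ from Proposition \ref{SOTCon}, together with the normal-families behavior of $A^p$ on compact subsets of $\D^n$, to pass to $\norm{S_x}\le M$ in the limit, thereby completing the proof after optimizing in $\epsilon$ and $r$.
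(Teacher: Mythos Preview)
Your lower bound is correct and close to the paper's argument, though the paper routes the vanishing $Q_x=0$ through the Berezin transform (via \eqref{Vanishing} and Lemma~\ref{BerezinVanish}) rather than through weak convergence of $(U^{q}_{z^{\omega}})^*f$. One small correction: $(U^{q}_z)^*$ need not have norm exactly $1$ on $A^p$ (since $(U^{q}_z)^*=T_{b_z}U^{p}_z$ and $\|T_{b_z}\|_{\mathcal{L}(A^p,A^p)}\le\|P\|_{L^p\to A^p}$), but uniform boundedness suffices for the $\lesssim$ in the statement.

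Your upper bound has a genuine gap at the last step. After the $\ell^p$ reduction you are effectively bounding a supremum of the form $\sup_{\vec{j}\in J_r^c}\|Sg_{\vec{j}}\|$ with $g_{\vec{j}}$ a normalized element of $T_{1_{G_{\vec{j}}}\mu}(A^p)$; this is precisely the quantity $\mathfrak{a}_S(d)$ the paper isolates. To pass from $\|Sg_{\vec{j}}\|=\|S_{z_{\vec{j}}}(U^{q}_{z_{\vec{j}}})^*g_{\vec{j}}\|$ to $\|S_x h\|$ along a subnet with $z_{\vec{j}}\to x$, SOT convergence $S_{z_{\vec{j}}}\to S_x$ must be paired with \emph{norm} convergence of $h_{\vec{j}}:=(U^{q}_{z_{\vec{j}}})^*g_{\vec{j}}$ to some $h$; weak or locally uniform convergence (which is all that ``normal families'' provides) is insufficient, since for uniformly bounded $R_\omega\to R$ in SOT and $h_\omega\rightharpoonup h$ one does not in general get $\|R_\omega h_\omega\|\to\|Rh\|$. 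The paper obtains norm compactness of $\{h_{\vec{j}}\}$ by exploiting two ingredients you have not invoked: the discreteness and separation of $\mu=\mu_\varrho$ force each $h_{\vec{j}}$ to be a sum of at most $M(\varrho,d)$ kernels $k^{(p)}_{\varphi_{z_{\vec{j}}}(w_{\vec{m}})}$ with centers in $\overline{D(0,d)}$, and the interpolation result of Berndtsson--Chang--Lin \cite{BCL} produces bounded dual functions that uniformly control the coefficients. Together these yield sequential compactness in $A^p$-norm, after which your subnet/SOT argument goes through. Without the coefficient bound the proposal is incomplete; the invocations of Lemmas~\ref{Tech1} and~\ref{Tech2} do not address this step (they are used for the segmented approximation in Lemma~\ref{Approx1}, not for the tail compactness).
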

\begin{proof}
Note that if $S$ is compact, then \eqref{EssentialNorm&Sx} is easy.  Since $k_\xi^{(p)}\to 0$ weakly as $\xi\to\partial\D^n$, for $S$ compact we have $\norm{Sk_\xi^{(p)}}_{A^p}\to 0$ as $\xi\to\partial\D^n$.  This in turn implies that the Berezin transform vanishes as we go to the boundary since
\begin{equation}
\label{Vanishing}
\abs{B(S)(\xi)}=\abs{\ip{S k_\xi^{(p)}}{k_\xi^{(q)}}_{A^2}}\leq\norm{Sk_\xi^{(p)}}_{A^p}\norm{k_\xi^{(q)}}_{A^q}\approx\norm{Sk_\xi^{(p)}}_{A^p}.
\end{equation}
Then, using Lemma \ref{BerezinVanish}, we have that $S_x=0$ for all $x\in M_{\mathcal{A}}\setminus\D^n$.

Now let $S$ be any bounded operator on $A^p$.  Suppose that $Q$ is a compact operator on $A^p$.  Select $x\in M_{\mathcal{A}}\setminus \D^n$ and a corresponding net $\{z^{\omega}\}$ tending to $x$.  Since the maps $U_{z^{\omega}}^{p}$ and $U_{z^{\omega}}^{q}$ are isometries on $A^p$ and $A^q$ we have that
$$
\norm{S_{z^{\omega}}+Q_{z^{\omega}}}_{\mathcal{L}\left(A^p, A^p\right)}\leq\norm{S+Q}_{\mathcal{L}\left(A^p, A^p\right)}.
$$
Since $S_{z^{\omega}}+Q_{z^{\omega}}\to S_x$ in the $WOT$, and passing to the limits we have
$$
\norm{S_x}_{\mathcal{L}\left(A^p, A^p\right)}\lesssim \varliminf\norm{S_{z^{\omega}}+Q_{z^{\omega}}}_{\mathcal{L}\left(A^p, A^p\right)}\leq \norm{S+Q}_{\mathcal{L}\left(A^p, A^p\right)}.
$$
But, this gives that
$$
\sup_{x\in M_{\mathcal{A}}\setminus\D^n}\norm{S_x}_{\mathcal{L}\left(A^p, A^p\right)}\lesssim\norm{S}_e
$$
which is the first inequality in \eqref{EssentialNorm&Sx}.  It only remains to address the last inequality and to accomplish this, we will instead prove 
\begin{equation}
\label{AlphaControlled}
\mathfrak{a}_S\lesssim \sup_{x\in M_{\mathcal{A}}\setminus\D^n}\norm{S_x}_{\mathcal{L}\left(A^p, A^p\right)}.
\end{equation}
Then we compare this with the first inequality in \eqref{Redux2}, $\norm{S}_e\lesssim\mathfrak{a}_S$, (shown below) to obtain
$$
\norm{S}_e\lesssim \sup_{x\in M_{\mathcal{A}}\setminus\D^n}\norm{S_x}_{\mathcal{L}\left(A^p, A^p\right)}.
$$
A remark that will be important later is that if we have \eqref{AlphaControlled}, then we also have
\begin{equation}
\label{LastStep}
\mathfrak{a}_S\lesssim\norm{S}_e.
\end{equation}

We now turn to addressing \eqref{AlphaControlled}.  It suffices to demonstrate that
$$
\mathfrak{a}_S(r)\lesssim\sup_{x\in M_{\mathcal{A}}\setminus\D^n}\norm{S_x}_{\mathcal{L}\left(A^p, A^p\right)}\quad\forall r>0.
$$
Fix a radius $r>0$, and then using the definition of $\mathfrak{a}_S(r)$ we have a sequence $\{z^{j}\}\subset\D^n$ tending to $\partial\D^n$ and a normalized sequence of functions $f_j\in T_{\mu 1_{D\left(z^j,r\right)}}(A^p)$ with $\norm{Sf_j}_{A^p}\to\mathfrak{a}_S(r)$.  To each $f_j$ we have a corresponding $h_j\in A^p$, and so
\begin{eqnarray*}
f_j(w)=T_{\mu 1_{D\left(z^j,r\right)}}h_j(w) & = & \sum_{w_{\vec{m}}\in D(z^j,r)}\frac{v(D_m)}{\prod_{l=1}^n(1-\overline{w_{m_l}}w_l)^{2}}h_j(w_{\vec{m}})\\
 & = & \sum_{w_{\vec{m}}\in D\left(z^j,r\right)} a_{j,\vec{m}} \prod_{l=1}^n\frac{\left(1-\abs{w_{m_l}}^2\right)^{\frac{2}{q}}}{\left(1-\overline{w_{m_l}}w\right)^{2}}\\
 & = & \sum_{w_{\vec{m}}\in D\left(z^j,r\right)} a_{j,\vec{m}} k_{w_{\vec{m}}}^{(p)}(w),
\end{eqnarray*}
where $a_{j,\vec{m}}=v(D_{\vec{m}})\prod_{l=1}^n\left(1-\abs{w_{m_l}}^2\right)^{-\frac{2}{q}}h_j(w_{\vec{m}})$.  We then have that
$$
\left(U^{q}_{z^j}\right)^{*}f_j(w)= \sum_{\varphi_{z^j}(w_{\vec{m}})\in D(0,r)} a_{j,\vec{m}}' k_{\varphi_{z^j}(w_{\vec{m}})}^{(p)}(w)
$$
where $a_{j,m}'$ is simply the original constant $a_{j,m}$ multiplied by the unimodular constant $\lambda_{(q)}$ from \eqref{Comp}.

Observe that the points $\abs{\varphi_{z^j}(w_{\vec{m}})}\leq r$.  
Since the M\"obius map $\varphi_{z^j}$ preserves the hyperbolic distance between the points $\{w_{\vec{m}}\}$ we have that when $\vec{m}\neq \vec{k}$ that
$$
\rho\left(\varphi_{z^j}(w_{\vec{m}}\right),\varphi_{z^j}(w_{\vec{k}}))=\rho\left(w_{\vec{m}},w_{\vec{k}}\right)\geq\frac{\varrho}{4}>0.
$$
By volume considerations there can only be at most $N_j\leq M(\varrho, r)$ points in the collection $\varphi_{z^j}(w_{\vec{m}})$ that lie in the disc $D(0,r)$.  This follows since we are looking in a compact set, $r\D^n$, and the points $w_{\vec{m}}$ are at a fixed distance from each other.  By passing to a subsequence, we can assume that $N_j=M$ and is independent of $j$.

For the fixed $j$ and $\vec{m}$, and select $g_{j,\vec{m}}\in H^\infty$ with $\norm{g_{j,\vec{m}}}_{H^\infty}\leq C\left(r,\varrho\right)$ and $g_{j,\vec{m}}(\varphi_{z^j}(w_{\vec{k}}))=\delta_{\vec{k},\vec{m}}$, the Dirac delta.  The existence of such a function is easy to deduce from a result of Berndtsson, Chang and Lin \cite{BCL}.  Indeed, from the main result of \cite{BCL} one can see that for a collection of points $\{z^j\}$ such that
$$
\inf_{j}\prod_{j\neq k}\rho\left(z^j,z^k\right)\geq \tau>0,
$$
then for any $k$, there exists a function $g_k\in H^\infty(\D^n)$ such that $\norm{g_k}_{H^\infty}\leq C(\tau)$ and $g_k(z_j)=\delta_{j,k}$. The main result of \cite{BCL}*{Theorem 1} actually says more, but what appears above suffices for our purposes.  We then have that
\begin{eqnarray*}
\ip{\left(U_{z^j}^{q}\right)^{*}f_j}{g_{j,\vec{k}}}_{A^2} & = & \sum_{\varphi_{z^j}(w_{\vec{m}})\in D(0,r)} a_{j,\vec{m}}' g_{j,\vec{k}}(\varphi_{z^j}(w_{\vec{m}}))\prod_{l=1}^n\left(1-\abs{\varphi_{z^j_l}(w_{m_l})}^2\right)^{\frac{2}{q}}\\
 & = & a_{j,\vec{k}}'\prod_{l=1}^n\left(1-\abs{\varphi_{z^j_l}(w_{k_l})}^2\right)^{\frac{2}{q}}.
\end{eqnarray*}
This expression then implies that $\abs{a_{j,\vec{k}}'}\leq C=C(n,p,\varrho,r)$ independent of $j$ and $\vec{k}$.  This follows since $g_{j,\vec{k}}\in H^\infty$ with norm controlled by $C(r,\varrho)$, $\left(U^{q}_z\right)^*$ is bounded, $\abs{\varphi_{z^j_l}(w_{k_l})}\leq r$, and $f_j$ is a normalized sequence.

Relabel the points in the collection $\varphi_{z^j}(w_{\vec{m}})$ that lie in the disc $D(0,r)$ to be $\varphi_{z^j}(w_i)$ where $1\leq i\leq M$.  Additionally, relabel the corresponding elements $a_{j,\vec{m}}'$ as $a_{j,i}'$.  Now we then have that $(\varphi_{z^j}(w_1),\ldots,\varphi_{z^j}(w_M), a_{j,1}',\ldots,a_{j,M}')\in\C^{M(n+1)}$ is a bounded sequence in $j$.  Passing to a subsequence, we have convergence to $(v_1,\ldots, v_M,a_{1}',\ldots a_{M}')$.  Here $\abs{v_i}\leq r$ and $\abs{a_{i}'}\leq C$ for $i=1,\ldots, M$.  This then gives that
$$
\left(U^{q}_{z^j}\right)^{*}f_j\to \sum_{k=1}^M a_k' k_{v_k}^{(p)} := h
$$
in the $L^p$ norm.  Moreover,
$$
\norm{h}_{L^p}=\norm{\sum_{k=1}^M a_k' k_{v_k}^{(p)}}_{L^p}=\lim_{j}\norm{\left(U^{q}_{z^j}\right)^{*}f_j}_{L^p}\lesssim1.
$$
Since the operator $U_{z^j}^{p}$ is isometric, and $\norm{S_{z^j}}$ is bounded independent of $j$, we have that
$$
\mathfrak{a}_S(r)=\lim_{j}\norm{S f_j}_{A^p}=\lim_{j}\norm{S_{z^j}\left(U_{z^j}^{q}\right)^* f_j}_{A^p}=\lim_j\norm{S_{z^j} h}_{A^p}.
$$
Recall that we have a sequence $\{z^j\}$ such that $z^j\to\partial\D^n$.  Now use the compactness of $M_{\mathcal{A}}$ and extract a subnet $\{z^\omega\}$ converging to some point $x\in M_{\mathcal{A}}\setminus\D^n$.  Then we have that $S_{z^\omega}h\to S_x h$ in $A^p$, and so
$$
\mathfrak{a}_S(r)=\lim_{\omega}\norm{S_{z^\omega} h}_{A^p}=\norm{S_xh}_{A^p}\lesssim\norm{S_x}_{\mathcal{L}\left(A^p, A^p\right)}\lesssim\sup_{x\in M_{\mathcal{A}}\setminus\D^n}\norm{S_x}_{\mathcal{L}\left(A^p, A^p\right)}.
$$
In the above, we used the continuity in the $SOT$ as guaranteed by Proposition \ref{SOTCon}.
\end{proof}

Our next main result is the following Theorem.
\begin{thm}
Let $1<p<\infty$ and $S\in \mathcal{T}_{p}$.  Then there exists constants depending only on $n$ and $p$ such that:
$$
\mathfrak{a}_S\approx\mathfrak{b}_S\approx\mathfrak{c}_S\approx\norm{S}_e.
$$
\end{thm}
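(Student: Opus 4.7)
The plan is to establish that all four quantities coincide via a circular chain of inequalities. Note that we already have $\mathfrak{a}_S \lesssim \norm{S}_e$ by combining \eqref{AlphaControlled} with the first inequality of Theorem~\ref{EssentialviaSx}. Two of the remaining directions are purely geometric. For $\mathfrak{c}_S \leq \norm{S}_e$: pick any compact $Q$ and split $\norm{M_{1_{(r\D^n)^c}} S} \leq \norm{S-Q} + \norm{M_{1_{(r\D^n)^c}} Q}$; the second term vanishes as $r \to 1$ because $Q$ sends the unit ball to a precompact set in $A^p$, and every function in $A^p$ has small $L^p$-tail on $(r\D^n)^c$. For $\mathfrak{b}_S \leq \mathfrak{c}_S$: for any fixed $r > 0$ and $r' \in (0,1)$, $D(z, r) \subset (r'\D^n)^c$ whenever $z$ is sufficiently close to $\partial \D^n$, so $\varlimsup_{z\to\partial\D^n} \norm{M_{1_{D(z,r)}}S} \leq \norm{M_{1_{(r'\D^n)^c}}S}$; send $r' \to 1$ and take $\sup_r$.

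The heart of the argument is $\norm{S}_e \lesssim \mathfrak{a}_S$. Fix $\epsilon > 0$ and apply Lemma~\ref{Approx3} with the Bergman--Carleson measure $\mu = \mu_\varrho$ to obtain Borel sets $F_{\vec{j}} \subset G_{\vec{j}}$ of $\rho$-diameter at most $r_0 := d(p,S,\epsilon)$ with $N(n)$-bounded overlap among the $G_{\vec{j}}$, and
\[\Big\| S T_\mu - \sum_{\vec{j}} M_{1_{F_{\vec{j}}}} S T_{1_{G_{\vec{j}}}\mu} \Big\|_{\mathcal{L}(A^p, L^p)} < \epsilon.\]
Composing on the right by $T_\mu^{-1}$ (bounded since $\norm{T_\mu - I_{A^p}} < 1/4$) and on the left by the Bergman projection $P$ (using $PS = S$) transfers this to an $O(\epsilon)$-approximation of $S$ in $\mathcal{L}(A^p, A^p)$. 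Partition the indices as $J_1 := \{\vec{j} : G_{\vec{j}} \subset r\D^n\}$ and $J_2 := \{\vec{j} : G_{\vec{j}} \cap (r\D^n)^c \neq \emptyset\}$ for some $r$ close to $1$. Since the $G_{\vec{j}}$ are $\rho$-bounded and $\mu = \mu_\varrho$ has $\rho$-separated atoms, $J_1$ is finite and each $T_{1_{G_{\vec{j}}}\mu}$ with $\vec{j}\in J_1$ is finite-rank, so the $J_1$-partial sum (after composition with $PT_\mu^{-1}$) is compact.

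For the tail $R_2 := \sum_{\vec{j} \in J_2} M_{1_{F_{\vec{j}}}} S T_{1_{G_{\vec{j}}}\mu}$, pairwise disjointness of $\{F_{\vec{j}}\}$ yields
\[\norm{R_2 f}_{L^p}^p = \sum_{\vec{j} \in J_2} \norm{M_{1_{F_{\vec{j}}}} S T_{1_{G_{\vec{j}}}\mu} f}_{L^p}^p \leq \sum_{\vec{j} \in J_2} \norm{S T_{1_{G_{\vec{j}}}\mu} f}_{A^p}^p.\]
Because $G_{\vec{j}} \subset D(z_{\vec{j}}, r_0)$ for some $z_{\vec{j}} \in G_{\vec{j}}$, one has $T_{1_{G_{\vec{j}}}\mu} f \in T_{\mu 1_{D(z_{\vec{j}}, r_0)}}(A^p)$; for $\vec{j}\in J_2$ and $r$ close to $1$, $z_{\vec{j}}$ lies close to $\partial \D^n$ (as $G_{\vec{j}}$ has bounded hyperbolic diameter and meets $(r\D^n)^c$), so the definition of $\mathfrak{a}_S$ gives $\norm{S T_{1_{G_{\vec{j}}}\mu} f}_{A^p} \leq (\mathfrak{a}_S + \epsilon) \norm{T_{1_{G_{\vec{j}}}\mu} f}_{A^p}$. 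Summing via Lemma~\ref{CM-Cor} and the bounded overlap gives $\sum_{\vec{j}} \norm{T_{1_{G_{\vec{j}}}\mu} f}_{A^p}^p \lesssim \norm{f}_{A^p}^p$, hence $\norm{R_2}_{\mathcal{L}(A^p, L^p)} \lesssim \mathfrak{a}_S + \epsilon$ and $\norm{S}_e \lesssim \mathfrak{a}_S$.

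Finally, $\norm{S}_e \lesssim \mathfrak{b}_S$ closes the loop via the same scheme, but at the critical step we use instead $\norm{M_{1_{F_{\vec{j}}}} S T_{1_{G_{\vec{j}}}\mu} f}_{L^p} \leq \norm{M_{1_{F_{\vec{j}}}}S}\,\norm{T_{1_{G_{\vec{j}}}\mu}f}_{A^p} \leq (\mathfrak{b}_S + \epsilon) \norm{T_{1_{G_{\vec{j}}}\mu}f}_{A^p}$, valid because each $F_{\vec{j}}$ with $\vec{j}\in J_2$ sits near $\partial\D^n$. Combining with the geometric inequalities and $\mathfrak{a}_S \lesssim \norm{S}_e$ yields $\mathfrak{a}_S \approx \mathfrak{b}_S \approx \mathfrak{c}_S \approx \norm{S}_e$. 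The main obstacle throughout is the bookkeeping for the $J_1 / J_2$ split together with the transfer from $\mathcal{L}(A^p, L^p)$-approximations to $\mathcal{L}(A^p, A^p)$-approximations for the essential norm, which is handled cleanly by the boundedness of $P$, the identity $PS = S$, and the approximation machinery in Lemma~\ref{Approx3}.
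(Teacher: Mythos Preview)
Your argument is correct and follows the same scheme as the paper: invoke Lemma~\ref{Approx3}, split the resulting sum into a compact piece and a tail near the boundary, and control the tail by $\mathfrak{a}_S$ (and, in a parallel pass, by $\mathfrak{b}_S$). The paper organizes this through an auxiliary quantity $\varlimsup_{m}\norm{S_m}_{\mathcal{L}(A^p,L^p)}$ with $S_m=\sum_{|\vec{j}|\ge m} M_{1_{F_{\vec{j}}}}ST_{1_{G_{\vec{j}}}\mu}$ and shows all four numbers are comparable to it, whereas you compose directly with $P$ and $T_\mu^{-1}$ and split geometrically via $J_1,J_2$; the two bookkeeping choices are equivalent. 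Your direct proof of $\mathfrak{c}_S\le\norm{S}_e$ (uniform smallness of $L^p$-tails on the precompact image of a compact operator) is a nice simplification of the paper's detour through $\varlimsup_m\norm{S_m}$.

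One point to tighten: the justification you give for $J_1$ being finite (``the $G_{\vec{j}}$ are $\rho$-bounded and $\mu=\mu_\varrho$ has $\rho$-separated atoms'') only shows that each $T_{1_{G_{\vec{j}}}\mu}$ is finite-rank, not that $J_1$ itself is finite. What is actually needed is that only finitely many $G_{\vec{j}}$ meet any fixed compact subset of $\D^n$; this is a property of the Su\'arez covering that the paper also relies on implicitly (the phrase ``$z^{\vec{j}}$ must approach the boundary'' in the paper's proof encodes precisely this). A clean way to sidestep the issue is to split on the index, taking $J_1=\{|\vec{j}|<m\}$, which is finite by construction, and then send $m\to\infty$; the boundary estimate for $J_2$ then uses the same property in the equivalent form that each $G_{\vec{j}}$ is precompact in $\D^n$ (immediate from bounded $\rho$-diameter).
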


\begin{proof}
By Lemma \ref{Approx3} there are Borel sets $F_{\vec{j}}\subset G_{\vec{j}}\subset\D^n$ such that
\begin{itemize}
\item[(i)] $\D^n=\bigcup F_{\vec{j}}$;
\item[(ii)] $F_{\vec{j}}\bigcap F_{\vec{j}'}=\emptyset$ if $\vec{j}\neq\vec{j}'$;
\item[(iii)] each point of $\D^n$ lies in no more than $N(n)$ of the sets $G_{\vec{j}}$;
\item[(iv)] $\textnormal{diam}_{\rho}\, G_{\vec{j}}\leq d(p,S,\epsilon)$
\end{itemize}
and
\begin{equation}
\label{RecallEst}
\norm{ST_\mu-\sum_{\vec{j}} M_{1_{F_{\vec{j}}}}ST_{\mu 1_{G_{\vec{j}}}}}_{\mathcal{L}\left(A^p, L^p\right)}<\epsilon.
\end{equation}
For $m\in\N$, set
$$
S_m:=\sum_{\abs{\vec{j}}\geq m} M_{1_{F_{\vec{j}}}} S T_{\mu 1_{G_{\vec{j}}}}.
$$
We then form one more measure of the size of $S$, and this is given by
$$
\varlimsup_{m\to\infty}\norm{S_m}_{\mathcal{L}\left(A^p, L^p\right)}=\varlimsup_{m\to\infty} \norm{\sum_{\abs{\vec{j}}\geq m} M_{1_{F_{\vec{j}}}} S T_{\mu 1_{G_{\vec{j}}}}}_{\mathcal{L}\left(A^p, L^p\right)}.
$$
First some observations.  Since every $z\in\D^n$ belongs to only $N(n)$ sets $G_{\vec{j}}$ we have by Lemma~\ref{CM-Cor} that
$$
\sum_{\abs{\vec{j}}\geq m}\norm{T_{1_{G_{\vec{j}}\mu}}f}_{A^p}^p\lesssim \sum_{\vec{j}} \norm{1_{G_{\vec{j}}}f}_{L^p(\mu)}^p \lesssim\norm{f}_{A^p}^p.
$$
Also, since $T_\mu$ is bounded and invertible, we have that $\norm{S}_e\approx\norm{ST_\mu}_e$.  Finally, we will need to compute both norms in $\mathcal{L}\left(A^p, A^p\right)$ and $\mathcal{L}\left(A^p, L^p\right)$.  When necessary, we will denote the respective essential norms as $\norm{\,\cdot\,}_{e}$ and $\norm{\,\cdot\,}_{ex}$.  However, we always have
$$
\norm{R}_{ex}\leq\norm{R}_e\leq\norm{P}_{L^p\to A^p}\norm{R}_{ex}\lesssim\norm{R}_{ex}.
$$
The strategy is to show that
\begin{equation}
\label{Redux1}
\mathfrak{b}_S\leq\mathfrak{c}_S\lesssim \varlimsup_{m\to\infty}\norm{S_m}_{\mathcal{L}\left(A^p, L^p\right)}\lesssim\mathfrak{b}_S
\end{equation}
and 
\begin{equation}
\label{Redux2}
\norm{S}_{e}\lesssim\varlimsup_{m\to\infty}\norm{S_m}_{\mathcal{L}\left(A^p, L^p\right)}\lesssim \mathfrak{a}_S\lesssim \norm{S}_e.
\end{equation}
Combining \eqref{Redux1} and \eqref{Redux2} we have the Theorem.  We first turn to proving the first two inequalities in \eqref{Redux2}.

Fix a $f\in A^p$ of norm 1 and note
\begin{eqnarray}
\norm{S_m f}^p_{L^p} & = & \sum_{\abs{\vec{j}}\geq m} \norm{M_{1_{F_{\vec{j}}}} ST_{\mu 1_{G_{\vec{j}}}}f}_{L^p}^p\notag\\
& = & \sum_{\abs{\vec{j}}\geq m} \left(\frac{\norm{M_{1_{F_{\vec{j}}}} ST_{\mu 1_{G_{\vec{j}}}}f}_{L^p}}{\norm{T_{1_{G_{\vec{j}}\mu}}f}_{A^p}}\right)^{p}\norm{T_{\mu 1_{G_{\vec{j}}}}f}_{A^p}^p\notag\\
& \leq & \sup_{\abs{\vec{j}}\geq m}\sup\left\{\norm{M_{1_{F_{\vec{j}}}} Sg}^p_{L^p}: g\in T_{\mu 1_{G_{\vec{j}}}}(A^p),\norm{g}_{A^p}= 1\right\}\sum_{\abs{\vec{j}}\geq m}\norm{T_{\mu 1_{G_{\vec{j}}}}f}_{A^p}^p\notag\\
& \lesssim & \sup_{\abs{\vec{j}}\geq m}\sup\left\{\norm{M_{1_{F_{\vec{j}}}} Sg}^p_{L^p}: g\in T_{\mu 1_{G_{\vec{j}}}}(A^p),\norm{g}_{A^p}= 1\right\}\label{Last}.
\end{eqnarray}
Now observe that since $\textnormal{diam}_{\rho}\, G_{\vec{j}}\leq d$, then select $z^{\vec{j}}\in G_{\vec{j}}$ and we have that
$G_{\vec{j}}\subset D\left(z^{\vec{j}},d\right)$, and so $T_{\mu 1_{G_{\vec{j}}}}(A^p)\subset T_{\mu 1_{D\left(z^{\vec{j}},d\right)}}(A^p)$.
Since $z^{\vec{j}}$ must approach the boundary, we can select an additional sequence $0<\gamma_m<1$ tending to $1$ such that
$\rho\left(0,z^{\vec{j}}\right)\geq\gamma_m$ when $\abs{\vec{j}}\geq m$.  Using \eqref{Last} we have that
\begin{eqnarray}
\norm{S_m}_{\mathcal{L}\left(A^p,L^p\right)} & \lesssim & \sup_{\abs{\vec{j}}\geq m}\sup\left\{\norm{M_{1_{F_{\vec{j}}}} Sg}_{L^p}: g\in T_{\mu 1_{G_{\vec{j}}}}(A^p),\norm{g}_{A^p}= 1\right\}\notag\\
 & \lesssim & \sup_{\rho\left(0,z^{\vec{j}}\right)\geq\gamma_m}\sup\left\{\norm{M_{1_{D\left(z^{\vec{j}},d\right)}} Sg}_{L^p}: g\in T_{\mu 1_{D\left(z^{\vec{j}},d\right)} }(A^p),\norm{g}_{A^p}= 1\right\}\label{important}\\
 & \lesssim & \sup_{\rho\left(0,z^{\vec{j}}\right)\geq\gamma_m}\sup\left\{\norm{Sg}_{L^p}: g\in T_{\mu 1_{D\left(z^{\vec{j}},d\right)}}(A^p),\norm{g}_{A^p}= 1\right\}\notag.
\end{eqnarray}
Then send $m\to \infty$ and note that since $\gamma_m\to 1$ we have that
$$
\varlimsup_{m\to\infty} \norm{S_m}_{\mathcal{L}\left(A^p,L^p\right)}\lesssim \mathfrak{a}_S(d).
$$
Now using \eqref{RecallEst} we see
$$
\norm{ST_\mu}_{ex}\leq\varlimsup_{m\to\infty} \norm{S_m}_{\mathcal{L}\left(A^p,L^p\right)}+\epsilon\lesssim \mathfrak{a}_S(d)+\epsilon\lesssim\mathfrak{a}_S+\epsilon.
$$
and so $\norm{ST_\mu}_{ex}\leq\varlimsup_{m\to\infty} \norm{S_m}_{\mathcal{L}\left(A^p,L^p\right)}\lesssim\mathfrak{a}_S$ since $\epsilon$ is arbitrary.  This in turn implies, 
\begin{equation}
\norm{S}_e\approx\norm{ST_\mu}_{e}\lesssim \norm{ST_\mu}_{ex}\leq \varlimsup_{m} \norm{S_m}_{\mathcal{L}\left(A^p,L^p\right)}\lesssim \mathfrak{a}_S.
\end{equation}
This then gives the first two inequalities in \eqref{Redux2}.  The remaining inequality is simply \eqref{LastStep} which was proved in Theorem \ref{EssentialviaSx}.

We now consider \eqref{Redux1}.  Let $0<r<1$, and note that there exists a positive integer $m(r)$
such that $\bigcup_{{\vec{j}}<m(r)}F_{\vec{j}}\subset r\D^n$.  We then have
\begin{eqnarray*}
\norm{M_{1_{(r\D^n)^c}}S}_{\mathcal{L}\left(A^p,L^p\right)}\norm{T^{-1}_\mu}^{-1}_{\mathcal{L}\left(A^p,A^p\right)} & \leq & \norm{M_{1_{(r\D^n)^c}}ST_\mu}_{\mathcal{L}\left(A^p,L^p\right)}\\
& \leq & \norm{M_{1_{(r\D^n)^c}}\left(ST_\mu-\sum_{{\vec{j}}} M_{1_{F_{\vec{j}}}}ST_{\mu 1_{G_{\vec{j}}}}\right)}_{\mathcal{L}\left(A^p,L^p\right)}\\
& & +\norm{M_{1_{(r\D^n)^c}}\sum_{{\vec{j}}} M_{1_{F_{\vec{j}}}}ST_{\mu 1_{G_{\vec{j}}}}}_{\mathcal{L}\left(A^p,L^p\right)}\\
& \leq & \epsilon +\norm{\sum_{\abs{{\vec{j}}}\geq m(r)} M_{1_{F_{\vec{j}}}}ST_{\mu 1_{G_{\vec{j}}}}}_{\mathcal{L}\left(A^p,L^p\right)}\\
& = & \epsilon+\norm{S_{m(r)}}_{\mathcal{L}\left(A^p,L^p\right)}.
\end{eqnarray*}
This string of inequalities easily gives
\begin{equation}
\label{Redux1-1}
\mathfrak{c}_{S}=\varlimsup_{r\to 1}\norm{M_{1_{(r\D^n)^c}}S}_{\mathcal{L}\left(A^p,L^p\right)}\lesssim \varlimsup_{m\to\infty} \norm{S_m}_{\mathcal{L}\left(A^p,L^p\right)}.
\end{equation}
But, \eqref{important} gives that
\begin{equation}
\label{Redux1-2}
\varlimsup_{m\to\infty} \norm{S_m}_{\mathcal{L}\left(A^p,L^p\right)}\lesssim\varlimsup_{z\to \partial\D^n}
\norm{M_{1_{D\left(z,r\right)}}S} _{\mathcal{L}\left(A^p,L^p\right)}\lesssim\mathfrak{b}_S.
\end{equation}
Combining the trivial inequality $\mathfrak{b}_S\leq\mathfrak{c}_S$ with \eqref{Redux1-1} and \eqref{Redux1-2} we obtain \eqref{Redux1}.
\end{proof}

From these Theorems we can deduce two results of interest.
\begin{thm}
Let $1<p<\infty$ and $S\in\mathcal{T}_{p}$.  Then
$$
\norm{S}_e\approx \sup_{\norm{f}_{A^p}=1}\varlimsup_{z\to\partial\D^n}\norm{S_z f}_{A^p}.
$$
\end{thm}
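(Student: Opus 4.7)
The plan is to reduce the theorem to the characterization of the essential norm in terms of $\{S_x : x \in M_{\mathcal{A}} \setminus \D^n\}$ that was already established in Theorem \ref{EssentialviaSx}. Writing $\mathfrak{d}_S := \sup_{\norm{f}_{A^p}=1}\varlimsup_{z\to\partial\D^n}\norm{S_z f}_{A^p}$, it suffices to prove
\begin{equation}
\label{ProposalKey}
\sup_{x\in M_{\mathcal{A}}\setminus \D^n}\norm{S_x}_{\mathcal{L}(A^p,A^p)} = \mathfrak{d}_S,
\end{equation}
after which the desired equivalence $\norm{S}_e \approx \mathfrak{d}_S$ follows immediately by combining \eqref{ProposalKey} with Theorem \ref{EssentialviaSx}. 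The main workhorse will be the strong operator topology continuity of $z \mapsto S_z$ supplied by Proposition \ref{SOTCon}, which applies because $S \in \mathcal{T}_p$.

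For the inequality $\sup_x \norm{S_x} \leq \mathfrak{d}_S$, I would fix $x \in M_{\mathcal{A}} \setminus \D^n$ and a net $\{z^{\omega}\} \subset \D^n$ converging to $x$. Since the Euclidean topology on $\D^n$ coincides with the topology induced from $M_{\mathcal{A}}$, and $x \notin \D^n$, the net must eventually escape every compact subset of $\D^n$, i.e., $z^{\omega} \to \partial \D^n$. For each $f \in A^p$ with $\norm{f}_{A^p} = 1$, Proposition \ref{SOTCon} gives $S_{z^{\omega}} f \to S_x f$ in $A^p$, and hence
\begin{equation*}
\norm{S_x f}_{A^p} = \lim_{\omega}\norm{S_{z^{\omega}} f}_{A^p} \leq \varlimsup_{z \to \partial \D^n}\norm{S_z f}_{A^p} \leq \mathfrak{d}_S.
\end{equation*}
Taking the supremum over such $f$ yields $\norm{S_x}_{\mathcal{L}(A^p,A^p)} \leq \mathfrak{d}_S$.

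For the reverse inequality $\mathfrak{d}_S \leq \sup_x \norm{S_x}$, I would fix any $f \in A^p$ with $\norm{f}_{A^p}=1$ and choose a sequence $\{z^k\} \subset \D^n$ tending to $\partial \D^n$ with $\norm{S_{z^k} f}_{A^p} \to \varlimsup_{z \to \partial \D^n}\norm{S_z f}_{A^p}$. By compactness of $M_{\mathcal{A}}$, this sequence has a subnet $\{z^{\omega}\}$ converging to some $x \in M_{\mathcal{A}}$; as before, $x \notin \D^n$ because the sequence leaves every compact subset of $\D^n$. Proposition \ref{SOTCon} again forces $\norm{S_{z^{\omega}} f}_{A^p} \to \norm{S_x f}_{A^p}$, and since subnets preserve the value of the limit,
\begin{equation*}
\varlimsup_{z \to \partial \D^n}\norm{S_z f}_{A^p} = \norm{S_x f}_{A^p} \leq \norm{S_x}_{\mathcal{L}(A^p,A^p)} \leq \sup_{x \in M_{\mathcal{A}} \setminus \D^n}\norm{S_x}_{\mathcal{L}(A^p,A^p)}.
\end{equation*}
Taking the supremum over $f$ gives $\mathfrak{d}_S \leq \sup_x \norm{S_x}$, which completes \eqref{ProposalKey}.

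There is no genuine obstacle here: both inequalities are essentially the same diagonal argument, the compactness of $M_{\mathcal{A}}$ supplying the boundary point $x$ and Proposition \ref{SOTCon} upgrading $WOT$ convergence to the norm convergence $\norm{S_{z^{\omega}} f}_{A^p} \to \norm{S_x f}_{A^p}$ that is needed to pass to the limit inside the norm. The only point that requires a small amount of care is the verification that a net in $\D^n$ converging in $M_{\mathcal{A}}$ to a point outside $\D^n$ must necessarily approach $\partial\D^n$ in the Euclidean sense, which uses the fact that the Euclidean and $M_{\mathcal{A}}$-topologies agree on $\D^n$.
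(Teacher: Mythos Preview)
Your proposal is correct and follows essentially the same route as the paper: establish the identity $\sup_{x\in M_{\mathcal{A}}\setminus\D^n}\norm{S_x}_{\mathcal{L}(A^p,A^p)} = \sup_{\norm{f}_{A^p}=1}\varlimsup_{z\to\partial\D^n}\norm{S_z f}_{A^p}$ using Proposition~\ref{SOTCon} together with the compactness of $M_{\mathcal{A}}$, then invoke Theorem~\ref{EssentialviaSx}. The paper compresses the argument into a single line, while you have spelled out both inequalities and the passage from nets in $\D^n$ to boundary points of $M_{\mathcal{A}}$; the content is the same.
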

\begin{proof}
It is easy to see from Lemma \ref{SOTCon} and the compactness of $M_{\mathcal{A}}$ that
$$
\sup_{x\in M_{\mathcal{A}}\setminus\D^n}\norm{S_xf}_{A^p}=\varlimsup_{z\to\partial\D^n}\norm{ S_z f}_{A^p}.
$$
But, then we get,
$$
\sup_{x\in M_{\mathcal{A}}\setminus\D^n}\norm{S_x}_{\mathcal{L}\left(A^p,A^p\right)}=\sup_{\norm{f}_{A^p}=1}\varlimsup_{z\to\partial\D^n}\norm{S_z f}_{A^p},
$$
and using Theorem \ref{EssentialviaSx} gives the result.
\end{proof}

The next result gives the characterization of compact operators in terms of the Berezin transform and membership in the Toeplitz algebra

\begin{thm}
Let $1<p<\infty$ and $S\in\mathcal{L}\left(A^p,A^p\right)$.  Then $S$ is compact if and only if $S\in\mathcal{T}_{p}$ and $B(S)=0$ on $\partial\D^n$.
\end{thm}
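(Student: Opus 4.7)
The plan is to deduce this as an essentially immediate corollary of the machinery already assembled, particularly Theorem \ref{EssentialviaSx} and Lemma \ref{BerezinVanish}. Only the two directions need to be separated.

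For the forward direction, suppose $S$ is compact. The containment $S \in \mathcal{T}_p$ is the result of Engli\v{s} cited in the introduction. To see that $B(S)$ vanishes on the boundary, I would repeat the short argument already used in the proof of Theorem \ref{EssentialviaSx}: since the normalized kernels $k_\xi^{(p)}$ tend to $0$ weakly in $A^p$ as $\xi \to \p\D^n$ (a standard uniqueness/normal-family argument), compactness of $S$ forces $\norm{Sk_\xi^{(p)}}_{A^p} \to 0$, and then \eqref{Vanishing} gives $\abs{B(S)(\xi)} \lesssim \norm{Sk_\xi^{(p)}}_{A^p} \to 0$.

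For the backward direction, assume $S \in \mathcal{T}_p$ and $B(S)(z) \to 0$ as $z \to \p\D^n$. By Lemma \ref{BerezinVanish} this is equivalent to $S_x = 0$ for every $x \in M_{\mathcal{A}} \setminus \D^n$, so
$$
\sup_{x \in M_{\mathcal{A}} \setminus \D^n} \norm{S_x}_{\mathcal{L}(A^p,A^p)} = 0.
$$
Since $S \in \mathcal{T}_p$, Theorem \ref{EssentialviaSx} applies and yields
$$
\norm{S}_e \lesssim \sup_{x \in M_{\mathcal{A}} \setminus \D^n} \norm{S_x}_{\mathcal{L}(A^p,A^p)} = 0,
$$
so $\norm{S}_e = 0$ and $S$ is compact.

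In this setup, there is no genuine obstacle left: all of the difficulty has been absorbed into Theorem \ref{EssentialviaSx}, whose proof in turn relied on the approximation scheme of Section \ref{Approximation}, the Bergman--Carleson measure machinery of Section \ref{CMA}, and the continuous extension of $\Psi_S$ to $M_{\mathcal{A}}$ via Propositions \ref{WOTCon} and \ref{SOTCon}. The only minor point worth noting is that the forward direction does not use the polydisc structure in any essential way, while the reverse direction crucially requires both that the essential-norm upper bound in Theorem \ref{EssentialviaSx} is available (hence the hypothesis $S \in \mathcal{T}_p$) and that Lemma \ref{BerezinVanish} faithfully translates boundary vanishing of $B(S)$ into vanishing of every fiber operator $S_x$.
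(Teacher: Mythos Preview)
Your proposal is correct and matches the paper's argument essentially line for line: the backward direction uses Lemma \ref{BerezinVanish} and Theorem \ref{EssentialviaSx} exactly as you do, and the forward direction invokes \eqref{Vanishing} for the vanishing of $B(S)$. The one small difference is that for the containment of compact operators in $\mathcal{T}_p$ you cite Engli\v{s}'s result from the introduction, whereas the paper gives a short self-contained argument: reduce to rank-one operators $f\otimes g$ with polynomial $f,g$, write $f\otimes g = T_f(1\otimes 1)T_{\overline{g}}$, and observe $1\otimes 1 = T_{\delta_0}\in\mathcal{T}_p$ by Theorem \ref{Sua2Thm}. Your appeal to the cited result is legitimate, but the paper's direct argument has the advantage of keeping the proof internal to the machinery already developed.
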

\begin{proof}
If $B(S)=0$ on $\partial\D^n$, then we have that $S_x=0$ for all $x\in M_{\mathcal{A}}\setminus\D^n$.  And if $S\in \mathcal{T}_{p}$, then Theorem \ref{EssentialviaSx} gives that $S$ must be compact.

In the other direction, if $S$ is compact, then we have that $B(S)=0$ on $\partial\D^n$ by \eqref{Vanishing}.  So it only remains to show that $S\in\mathcal{T}_{p}$.  Since every compact operator on $A^p$ can be approximated by finite rank operators, it suffices to show that all rank one operators are in $\mathcal{T}_{p}$.  But, the rank one operators have the form $f\otimes g$ where $f\in A^p$, $g\in A^q$ and
$f\otimes g:A^p\to A^p$ is given by
$$
f\otimes g(h)=\ip{h}{g}_{A^2}f.
$$
We can further suppose that $f$ and $g$ are polynomials since they are dense in $A^p$ and $A^q$ respectively.  But, then
$$
f\otimes g= T_f(1\otimes 1)T_{\overline{g}},
$$
and so it suffices to know that $1\otimes 1\in \mathcal{T}_{p}$.  But, $1\otimes 1(h)=h(0)=T_{\delta_0}h$, where $\delta_0$ is the Dirac mass at zero and it is easy to see by Theorem \ref{Sua2Thm} that $T_{\delta_0}$ belongs to the algebra generated by $\left\{T_a:a\in\mathcal{A}\right\}$, and so $T_{\delta_0}\in\mathcal{T}_{p}$.

\end{proof}

\subsection{The Hilbert Space Case}

When $p=2$, then some of the results can be strengthened in a straightforward manner.  It is easy to see that when $T\in\mathcal{L}\left(A^2,A^2\right)$, $S\in\mathcal{T}_{2}$ and for $x\in M_{\mathcal{A}}$ we have
$$
(ST)_x=S_xT_x\quad (TS)_x=T_xS_x\quad (T^*)_x=T_x^*.
$$
This is accomplished by noting that $\mathcal{T}_{2}$ is a self-adjoint algebra, and using Propositions \ref{WOTCon} and \ref{SOTCon} and that $b_z=1$ by \eqref{bofz}.  Also note that if $S\in\mathcal{L}\left(A^2,A^2\right)$ that we have $\norm{S_z}_{\mathcal{L}\left(A^2,A^2\right)}=\norm{S}_{\mathcal{L}\left(A^2,A^2\right)}$, and so
$$
\norm{S_x}_{\mathcal{L}\left(A^2,A^2\right)}\leq\norm{S}_{\mathcal{L}\left(A^2,A^2\right)}.
$$
Here we have used that $b_z=1$ by \eqref{bofz} when $p=2$ and the definition of $S_z$.

Let $\mathcal{K}$ denote the ideal of compact operators on $A^2$.  Recall that the Calkin algebra is given by $\mathcal{L}\left(A^2, A^2\right)/\mathcal{K}$.  The spectrum of $S$ will be denoted by $\sigma(S)$, and the spectral radius will be denoted by
$$
r(S)=\sup\left\{\abs{\lambda}:\lambda\in\sigma(S)\right\}.
$$
Define the essential spectrum $\sigma_e(S)$ to be the spectrum of $S+\mathcal{K}$ in the Calkin algebra, and the essential spectral radius as
$$
r_e(S)=\sup\left\{\abs{\lambda}:\lambda\in\sigma_e(S)\right\}.
$$
The following result is the improvement that is available in the Hilbert space case.

\begin{thm}
For $S\in\mathcal{T}_{2}$ we have
\begin{equation}
\label{EssentialNorm}
\norm{S}_e=\sup_{x\in M_{\mathcal{A}}\setminus\D^n}\norm{S_x}_{\mathcal{L}\left(A^2, A^2\right)}
\end{equation}
and
\begin{equation}
\label{EssentialRad}
\sup_{x\in M_{\mathcal{A}}\setminus\D^n} r(S_x)\leq\lim_{k\to\infty}
\left(\sup_{x\in M_{\mathcal{A}}\setminus\D^n}\norm{S_x^k}^{\frac{1}{k}}_{\mathcal{L}\left(A^2, A^2\right)}\right)=r_e(S)
\end{equation}
with equality when $S$ is essentially normal.
\end{thm}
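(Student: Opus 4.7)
The plan is to exploit the fact that for $p=2$ the map $S\mapsto S_x$ is a genuine unital *-homomorphism of $C^*$-algebras, so that the first equality becomes precisely the statement that an injective *-homomorphism is isometric. The key starting point is that $b_z\equiv 1$ when $p=2$ by \eqref{bofz}, which makes $U^2_z$ a unitary involution on $A^2$ and forces the pointwise identities $\norm{S_z}_{\mathcal{L}(A^2,A^2)}=\norm{S}_{\mathcal{L}(A^2,A^2)}$ and $(ST)_z=S_z T_z$ with no correction term. Combining these with Propositions \ref{WOTCon} and \ref{SOTCon} to pass to boundary points, and using WOT lower semicontinuity of the operator norm together with self-adjointness of $\mathcal{T}_2$, one obtains the boundary relations $\norm{S_x}\le\norm{S}$, $(ST)_x=S_x T_x$ and $(S^*)_x=(S_x)^*$ for all $S,T\in\mathcal{T}_2$ and $x\in M_{\mathcal{A}}$ (as already recorded at the start of this subsection).

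With these relations in hand, the map $\Phi\colon \mathcal{T}_2 \to \prod_{x \in M_{\mathcal{A}}\setminus\D^n}\mathcal{L}(A^2,A^2)$ defined by $\Phi(S)=(S_x)_x$ is a contractive *-homomorphism into a $C^*$-algebra (under the supremum norm). Any compact $Q$ on $A^2$ satisfies $B(Q)(z)\to 0$ as $z\to\partial\D^n$, so Lemma \ref{BerezinVanish} forces $Q_x=0$ for every boundary $x$; hence $\mathcal{K}\subset\ker\Phi$ and $\Phi$ descends to a *-homomorphism $\tilde\Phi$ on the Calkin-type quotient $\mathcal{T}_2/\mathcal{K}$. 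The compactness characterization proved earlier in this section shows that $\tilde\Phi$ is injective: if $S_x=0$ for every $x\in M_{\mathcal{A}}\setminus\D^n$ then $B(S)$ vanishes on $\partial\D^n$, and since $S\in\mathcal{T}_2$ this forces $S$ to be compact. Since an injective *-homomorphism between $C^*$-algebras is automatically isometric, we conclude
$$
\norm{S}_e=\norm{\tilde\Phi(S+\mathcal{K})}=\sup_{x\in M_{\mathcal{A}}\setminus\D^n}\norm{S_x}_{\mathcal{L}(A^2,A^2)},
$$
which is \eqref{EssentialNorm}.

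For \eqref{EssentialRad}, I would apply \eqref{EssentialNorm} to $S^k$ and use multiplicativity $(S^k)_x=(S_x)^k$ to obtain $\norm{S^k}_e=\sup_x\norm{S_x^k}_{\mathcal{L}(A^2,A^2)}$ for every $k\ge 1$. Taking $k$th roots and sending $k\to\infty$, together with the spectral radius formula $r_e(S)=\lim_k\norm{S^k}_e^{1/k}$ in the Calkin algebra, produces the rightmost equality in \eqref{EssentialRad}. The inequality $\sup_x r(S_x)\le r_e(S)$ follows because, for each $x$ and every $k$, $r(S_x)\le\norm{S_x^k}^{1/k}\le\bigl(\sup_y\norm{S_y^k}\bigr)^{1/k}$, and the right-hand side converges to $r_e(S)$. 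Finally, essential normality of $S$ means $[S,S^*]\in\mathcal{K}$, so $[S_x,S_x^*]=0$ for all boundary $x$ and each $S_x$ is normal; normality yields $r(S_x)=\norm{S_x}$ and $\norm{S_x^k}=\norm{S_x}^k$, collapsing every intermediate estimate to equality and giving $\sup_x r(S_x)=\sup_x\norm{S_x}=r_e(S)$.

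The only real obstacle is the injectivity of $\tilde\Phi$, which is exactly where the compactness characterization theorem proved earlier is consumed; once that is available, the $C^*$-algebraic principle that injective *-homomorphisms are isometric closes \eqref{EssentialNorm}, and the spectral statement \eqref{EssentialRad} reduces to routine manipulations with $\Phi$ and the Gelfand spectral radius formula.
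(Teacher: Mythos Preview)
Your proposal is correct but proceeds in the opposite order from the paper. You establish \eqref{EssentialNorm} first, by packaging $S\mapsto (S_x)_x$ as a $*$-homomorphism $\Phi$ on $\mathcal{T}_2$, observing that $\mathcal{K}\subset\ker\Phi$ and that the induced map on $\mathcal{T}_2/\mathcal{K}$ is injective (via Lemma~\ref{BerezinVanish} and the compactness characterization), and then invoking the general fact that injective $*$-homomorphisms between $C^*$-algebras are isometric; \eqref{EssentialRad} then drops out from \eqref{EssentialNorm} applied to $S^k$. The paper instead proves \eqref{EssentialRad} first, using only the \emph{approximate} equivalence of Theorem~\ref{EssentialviaSx}: the implicit constants there do not depend on $S$, so applying it to $S^k$, taking $k$th roots and letting $k\to\infty$ eliminates the constants and yields the exact equality $r_e(S)=\lim_k(\sup_x\norm{S_x^k})^{1/k}$. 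The paper then derives \eqref{EssentialNorm} by applying the equality case of \eqref{EssentialRad} to the self-adjoint operator $S^*S$, via $\norm{S}_e^2=\norm{S^*S}_e=r_e(S^*S)=\sup_x r((S^*S)_x)=\sup_x\norm{S_x^*S_x}=\sup_x\norm{S_x}^2$. Your route is more conceptual and makes the $C^*$-structure do the work, at the cost of consuming the full compactness theorem; the paper's route is a neat trick that only needs the weaker Theorem~\ref{EssentialviaSx} (the constants wash out under the spectral radius limit), so it is logically lighter even though both ingredients are already available at this point.
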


\begin{proof}
Since we have that $\left(S^k\right)_x=\left(S_x\right)^k$, then by Theorem \ref{EssentialviaSx} we have
$$
\sup_{x\in M_{\mathcal{A}}\setminus\D^n}\norm{S_x^k}^{\frac{1}{k}}_{\mathcal{L}\left(A^2, A^2\right)}\lesssim \norm{S^k}_e^{\frac{1}{k}}\lesssim \sup_{x\in M_{\mathcal{A}}\setminus\D^n}\norm{S_x^k}^{\frac{1}{k}}_{\mathcal{L}\left(A^2, A^2\right)}.
$$
Taking the limit as $k\to\infty$ gives that
$$
\lim_{k\to\infty}\left(\sup_{x\in M_{\mathcal{A}}\setminus\D^n}\norm{S_x^k}^{\frac{1}{k}}_{\mathcal{L}\left(A^2, A^2\right)}\right)=r_e(S).
$$
While for the inequality one notes that $r(T)\leq \norm{T^k}^{\frac{1}{k}}$ for a generic operator and so we have
$$
\sup_{x\in M_{\mathcal{A}}\setminus\D^n} r(S_x)\leq \sup_{x\in M_{\mathcal{A}}\setminus\D^n}\norm{S_x^k}^{\frac{1}{k}}_{\mathcal{L}\left(A^2, A^2\right)}.
$$
Combining these observations gives \eqref{EssentialRad}.  Suppose now that $S$ is essentially normal.  This means that $S^*S-SS^*$ is compact, and so we have that
$$
S^*_xS_x-S_xS^*_x=\left(S^*S-SS^*\right)_x=0.
$$
So $S_x$ is a normal operator for each $x\in M_{\mathcal{A}}\setminus\D^n$ and consequently we have
$$
\norm{S_x^k}^{\frac{1}{k}}_{\mathcal{L}\left(A^2, A^2\right)}=r(S_x).
$$
This then gives the equality in \eqref{EssentialRad} by noting that
$$
\sup_{x\in M_\mathcal{A}\setminus\D^n}r(S_x)=\lim_{k\to\infty}\sup_{x\in M_\mathcal{A}\setminus\D^n}\norm{S_x^k}^{\frac{1}{k}}_{\mathcal{L}\left(A^2, A^2\right)}=r_e(S).
$$
Now apply the equality in \eqref{EssentialRad} to the operator $S^*S$ and note that
\begin{eqnarray*}
\norm{S}^2_e=\norm{S^*S}_e=r_e(S^*S) & = & \sup_{x\in M_\mathcal{A}\setminus\D^n}r\left(\left(S^*S\right)_x\right)\\
 & = & \sup_{x\in M_\mathcal{A}\setminus\D^n}\norm{S_x^*S_x} _{\mathcal{L}\left(A^2, A^2\right)}\\
 & = & \sup_{x\in M_\mathcal{A}\setminus\D^n}\norm{S_x}^2 _{\mathcal{L}\left(A^2, A^2\right)}.
\end{eqnarray*}
\end{proof}
We have the following Corollary, that can be proved in a similar manner as in \cite{Sua}.  For completeness, we provide the details.
\begin{cor}
\label{OperatorInequality}
Let $S\in \mathcal{T}_{2}$ and $\gamma, \delta\in\R$ be such that $\gamma I_{A^2}\leq S_x\leq \delta I_{A^2}$ for all $x\in M_{\mathcal{A}}\setminus\D^n$.  Then given $\epsilon>0$ there is a compact self-adjoint operator $K$ such that
$$
\left(\gamma-\epsilon\right)I_{A^2}\leq S+K\leq\left(\delta+\epsilon\right)I_{A^2}.
$$
\end{cor}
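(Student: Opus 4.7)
The plan is to reduce to a self-adjoint $S$ and then apply the Hilbert-space essential norm identity \eqref{EssentialNorm} directly. First, I would note that the hypothesis $\gamma I_{A^2}\le S_x\le\delta I_{A^2}$ forces each $S_x$ to be self-adjoint, so $(S-S^*)_x=S_x-(S_x)^*=0$ for every $x\in M_\mathcal{A}\setminus\D^n$, using the identity $(S^*)_x=(S_x)^*$ valid in $\mathcal{T}_2$. Since $\mathcal{T}_2$ is self-adjoint, $S-S^*\in\mathcal{T}_2$, and then \eqref{EssentialNorm} gives $\|S-S^*\|_e=0$, i.e.\ $S-S^*$ is compact. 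Hence we may replace $S$ by its self-adjoint part $\tfrac{1}{2}(S+S^*)$ without changing any $S_x$, and thus assume from the start that $S=S^*$.

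Next I would center the interval by setting $c=(\gamma+\delta)/2$ and $r=(\delta-\gamma)/2$, so that the hypothesis rewrites as $-rI\le(S-cI)_x\le rI$, which in particular yields $\|(S-cI)_x\|_{\mathcal{L}(A^2,A^2)}\le r$ for all $x\in M_\mathcal{A}\setminus\D^n$. Since $S-cI\in\mathcal{T}_2$, the identity \eqref{EssentialNorm} gives
$$
\|S-cI\|_e=\sup_{x\in M_\mathcal{A}\setminus\D^n}\|(S-cI)_x\|_{\mathcal{L}(A^2,A^2)}\le r.
$$
By definition of the essential norm, there exists a compact operator $K_0$ with $\|(S-cI)-K_0\|_{\mathcal{L}(A^2,A^2)}\le r+\epsilon$. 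Since $S-cI$ is self-adjoint, I can replace $K_0$ by its self-adjoint part $\tfrac{1}{2}(K_0+K_0^*)$, which is still compact, without increasing the norm: the triangle inequality gives
$$
\bigl\|(S-cI)-\tfrac{1}{2}(K_0+K_0^*)\bigr\|\le\tfrac{1}{2}\|(S-cI)-K_0\|+\tfrac{1}{2}\|(S-cI)-K_0^*\|\le r+\epsilon.
$$
So I may assume $K_0=K_0^*$.

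Finally, $S-cI-K_0$ is self-adjoint with operator norm at most $r+\epsilon$, so its spectrum lies in $[-(r+\epsilon),r+\epsilon]$ and hence
$$
-(r+\epsilon)I_{A^2}\le S-cI-K_0\le(r+\epsilon)I_{A^2}.
$$
Translating back by $cI$ yields $(\gamma-\epsilon)I_{A^2}\le S-K_0\le(\delta+\epsilon)I_{A^2}$, so taking $K:=-K_0$ produces the desired compact self-adjoint operator. The only potentially delicate step is the initial reduction to the self-adjoint case, since it crucially uses the sharper equality in \eqref{EssentialNorm} available in the $p=2$ setting; the rest of the argument is standard self-adjoint functional calculus combined with the averaging trick to symmetrize $K_0$.
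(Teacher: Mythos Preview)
Your proof is correct and follows essentially the same route as the paper: center the interval, apply the Hilbert-space essential norm identity to bound $\|S-cI\|_e\le r$, pick a compact approximant, symmetrize it, and translate back. Your explicit reduction to self-adjoint $S$ via $(S-S^*)_x=0$ and \eqref{EssentialNorm} is a refinement the paper leaves implicit (its symmetrization of $K$ tacitly requires $S-cI$ to be self-adjoint), but the arguments are otherwise identical.
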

\begin{proof}
Since $\gamma I_{A^2}\leq S_x\leq\delta I_{A^2}$, then we have
$$
-\left(\frac{\delta-\gamma}{2}\right)I_{A^2}\leq S_x-\left(\frac{\delta+\gamma}{2}\right)I_{A^2}\leq \left(\frac{\delta-\gamma}{2}\right)I_{A^2}
$$
for all $x\in M_{\mathcal{A}}\setminus\D^n$.  By standard operator theory, the spectral radius of a self-adjoint element in a $C^*$ algebra coincides with its norm, and so applying Theorem \ref{EssentialRad} gives
$$
\norm{S-\frac{\delta+\gamma}{2}I_{A^2}}_e\leq\frac{\delta-\gamma}{2}.
$$
Thus, there is a compact operator $K$ such that for any $\epsilon>0$ we have
$$
\norm{S-\frac{\delta+\gamma}{2}I_{A^2}+K}_{\mathcal{L}\left(A^2,A^2\right)}\leq\frac{\delta-\gamma}{2}+\epsilon.
$$
Without loss of generality, we can take $K$ to be self-adjoint (simply replace $K$ by $\frac{K+K^*}{2}$ if necessary).  This means we have
$$
-\left(\frac{\delta-\gamma}{2}+\epsilon\right)I_{A^2}\leq S+K-\left(\frac{\delta+\gamma}{2}\right)I\leq \left(\frac{\delta-\gamma}{2}+\epsilon\right)I_{A^2}
$$
and adding $\frac{\delta+\gamma}{2}I_{A^2}$ to every term in this inequality gives the result.
\end{proof}

Using the tools from above, and repeating the proof in \cite{Sua} we have the following.
\begin{thm}
\label{ThreeEquivs}
Let $S\in\mathcal{T}_{2}$.  The following are equivalent:
\begin{itemize}
\item[(i)] $\lambda\notin \sigma_e(S)$;
\item[(ii)] $$\lambda\notin \bigcup_{x\in M_\mathcal{A}\setminus\D^n}\sigma(S_x)\quad\textnormal{ and }\quad\sup_{x\in M_\mathcal{A}\setminus\D^n}\norm{\left(S_x-\lambda I_{A^2}\right)^{-1}}_{\mathcal{L}\left(A^2,A^2\right)}<\infty;$$
\item[(iii)] there is a number $t>0$ depending only on $\lambda$ such that
$$
\norm{\left(S_x-\lambda I_{A^2}\right)f}_{A^2}\geq t\norm{f}_{A^2}\quad\textnormal{ and }\quad\norm{\left(S_x^*-\overline{\lambda} I_{A^2}\right)f}_{A^2}\geq t\norm{f}_{A^2}
$$
for all $f\in A^2$ and $x\in M_{\mathcal{A}}\setminus\D^n$.
\end{itemize}
\end{thm}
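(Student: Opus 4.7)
The plan is to reduce to $\lambda=0$ by replacing $S$ with $S-\lambda I_{A^2}$, which still lies in $\mathcal{T}_2$ since $I_{A^2}\in\mathcal{T}_2$. With this reduction, the equivalence (ii)$\Leftrightarrow$(iii) becomes a standard Hilbert space fact: the bound $\norm{S_xf}\geq t\norm{f}$ forces $S_x$ to have closed range and be injective, while $\norm{S_x^*f}\geq t\norm{f}$ makes $S_x^*$ injective, so $S_x$ has dense range; together $S_x$ is bijective with $\norm{S_x^{-1}}\leq t^{-1}$. Conversely, if each $S_x$ is invertible with $\sup_x\norm{S_x^{-1}}=M<\infty$, then $\norm{f}=\norm{S_x^{-1}S_xf}\leq M\norm{S_xf}$ and the analogous inequality for the adjoint give (iii) with $t=M^{-1}$.

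For the implication (i)$\Rightarrow$(iii), I would first argue that since $\mathcal{K}\subset\mathcal{T}_2$ (by the Engli\v{s} result cited in the introduction) and $\mathcal{T}_2$ is a unital $C^*$-subalgebra of $\mathcal{L}(A^2,A^2)$ (it is norm-closed, closed under products and adjoints, and contains the identity), the quotient $\mathcal{T}_2/\mathcal{K}$ is a unital $C^*$-subalgebra of the Calkin algebra. Spectral permanence for $C^*$-algebras then yields $R\in\mathcal{T}_2$ such that $RS-I_{A^2}$ and $SR-I_{A^2}$ are both compact. The multiplicativity relation $(RS)_x=R_xS_x$ valid for $p=2$ (recorded at the opening of the Hilbert space subsection, where it follows from $b_z\equiv 1$), together with the vanishing $K_x=0$ for compact $K$ (from the opening of the proof of Theorem \ref{EssentialviaSx}), gives $R_xS_x=I_{A^2}=S_xR_x$ for every $x\in M_{\mathcal{A}}\setminus\D^n$. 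Hence $S_x^{-1}=R_x$ with $\norm{R_x}_{\mathcal{L}(A^2,A^2)}\leq\norm{R}_{\mathcal{L}(A^2,A^2)}$, producing both (ii) and (iii) with uniform constant $\norm{R}^{-1}$.

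The implication (iii)$\Rightarrow$(i) is the main obstacle, and here I would invoke Corollary \ref{OperatorInequality}. Since $\mathcal{T}_2$ is closed under adjoints and products, both $S^*S$ and $SS^*$ are self-adjoint elements of $\mathcal{T}_2$. The lower bound $\norm{S_xf}\geq t\norm{f}$ is equivalent to the operator inequality $(S^*S)_x=S_x^*S_x\geq t^2 I_{A^2}$, and the bound on $S_x^*$ similarly gives $(SS^*)_x\geq t^2 I_{A^2}$ for all $x\in M_\mathcal{A}\setminus\D^n$. Applying Corollary \ref{OperatorInequality} to each of $S^*S$ and $SS^*$ with $\gamma=t^2$ and any $\epsilon<t^2$ produces compact self-adjoint operators $K_1,K_2$ satisfying
\begin{equation*}
S^*S+K_1\geq(t^2-\epsilon)I_{A^2}\quad\text{and}\quad SS^*+K_2\geq(t^2-\epsilon)I_{A^2}.
\end{equation*}
Passing to the Calkin algebra, both $\pi(S^*S)$ and $\pi(SS^*)$ are then strictly positive, hence invertible. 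To conclude, I would use the standard $C^*$-algebra fact that if $b^*b$ and $bb^*$ are both invertible in a unital $C^*$-algebra with inverses $u$ and $v$, then $ub^*$ is a left inverse and $b^*v$ is a right inverse of $b$, forcing $b$ itself to be invertible; applied to $b=\pi(S)$ this yields $0\notin\sigma_e(S)$. The delicacy here is that one genuinely needs lower bounds on both $S_x$ \emph{and} $S_x^*$, which is exactly why (iii) carries both inequalities; without the second one could only control $\pi(S^*S)$, which is not enough to invert $\pi(S)$ in a non-commutative setting.
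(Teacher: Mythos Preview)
Your proposal is correct and follows essentially the same route as the paper: reduce to $\lambda=0$, use the multiplicativity $(RS)_x=R_xS_x$ together with $K_x=0$ for compact $K$ to pass from a Calkin inverse to uniform invertibility of the $S_x$, and in the reverse direction apply Corollary~\ref{OperatorInequality} to $S^*S$ (and $SS^*$) to produce invertibility in the Calkin algebra. The only cosmetic difference is that you invoke spectral permanence to choose the Calkin inverse $R$ inside $\mathcal{T}_2$, whereas the paper takes $Q\in\mathcal{L}(A^2,A^2)$ arbitrary and uses the mixed multiplicativity $(TS)_x=T_xS_x$ valid whenever $S\in\mathcal{T}_2$ and $T$ is merely bounded; both variants work.
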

\begin{proof}
Without loss of generality, we may take $\lambda=0$.  First, suppose that (i) holds, i.e., $0\notin\sigma_e(S)$.  Then there is a $Q\in\mathcal{L}\left(A^2,A^2\right)$ such that $QS-I_{A^2}$ and $SQ-I_{A^2}$ are compact operators.  Let $x\in M_{\mathcal{A}}\setminus\D^n$.  Since $S\in\mathcal{T}_2$ we have $\left(SQ\right)_x=S_xQ_x$, $\left(QS\right)_x=Q_xS_x$, and $K_x=0$ for any compact operator $K\in\mathcal{L}\left(A^2,A^2\right)$.  Then we have
$$
Q_xS_x-I_{A^2}=0=S_xQ_x-I_{A^2}
$$
and so $S_x$ is invertible with $Q_x=\left( S_x\right)^{-1}$.  So we have $\norm{\left( S_x\right)^{-1}}_{\mathcal{L}\left(A^2,A^2\right)}=\norm{Q_x}_{\mathcal{L}\left(A^2,A^2\right)}\leq\norm{Q}_{\mathcal{L}\left(A^2,A^2\right)}<\infty$.  This gives that (ii) holds.

Now suppose that (ii) holds with $\lambda=0$.  Hence $S_x$ is invertible and there exists $\gamma^{-1}>0$, independent of $x\in M_{\mathcal{A}}\setminus\D^n$, such that
$$
\norm{\left(S_x^*\right)^{-1}}_{\mathcal{L}\left(A^2,A^2\right)}=\norm{\left(S_x\right)^{-1}}_{\mathcal{L}\left(A^2,A^2\right)}\leq \gamma^{-1}\quad\forall x\in M_{\mathcal{A}}\setminus\D^n.
$$
Then for any $f\in A^2$ and $x\in M_{\mathcal{A}}\setminus\D^n$ we have that 
$$
\gamma^{-1}\norm{S_x f}_{A^2}\geq \norm{\left(S_x\right)^{-1}S_x f}_{A^2}=\norm{f}_{A^2}.
$$
Rearrangement gives that (iii) holds.

Finally, suppose that (iii) holds with $\lambda=0$ and so  
$$
\norm{S_x f}_{A^2}\geq t\norm{f}_{A^2}\quad\forall f\in A^2\quad\forall x\in M_{\mathcal{A}}\setminus\D^n.
$$
Rearrangement gives that 
$$
t^2 I_{A^2}\leq S_x^*S_x\leq\norm{S}_{A^2}^2I_{A^2}.
$$
Given $\epsilon>0$, with $0<\epsilon<t^2$, by applying Corollary \ref{OperatorInequality} there is a self-adjoint compact operator $K$ such that
$$
\left(t^2-\epsilon\right) I_{A^2}\leq S^*S+K\leq\left(\norm{S}_{A^2}^2+\epsilon\right)I_{A^2}.
$$
Since $t^2-\epsilon>0$, we have that $S^*S+K$ is invertible.  Thus, there exists a $Q\in\mathcal{L}\left(A^2,A^2\right)$ such that $\left(QS^*\right)S+QK=I_{A^2}$.  This equality is simply the statement that $S+\mathcal{K}$ is left invertible in the Calkin algebra.  Repeating this argument but with $S_x^*$ one concludes that $S+\mathcal{K}$ is right invertible in the Calkin algebra.  These two statements together give (i).
\end{proof}

The above theorem then yields the following Corollary.
\begin{cor}
If $S\in\mathcal{T}_{2}$ then
$$
\overline{\bigcup_{x\in M_\mathcal{A}\setminus\D^n}\sigma(S_x)}\subset\sigma_e(S)
$$
with equality if $S$ is essentially normal.
\end{cor}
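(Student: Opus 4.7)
The plan is to reduce both inclusions to applications of Theorem~\ref{ThreeEquivs}. For the first inclusion, note that $\sigma_e(S)$ is automatically closed as the spectrum of the class of $S$ in the Calkin algebra, so it suffices to establish $\bigcup_{x\in M_\mathcal{A}\setminus\D^n}\sigma(S_x)\subset\sigma_e(S)$. Arguing contrapositively, if $\lambda\notin\sigma_e(S)$, then by the implication (i)$\Rightarrow$(ii) of Theorem~\ref{ThreeEquivs} we have $\lambda\notin\sigma(S_x)$ for every $x\in M_\mathcal{A}\setminus\D^n$, giving the desired inclusion after taking closures.

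For the reverse inclusion under the hypothesis that $S$ is essentially normal, I would first observe that normality passes to the limit operators $S_x$. Indeed, by the identities listed in the Hilbert space subsection, $(S^*S-SS^*)_x=S_x^*S_x-S_xS_x^*$, and since $S^*S-SS^*$ is compact, its class $x$-limit vanishes, so $S_x$ is normal for every $x\in M_\mathcal{A}\setminus\D^n$. For a normal operator one has the exact identity
\[
\|(S_x-\lambda I_{A^2})^{-1}\|_{\mathcal{L}(A^2,A^2)}=\frac{1}{\operatorname{dist}(\lambda,\sigma(S_x))}
\]
whenever $\lambda\notin\sigma(S_x)$.

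Now suppose $\lambda\notin\overline{\bigcup_{x\in M_\mathcal{A}\setminus\D^n}\sigma(S_x)}$. Then there is $\eta>0$ such that $|\lambda-\mu|\geq\eta$ for every $\mu\in\sigma(S_x)$ and every $x\in M_\mathcal{A}\setminus\D^n$. Combined with the normality identity above, this yields
\[
\sup_{x\in M_\mathcal{A}\setminus\D^n}\|(S_x-\lambda I_{A^2})^{-1}\|_{\mathcal{L}(A^2,A^2)}\leq \frac{1}{\eta}<\infty,
\]
which is exactly condition (ii) of Theorem~\ref{ThreeEquivs}. The implication (ii)$\Rightarrow$(i) of that theorem then gives $\lambda\notin\sigma_e(S)$, completing the proof of the equality.

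There is no real obstacle here beyond correctly invoking Theorem~\ref{ThreeEquivs}; the only mildly subtle point is the passage from the resolvent bound to a spectral distance estimate, which is free precisely because essential normality of $S$ forces each fibre $S_x$ to be normal, so the spectral radius formula applies without slack.
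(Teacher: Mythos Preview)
Your proof is correct and follows essentially the same route as the paper's: both directions are reduced to Theorem~\ref{ThreeEquivs}, and for the reverse inclusion you use normality of each $S_x$ to convert the uniform spectral gap into a uniform resolvent bound. Your treatment of the forward inclusion is in fact slightly cleaner than the paper's---you invoke closedness of $\sigma_e(S)$ to pass to the closure, whereas the paper extracts the uniform bound on $\norm{(S_x)^{-1}}$ from condition~(ii) to exhibit an explicit ball around $\lambda$ disjoint from $\bigcup_x\sigma(S_x)$---but the two arguments are equivalent.
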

\begin{proof}
Suppose that $0\notin\sigma_e\left(S\right)$.  Then by Theorem \ref{ThreeEquivs} we have that $S_x$ is invertible and there is a $\gamma>0$ such that  $\norm{\left(S_x\right)^{-1}}_{\mathcal{L}\left(A^2,A^2\right)}\leq\gamma^{-1}$ for all $x\in M_{\mathcal{A}}\setminus\D^n$.  Then we have that a similar statement in terms of the spectral radius, 
$$
r\left(\left(S_x\right)^{-1}\right)\leq \norm{\left(S_x\right)^{-1}}_{\mathcal{L}\left(A^2,A^2\right)}\leq\gamma^{-1}.
$$
However, since 
$$
\sigma\left (S_x\right):=\{\xi^{-1}: \xi\in\sigma\left(\left(S_x\right)^{-1}\right)\}
$$
we have that $\abs{\xi}\geq\gamma$ for all $\xi\in \sigma\left(S_x\right)$.  In particular, we see that the open ball centered at the origin of radius $\gamma$ is disjoint from $\sigma\left( S_x\right)$ for all $x\in M_{\mathcal{A}}\setminus\D^n$.  This gives that,
$$
0\notin \overline{\bigcup_{x\in M_\mathcal{A}\setminus\D^n}\sigma(S_x)}.
$$

If $S$ is essentially normal, then $S_x$ is normal for all $x\in M_{\mathcal{A}}\setminus\D^n$.  So if
$$
0\notin \overline{\bigcup_{x\in M_\mathcal{A}\setminus\D^n}\sigma(S_x)}
$$
then there is a $\gamma>0$ such that the open ball of radius $\gamma$ centered at $0$ does not meet $\sigma\left( S_x\right)$. If we can show that $\sup_{x\in M_\mathcal{A}\setminus\D^n}\norm{\left(S_x\right)^{-1}}_{\mathcal{L}\left(A^2,A^2\right)}<\infty$, then by Theorem \ref{ThreeEquivs}, we would have that $0\notin\sigma_e\left(S\right)$.  However, this is easy since we have $r\left(\left(S_x\right)^{-1}\right)\leq\gamma^{-1}$, and since the spectral radius of a normal operator coincides with its norm, we have 
$$
\norm{\left(S_x\right)^{-1}}_{\mathcal{L}\left(A^2,A^2\right)}\leq\gamma^{-1}<\infty.
$$

\end{proof}

\section{Concluding Remarks}

One can also define weighted Bergman spaces on the polydisc.  Let $\vec{\alpha}=\left(\alpha_1,\ldots,\alpha_n\right)$ be a $n$-tuple such that $\alpha_l>-1$ for $1\leq l\leq n$.  Then we define the weighted Bergman space $A^p_{\vec{\alpha}}$ by
$$
\norm{f}_{A^p_{\vec{\alpha}}}^p:=c_{\vec{\alpha}}\int_{\D^n}\abs{f(z)}^p\prod_{l=1}^n\left(1-\abs{z_l}^2\right)^{\alpha_l}\,dv(z)<\infty.
$$
Analogous to what appears above, we have similar notions of Bergman--Carleson measures, normalized reproducing kernels for $A^p_{\vec{\alpha}}$, orthogonal projections from $L^2_{\vec{\alpha}}$ to $A^2_{\vec{\alpha}}$, Toeplitz operators and Toeplitz algebras.  Using the techniques in this paper, along with the modifications in \cite{MSW}, the interested reader can extend all the results in this paper in a straightforward manner to the case of weighted Bergman spaces.  In particular, one can obtain the following Theorem:
\begin{thm}
Let $1<p<\infty$, $\vec{\alpha}=(\alpha_1,\ldots,\alpha_n)$ satisfy $\alpha_l>-1$ for $1\leq l\leq n$ and $S\in\mathcal{L}(A^p_{\vec{\alpha}},A^p_{\vec{\alpha}})$.  Then $S$ is compact if and only if $S\in\mathcal{T}_{p,_{\vec{\alpha}}}$ and $\lim_{z\to\p\D^n} B(S)(z)=0$.
\end{thm}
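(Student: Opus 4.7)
The plan is to carry out exactly the same sequence of arguments as in Sections 2--5, with every appearance of the reproducing kernel $\prod_l(1-\overline{w}_lz_l)^{-2}$ replaced by the weighted kernel $\prod_l(1-\overline{w}_lz_l)^{-(2+\alpha_l)}$, and every integration against $dv$ replaced by integration against $dv_{\vec{\alpha}}(z)=c_{\vec\alpha}\prod_l(1-|z_l|^2)^{\alpha_l}dv(z)$. Concretely, I would first install the weighted setup: normalized reproducing kernel $k_\lambda^{(p,\vec\alpha)}(z)=\prod_l(1-|\lambda_l|^2)^{(2+\alpha_l)/q}/(1-\overline{\lambda}_lz_l)^{2+\alpha_l}$, projection $P_{\vec\alpha}:L^2_{\vec\alpha}\to A^2_{\vec\alpha}$, Toeplitz operators $T_a^{\vec\alpha}=P_{\vec\alpha}M_a$, the weighted Berezin transform $B_{\vec\alpha}(S)(z)=\langle Sk_z^{(p,\vec\alpha)},k_z^{(q,\vec\alpha)}\rangle_{A^2_{\vec\alpha}}$, and the isometries
\[
U_z^{p,\vec\alpha}f(w)=f(\varphi_z(w))\prod_{l=1}^n\frac{(1-|z_l|^2)^{(2+\alpha_l)/p}}{(1-w_l\overline{z}_l)^{2(2+\alpha_l)/p}}.
\]
All of the factorization identities of Section 3 ($U_z^{p,\vec\alpha}U_z^{p,\vec\alpha}=I$, the identification of $(U_z^{q,\vec\alpha})^*U_z^{p,\vec\alpha}$ with a Toeplitz operator $T^{\vec\alpha}_{b_z^{\vec\alpha}}$, and the formula \eqref{Comp}) carry over with the obvious exponent adjustments.

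Next I would rerun the technical machinery. Lemma \ref{Growth} has a standard weighted counterpart; from it the Bergman--Carleson characterization of Lemma \ref{CM} follows with $(1-|z_l|^2)^{2+\alpha_l}$ replacing $(1-|z_l|^2)^2$, and the $D(z,r)$ are unchanged since the geometric Lemmas \ref{StandardGeo_Polydisc} and \ref{SuaGeo2_Poly} depend only on the hyperbolic metric. Lemmas \ref{Tech1} and \ref{Tech2} then transfer verbatim: one just chooses the auxiliary Schur weight $h(z)=\prod_l(1-|z_l|^2)^{-(1+\alpha_l/2)/(pq)}$ so that the two Schur-test integrals close up, and selects the auxiliary Hölder exponent $a=a(p,\vec\alpha)$ satisfying the shifted integrability constraints. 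Lemmas \ref{Lemma_Change} and \ref{Lemma_Change_Est} extend in the same way, which in turn yields the weighted analog of the approximation theorem \ref{Sua2Thm}: $\mathcal{T}_{p,\vec\alpha}$ coincides with the closed algebra generated by $\{T^{\vec\alpha}_a:a\in\mathcal{A}\}$, via the $k$-Berezin transform $B^{\vec\alpha}_k(\mu)$ built from the weighted kernel.

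With these tools in place, the Sections 3--5 arguments can be transported directly. The maximal ideal space $M_{\mathcal{A}}$ is unchanged (it depends only on $(\D^n,\rho)$). One verifies Propositions \ref{WOTCon} and \ref{SOTCon} in the weighted setting, so the map $\Psi_S(z)=S_z:=U_z^{p,\vec\alpha}S(U_z^{q,\vec\alpha})^*$ extends continuously to $M_{\mathcal{A}}$ (in WOT for every bounded $S$, in SOT when $S\in\mathcal{T}_{p,\vec\alpha}$). Lemma \ref{BerezinVanish} translates word-for-word. The segmentation Lemmas \ref{Approx1}, \ref{Approx2}, \ref{Approx3} use only the decompositions from Section 2 and Lemma \ref{Tech2}, so they are immediate. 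Consequently, the central essential-norm identity
\[
\sup_{x\in M_{\mathcal{A}}\setminus\D^n}\|S_x\|_{\mathcal{L}(A^p_{\vec\alpha},A^p_{\vec\alpha})}\ \approx\ \|S\|_e\ \approx\ \mathfrak{a}_S\ \approx\ \mathfrak{b}_S\ \approx\ \mathfrak{c}_S
\]
follows by the same argument as Theorem \ref{EssentialviaSx}. The final reduction is the same as the unweighted case: compactness forces $B_{\vec\alpha}(S)\to 0$ at $\partial\D^n$ (since $k_\xi^{(p,\vec\alpha)}\rightharpoonup 0$), while the converse uses the essential-norm identity together with Lemma \ref{BerezinVanish}; membership of compacts in $\mathcal{T}_{p,\vec\alpha}$ reduces, via finite-rank approximation, to checking that $1\otimes 1=T_{\delta_0}^{\vec\alpha}$ lies in the Toeplitz algebra, which follows from the weighted \ref{Sua2Thm}.

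The main obstacle is entirely in the bookkeeping of exponents in the technical estimates: verifying that the restrictions $q_1\eta<1$ and $q_1(2-\eta)<2$ in Lemma \ref{Lemma_Change} have analogs with $\eta$ depending on $(p,\vec\alpha)$, and that the auxiliary parameter $a$ in Lemma \ref{Tech1} can still be chosen in the admissible window $(1,p)\cap\{a(2-1/p-\alpha_l/\cdots)<2+\alpha_l\}$ uniformly in $l$. Once one fixes these constants consistently, every subsequent inequality in the paper goes through unchanged, and the theorem follows from the weighted counterparts of Theorem \ref{EssentialviaSx} and Lemma \ref{BerezinVanish} exactly as in the unweighted case.
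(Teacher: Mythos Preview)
Your proposal is correct and mirrors the paper's own treatment: the paper does not give a separate proof of this weighted theorem but states it in the concluding remarks, indicating that ``using the techniques in this paper, along with the modifications in \cite{MSW}, the interested reader can extend all the results in this paper in a straightforward manner to the case of weighted Bergman spaces.'' Your outline is precisely that straightforward extension, and your identification of the only genuine work---tracking the exponents in Lemmas~\ref{Tech1}, \ref{Lemma_Change}, and \ref{Lemma_Change_Est} so that the Schur-test and H\"older windows remain nonempty---is exactly the content of the modifications carried out in \cite{MSW} for the ball.
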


Also, recall that $\B_n$ and $\D^n$ are examples of bounded symmetric domains $\Omega\subset\C^n$.  The results in \cites{Sua,MSW, E} and this paper provide concrete evidence for an analogous characterization of compact operators on the Bergman space over $\Omega$ in terms of the Berezin transform.  In a future project, the authors will demonstrate that many of the results of \cites{Sua, MSW} and this paper can be generalized to bounded symmetric domains.


\begin{bibdiv}
\begin{biblist}

\bib{Amar}{article}{
   author={Amar, {\'E}ric},
   title={Suites d'interpolation pour les classes de Bergman de la boule et
   du polydisque de ${\bf C}^{n}$},
   language={French},
   journal={Canad. J. Math.},
   volume={30},
   date={1978},
   number={4},
   pages={711--737}
}

\bib{AZ}{article}{
   author={Axler, Sheldon},
   author={Zheng, Dechao},
   title={Compact operators via the Berezin transform},
   journal={Indiana Univ. Math. J.},
   volume={47},
   date={1998},
   number={2},
   pages={387--400}
}



\bib{BCL}{article}{
   author={Berndtsson, Bo},
   author={Chang, Sun-Yung A.},
   author={Lin, Kai-Ching},
   title={Interpolating sequences in the polydisc},
   journal={Trans. Amer. Math. Soc.},
   volume={302},
   date={1987},
   number={1},
   pages={161--169}
}

\bib{CLNZ}{article}{
   author={Choe, Boo Rim},
   author={Lee, Young Joo},
   author={Nam, Kyesook},
   author={Zheng, Dechao},
   title={Products of Bergman space Toeplitz operators on the polydisk},
   journal={Math. Ann.},
   volume={337},
   date={2007},
   number={2},
   pages={295--316}
}

\bib{CR}{article}{
   author={Coifman, R. R.},
   author={Rochberg, R.},
   title={Representation theorems for holomorphic and harmonic functions in
   $L^{p}$},
   conference={
      title={Representation theorems for Hardy spaces},
   },
   book={
      series={Ast\'erisque},
      volume={77},
      publisher={Soc. Math. France},
      place={Paris},
   },
   date={1980},
   pages={11--66}
}

\bib{E92}{article}{
   author={Engli{\v{s}}, Miroslav},
   title={Density of algebras generated by Toeplitz operator on Bergman spaces},
   journal={Ark. Mat.},
   volume={30},
   date={1992},
   pages={227--243}
}

\bib{E}{article}{
   author={Engli{\v{s}}, Miroslav},
   title={Compact Toeplitz operators via the Berezin transform on bounded
   symmetric domains},
   journal={Integral Equations Operator Theory},
   volume={33},
   date={1999},
   number={4},
   pages={426--455}
}

\bib{H}{article}{
   author={Hastings, William W.},
   title={A Carleson measure theorem for Bergman spaces},
   journal={Proc. Amer. Math. Soc.},
   volume={52},
   date={1975},
   pages={237--241}
}


\bib{J}{article}{
   author={Jafari, F.},
   title={Carleson measures in Hardy and weighted Bergman spaces of
   polydiscs},
   journal={Proc. Amer. Math. Soc.},
   volume={112},
   date={1991},
   number={3},
   pages={771--781}
}



\bib{M}{article}{
   author={Michalak, Artur},
   title={Carleson measures theorems for generalized Bergman spaces on the
   unit polydisk},
   journal={Glasg. Math. J.},
   volume={43},
   date={2001},
   number={2},
   pages={277--294}
}

\bib{MSW}{article}{
   author={Mitkovski, Mishko},
   author={Su\'arez, Daniel},
   author={Wick, Brett D.},
   title={The Essential Norm of Operators on $A^p_\alpha(\mathbb{B}_n)$},
   eprint={http://arxiv.org/abs/1204.5548},
   status={preprint},
   pages={1--32},
   date={2012}
}

\bib{NZ}{article}{
   author={Nam, Kyesook},
   author={Zheng, Dechao},
   title={$m$-Berezin transform on the polydisk},
   journal={Integral Equations Operator Theory},
   volume={56},
   date={2006},
   number={1},
   pages={93--113}
}

\bib{R}{article}{
   author={Raimondo, Roberto},
   title={Toeplitz operators on the Bergman space of the unit ball},
   journal={Bull. Austral. Math. Soc.},
   volume={62},
   date={2000},
   number={2},
   pages={273--285}
}



\bib{Roch}{article}{
   author={Rochberg, Richard},
   title={Interpolation by functions in Bergman spaces},
   journal={Michigan Math. J.},
   volume={29},
   date={1982},
   number={2},
   pages={229--236}
}

\bib{Sua}{article}{
   author={Su{\'a}rez, Daniel},
   title={The essential norm of operators in the Toeplitz algebra on $A^p(\mathbb{B}_n)$},
   journal={Indiana Univ. Math. J.},
   volume={56},
   date={2007},
   number={5},
   pages={2185--2232}
}

\bib{Sua2}{article}{
   author={Su{\'a}rez, Daniel},
   title={Approximation and the $n$-Berezin transform of operators on the
   Bergman space},
   journal={J. Reine Angew. Math.},
   volume={581},
   date={2005},
   pages={175--192}
}

\bib{Sua3}{article}{
   author={Su{\'a}rez, Daniel},
   title={Approximation and symbolic calculus for Toeplitz algebras on the
   Bergman space},
   journal={Rev. Mat. Iberoamericana},
   volume={20},
   date={2004},
   number={2},
   pages={563--610}
}


\bib{W}{article}{
   author={Wick, Brett D.},
   title={Carleson Measures on Polydiscs via Trees},
   eprint={},
   status={preprint},
   pages={1--5},
   date={2012}
}

\bib{Zhu}{book}{
   author={Zhu, Kehe},
   title={Spaces of holomorphic functions in the unit ball},
   series={Graduate Texts in Mathematics},
   volume={226},
   publisher={Springer-Verlag},
   place={New York},
   date={2005},
   pages={x+271}
}

\end{biblist}
\end{bibdiv}


\end{document}